\tikzset{node distance=2em, ch/.style={circle,draw,on chain,inner sep=2pt},chj/.style={ch,join},every path/.style={shorten >=4pt,shorten <=4pt},line width=1pt,baseline=-1ex}
\let\dlabel=\alabel
\newcommand{\dnode}[2][chj]{%
\node[#1,label={below:\dlabel{#2}}] {};
}
\newcommand{\dnodenj}[1]{%
\dnode[ch]{#1}
}
\newcommand{\dydots}{%
\node[chj,draw=none,inner sep=1pt] {\dots};
}
\newenvironment{proofprop}{\noindent {\em Proof of Proposition} {\rm \ref{lintegralG}}.  }{\hfill $\square$\par}
\newtheorem{Thm}{Theorem}[section]
\newtheorem{Def}[Thm]{Definition}
\newtheorem{Prop}[Thm]{Proposition}
\newtheorem{Lem}[Thm]{Lemma}
\newtheorem{Cor}[Thm]{Corollary}
\theoremstyle{definition}
\newtheorem{Rk}[Thm]{Remark}
\numberwithin{equation}{section}
\def\QC{U_{v}^{>}(L\mathfrak{g})}
\def\SF{\mathbb{S}}
\def\Yangian{Y_{\hbar}^{>}(\mathfrak{g})}
\def\dgdual{\dot{\mathbf{Y}}^{>}_{\hbar}(\mathfrak{g})}
\def\integralb{\mathcal{U}_{v}^{>}(L\mathfrak{o}_{2n+1})}
\def\integralbl{\mathbf{U}_{v}^{>}(L\mathfrak{o}_{2n+1})}
\def\qfb{U_{v}^{>}(L\mathfrak{o}_{2n+1})}
\def\qgb{U_{v}^{>}(\mathfrak{o}_{2n+1})}
\def\qgbi{\mathbf{U}^{>}_{v}(\mathfrak{o}_{2n+1})}
\def\integralgl{\mathbf{U}_{v}^{>}(L\mathfrak{g}_{2})}
\def\qfg{U_{v}^{>}(L\mathfrak{g}_{2})}
\def\qgg{U_{v}^{>}(\mathfrak{g}_{2})}
\def\qggi{\mathbf{U}_{v}^{>}(\mathfrak{g}_{2})}
\newif\ifShowLabels
\newdimen\theight
\def\TeXref#1{%
    \leavevmode\vadjust{\setbox0=\hbox{{\tt
        \quad\quad  {\small \rm #1}}}%
    \theight=\ht0
    \advance\theight by \lineskip
    \kern -\theight \vbox to
    \theight{\rightline{\rlap{\box0}}%
    \vss}%
    }}%
\newcommand\nc{\newcommand}
\nc{\unl}{\underline}
\nc{\ol}{\overline}
\nc{\on}{\operatorname}
\nc{\BA}{{\mathbb{A}}}
\nc{\BC}{{\mathbb{C}}}
\nc{\BD}{{\mathbb{D}}}
\nc{\BF}{{\mathbb{F}}}
\nc{\BG}{{\mathbb{G}}}
\nc{\BM}{{\mathbb{M}}}
\nc{\BN}{{\mathbb{N}}}
\nc{\BO}{{\mathbb{O}}}
\nc{\BQ}{{\mathbb{Q}}}
\nc{\BP}{{\mathbb{P}}}
\nc{\BR}{{\mathbb{R}}}
\nc{\BZ}{{\mathbb{Z}}}
\nc{\BS}{{\mathbb{S}}}
\nc{\BK}{{\mathbb{K}}}
\nc{\BW}{W}
\nc{\CA}{{\mathcal{A}}} \nc{\CB}{{\mathcal{B}}} \nc{\CalC}{{\mathcal
C}} \nc{\CalD}{{\mathcal D}} \nc{\CE}{{\mathcal{E}}}
\nc{\CF}{{\mathcal{F}}} \nc{\CG}{{\mathcal{G}}}
\nc{\CH}{{\mathcal{H}}} \nc{\CI}{{\mathcal{I}}}
\nc{\CK}{{\mathcal{K}}} \nc{\CL}{{\mathcal{L}}}
\nc{\CM}{{\mathcal{M}}} \nc{\CN}{{\mathcal{N}}}
\nc{\CO}{{\mathcal{O}}} \nc{\CP}{{\mathcal{P}}}
\nc{\CQ}{{\mathcal{Q}}} \nc{\CR}{{\mathcal{R}}}
\nc{\CS}{{\mathcal{S}}} \nc{\CT}{{\mathcal{T}}}
\nc{\CU}{{\mathcal{U}}} \nc{\CV}{{\mathcal{V}}}
\nc{\CW}{{\mathcal{W}}} \nc{\CX}{{\mathcal{X}}}
\nc{\CY}{{\mathcal{Y}}} \nc{\CZ}{{\mathcal{Z}}}
\nc{\fa}{{\mathfrak{a}}}
\nc{\fb}{{\mathfrak{b}}}
\nc{\fg}{{\mathfrak{g}}}
\nc{\fgl}{{\mathfrak{gl}}}
\nc{\fh}{{\mathfrak{h}}}
\nc{\fj}{{\mathfrak{j}}}
\nc{\fl}{{\mathfrak{l}}}
\nc{\fm}{{\mathfrak{m}}}
\nc{\fn}{{\mathfrak{n}}}
\nc{\fu}{{\mathfrak{u}}}
\nc{\fp}{{\mathfrak{p}}}
\nc{\frr}{{\mathfrak{r}}}
\nc{\fs}{{\mathfrak{s}}}
\nc{\ft}{{\mathfrak{t}}}
\nc{\fw}{{\mathfrak{w}}}
\nc{\fz}{{\mathfrak{z}}}
\nc{\fA}{{\mathfrak{A}}}
\nc{\fB}{{\mathfrak{B}}}
\nc{\fD}{{\mathfrak{D}}}
\nc{\fE}{{\mathfrak{E}}}
\nc{\fF}{{\mathfrak{F}}}
\nc{\fG}{{\mathfrak{G}}}
\nc{\fI}{{\mathfrak{I}}}
\nc{\fJ}{{\mathfrak{J}}}
\nc{\fK}{{\mathfrak{K}}}
\nc{\fL}{{\mathfrak{L}}}
\nc{\fM}{{\mathfrak{M}}}
\nc{\fN}{{\mathfrak{N}}}
\nc{\frP}{{\mathfrak{P}}}
\nc{\fQ}{{\mathfrak Q}}
\nc{\fR}{{\mathfrak R}}
\nc{\fS}{{\mathfrak S}}
\nc{\fT}{{\mathfrak{T}}}
\nc{\fU}{{\mathfrak{U}}}
\nc{\fW}{{\mathfrak{W}}}
\nc{\fY}{{\mathfrak{Y}}}
\nc{\fZ}{{\mathfrak{Z}}}
\nc{\ba}{{\mathbf{a}}}
\nc{\bb}{{\mathbf{b}}}
\nc{\bc}{{\mathbf{c}}}
\nc{\bd}{{\mathbf{d}}}
\nc{\be}{{\mathbf{e}}}
\nc{\bi}{{\mathbf{i}}}
\nc{\bj}{{\mathbf{j}}}
\nc{\bn}{{\mathbf{n}}}
\nc{\bp}{{\mathbf{p}}}
\nc{\bq}{{\mathbf{q}}}
\nc{\bu}{{\mathbf{u}}}
\nc{\bv}{{\mathbf{v}}}
\nc{\bw}{{\mathbf{w}}}
\nc{\bx}{{\mathbf{x}}}
\nc{\by}{{\mathbf{y}}}
\nc{\bz}{{\mathbf{z}}}
\nc{\bA}{{\mathbf{A}}}
\nc{\bB}{{\mathbf{B}}}
\nc{\bC}{{\mathbf{C}}}
\nc{\bD}{{\mathbf{D}}}
\nc{\bE}{{\mathbf{E}}}
\nc{\bI}{{\mathbf{I}}}
\nc{\bK}{{\mathbf{K}}}
\nc{\bH}{{\mathbf{H}}}
\nc{\bM}{{\mathbf{M}}}
\nc{\bN}{{\mathbf{N}}}
\nc{\bO}{{\mathbf{O}}}
\nc{\bQ}{{\mathbf Q}}
\nc{\bS}{{\mathbf{S}}}
\nc{\bT}{{\mathbf{T}}}
\nc{\bV}{{\mathbf{V}}}
\nc{\bW}{{\mathbf{W}}}
\nc{\bX}{{\mathbf{X}}}
\nc{\bP}{{\mathbf{P}}}
\nc{\bY}{{\mathbf{Y}}}
\nc{\bZ}{{\mathbf{Z}}}
\nc{\sA}{{\mathsf{A}}}
\nc{\sB}{{\mathsf{B}}}
\nc{\sC}{{\mathsf{C}}}
\nc{\sD}{{\mathsf{D}}}
\nc{\sF}{{\mathsf{F}}}
\nc{\sK}{{\mathsf{K}}}
\nc{\sM}{{\mathsf{M}}}
\nc{\sO}{{\mathsf{O}}}
\nc{\sQ}{{\mathsf{Q}}}
\nc{\sP}{{\mathsf{P}}}
\nc{\sT}{{\mathsf{T}}}
\nc{\sV}{{\mathsf{V}}}
\nc{\sW}{{\mathsf{W}}}
\nc{\sX}{{\mathsf{X}}}
\nc{\sZ}{{\mathsf{Z}}}
\nc{\sU}{{\mathsf{U}}}
\nc{\sS}{{\mathsf{S}}}
\nc{\sH}{{\mathsf{H}}}
\nc{\sfb}{{\mathsf{b}}}
\nc{\sfc}{{\mathsf{c}}}
\nc{\sd}{{\mathsf{d}}}
\nc{\sg}{{\mathsf{g}}}
\nc{\sk}{{\mathsf{k}}}
\nc{\sfl}{{\mathsf{l}}}
\nc{\sfp}{{\mathsf{p}}}
\nc{\sr}{{\mathsf{r}}}
\nc{\st}{{\mathsf{t}}}
\nc{\sfu}{{\mathsf{u}}}
\nc{\sw}{{\mathsf{w}}}
\nc{\sz}{{\mathsf{z}}}
\nc{\sx}{{\mathsf{x}}}
\nc{\se}{{\mathsf{e}}}
\nc{\sfv}{{\mathsf{v}}}
\nc{\bLambda}{{\boldsymbol{\Lambda}}}
\nc{\vv}{{\boldsymbol{v}}}
\nc{\Fl}{{{\mathcal F}\ell}}
\nc{\Gr}{{\on{Gr}}}
\nc{\CHH}{{\CH\!\!\CH}}
\nc{\lambdavee}{{\lambda^{\!\scriptscriptstyle\vee}}}
\nc{\alphavee}{\alpha^{\!\scriptscriptstyle\vee}}
\nc{\rhovee}{{\rho^{\!\scriptscriptstyle\vee}}}
\newcommand\iso{\,\vphantom{j^{X^2}}\smash{\overset{\sim}{\vphantom{\rule{0pt}{0.20em}}\smash{\longrightarrow}}}\,}
\nc{\oQM}{\vphantom{j^{X^2}}\smash{\overset{\circ}{\vphantom{\vstretch{0.7}{A}}\smash{\QM}}}}
\nc{\oZ}{{}^\dagger\!\vphantom{j^{X^2}}\smash{\overset{\circ}{\vphantom{\vstretch{0.7}{A}}\smash{Z}}}}
\nc{\odZ}{{}^\dagger\!\vphantom{j^{X^2}}\smash{\overset{\circ}{\vphantom{\vstretch{0.7}{A}}\smash{\mathfrak Z}}}^{c',c}}
\nc{\bdZ}{{}^\dagger\!\vphantom{j^{X^2}}\smash{\overset{\bullet}{\vphantom{\vstretch{0.7}{A}}\smash{\mathfrak Z}}}^{c',c}}
\nc{\oS}{\vphantom{j^{X^2}}\smash{\overset{\circ}{\vphantom{\vstretch{0.7}{A}}\smash{S}}}}
\nc{\buM}{\vphantom{j^{X^2}}\smash{\overset{\bullet}{\vphantom{\vstretch{0.7}{A}}\smash{M}}}}
\nc{\dW}{{}^\dagger\ol\CW{}}
\nc{\hW}{{}^\dagger\hat\CW{}}
\nc{\wW}{{}^\dagger\widetilde\CW{}}
\nc{\dZ}{{}^\dagger\!\fZ^{c',c}}
\nc{\dZc}{{}^\dagger\!\fZ^{c,c}}
\nc{\tZ}{{}^\dagger\!\tilde{Z}{}}
\nc{\hZ}{{}^\dagger\!\hat{Z}{}}
\nc{\ssl}{\mathfrak{sl}} \nc{\gl}{\mathfrak{gl}}
\nc{\wt}{\widetilde} \nc{\Sym}{\mathrm{Sym}} \nc{\Res}{\mathrm{Res}}
\nc{\sE}{{\mathsf{E}}} \nc{\bs}{{\mathbf{s}}}
\nc{\trig}{\mathrm{trig}} \nc{\rat}{\mathrm{rat}}
\nc{\sign}{\mathrm{sign}} \nc{\sL}{{\mathsf{L}}}
\nc{\fv}{{\mathfrak{v}}} \nc{\ad}{\mathrm{ad}}
\nc{\spsi}{{\mathsf{\psi}}} \nc{\sh}{{\mathsf{h}}}
\nc{\rtt}{\mathrm{rtt}} \nc{\qdet}{\mathrm{qdet}} \nc{\pt}{{\operatorname{pt}}}
\nc{\M}{\mathrm{M}} \nc{\Ker}{\mathrm{Ker}} \nc{\ssc}{\mathrm{sc}}
\nc{\loc}{\mathrm{loc}} \nc{\fra}{\mathrm{frac}}
\nc{\ddj}{\mathrm{DJ}} \nc{\End}{\mathrm{End}} \nc{\ev}{\mathrm{ev}}
\nc{\GL}{\mathrm{GL}}
\nc{\hzeta}{\hat{\zeta}}
\nc{\sfq}{\mathsf{q}}
\newcommand{\sso}{\mathfrak{o}}
\newcommand{\rtU}{\mathcal{U}}
\newcommand{\Lf}{\mathcal{L}}
\begin{document}


\title[]{Shuffle algebras and their integral forms:\\ specialization map approach in types $B_n$ and $G_2$}

\author[]{Yue Hu}
\address[]{Y.H.: Beijing University of Posts and Telecommunications, School of Science, Beijing, China}
\email[]{ldkhtys@gmail.com}
\author[]{Alexander Tsymbaliuk}
\address[]{A.T.: Purdue University, Department of Mathematics, West Lafayette, IN 47907, USA}
\email[]{sashikts@gmail.com}

\date{}


\begin{abstract}
We construct a family of PBWD (Poincar{\'e}–Birkhoff–Witt–Drinfeld) bases for the positive subalgebras of quantum
loop algebras of type $B_n$ and $G_2$, as well as their Lusztig and RTT (for type $B_n$ only) integral forms, in the
new Drinfeld realization. We also establish a shuffle algebra realization of these $\BQ(v)$-algebras (proved earlier
in~\cite{NT21} by completely different tools) and generalize the latter to the above $\BZ[v,v^{-1}]$-forms. The rational
counterparts provide shuffle algebra realizations of positive subalgebras of type $B_n$ and $G_2$ Yangians and their
Drinfeld-Gavarini duals. All of this generalizes the type $A_n$ results of~\cite{Tsy18}.
\end{abstract}

\maketitle


\section{Introduction}


\subsection{Summary.}

The quantum loop algebras (aka quantum affine algebras with a trivial central charge) associated to a simple
finite dimensional Lie algebra $\fg$ admit two well-known presentations: the original Drinfeld-Jimbo realization
$U^{\ddj}_v(L\fg)$ and the new Drinfeld realization $U_v(L\fg)$, the latter introduced in~\cite{Dri88}.
The explicit isomorphism $U^{\ddj}_v(L\fg)\simeq U_v(L\fg)$ was actually upgraded in~\cite[Theorem 3]{Dri88}
to the isomorphism of the corresponding quantum affine algebras
\begin{equation}
\label{eq:drinfeld-iso-aff}
  U^{\ddj}_v(\widehat{\fg}) \simeq U_v(\widehat{\fg}).
\end{equation}
Many intrinsic properties of quantum affine algebras have been developed in the Drinfeld-Jimbo realization.
For example, the classical Poincar{\'e}–Birkhoff–Witt theorem for Lie algebras was generalized by Beck in~\cite{b}
to the case of $U^{\ddj}_v(\widehat{\fg})$. More precisely, he constructed the bases of each of the subalgebras
featuring in the triangular decomposition
\begin{equation}
\label{triang DJ}
  U^{\ddj}_v(\widehat{\fg})\simeq
  U^{\ddj,>}_v(\widehat{\fg})\otimes U^{\ddj,0}_v(\widehat{\fg})\otimes U^{\ddj,<}_v(\widehat{\fg}).
\end{equation}
This result is a natural upgrade of Lusztig's PBW theorem for finite quantum groups $U_q(\fg)$.

On the other hand, the new Drinfeld realization $U_v(\widehat{\fg})$ is essential to develop the representation
theory of quantum affine algebras. In this realization, there are infinitely many generators which can be conveniently
encoded by currents $e_i(z),f_i(z),\varphi^\pm_i(z)$. Already in the classical case, this approach played the prominent
role manifestly featuring affine Lie algebras $\widehat{\fg}$ in the conformal field theory. It is thus natural to develop
algebraic aspects of $U_v(\widehat{\fg})$ intrinsic to the loop realization (with the hope to generalize this to generalized
Kac-Moody Lie algebras~$\fg$). Let us note right away that $U_v(\widehat{\fg})$ also has a triangular decomposition
(a vector space isomorphism)
\begin{equation}
\label{triang Dr-loop}
  U_v(\widehat{\fg})\simeq U^>_v(\widehat{\fg})\otimes U^0_v(\widehat{\fg})\otimes U^<_v(\widehat{\fg}).
\end{equation}
However, the isomorphism~(\ref{eq:drinfeld-iso-aff}) \underline{does not} intertwine the triangular
decompositions~\eqref{triang DJ}--\eqref{triang Dr-loop}. To this end, we note that specific PBW-type bases of
$U^>_v(\widehat{\fg}),U^<_v(\widehat{\fg})$ were constructed in~\cite{NT21}.

\medskip

While most often quantum groups are defined by generators and relations, there is an alternative combinatorial
approach sweeping the defining relations under the rug. For finite quantum groups, this manifests in the algebra
embedding (observed independently in \cite{Gre97, Ros02, S}):
\begin{equation}\label{eq:shuffle-finite}
  U^>_v(\fg) \hookrightarrow \mathcal{F}=\bigoplus_{i_1,\ldots,i_k\in I}^{k\in \BN}\ \BQ(v)\cdot [i_1\ldots i_k].
\end{equation}
Here, $I$ denotes the set of simple roots of $\fg$, $\mathcal{F}$ has a basis labeled by finite length words
in $I$ and is endowed with the \emph{quantum shuffle} product. As shown by Lalonde-Ram in~\cite{LR95}, there is
a bijection between the set $\Delta^+$ of positive roots of $\fg$ and so-called \emph{standard Lyndon} words in~$I$:
\begin{equation}
\label{eqn:1-to-1 intro}
  \ell \colon \Delta^+ \,\iso\, \Big\{\text{standard Lyndon words}\Big\}.
\end{equation}
In this case, every order on the alphabet $I$ gives rise to a convex order on $\Delta^+$, and the corresponding
Lusztig's PBW basis of $U^>_v(\fg)$ can be constructed combinatorially via iterated $v$-commutators, due to
Levendorskii-Soibelman convexity property of~\cite{LS91}, see~\cite{Lec04,NT21}.

Using similar ideas, Feigin-Odesskii introduced the elliptic shuffle algebras in~\cite{FO89}--\cite{FO98}.
Their trigonometric counterpart (but in the formal setup with $\BQ(v)$ been replaced by $\BQ[[\hbar]]$) was
further studied by Enriquez in~\cite{Enr00, Enr03}. Explicitly, this manifests in the algebra embedding:
\begin{equation}
\label{eq:shuffle-loop}
  \Psi\colon U^>_v(L\fg) \hookrightarrow S.
\end{equation}
Here, $S$ consists of symmetric rational functions in $\{x_{i,r}\}_{i\in I}^{r\in \BZ}$ subject to so-called
\emph{pole} and \emph{wheel} conditions, and endowed with the shuffle product. Thus,~\eqref{eq:shuffle-loop}
is a \emph{functional version} of \eqref{eq:shuffle-finite}.

The major benefit of \eqref{eq:shuffle-loop} is that it allows to conveniently work with the elements of
$U_v(L\fg)$ given by rather complicated non-commutative polynomials in the original generators. Within
the last decade, this realization has already found major applications in the geometric representation theory
and quantum integrable systems. To make this approach self-contained, it is important to have an explicit
description of the image $\mathrm{Im}(\Psi)$. In fact, Enriquez conjectured:
\begin{equation}\label{eq:shuffle-iso}
  \Psi\colon U^>_v(L\fg) \, \iso\, S.
\end{equation}

To prove~\eqref{eq:shuffle-iso}, it is crucial to ``compare the size'' of $\QC$ and $S$. For types $A_{1}$ and
$\hat{A}_{1}$, this was accomplished in~\cite{Neg14} by crucially utilizing \emph{specialization maps} analogous to
those from~\cite{FS94, FHHSY09}. The same approach was later used to prove~\eqref{eq:shuffle-iso} for types
$A_{n}$ and $\hat{A}_{n}$ in \cite{Neg13}; for two-parameter and super counterparts of type $A_n$ in~\cite{Tsy18};
for type $\hat{\mathfrak{D}}(2,1;\theta)$ in \cite{FH21}.

In the present note, we generalize most of the results from~\cite{Tsy18} to types $G_{2}$ and $B_{n}$, thus
establishing the isomorphism \eqref{eq:shuffle-iso} and constructing families of PBWD-bases of $U^>_v(L\fg)$
in these types. To do so, we introduce the corresponding specialization maps on the shuffle algebra of the
associated type, and establish their key properties similarly to type $A_n$ from~\cite{Tsy18}. Let us note
that these specialization maps arise naturally from the specific convex orders~on~ $\Delta^+$.

\medskip

It should be emphasized right away that Enriquez's conjecture \eqref{eq:shuffle-iso} has been recently proved for all
finite $\fg$ in \cite{NT21}, the joint work of Negu\c{t} and the second author. However, the approach of \emph{loc.cit}.\
is completely different, as it crucially uses a loop version of \eqref{eq:shuffle-finite} instead of the specialization maps.
The present approach has its own benefits as it can also be used to upgrade both results (the shuffle algebra realization
and the PBWD-type bases) to the integral $\BZ[v,v^{-1}]$-forms of $U^>_v(L\fg)$ and the Yangian version $Y^>_\hbar(\fg)$,
generalizing type $A_n$ from~\cite{Tsy18, Tsy19}. We conclude this introduction by noting that similar specialization maps
actually exist for all finite types (which was already known to~\cite{NT21}), though their definition is more involved.


\subsection{Outline of the paper.}

The structure of the present paper is the following:
\begin{itemize}

\item[$\bullet$]
In Section \ref{pre}, we recall the notion of quantum loop algebras $U^>_v(L\fg)$ in the new Drinfeld realization as well
as shuffle algebras $S$, introduce certain families of quantum root vectors (associated to specific convex orders on the
set $\Delta^+$ of positive roots), and state the key results (PBWD bases and shuffle algebra isomorphism) for $U^>_v(L\fg)$
of types $B_n$ and $G_2$. We briefly recall how such results were proved in~\cite{Tsy18} for type $A_n$ using specialization
maps on $S$, and summarize their key properties in Lemmas \ref{shuffleelement}--\ref{span}.

\medskip
\item[$\bullet$]
In Section \ref{tG}, we define the specialization maps for the shuffle algebra $S$ of exceptional type $G_{2}$, establish
the counterparts of Lemmas~\ref{shuffleelement}--\ref{span} in that setup, and use the latter to prove Theorems~\ref{mainthm}
and~\ref{pbwtheorem} for type $G_2$, see Theorem~\ref{shufflePBWDG}. We upgrade both results to the
Lusztig/Chari-Pressley/Grojnowski integral form $\integralgl$ in Theorem~\ref{srig}.

\medskip
\item[$\bullet$]
In Section \ref{type B}, we define the specialization maps for the shuffle algebra $S$ of type $B_n$, establish the
counterparts of Lemmas~\ref{shuffleelement}--\ref{span} in that setup, and use the latter to prove Theorems~\ref{mainthm}
and~\ref{pbwtheorem} for type $B_n$, see Theorem~\ref{shufflePBWDB}. We upgrade both results to the
Lusztig/Chari-Pressley/Grojnowski integral form $\integralbl$ in Theorem~\ref{lusthmb}. Likewise, we upgrade both results
to the RTT integral form $\integralb$ in Theorem~\ref{rttthmb}.

\medskip
\item[$\bullet$]
In Section \ref{yangian}, we generalize the results of Sections~\ref{tG}--\ref{type B} to the rational setup by providing
the shuffle realization and constructing PBWD bases for the positive subalgebras of the Yangians and their Drinfeld-Gavarini
duals in types $G_2$ and $B_n$, see Theorems~\ref{yangshuffle},~\ref{dgdualshuffle}.

\medskip
\item[$\bullet$]
In Appendix~\ref{sec:app}, we use the RTT realization of $U_v(L{\mathfrak{o}}_{2n+1})$ from~\cite{JLM20} to
explain the natural origin and the name of the RTT integral form $\integralb$ from Subsection~\ref{rttb}.

\end{itemize}


\subsection{Acknowledgements}

Both authors are indebted to B.~Feigin for numerous discussions. Y.H.\ is grateful to P.~Shan for stimulating
discussions. A.T.\ is deeply indebted to A.~Negu\c{t} for enlightening discussions over the years. We are also
grateful to the anonymous referees for very useful suggestions.
Y.H.\ gratefully acknowledges support from Yau Mathematical Sciences Center, Tsinghua University, at which part of
the research for this note was performed. A.T.\ is grateful to IHES (Bures-sur-Yvette, France) for the hospitality
and wonderful working conditions in the Spring 2023, when the final version of this note was prepared.
The work of A.T.\ was partially supported by NSF Grants DMS-$2037602$ and~DMS-$2302661$.


\section{Preliminaries}\label{pre}


\subsection{Quantum loop algebras and shuffle algebras.}\label{ssec:qlas}

Let $\mathfrak{g}$ be a finite dimensional simple Lie algebra with simple positive roots $\{\alpha_{i}\}_{i\in I}$.
We denote the set of positive roots by $\Delta^{+}$. Each $\beta\in\Delta^{+}$ can be uniquely expressed as a sum
of simple roots: $\beta=\sum_{i\in I}\nu_{\beta,i}\alpha_{i}$ with $\nu_{\beta,i}\in\BN$ (the set $\BN$ will be assumed to include $0$).
We shall refer to $\nu_{\beta,i}$ as the \emph{coefficient of $\alpha_{i}$ in $\beta$}, and we shall use the following notation:
\begin{equation}
\label{eq:i-in-beta}
  i\in\beta  \Longleftrightarrow \nu_{\beta,i}\neq 0.
\end{equation}

We fix a nondegenerate invariant bilinear form on the Cartan subalgebra $\fh$ of $\fg$. This gives rise to a
nondegenerate form on the dual $\fh^*$, and we set $d_{i}\coloneqq \frac{(\alpha_{i},\alpha_{i})}{2}$. The choice of
the form is such that $d_i=1$ for short roots $\alpha_i$. Let $A=(a_{ij})_{i,j\in I}$ be the Cartan matrix of $\fg$,
so that $d_{i}a_{ij}=(\alpha_{i},\alpha_{j})=d_{j}a_{ji}$. In this paper, we consider simple Lie algebras of types
$A_{n}$, $B_{n}$, $G_{2}$. The corresponding Dynkin diagrams look as follows:
\begin{align}
  A_{n}\text{-type}\ (n\geq 1): &\qquad
  \begin{tikzpicture}[start chain]
    \dnode{1}
    \dnode{2}
    \dydots
    \dnode{n-1}
    \dnode{n}
  \end{tikzpicture}\label{dya} \\
  B_{n}\text{-type}\ (n\geq 2): &\qquad
  \begin{tikzpicture}[start chain]
    \dnode{1}
    \dnode{2}
    \dydots
    \dnode{n-1}
    \dnodenj{n}
    \path (chain-4) -- node[anchor=mid] {\(\Rightarrow\)} (chain-5);
  \end{tikzpicture} \label{dyb}\\
  G_2\text{-type}: &\qquad
  \begin{tikzpicture}[start chain]
    \dnodenj{1}
    \dnodenj{2}
    \path (chain-1) -- node {\(\Rrightarrow\)} (chain-2);
  \end{tikzpicture}\label{dyg}
\end{align}
For these types, we have
\begin{align}
  A_{n}\text{-type}\ (n\geq 1) \colon & \quad d_{i}=1\ (1\leq i\leq n),\\
  B_{n}\text{-type}\ (n\geq 2) \colon & \quad d_{i}=2\ (1\leq i\leq n-1),\  d_{n}=1,\\
  G_2\text{-type} \colon & \quad d_{1}=3,\ d_{2}=1.
\end{align}

Let $v$ be a formal variable. We define $v_{\alpha}=v^{(\alpha,\alpha)/2}$ for any $\alpha\in\Delta^{+}$,
and denote $v_{\alpha_{i}}=v^{d_i}$ simply by $v_{i}$ for any $i\in I$. Let $\fS_{m}$ denote the symmetric group of degree $m$.
Let $U_{v}^{>}(L\mathfrak{g})$ be the \textbf{``positive subalgebra'' of the quantum loop algebra} $U_{v}(L\mathfrak{g})$
associated to $\fg$ in the new Drinfeld realization. Explicitly, $U_{v}^{>}(L\mathfrak{g})$ is the $\BQ(v)$-algebra
generated by $\{e_{i,r}\}_{i\in I}^{r\in\mathbb{Z}}$ subject to the following defining relations:
\begin{equation}
  (z-v_{i}^{a_{ij}}w)e_{i}(z)e_{j}(w)=(v_{i}^{a_{ij}}z-w)e_{j}(w)e_{i}(z) \qquad \forall\ i,j \in I,
\end{equation}
\begin{equation}
  \mathop{Sym}_{z_{1},\dots,z_{1-a_{ij}}}
  \sum_{k=0}^{1-a_{ij}} (-1)^{k}\left[\begin{matrix} 1-a_{ij}\\k\end{matrix}\right]_{v_{i}}
  e_{i}(z_{1})\cdots e_{i}(z_{k})e_{j}(w)e_{i}(z_{k+1})\cdots e_{i}(z_{1-a_{ij}})=0\qquad \forall\ i\neq j.
\label{serreloop}
\end{equation}
Here, we use the following notations:
\begin{equation}\label{eq:basic-def}
\begin{aligned}
  & [\ell]_{u}\coloneqq\frac{u^{\ell}-u^{-\ell}}{u-u^{-1}},\quad  [\ell]_{u}!\coloneqq\prod_{k=1}^{\ell}[k]_{u},\quad
    \left[\begin{matrix} \ell\\m\end{matrix}\right]_{u}\coloneqq\frac{[\ell]_{u}!}{[\ell-m]_{u}![m]_{u}!},\\
  & e_{i}(z)\coloneqq\sum_{r\in\mathbb{Z}} e_{i,r}z^{-r},\quad
    \mathop{Sym}_{z_{1},\dots,z_{m}}V(z_{1},\dots,z_{m})\coloneqq\sum_{\sigma\in\mathfrak{S}_{m}}
    V(z_{\sigma(1)},\dots,z_{\sigma(m)}).
\end{aligned}
\end{equation}
We shall also need the following notation later:
\begin{equation}
  \langle m \rangle_{u}\coloneqq u^m-u^{-m}\qquad \forall\ m\in\BN.
\label{anglev}
\end{equation}

We define $\mathfrak{S}_{\underline{k}}\coloneqq\prod_{i\in I}\mathfrak{S}_{k_{i}}$ for any
$\underline{k}=(k_{1},\dots,k_{|I|})\in\mathbb{N}^{I}$. Associated to the Cartan matrix $A=(a_{ij})_{i,j\in I}$,
we also have the trigonometric version of the Feigin-Odesskii shuffle algebra. To this end, consider
the following $\BN^{I}$-graded $\BQ(v)$-vector space
  \[\SF=\bigoplus_{\underline{k}\in \mathbb{N}^{I}}\SF_{\underline{k}},\]
where $\SF_{\underline{k}}$ consists of rational functions $F$ in the variables
$\{x_{i,r}\}_{i\in I}^{1\leq r\leq k_{i}}$ such that:
\begin{itemize}[leftmargin=0.7cm]

\item
$F$ is $\fS_{\unl{k}}$-symmetric, that is, symmetric in $\{x_{i,r}\}_{r=1}^{k_{i}}$ for each $i\in I$,

\medskip
\item
(\emph{pole conditions}) $F$ has the form
\begin{equation}
  F=\frac{f(\{x_{i,r}\}_{i\in I}^{1\leq r\leq k_{i}})}
         {\prod_{i<j}^{a_{ij}\neq 0}\prod_{1\leq r\leq k_{i}}^{1\leq s\leq k_{j}}(x_{i,r}-x_{j,s})},
\label{polecondition}
\end{equation}
where $f\in \BQ(v)[\{x_{i,r}^{\pm 1}\}_{i\in I}^{1\leq r\leq k_{i}}]^{\mathfrak{S}_{\underline{k}}}$
and an arbitrary order $<$ is chosen on $I$ to make sense of $i<j$
(though the space $\SF_{\underline{k}}$ is clearly independent of this order).

\end{itemize}

Let $(\zeta_{i,j}(z))_{i,j\in I}$ be the matrix of rational functions in $z$ given by
\begin{equation}\label{eq:zeta}
  \zeta_{i,j}(z)=\frac{z-v^{-(\alpha_{i},\alpha_{j})}}{z-1}.
\end{equation}
For $\unl{k},\unl{\ell}\in\BN^{I}$, let
\begin{equation*}
  \unl{k}+\unl{\ell}=(k_{i}+\ell_{i})_{i\in I}\in\BN^{I}.
\end{equation*}
Let us introduce the bilinear {\em shuffle product} $\star$ on $\SF$ as follows:
for $F\in \SF_{\underline{k}}$ and $G\in \SF_{\underline{\ell}}$, we set
\begin{equation}
\begin{aligned}
  & F\star G \big(\{x_{i,r}\}_{i\in I}^{1\leq r\leq k_{i}+\ell_i}\big)=\\
  & \frac{1}{\unl{k}!\cdot \unl{\ell}!}\cdot
    {\mathop{Sym}}_{\mathfrak{S}_{\underline{k}+\underline{\ell}}}
    \bigg(F\big(\{x_{i,r}\}_{i\in I}^{1\leq r\leq k_{i}}\big)\cdot
          G\big(\{x_{j,s}\}_{j\in I}^{k_{j}<s\leq k_{j}+\ell_{j}}\big)
          \prod_{i,j\in I}\prod_{r\leq k_{i}}^{s>k_{j}}\zeta_{i,j}\Big(\frac{x_{i,r}}{x_{j,s}}\Big)\bigg).
\label{shuffleproduct}
\end{aligned}
\end{equation}
Here, for $\unl{k}\in \BN^I$, we set $\unl{k}!=\prod_{i\in I} k_{i}!$, and define the \emph{symmetrization}
\begin{equation}
  {\mathop{Sym}}_{\mathfrak{S}_{\unl{k}}}\big(F(\{x_{i,r}\}_{i\in I}^{1\leq r\leq k_{i}})\big)\, \coloneqq
  \sum_{(\sigma_{1},\dots,\sigma_{|I|})\in \mathfrak{S}_{\unl{k}}}F(\{x_{i,\sigma_{i}(r)}\}_{i\in I}^{1\leq r\leq k_{i}}).
\end{equation}
This endows $\SF$ with a structure of an associative unital algebra. The resulting algebra $(\SF,\star)$ is related
to $\QC$ via the following result (cf.~\cite[Theorem 3]{Enr00}, \cite[Proposition 1.2]{Enr03}):

\begin{Prop}\label{morphism}
The assignment $e_{i,r}\mapsto x_{i,1}^{r}\in \SF_{\mathbf{1}_i} \ (i\in I, r\in\BZ)$, where $\mathbf{1}_i=(0,\ldots,1,\ldots,0)$
with $1$ at the $i$-th coordinate, gives rise to a $\BQ(v)$-algebra homomorphism
\begin{equation}\label{eq:Psi-homom}
  \Psi\colon U_{v}^{>}(L\mathfrak{g}) \longrightarrow \SF.
\end{equation}
Moreover, for any $F\in \text{\rm Im}(\Psi)$, its numerator $f$ from~\eqref{polecondition} satisfies:
\begin{equation}
  f(\{x_{i,r}\}_{i\in I}^{1\leq r\leq k_{i}})=0 \quad \text{once} \quad
  x_{i,s_{1}}=v_{i}^{2}x_{i,s_{2}}=\cdots=v_{i}^{-2a_{ij}}x_{i,s_{1-a_{ij}}}=v_{i}^{-a_{ij}}x_{j,r}
\label{wheelcon}
\end{equation}
for any $i\neq j$ such that $a_{ij}\neq 0$, pairwise distinct $1\leq s_{1},\dots,s_{1-a_{ij}}\leq k_{i}$, and $1\leq r\leq k_{j}$.
\end{Prop}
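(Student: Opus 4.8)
The plan is to verify that $\Psi$ respects the defining relations of $U_v^>(L\fg)$, and then to read off the wheel conditions from the resulting description of $\Psi$ on monomials in the generators. First I would establish, by induction on $N$ from the definition~\eqref{shuffleproduct} of $\star$ and its associativity, the closed formula
\[
  \Psi(e_{i_1,r_1}\cdots e_{i_N,r_N})=
  \mathop{Sym}_{\fS_{\unl k}}\Big(\prod_{l=1}^{N}x_{i_l,c_l}^{\,r_l}\cdot
  \prod_{1\le l<l'\le N}\zeta_{i_l,i_{l'}}\!\big(x_{i_l,c_l}/x_{i_{l'},c_{l'}}\big)\Big),
\]
where $k_i=\#\{l:i_l=i\}$ and $c_l=\#\{1\le l''\le l:i_{l''}=i_l\}$; equivalently, $\Psi\big(e_{i_1}(z_1)\cdots e_{i_N}(z_N)\big)$ is obtained from $\prod_{l<l'}\zeta_{i_l,i_{l'}}(z_l/z_{l'})$ by the operator $\mathcal{D}$ which identifies (via delta functions) each $z_l$ with $x_{i_l,c_l}$ and then symmetrizes over $\fS_{\unl k}$. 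The key feature is that $\mathcal{D}$ depends only on the multiset $\{i_1,\dots,i_N\}$, not on the order of the letters, so all the content of the relations lives in the $\zeta$-factors.

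With this formula, the quadratic relation (case $N=2$) will reduce to the rational-function identity $(z-v_i^{a_{ij}}w)\,\zeta_{i,j}(z/w)=(v_i^{a_{ij}}z-w)\,\zeta_{j,i}(w/z)$, which I would check directly: since $(\alpha_i,\alpha_j)=d_ia_{ij}=(\alpha_j,\alpha_i)$ and $v_i=v^{d_i}$, formula~\eqref{eq:zeta} gives $\zeta_{i,j}(z/w)=\tfrac{z-v_i^{-a_{ij}}w}{z-w}$ and $\zeta_{j,i}(w/z)=\tfrac{v_i^{-a_{ij}}z-w}{z-w}$, and both sides equal $\tfrac{z^2-(v_i^{a_{ij}}+v_i^{-a_{ij}})zw+w^2}{z-w}$ (this also covers $i=j$, where $a_{ii}=2$). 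For the Serre relation~\eqref{serreloop} with $i\ne j$ and $m:=1-a_{ij}$, applying the formula and using that $\mathcal{D}$ ignores orderings, $\Psi$ of its left-hand side is $\mathcal{D}$ applied to
\[
  \mathop{Sym}_{z_1,\dots,z_m}\Big(\prod_{a<b}\zeta_{i,i}(z_a/z_b)\cdot
  \sum_{k=0}^m(-1)^k\left[\begin{matrix}m\\ k\end{matrix}\right]_{v_i}
  \prod_{a\le k}\zeta_{i,j}(z_a/w)\prod_{b>k}\zeta_{j,i}(w/z_b)\Big).
\]
Since $\prod_{a\le k}\zeta_{i,j}(z_a/w)\prod_{b>k}\zeta_{j,i}(w/z_b)$ has the $k$-independent denominator $\prod_{a=1}^m(z_a-w)$ while $\prod_{a<b}\zeta_{i,i}(z_a/z_b)$ has denominator $\prod_{a<b}(z_a-z_b)$, this vanishes if and only if the $\fS_m$-antisymmetrization of
\[
  \prod_{1\le a<b\le m}(z_a-v_i^{-2}z_b)\cdot
  \sum_{k=0}^m(-1)^k\left[\begin{matrix}m\\ k\end{matrix}\right]_{v_i}
  \prod_{a\le k}(z_a-v_i^{-a_{ij}}w)\prod_{b>k}(v_i^{-a_{ij}}z_b-w)
\]
vanishes. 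This last combinatorial identity — the functional form of the quantum Serre relation, classical since~\cite{Enr00, Enr03} — is the one genuinely nontrivial point, and carrying out the $v_i$-binomial manipulation that proves it is what I expect to be \emph{the main obstacle}; everything else is substitution into~\eqref{shuffleproduct} and~\eqref{eq:zeta}.

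For the ``Moreover'' part it suffices, since the $e_{i,r}$ generate $U_v^>(L\fg)$, to take $F=\Psi(e_{i_1,r_1}\cdots e_{i_N,r_N})$ and use the formula above. Clearing denominators there, one gets $F=\hat f/\big(\Delta\cdot\prod_{i<j}^{a_{ij}\ne 0}\prod_{r,s}(x_{i,r}-x_{j,s})\big)$ with $\Delta=\prod_{i\in I}\prod_{r<s}(x_{i,r}-x_{i,s})$ and $\hat f=\sum_{\sigma\in\fS_{\unl k}}\pm(\text{monomial})\cdot\prod(\text{numerators }x-v_i^{-2}x',\,x-v_i^{-a_{ij}}x'\text{ of the }\zeta\text{-factors})$, so the numerator $f$ from~\eqref{polecondition} equals $\hat f/\Delta$. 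Now fix $i\ne j$ with $a_{ij}\ne 0$, a wheel point as in~\eqref{wheelcon} — so $x_{i,s_l}=v_i^{-2(l-1)}x_{i,s_1}$ for $1\le l\le m$ and $x_{j,r}=v_i^{a_{ij}}x_{i,s_1}$ — and note that $\Delta$ is not annihilated by the substitution~\eqref{wheelcon}. The point is that \emph{every} summand of $\hat f$ vanishes at this point: inspecting where $\sigma$ places $x_{i,s_1},\dots,x_{i,s_m},x_{j,r}$ in the word, either $x_{i,s_{a+1}}$ precedes $x_{i,s_a}$ for some $a$, so the factor $x_{i,s_{a+1}}-v_i^{-2}x_{i,s_a}$ appears and vanishes; or $x_{i,s_1}$ precedes each of $x_{i,s_2},\dots,x_{i,s_m}$ (hence $x_{i,s_m}$ follows them all), and then $x_{j,r}$ either follows $x_{i,s_1}$, giving the factor $x_{i,s_1}-v_i^{-a_{ij}}x_{j,r}=0$, or precedes $x_{i,s_m}$, giving $x_{j,r}-v_i^{-a_{ij}}x_{i,s_m}=0$ (using $m-1=-a_{ij}$). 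Hence $\hat f$, and therefore $f=\hat f/\Delta$, vanishes at the wheel point, which is exactly~\eqref{wheelcon}.
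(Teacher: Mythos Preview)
The paper does not actually give a proof of this proposition; it simply cites \cite[Theorem~3]{Enr00} and \cite[Proposition~1.2]{Enr03}. Your argument is essentially the standard one from those references: the closed formula for $\Psi$ on words, the reduction of the quadratic relation to the identity $(z-v_i^{a_{ij}}w)\zeta_{i,j}(z/w)=(v_i^{a_{ij}}z-w)\zeta_{j,i}(w/z)$, and the reduction of the Serre relation to the antisymmetrized $v_i$-binomial identity are exactly how Enriquez proceeds. You are right that the latter identity is the only nontrivial step, and deferring it to \cite{Enr00,Enr03} matches what the paper does.

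Your treatment of the wheel conditions is correct and in fact more explicit than anything written in the paper or in most references. The key observation---that for each summand in the symmetrization, the relative order of $x_{i,s_1},\dots,x_{i,s_m},x_{j,r}$ in the word forces at least one $\zeta$-numerator to vanish at the wheel point---is clean, and your case split (either some $x_{i,s_{a+1}}$ precedes $x_{i,s_a}$, or else $x_{j,r}$ is trapped on one side of the chain $x_{i,s_1}<\cdots<x_{i,s_m}$) is exhaustive. The remark that $\Delta$ does not vanish at a generic wheel point (the $v_i^{-2(l-1)}$ being distinct over $\BQ(v)$) is the right way to pass from $\hat f=0$ to $f=0$. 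One cosmetic point: your displayed formula for $\Psi(e_{i_1,r_1}\cdots e_{i_N,r_N})$ omits the normalization $1/\unl{k}!$ from~\eqref{shuffleproduct}, but this is harmless for the argument.
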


The vanishing conditions~\eqref{wheelcon} are usually called {\em wheel conditions}. Let $S_{\underline{k}}$
denote the subspace of all elements of $\SF_{\underline{k}}$ satisfying the wheel conditions, and set
$S=\bigoplus_{\underline{k}\in \mathbb{N}^{I}} S_{\underline{k}}$. The following is straightforward:

\begin{Lem}
$S$ is a subalgebra of $\SF$ under the shuffle product $\star$ determined by~\eqref{shuffleproduct}.
\end{Lem}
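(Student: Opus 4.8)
The plan is to show that $S$ is closed under the shuffle product $\star$, i.e.\ that $F\star G\in S_{\unl{k}+\unl{\ell}}$ whenever $F\in S_{\unl{k}}$ and $G\in S_{\unl{\ell}}$. Since $S_{\unl{k}+\unl{\ell}}\subset\SF_{\unl{k}+\unl{\ell}}$ by definition and $\SF$ is already known to be an algebra, the only thing to check is that $F\star G$ satisfies the wheel conditions~\eqref{wheelcon}; membership in $\SF_{\unl{k}+\unl{\ell}}$ (symmetry and pole conditions) is inherited from $\SF$ being an algebra.

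First I would write out $F\star G$ as the normalized symmetrization~\eqref{shuffleproduct} over $\fS_{\unl{k}+\unl{\ell}}$ of the product $F\cdot G\cdot\prod\zeta_{i,j}$, and then impose a wheel specialization: fix $i\neq j$ with $a_{ij}\neq 0$, pairwise distinct indices $s_1,\dots,s_{1-a_{ij}}$ among the $x_i$-variables, an index $r$ among the $x_j$-variables, and set $x_{i,s_1}=v_i^2 x_{i,s_2}=\cdots=v_i^{-a_{ij}}x_{j,r}$ as in~\eqref{wheelcon}. The key combinatorial point is that each summand in the symmetrization distributes the $1-a_{ij}$ variables $x_{i,s_1},\dots,x_{i,s_{1-a_{ij}}}$ between the ``$F$-block'' (first $k_i$ of the $x_i$-variables) and the ``$G$-block'' (last $\ell_i$), and similarly sends $x_{j,r}$ to either the $F$- or $G$-block of $x_j$-variables. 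For a fixed such distribution I need to show the corresponding summand vanishes under the specialization.

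The core case analysis is the following. If all $1-a_{ij}$ of the specialized $x_i$-variables together with the specialized $x_j$-variable land in the same block (all in $F$'s block, or all in $G$'s block), then that block's factor is itself a wheel specialization of $F$ (resp.\ $G$), which vanishes since $F,G\in S$. Otherwise the specialized variables are split between the two blocks, and then I would show the cross-term $\prod_{i',j'}\prod_{r'\le k_{i'}}^{s'>k_{j'}}\zeta_{i',j'}(x_{i',r'}/x_{j',s'})$ contributes a zero: indeed $\zeta_{i,i}(z)=\frac{z-v_i^{-2}}{z-1}$ vanishes at $z=v_i^{-2}$, and among the specialized $x_i$-variables split across the two blocks there is always an adjacent pair $x_{i,s_t},x_{i,s_{t+1}}$ with one in the $F$-block and one in the $G$-block, whose ratio is exactly $v_i^{-2}$ (up to the right orientation of the $\zeta$-factor in the product); one must be slightly careful about which block comes first in the product $\prod_{r'\le k_{i'}}^{s'>k_{j'}}$, but in either orientation one of $\zeta_{i,i}(v_i^{\pm 2})$ is zero, killing the summand. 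The sub-case where the split occurs ``at'' the $x_j$-variable (so that $x_{j,r}$ is in one block and all the $x_i$'s in the other, or $x_{j,r}$ and some $x_i$'s in one block, the rest of the $x_i$'s in the other) is handled the same way: if $x_j$'s block contains none of the $x_i$'s one uses a $\zeta_{i,j}$ or $\zeta_{j,i}$ factor vanishing at the value $v_i^{\mp a_{ij}}$ forced by~\eqref{wheelcon}; if it contains some but not all, one still finds an adjacent split $x_i$-pair as before. In every case the summand vanishes, hence so does the full symmetrization, proving $F\star G$ satisfies~\eqref{wheelcon}.

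I expect the main obstacle to be purely bookkeeping: carefully tracking the orientation of the $\zeta$-factors in $\prod_{r'\le k_{i'}}^{s'>k_{j'}}$ (i.e.\ whether the ``first block'' variable sits in the numerator or denominator of the argument of $\zeta_{i,i}$), and making sure that in the split case there genuinely is an adjacent consecutive pair $s_t,s_{t+1}$ straddling the two blocks — which follows since the $1-a_{ij}$ specialized $x_i$-variables form a ``chain'' $x_{i,s_1},\dots,x_{i,s_{1-a_{ij}}}$ with consecutive ratios $v_i^{-2}$, so any nontrivial partition of this chain into two blocks cuts at least one consecutive bond. There is no genuine mathematical difficulty; this is why the excerpt labels the statement ``straightforward.''
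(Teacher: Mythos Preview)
Your overall strategy is the standard one (and the paper itself gives no proof, merely calling the lemma ``straightforward''), but there is a concrete error in your split-case analysis. You assert that ``in either orientation one of $\zeta_{i,i}(v_i^{\pm 2})$ is zero,'' and this is false: from $\zeta_{i,i}(z)=\dfrac{z-v_i^{-2}}{z-1}$ one has $\zeta_{i,i}(v_i^{-2})=0$ but $\zeta_{i,i}(v_i^{2})=1+v_i^{-2}\neq 0$. Thus if the adjacent split pair happens to have $x_{i,s_t}$ in the $F$-block and $x_{i,s_{t+1}}$ in the $G$-block, the only same-colour cross factor present is $\zeta_{i,i}(x_{i,s_t}/x_{i,s_{t+1}})=\zeta_{i,i}(v_i^{2})\neq 0$, and your argument breaks down. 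For a concrete instance, take $a_{ij}=-2$ with $x_{i,s_1}\in F$, $x_{i,s_2},x_{i,s_3},x_{j,r}\in G$: no $\zeta_{i,i}$-factor vanishes.

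The fix is to put $x_{j,r}$ into the chain and observe that the wheel variables form a \emph{directed cycle}
\[
  x_{j,r}\to x_{i,s_1}\to x_{i,s_2}\to\cdots\to x_{i,s_{1-a_{ij}}}\to x_{j,r},
\]
where an arrow $a\to b$ records that the $\zeta$-factor $\zeta_{*,*}(b/a)$ (the one appearing when $b$ lies in the $F$-block and $a$ in the $G$-block) vanishes; the two edges through $x_{j,r}$ use $\zeta_{i,j}(x_{i,s_1}/x_{j,r})=\zeta_{i,j}(v_i^{-a_{ij}})=0$ and $\zeta_{j,i}(x_{j,r}/x_{i,s_{1-a_{ij}}})=\zeta_{j,i}(v_i^{-a_{ij}})=0$. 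Any partition of the vertices of a directed cycle into two nonempty blocks $F,G$ contains at least one arrow with tail in $G$ and head in $F$, and that arrow supplies the vanishing cross factor. In the example above it is the edge $x_{j,r}\to x_{i,s_1}$, giving $\zeta_{i,j}(x_{i,s_1}/x_{j,r})=0$. With this cycle argument in place of your orientation claim, the proof goes through.
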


From now on, we will refer to $(S,\star)$ as the \textbf{(trigonometric Feigin-Odesskii) shuffle algebra}
(of type $\fg$). According to~\cite[Proposition 5.7]{NT21} (cf.~\cite[Corollary 1.4]{Enr03}), we have:

\begin{Prop}\label{inj}
The algebra homomorphism $\Psi$ of~\eqref{eq:Psi-homom} is injective.
\end{Prop}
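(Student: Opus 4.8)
The plan is to prove injectivity of $\Psi$ by exhibiting enough elements in $\mathrm{Im}(\Psi)$ to separate the ``size'' of $U_v^>(L\fg)$ from above, combined with the triangular-decomposition grading. First I would note that $\Psi$ is graded: it sends the $\BN^I\times\BZ$-graded piece of $U_v^>(L\fg)$ (grading by the total root $\sum_i k_i\alpha_i$ and by the loop degree $\sum_i r_{i,j}$) into the correspondingly graded piece of $\SF$. Thus it suffices to check injectivity in each fixed degree $(\unl{k},d)$. Fix a convex order on $\Delta^+$ and the associated PBWD monomials in $U_v^>(L\fg)$ (the ordered products of quantum root vectors $e_\beta$ with loop parameters, as introduced in Section~\ref{pre}); these span $U_v^>(L\fg)$ by construction, and the claim amounts to showing they are linearly independent after applying $\Psi$.

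The key step is to analyze $\Psi$ on such a PBWD monomial. The standard mechanism (going back to~\cite{Neg14, Neg13, Tsy18}) is to exploit that the shuffle product is a symmetrization of a product times the kernel $\prod\zeta_{i,j}(x/y)$, and that the factor $\zeta_{i,j}$ has a zero exactly matching the simple pole in~\eqref{polecondition}. Concretely, I would order the variables $\{x_{i,r}\}$ so that they are grouped block-by-block according to which root vector $e_\beta$ in the monomial they come from, following the convex order; then, evaluating the symmetrized sum, the ``leading'' term (corresponding to the identity shuffle) is a product of the shuffle images of the individual $e_\beta$'s times a nonzero rational function, and all other terms in the symmetrization are ``lower'' in an appropriate ordering — this is precisely where the Leclerc/Levendorskii-Soibelman convexity of the chosen order and the wheel conditions~\eqref{wheelcon} are used, to guarantee that the relevant specializations do not vanish. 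One then evaluates a family of variable specializations (the ``specialization maps'' alluded to in the introduction, but here used only qualitatively) designed so that the leading term survives and the others die, giving a triangular (hence invertible) transition between PBWD monomials and their images.

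The main obstacle is exactly this triangularity statement: controlling the images $\Psi(e_\beta)$ of the quantum root vectors for the non-simple $\beta$ (which are iterated $v$-commutators, hence non-obvious shuffle elements), and showing that the full symmetrization in~\eqref{shuffleproduct} of a product of such images, against the kernel $\prod\zeta_{i,j}$, has a computable non-vanishing leading specialization while the cross terms are strictly subleading. This requires knowing the ``shape'' (degree, pole structure, and specialization values) of each $\Psi(e_\beta)$ — i.e.\ a shuffle-theoretic incarnation of Lemmas~\ref{shuffleelement}--\ref{span}. Since the present excerpt cites~\cite[Proposition 5.7]{NT21} (cf.~\cite[Corollary 1.4]{Enr03}) for this proposition, the cleanest route for the paper is simply to invoke that reference: in \emph{loc.\ cit.}\ injectivity of $\Psi$ is deduced from the explicit PBWD-type spanning set together with the loop analogue of the embedding~\eqref{eq:shuffle-finite}, which forces linear independence of the images. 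I would therefore present the proof as: reduce to a fixed grading, recall from~\cite{NT21} that the PBWD monomials span $U_v^>(L\fg)$ and that their $\Psi$-images are related by an invertible triangular change of variables to a basis of the relevant graded piece of $\SF$, and conclude $\ker\Psi=0$; the technical heart is then deferred to (or re-proved by) the specialization-map arguments of Sections~\ref{tG}--\ref{type B} of the present paper, which re-establish the needed properties of $\Psi(e_\beta)$ independently.
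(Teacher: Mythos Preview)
The paper gives no proof of this proposition at all: it is simply attributed to \cite[Proposition 5.7]{NT21} and \cite[Corollary 1.4]{Enr03}. You correctly recognize this and ultimately defer to the same citation, so at that level your proposal matches the paper.

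However, the sketch you wrap around the citation has a genuine gap. You write that the PBWD monomials ``span $U_v^>(L\fg)$ by construction'' and that injectivity then reduces to linear independence of their $\Psi$-images. Spanning is \emph{not} by construction: it is a nontrivial statement (half of Theorem~\ref{pbwtheorem}) and in this paper it is obtained only \emph{after} Proposition~\ref{inj} is in hand. The paper's logic runs the other way: injectivity is taken as an external input from \cite{NT21,Enr03}; the specialization maps of Sections~\ref{tG}--\ref{type B} then show that $\{\Psi(E_h)\}$ is a basis of $S$, and only by combining this with injectivity does one conclude that $\{E_h\}$ is a basis of $U_v^>(L\fg)$. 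Your proposed route (specialization maps $\Rightarrow$ injectivity) would be circular in this presentation unless you supply an independent argument that the $E_h$ span, which you do not.

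For context, the cited proofs do not use specialization maps: Enriquez's argument goes through a nondegenerate Hopf pairing between $U_v^>(L\fg)$ and $U_v^<(L\fg)$, while \cite{NT21} uses the loop version of the quantum-shuffle/word embedding~\eqref{eq:shuffle-finite}. Either of these gives injectivity directly, without first knowing that the PBWD monomials span.
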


In fact, $\Psi$ is an algebra isomorphism, cf.~\eqref{eq:shuffle-iso}. This was first conjectured by Enriquez, established
for $A_{n}$-type in \cite{Neg13} (see also \cite{Tsy18}), and finally proved in the full generality in \cite{NT21}.

\begin{Thm}\label{mainthm}
$\Psi\colon \QC \, \iso \, S$ of~\eqref{eq:Psi-homom} is a $\BQ(v)$-algebra isomorphism.
\end{Thm}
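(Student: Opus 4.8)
The plan is to follow the specialization-map strategy pioneered in~\cite{Neg14, Neg13, Tsy18}, as outlined in the introduction. Since $\Psi$ is already known to be an injective algebra homomorphism (Proposition~\ref{inj}), the entire content of the theorem is the \emph{surjectivity} of $\Psi$, and equivalently a ``size comparison'' between $\QC$ and $S$. The grading by $\unl{k}\in\BN^I$ reduces everything to showing, for each $\unl{k}$, that $\dim_{\BQ(v)}\big(\Psi(\QC)_{\unl{k}}\big)\geq \dim_{\BQ(v)} S_{\unl{k}}$ (the reverse inequality being automatic from injectivity restricted to a finite-dimensional-over-$\BQ(v)[\text{shift}]$ graded piece, more precisely after further grading by the ``loop degree'' or working over $\BQ(v)$ with the shift grading taken into account).

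The key steps, in order, are as follows. First, I would fix a convenient convex order on $\Delta^+$ (the one adapted to a chosen order on the alphabet $I$, as in~\cite{NT21}) and, using the quantum root vectors $e_\beta$ already introduced in Section~\ref{pre}, write down an explicit spanning collection of ordered PBWD monomials in $\QC$; their images under $\Psi$ give an explicit family of shuffle elements, whose leading behavior I would record (this is the content of the analogues of Lemmas~\ref{shuffleelement}--\ref{span} referenced in the outline). Second, I would construct the \emph{specialization maps} $\phi$ on $S$: these are linear maps that successively specialize the variables $x_{i,r}$ along the ``strings'' dictated by the wheel conditions~\eqref{wheelcon}, degree by degree following the convex order. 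The wheel conditions guarantee these specializations are well-defined on $S$, and the pole conditions control the output. Third, I would prove the two crucial properties of $\phi$: (a) the composition of $\phi$ with the natural pairing/evaluation detects the top PBWD degree of any shuffle element, so that $\phi$ is ``triangular'' with respect to the PBWD filtration; and (b) $\phi$ is injective on an appropriate associated graded piece, or equivalently the PBWD images $\Psi(e_{\beta_1}^{(n_1)}\cdots)$ together with their lower-order corrections span $S_{\unl{k}}$. Combining (a) and (b) shows $\Psi$ is surjective, hence an isomorphism; as a byproduct the ordered PBWD monomials form a basis (Theorem~\ref{pbwtheorem}).

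The main obstacle, and the reason types $B_n$ and $G_2$ require separate work beyond type $A_n$, is the combinatorics of the wheel conditions and the convex order when there are \emph{two root lengths} and the off-diagonal Cartan entries can be $-2$ or $-3$. In type $A_n$ the specialization strings are uniform and the ``leading term'' of each quantum root vector's image is easy to pin down; in types $B_n$ and $G_2$ one must carefully track the short/long dichotomy, the more intricate wheel conditions~\eqref{wheelcon} (which now involve chains of length $1-a_{ij}\in\{2,3,4\}$), and verify that the specialization maps respect them and that the resulting triangularity statement still holds. Concretely, the hard part is the explicit computation of $\Psi(e_\beta)$ for the ``mixed-length'' positive roots and the verification that the matrix relating the PBWD monomials to a natural basis of $S_{\unl{k}}$ is unitriangular (up to nonzero scalars) with respect to the ordering induced by the convex order. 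Once this is established in each type (Sections~\ref{tG} and~\ref{type B}), the passage to the integral forms and to the Yangian/rational degeneration is comparatively formal, following the templates of~\cite{Tsy18, Tsy19}.
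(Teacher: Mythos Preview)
Your proposal is correct and follows essentially the same specialization-map approach as the paper, which constructs maps $\phi_{\unl{d}}$ indexed by Kostant partitions $\unl{d}\in\text{KP}(\unl{k})$ and establishes the analogues of Lemmas~\ref{shuffleelement}--\ref{span} in types $G_2$ and $B_n$ (Propositions~\ref{spekpG}--\ref{spanG} and~\ref{spekpB}--\ref{spanB}). One minor clarification: the specializations are defined via explicit substitution formulas~\eqref{speG},~\eqref{spe} rather than directly ``along wheel-condition strings''; the wheel conditions enter instead in the proof of the spanning step (Lemma~\ref{span} analogue), where together with the vanishing of all lower $\phi_{\unl{d}'}$ they force the divisibility of $\phi_{\unl{d}}(F)$ by the factors $G_\beta$, $G_{\beta,\beta'}$.
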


The key to the proof of Theorem \ref{mainthm} is to obtain an upper bound estimate for the dimensions of the graded
components of the shuffle algebra $S$. To this end, we note that \cite{NT21} instrumentally used a loop version
of the formal quantum shuffle algebra due to Green, Rosso, and Schauenburg. In contrast, the arguments of~\cite{Neg13}
and~\cite{Tsy18} for type $A_{n}$ were quite different: an estimate of the graded dimensions was achieved by using
certain specialization maps. One benefit of the latter approach is that it allows for the shuffle realization of
various integral $\BZ[v,v^{-1}]$-forms.

The key objective of the present paper is to extend the method used in \cite{Neg13,Tsy18} to types $B_{n}$ and $G_{2}$.
This will provide a new proof of Theorem \ref{mainthm} in these types, different from~\cite{NT21}.


\subsection{Root vectors and PBWD bases}

Our construction of the specialization maps and PBWD bases is based on the specific choice of a convex order
on $\Delta^{+}$. The one that is best suited for our purposes is arising through the lexicographical order on
standard Lyndon words, see~\cite{LR95,Lec04}, as we recall next.

Recall that $I$ is the indexing set of the simple roots of $\mathfrak{g}$. The labeling of the simple roots in
the corresponding Dynkin diagrams \eqref{dya}--\eqref{dyg} provides a total order on the set $I$, and hence the
lexicographical order on the set of words in the alphabet $I$. According to \cite[Proposition~3.2]{LR95}, there is
a natural bijection between the sets of positive roots $\Delta^{+}$ and so-called {\em standard Lyndon words},
cf.~\eqref{eqn:1-to-1 intro}. Thus, the lexicographical order on the latter gives rise to an order $<$ on $\Delta^{+}$,
which is convex by~\cite[Proposition 26]{Lec04}, cf.~\cite[Proposition 2.34]{NT21}. In what follows, we fix this
specific convex order on $\Delta^{+}$ and use standard Lyndon words to parametrize the positive roots.

Let us work this out explicitly for types $A_{n}$, $B_{n}$, $G_{2}$ with the specific order on $I$ as in
\eqref{dya}--\eqref{dyg}. Applying~\cite[Proposition 25]{Lec04}, we find the set of all standard Lyndon words:
\begin{equation}\label{eq:SL-words}
\begin{aligned}
  A_{n}\text{-type}\ (n\geq 1) \colon
  & \quad  \Delta^{+}=\big\{[i,i+1,\dots,j] \, \big| \,  1\leq i\leq j\leq n\big\},\\
  B_{n}\text{-type}\ (n\geq 2) \colon & \quad  \Delta^{+}=\ \big\{[i,i+1,\dots,j] \, \big| \, 1\leq i\leq j\leq n\big\}\\
  & \qquad \qquad \cup \big\{[i,\dots,n,n,n-1,\dots,j] \, \big| \, 1\leq i<j\leq n\big\} ,\\
  G_{2}\text{-type} \colon & \quad  \Delta^{+}=\big\{[1],[1,2],[1,2,1,2,2],[1,2,2],[1,2,2,2],[2]\big\}.
\end{aligned}
\end{equation}
For convenience, we shall use the following notations for positive roots in types $A_{n}$ and $B_{n}$:
\begin{equation}
\begin{aligned}
  & [i,j]\coloneqq [i,i+1,\dots,j] \quad \mathrm{for} \quad 1\leq i\leq j\leq n,\\
  & [i,n,j]\coloneqq [i,\dots,n,n,n-1,\dots,j] \quad \mathrm{for} \quad 1\leq i<j\leq n.
\end{aligned}
\end{equation}
The aforementioned specific convex order on $\Delta^{+}$ in types $A_{n}$, $B_{n}$, $G_{2}$ looks as follows:
\begin{itemize}[leftmargin=0.7cm]

\item
Type $A_{n}\ (n\geq 1)\colon$
\begin{equation}
  [1]<[1,2]<\cdots<[1,n]<[2]<\cdots<[n-1]<[n-1,n]<[n].
\label{lynordera}
\end{equation}

\medskip
\item
Type $B_{n}\ (n\geq 2)\colon$
\begin{equation}
  [1]<[1,2]<\cdots<[1,n]<[1,n,n]<\cdots<[1,n,2]<[2]<\cdots<[n-1,n,n]<[n].
\label{lynorderb}
\end{equation}

\medskip
\item Type $G_{2}\colon$
\begin{equation}
  [1]<[1,2]<[1,2,1,2,2]<[1,2,2]<[1,2,2,2]<[2].
\label{lynorderg}
\end{equation}

\end{itemize}

The \emph{quantum root vectors} $\{E_{\beta,s}\}_{\beta\in\Delta^{+}}^{s\in\BZ}$ of $\QC$ in type $A_{n}$ were
defined in~\cite[(2.12)]{Tsy18} via iterated $v$-commutators (they were called the {\em PBWD basis elements}
and depended on certain extra choices). Here, for $x,y\in\QC$ and $u\in\BQ(v)$, the \emph{$u$-commutator}
$[x,y]_{u}$ is defined via:
\begin{equation}\label{eq:q-commutator}
  [x,y]_{u}\coloneqq xy-u\cdot yx.
\end{equation}
We shall now similarly define the \emph{quantum root vectors} of $\QC$ for $\fg$ of type $B_{n}$ and $G_{2}$:
\begin{itemize}[leftmargin=0.7cm]

\item
$B_{n}$-type.

\noindent
For any $\beta=[i_{1},\dots,i_{\ell}]\in\Delta^{+}$ from~\eqref{eq:SL-words} and $s\in\BZ$, choose a collection
$\lambda_{1},\dots,\lambda_{\ell-1}\in v^{\BZ}$ and a decomposition $s=s_{1}+\cdots+s_{\ell}$ with
$s_{1}, \dots,s_{\ell}\in\BZ$. Then, we define
\begin{equation}
  E_{\beta,s}\coloneqq
  [\cdots[[e_{i_{1},s_{1}},e_{i_{2},s_{2}}]_{\lambda_{1}},e_{i_{3},s_{3}}]_{\lambda_{2}},\cdots,
   e_{i_{\ell},s_{\ell}}]_{\lambda_{\ell-1}}.
\label{rootvector1}
\end{equation}

\medskip
\item
$G_{2}$-type.

\noindent
For $\beta=[i_{1},\dots,i_{\ell}]\neq [1,2,1,2,2]$, $s\in \BZ$, the elements $E_{\beta,s}$ are defined exactly
as in~\eqref{rootvector1}.

\noindent
For $\beta=[1,2,1,2,2]$, $s\in \BZ$, we choose a decomposition $s=s_{1}+\cdots+s_{5}$ with $s_{1},\dots,s_{5}\in\BZ$,
a collection $\lambda_{1},\dots,\lambda_{4}\in v^{\BZ}$, and define
\begin{equation}
  E_{\beta,s}\coloneqq
  [[e_{1,s_{1}},e_{2,s_{2}}]_{\lambda_{1}},
  [[e_{1,s_{3}},e_{2,s_{4}}]_{\lambda_{2}},e_{2,s_{5}}]_{\lambda_{3}}]_{\lambda_{4}}.
\label{rootvector2}
\end{equation}

\end{itemize}

In particular, we have the following specific choices $\{\tilde{E}^{\pm}_{\beta,s}\}_{\beta\in\Delta^{+}}^{s\in \BZ}$
which will be used to construct PBWD bases of the integral forms in Sections \ref{lusg}, \ref{rttb}, \ref{lusb}:

\begin{itemize}[leftmargin=0.7cm]

\item
$B_{n}$-type.

\noindent
If $\beta=[i,j]$, $s\in \BZ$, we choose any decomposition $s=s_{i}+\cdots+s_{j}$, fix a sign $\pm$, and define
\begin{equation}
  \tilde{E}^{\pm}_{[i,j],s}\coloneqq
  [\cdots[[e_{i,s_{i}},e_{i+1,s_{i+1}}]_{v^{\pm 2}},e_{i+2,s_{i+2}}]_{v^{\pm 2}},\cdots,e_{j,s_{j}}]_{v^{\pm 2}}.
\label{rvb1}
\end{equation}
If $\beta=[i,n,j]$, $s\in \BZ$, we choose any decomposition $s=s_{i}+\cdots+s_{j-1}+2s_{j}+\cdots +2s_{n}$,
fix a sign~$\pm$, and define
\begin{equation}
  \tilde{E}^{\pm}_{[i,n,j],s}\coloneqq
  [\cdots[[[\cdots[e_{i,s_{i}},e_{i+1,s_{i+1}}]_{v^{\pm 2}},\cdots, e_{n,s_{n}}]_{v^{\pm 2}},
   e_{n,s_{n}}],e_{n-1,s_{n-1}}]_{v^{\pm 2}},\cdots,e_{j,s_{j}}]_{v^{\pm2}}.
\label{rvb2}
\end{equation}

\medskip
\item
$G_{2}$-type.

\noindent
If $\beta=[1]$ or $[2]$, $s\in \BZ$, we define
\begin{equation}
  \tilde{E}^{\pm}_{[i],s}\coloneqq e_{i,s} \quad \mathrm{for} \quad 1\leq i\leq 2.
\label{rvg1}
\end{equation}
If $\beta=[1,2]$, $s\in \BZ$, we choose any decomposition $s=s_{1}+s_{2}$, and define
\begin{equation}
  \tilde{E}^{\pm}_{[1,2],s}\coloneqq [e_{1,s_{1}},e_{2,s_{2}}]_{v^{\pm 3}}.
\label{rvg2}
\end{equation}
If $\beta=[1,2,2]$, $s\in \BZ$, we choose any decomposition $s=s_{1}+2s_{2}$, and define
\begin{equation}
  \tilde{E}^{\pm}_{[1,2,2],s}\coloneqq [[e_{1,s_{1}},e_{2,s_{2}}]_{v^{\pm 3}},e_{2,s_{2}}]_{v^{\pm 1}}.
\label{rvg3}
\end{equation}
If $\beta=[1,2,2,2]$, $s\in \BZ$, we choose any decomposition $s=s_{1}+3s_{2}$, and define
\begin{equation}
  \tilde{E}^{\pm}_{[1,2,2,2],s}\coloneqq
  [[[e_{1,s_{1}},e_{2,s_{2}}]_{v^{\pm 3}},e_{2,s_{2}}]_{v^{\pm 1}},e_{2,s_{2}}]_{v^{\mp1}}.
\label{rvg4}
\end{equation}
If $\beta=[1,2,1,2,2]$, $s\in \BZ$, we choose any decomposition $s=2s_{1}+3s_{2}$, and define
\begin{equation}
  \tilde{E}^{\pm}_{[1,2,1,2,2],s}\coloneqq
  [[e_{1,s_{1}},e_{2,s_{2}}]_{v^{\pm 3}},[[e_{1,s_{1}},e_{2,s_{2}}]_{v^{\pm 3}},e_{2,s_{2}}]_{v^{\pm 1}}]_{v^{\mp 1}}.
\label{rvg5}
\end{equation}
\end{itemize}

Evoking the specific convex orders $<$ on $\Delta^+$ from~\eqref{lynordera}--\eqref{lynorderg},
let us consider the following order $<$ on the set $\Delta^{+}\times \mathbb{Z}$:
\begin{equation}
  (\alpha,s) < (\beta,t) \quad  \text{iff} \quad \alpha<\beta \ \text{ or }\  \alpha=\beta, s < t.
\label{orderbetas}
\end{equation}
Let $H$ denote the set of all functions $h\colon \Delta^{+}\times\BZ\rightarrow \BN$ with finite support.
The monomials
\begin{equation}
  E_{h}\ :=\prod_{(\beta,s)\in\Delta^{+}\times\mathbb{Z}}\limits^{\rightarrow}E_{\beta,s}^{h(\beta,s)}
  \qquad \forall\ h\in H
\label{PBWDbases}
\end{equation}
will be called the \emph{ordered PBWD monomials} of $\QC$. Here, the arrow $\rightarrow$ over the product sign
refers to the total order \eqref{orderbetas}.

Our first key result generalizes~\cite[Theorem 2.16]{Tsy18} from type $A_n$ to types $G_{2}$ and $B_{n}$
(the proof is presented in Sections \ref{tG} and \ref{type B}, respectively, and is based on the shuffle approach):

\begin{Thm}\label{pbwtheorem}
The ordered PBWD monomials $\{E_{h}\}_{h\in H}$ of~\eqref{PBWDbases} form $\BQ(v)$-bases of~$\QC$
for $\fg$ of type $B_{n}$ and $G_{2}$.
\end{Thm}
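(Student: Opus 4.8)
\textbf{Proof strategy for Theorem~\ref{pbwtheorem}.}
The plan is to prove linear independence and spanning separately, and to bootstrap both from the shuffle algebra side via the homomorphism $\Psi$ of~\eqref{eq:Psi-homom} together with the upper bound on graded dimensions of $S$ that the specialization-map machinery of Sections~\ref{tG}--\ref{type B} provides. The overall logical shape mimics~\cite[\S2]{Tsy18}: one shows that (i) the images $\Psi(E_h)$ are linearly independent in $S$, hence $\{E_h\}$ is linearly independent in $\QC$; (ii) the span of $\{E_h\}$ has graded dimension at least that of $S$; and (iii) by Proposition~\ref{inj} together with $\dim_{\BQ(v)}\QC_{\unl{k}}\le\dim_{\BQ(v)}S_{\unl{k}}$ (the estimate coming from the specialization maps, Lemmas~\ref{shuffleelement}--\ref{span} transplanted to types $B_n$, $G_2$), all these inequalities are forced to be equalities, so $\{E_h\}$ is a basis. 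Since $\Psi$ is already known to be injective, the only genuinely new input needed in each type is the dimension estimate $\dim S_{\unl{k}}\le |\{h\in H : \deg E_h=\unl{k}\}|$, which is exactly what the specialization maps deliver.

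\textbf{Linear independence.}
First I would compute the leading term of $\Psi(E_{\beta,s})$ for each quantum root vector. Each $E_{\beta,s}$ is an iterated $v$-commutator of the generators $e_{i,r}$, so by Proposition~\ref{morphism} its image is an explicit symmetrization of a Laurent monomial times a product of $\zeta_{i,j}$-factors; one reads off that $\Psi(E_{\beta,s})$ is a shuffle element supported in the graded component $\unl{k}=(\nu_{\beta,i})_{i\in I}$ with a controllable ``leading monomial'' determined by the word $[i_1,\dots,i_\ell]$ and the chosen decomposition $s=s_1+\cdots+s_\ell$. Ordering graded components by the convex order~\eqref{orderbetas} and using that the specialization maps separate the corresponding factorized functions (this is the role of the analogue of Lemma~\ref{shuffleelement}), one shows the matrix pairing the PBWD monomials $E_h$ against the specializations is triangular with invertible diagonal. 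This forces $\{\Psi(E_h)\}$, hence $\{E_h\}$, to be $\BQ(v)$-linearly independent.

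\textbf{Spanning and conclusion.}
For spanning I would run the opposite inequality: the specialization maps, applied iteratively as in the analogue of Lemma~\ref{span}, show that $S_{\unl{k}}$ is spanned by shuffle elements whose number is at most the number of PBWD monomials $E_h$ of degree $\unl{k}$; combined with step (i) this pins down $\dim S_{\unl{k}}$ exactly. Then Proposition~\ref{inj} (injectivity of $\Psi$) gives $\dim\QC_{\unl{k}}\le\dim S_{\unl{k}}=|\{h:\deg E_h=\unl{k}\}|$, while linear independence of $\{E_h\}$ in $\QC$ gives the reverse inequality; hence $\{E_h\}$ spans and is a basis. The main obstacle, and where all the real work of Sections~\ref{tG} and~\ref{type B} goes, is constructing the specialization maps for types $B_n$ and $G_2$ and verifying the analogues of Lemmas~\ref{shuffleelement}--\ref{span}: unlike type $A_n$, here the roots of the form $[i,n,n,\dots,j]$ in $B_n$ and the long root $[1,2,1,2,2]$ in $G_2$ have repeated letters and nontrivial multiplicities, so the specialization points (dictated by the convex orders~\eqref{lynorderb}--\eqref{lynorderg}) are more intricate and the wheel conditions~\eqref{wheelcon} must be used carefully to control the kernels of the successive specializations. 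Once those lemmas are in place, the deduction of Theorem~\ref{pbwtheorem} is the short formal argument sketched above.
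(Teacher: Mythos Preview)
Your strategy is essentially the paper's: establish the analogues of Lemmas~\ref{shuffleelement}--\ref{span} for types $G_2$ and $B_n$ (these are Propositions~\ref{spekpG}--\ref{spanG} and Propositions~\ref{spekpB}--\ref{spanB}), and then deduce Theorems~\ref{mainthm} and~\ref{pbwtheorem} simultaneously (Theorems~\ref{shufflePBWDG} and~\ref{shufflePBWDB}).

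One caution: your final ``dimension counting'' wrap-up is not well-posed as written, because each graded piece $\QC_{\unl{k}}$ and $S_{\unl{k}}$ is infinite-dimensional over $\BQ(v)$ (the loop index $s\in\BZ$ is unbounded, and likewise $|H_{\unl{k}}|=\infty$), so inequalities like $\dim S_{\unl{k}}\le |H_{\unl{k}}|$ carry no information. The correct deduction, which you essentially have but obscure behind the dimension language, is direct: the iterative application of the analogue of Lemma~\ref{span} shows that $\{\Psi(E_h)\}_{h\in H_{\unl{k}}}$ \emph{spans} $S_{\unl{k}}$; the analogues of Lemmas~\ref{shuffleelement}--\ref{vanish} give linear independence via the triangularity you describe; hence $\{\Psi(E_h)\}$ is a basis of $S$, so $\Psi$ is surjective and thus an isomorphism by Proposition~\ref{inj}, whence $\{E_h\}$ is a basis of $\QC$. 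No cardinality comparison is needed.
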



\subsection{Specialization maps in type $A_{n}$}

As already mentioned, our key technique used to prove Theorem~\ref{pbwtheorem} as well as Theorem~\ref{mainthm}
in types $B_n$ and $G_2$ is that of specialization maps. Following~\cite{Tsy18}, we shall now briefly recall the
construction and the key properties of the latter in type $A_{n}$, while Sections \ref{tG}--\ref{type B} will
generalize this technique to types $G_{2}$ and $B_{n}$, respectively.

Identifying each simple root $\alpha_{i}\ (i\in I)$ with a basis element $\mathbf{1}_{i}\in\mathbb{N}^{I}$
(having the $i$-th coordinate equal to $1$ and the rest equal to $0$), we can view $\mathbb{N}^{I}$ as the positive cone
of the root lattice of $\fg$. For any $\underline{k}\in\mathbb{N}^{I}$, let $\text{KP}(\underline{k})$ be the set of
{\em Kostant partitions}, which consists of all unordered vector partitions of $\underline{k}$ into a sum of
positive roots. Explicitly, a Kostant partition of $\underline{k}$ is the same as a tuple
$\unl{d}=\{d_{\beta}\}_{\beta\in\Delta^{+}}\in \BN^{\Delta^{+}}$ satisfying
\begin{equation}\label{eq:KP-partition}
  \sum_{i\in I} k_i\alpha_i \, = \sum_{\beta\in\Delta^{+}}d_{\beta}\beta.
\end{equation}
The convex order~\eqref{lynordera} on $\Delta^{+}$ induces a total order on $\text{KP}(\underline{k})$
(opposite to that of~\cite{Tsy18}):
\begin{equation}\label{eq:KP-order}
  \{d_\beta\}_{\beta\in \Delta^+}<\{d'_\beta\}_{\beta\in \Delta^+}\Longleftrightarrow \exists\  \gamma\in \Delta^+\
  \mathrm{s.t.}\ d_\gamma<d'_\gamma\ \mathrm{and}\
  d_\beta=d'_\beta\ \mathrm{for\ all}\ \beta<\gamma.
\end{equation}

For any $h\in H$, we define its \emph{degree} $\text{deg}(h)\in \BN^{\Delta^{+}}$ as the Kostant partition
$\unl{d}=\{d_{\beta}\}_{\beta\in\Delta^{+}}$ with $d_{\beta}=\sum_{s\in \mathbb{Z}} h(\beta,s)\in \BN$ for all
$\beta\in \Delta^{+}$, and the \emph{grading} $\text{gr}(h)\in \BN^I$ so that
$\text{deg}(h)\in\text{KP}(\text{gr}(h))$. For any $\unl{k}\in\BN^{I}$ and $\unl{d}\in\text{KP}(\unl{k})$,
we define the following subsets of $H$:
\begin{equation}\label{hunlkunld}
  H_{\underline{k}}\coloneqq \big\{h\in H \, \big| \,  \text{gr}(h)=\underline{k}\big\}, \qquad
  H_{\underline{k},\underline{d}}\coloneqq \big\{h\in H \, \big| \, \text{deg}(h)=\underline{d}\big\}.
\end{equation}
For any $h\in H_{\unl{k},\unl{d}}$ and $\beta\in\Delta^{+}$, we consider the collection
\begin{equation}\label{eq:lambda-collection}
  \lambda_{h,\beta}=\big\{r_{\beta}(h,1)\leq \dots\leq r_{\beta}(h,d_{\beta})\big\}
\end{equation}
obtained by listing all integers $r\in\mathbb{Z}$ with multiplicity $h(\beta,r)>0$ in the nondecreasing order.
Thus, $E_{h}$ of~\eqref{PBWDbases} can be represented by
\begin{equation}\label{reppbwd}
  E_{h}=
  \prod_{\beta\in\Delta^{+}}\limits^{\rightarrow}\left(E_{\beta,r_{\beta}(h,1)}\cdots E_{\beta,r_{\beta}(h,d_\beta)}\right)
  \qquad \forall\ h\in H_{\unl{k},\unl{d}},
\end{equation}
where the arrow $\rightarrow$ over the product sign refers to the convex order~\eqref{lynordera} on $\Delta^{+}$.

Let us now recall the definition of the specialization maps in type $A_{n}$. For any $F\in S_{\underline{k}}$ and
$\underline{d}\in\text{KP}(\underline{k})$, we split the variables $\{x_{i,l}\}_{i\in I}^{1\leq l\leq k_{i}}$
into the disjoint union of $\sum_{\beta\in\Delta^{+}}d_{\beta}$ groups
\begin{equation}
  \bigsqcup_{\substack{\beta\in\Delta^{+}\\1\leq s\leq d_{\beta}}}
  \Big\{x^{(\beta,s)}_{i,t} \, \Big| \, i\in I, 1\leq t\leq \nu_{\beta,i}\Big\} \,,
\label{splitvariable}
\end{equation}
where the integer $\nu_{\beta,i}$ is the coefficient of $\alpha_i$ in $\beta$ as defined in the beginning of Subsection~\ref{ssec:qlas}.
In type $A_{n}$, any positive root $\beta\in\Delta^{+}$ is of the form $\beta=[i,j]=\sum_{s=i}^{j}\alpha_{s}$ for some
$1\leq i\leq j\leq n$, and so $\nu_{\beta,i}\in\{0,1\}$ for any $1\leq i\leq n$. For $F\in S_{\unl{k}}$, let $f$ denote
its numerator from~\eqref{polecondition}. Then, we define $\phi_{\underline{d}}(F)$ by specializing the variables in $f$
as follows:
\begin{equation}\label{eq:spec-A}
  x^{(\beta,s)}_{i,1}\mapsto v^{-i}w_{\beta,s} \,,\,  \dots \,,\, x^{(\beta,s)}_{j,1}\mapsto v^{-j}w_{\beta,s}
  \qquad \forall\ \beta=[i,j],\ 1\leq s\leq d_{\beta}.
\end{equation}
We note that $\phi_{\unl{d}}(F)$ is symmetric in $\{w_{\beta,s}\}_{s=1}^{d_{\beta}}$ for any $\beta\in\Delta^{+}$,
and is independent of our splitting \eqref{splitvariable} of the variables $\{x_{i,l}\}_{i\in I}^{1\leq l\leq k_{i}}$
into groups. This gives rise to the \textbf{specialization~map}
\begin{equation}
  \phi_{\underline{d}}\colon S_{\underline{k}}\longrightarrow
  \BQ(v)[\{w_{\beta,s}^{\pm 1}\}_{\beta\in\Delta^{+}}^{1\leq s\leq d_{\beta}}]^{\mathfrak{S}_{\unl{d}}}.
\end{equation}
We shall further extend it to the specialization map $\phi_{\unl{d}}$ on the entire shuffle algebra $S$:
\begin{equation}\label{speentireshuffle}
  \phi_{\underline{d}}\colon S\longrightarrow
  \BQ(v)[\{w_{\beta,s}^{\pm 1}\}_{\beta\in\Delta^{+}}^{1\leq s\leq d_{\beta}}]^{\mathfrak{S}_{\unl{d}}}
\end{equation}
by declaring $\phi_{\unl{d}}(F')=0$ for any $\unl{\ell}\neq \unl{k}$ and $F'\in S_{\unl{\ell}}$.

Let us now summarize the key properties of the above specialization maps $\phi_{\unl{d}}$ that were crucially used
in~\cite{Tsy18} to derive the shuffle algebra realization and the PBWD bases in type $A_n$. Our presentation is
adapted to allow for an immediate formulation of $B_n$ and $G_2$ counterparts.

In what follows, we will use the notation $\doteq$ to denote an equality up to $\BQ^{\times}\cdot v^{\BZ}$:
 \begin{equation}\label{eq:trig-const}
  A\doteq B \quad \text{if} \quad  A=c\cdot B \quad \text{for some} \ c\in \BQ^{\times}\cdot v^{\BZ}.
\end{equation}
In type $A_{n}$, we define $\kappa_{\beta}\coloneqq |\beta|-1$ for $\beta\in\Delta^{+}$, where $|\beta|$ denotes
the height of a root $\beta\in \Delta^+$:
\begin{equation}\label{eq:root-height}
  |\beta|:=\sum_{i\in I} \nu_{\beta,i}.
\end{equation}
We also recall the notation $\langle m \rangle_{v}$ from~\eqref{anglev}. The first two lemmas imply the linear
independence of the ordered PBWD monomials $\{E_{h}\}_{h\in H}$ in type $A_{n}$ (see~\cite[\S3.2.2]{Tsy18}):

\begin{Lem}\label{shuffleelement}
For any $h\in H_{\unl{k},\unl{d}}$, we have
\begin{equation}
  \phi_{\unl{d}}(\Psi(E_{h}))\doteq
  {\langle 1\rangle_{v}}^{\sum_{\beta\in\Delta^{+}}d_{\beta}\kappa_{\beta}}\cdot
  \prod_{\beta,\beta'\in \Delta^+}^{\beta<\beta'}G_{\beta,\beta'} \cdot \prod_{\beta\in\Delta^{+}}G_{\beta} \cdot
  \prod_{\beta\in\Delta^{+}}P_{\lambda_{h,\beta}}.
\label{speform}
\end{equation}
Here, the factors $G_{\beta,\beta'}$ and $ G_{\beta}$ are products of linear factors $w_{\beta,s}$ and
$w_{\beta,s}-v^{\BZ}w_{\beta',s'}$ which are \underline{independent of $h\in H_{\unl{k},\unl{d}}$} and are
symmetric with respect to $\mathfrak{S}_{\unl{d}}$, $\lambda_{h,\beta}$ are as in~\eqref{eq:lambda-collection}, and
\begin{equation}
  P_{\lambda_{h,\beta}}={\mathop{Sym}}_{\mathfrak{S}_{d_{\beta}}}
  \left(w_{\beta,1}^{r_{\beta}(h,1)}\cdots w_{\beta,d_{\beta}}^{r_{\beta}(h,d_{\beta})}
       \prod_{1\leq i<j\leq d_{\beta}}\frac{w_{\beta,i}-v_{\beta}^{-2}w_{\beta,j}}{w_{\beta,i}-w_{\beta,j}}\right).
\label{hlp}
\end{equation}
\end{Lem}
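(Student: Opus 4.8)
The plan is to compute $\phi_{\unl{d}}(\Psi(E_h))$ directly by unwinding all definitions, reducing everything to a combinatorial analysis of which specialized monomials survive. Recall from~\eqref{reppbwd} that $E_h$ is an ordered product (with respect to the convex order on $\Delta^+$) of quantum root vectors $E_{\beta,r}$, each of which is an iterated $v_i$-commutator of the currents $e_{i,r}$ by~\eqref{rootvector1}; applying $\Psi$ and using Proposition~\ref{morphism} together with the definition~\eqref{shuffleproduct} of the shuffle product, $\Psi(E_h)$ becomes a large symmetrization over $\mathfrak{S}_{\unl{k}}$ of a product of monomials $x_{i,l}^{r}$ times $\zeta$-factors. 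First I would handle a \emph{single} quantum root vector: I expect $\Psi(E_{\beta,r})$ to be (up to $\doteq$) the symmetrization of $x_{i_1,1}^{s_1}\cdots x_{i_\ell,1}^{s_\ell}$ against the chain of $\zeta_{i_a,i_{a+1}}$'s coming from the nested commutator, and after specializing $x^{(\beta,1)}_{i_a,1}\mapsto v^{-i_a}w_{\beta,1}$ via~\eqref{eq:spec-A} the $\zeta$-factors along the ``spine'' of $\beta=[i,j]$ collapse: each consecutive pair $x^{(\beta)}_{i_a}=v x^{(\beta)}_{i_{a+1}}$ is exactly (a $v$-multiple of) the zero $z=v^{-(\alpha_{i_a},\alpha_{i_{a+1}})}=v^{2}$, wait --- more precisely the numerator $z - v^{2}$ of $\zeta$ at such a specialization is nonzero but the \emph{denominator} $x_{i,r}-x_{j,s}$ in~\eqref{polecondition} forces the pole structure, and the surviving contribution is $\langle 1\rangle_v^{\kappa_\beta}$ with $\kappa_\beta=|\beta|-1$ as claimed, times $w_{\beta,1}^{r}$. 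Thus for a single root vector $\phi_{(\mathbf 1_\beta)}(\Psi(E_{\beta,r}))\doteq \langle 1\rangle_v^{\kappa_\beta}\, w_{\beta,1}^{r}$.

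Next I would assemble the full product. The symmetrization over $\mathfrak{S}_{\unl{k}}$ in $\Psi(E_h)$ ranges over all ways of distributing the variables $\{x_{i,l}\}$ among the groups~\eqref{splitvariable}; after specialization~\eqref{eq:spec-A} most terms vanish because of the wheel conditions~\eqref{wheelcon} (this is where Proposition~\ref{morphism} enters crucially) or because two groups assigned to the ``same slot'' produce a zero of the numerator $f$. The terms that survive are those respecting the group structure up to the residual $\mathfrak{S}_{\unl{d}}$-symmetry, which is precisely what produces (i) the diagonal factors $P_{\lambda_{h,\beta}}$ of~\eqref{hlp}, gathering the $w_{\beta,s}^{r_\beta(h,s)}$ together with the Vandermonde-type correction $\prod_{i<j}\frac{w_{\beta,i}-v_\beta^{-2}w_{\beta,j}}{w_{\beta,i}-w_{\beta,j}}$ coming from the $\zeta_{i,i}$-factors between variables in groups of the same root $\beta$, and (ii) the cross factors $G_{\beta,\beta'}$ ($\beta<\beta'$) coming from the $\zeta_{i,j}$-factors linking variables of $\beta$ to those of $\beta'$, evaluated at the specialization points $v^{-i}w_{\beta,s}$ --- these are manifestly products of $w_{\beta,s}$ and $w_{\beta,s}-v^{\BZ}w_{\beta',s'}$, and crucially depend only on $\unl{d}$ (which roots appear, with which multiplicities) and not on the exponents $h$. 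The $G_\beta$ absorb whatever monomial-in-$w_{\beta,s}$ prefactors arise from the $v^{-i_a}$ shifts. The convex order enters to guarantee that when we multiply the root vectors in the prescribed order, the leading surviving term is the ``diagonal'' one; off-diagonal assignments either vanish or contribute to strictly lower terms in the $\mathrm{KP}$-order~\eqref{eq:KP-order}, but for this particular lemma (the exact formula, not just leading order) one checks that after specialization the off-diagonal pieces literally cancel or vanish, leaving~\eqref{speform} on the nose.

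The main obstacle I anticipate is bookkeeping the vanishing: showing rigorously that, among the $|\mathfrak{S}_{\unl{k}}|$ permutations in the symmetrization, exactly those that map each group of~\eqref{splitvariable} to a group of the \emph{same} root $\beta$ (i.e.\ the image of $\mathfrak{S}_{\unl{d}}\ltimes\prod_\beta \mathfrak{S}_{\nu_\beta}$-type subgroup) give a nonzero contribution after the specialization~\eqref{eq:spec-A}. This requires a careful case analysis using the wheel conditions~\eqref{wheelcon}: any assignment that ``crosses'' two groups of distinct roots in an incompatible way creates a configuration of specialized variables satisfying the hypothesis of~\eqref{wheelcon}, hence kills $f$; and any assignment collapsing two groups of the same root produces a repeated variable, again killing the symmetric numerator or producing a canceling pair. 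The $A_n$-specific simplification $\nu_{\beta,i}\in\{0,1\}$ keeps this tractable: each root occupies at most one variable of each color, so the combinatorics of ``which group goes where'' reduces to matching intervals $[i,j]$, and the Levendorskii--Soibelman convexity of the chosen order ensures the matchings are essentially rigid. I would organize the proof as: (1) reduce to a single $E_{\beta,r}$ and prove $\phi(\Psi(E_{\beta,r}))\doteq\langle 1\rangle_v^{\kappa_\beta}w_{\beta,1}^r$; (2) analyze the shuffle product of two root vectors under specialization to identify the $G_{\beta,\beta'}$; (3) induct on the number of factors, at each stage peeling off the largest root in the convex order and invoking the wheel conditions to discard non-surviving permutations; (4) collect the surviving diagonal terms into $P_{\lambda_{h,\beta}}$ and verify the stated $\mathfrak{S}_{\unl{d}}$-symmetry and $h$-independence of $G_{\beta,\beta'}$, $G_\beta$. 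Steps (1)--(2) are short computations; step (3) is the heart and is where I would spend most of the effort.
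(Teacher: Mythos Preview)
Your overall architecture---compute $\phi_\beta(\Psi(E_{\beta,r}))$ for a single root vector, then analyze which permutations in the symmetrization of $\Psi(E_h)$ survive the specialization, then collect the surviving diagonal terms into $G_{\beta,\beta'}$, $G_\beta$, $P_{\lambda_{h,\beta}}$---is exactly the shape of the argument the paper follows (the paper cites~\cite[Lemma 3.17]{Tsy18} for the $A_n$ statement itself, but the identical method is spelled out for $G_2$ and $B_n$ in Lemmas~\ref{Gs2},~\ref{bstep3} and Propositions~\ref{spekpG},~\ref{spekpB}). So the plan is right; but the mechanism you invoke for the key vanishing step is wrong, and if you try to execute step~(3) as described you will get stuck.

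The issue is your repeated appeal to the wheel conditions~\eqref{wheelcon} to kill the ``off-diagonal'' permutations. Wheel conditions are a property of the \emph{total} element $\Psi(E_h)\in S$; they tell you nothing about an individual summand $\sigma(F_h)$ in the symmetrization~\eqref{eq:part-symm}, because such a summand is not itself an element of $S$ and need not satisfy any wheel condition. What actually makes $\phi_{\unl d}(\sigma(F_h))$ vanish for $\sigma\notin\mathfrak S_{\unl d}$ is much more concrete: the cross $\zeta$-factors in~\eqref{eq:part-symm-2} have \emph{numerator zeros}, since $\zeta_{i,j}(z)=(z-v^{-(\alpha_i,\alpha_j)})/(z-1)$, and the specialization~\eqref{eq:spec-A} is engineered so that whenever $\sigma$ sends a variable from the $(\beta,s)$-group to a position belonging to some $(\gamma,r)$ with $\gamma\ne\beta$ (or to a different copy of $\beta$ in an order-violating way), one of those $\zeta$-numerators is hit exactly at its zero. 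This is the content of Lemma~\ref{Gs2} (and Lemma~\ref{bstep3}) in the paper: one tracks, root by root in the convex order, which $\zeta_{i,j}$-factor is forced to vanish, and the convexity of the order is what guarantees such a factor is always present. No wheel condition enters. (Wheel conditions \emph{do} appear later, in the proof of Lemma~\ref{span}/Propositions~\ref{spanG},~\ref{spanB}, where one needs divisibility of $\phi_{\unl d}(F)$ for an \emph{arbitrary} $F\in S$; that is a different statement.)

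A secondary point: your hedge ``off-diagonal assignments either vanish or contribute to strictly lower terms \ldots\ one checks that \ldots\ the off-diagonal pieces literally cancel or vanish'' is unnecessary once you use the correct mechanism---they all vanish outright, with no cancellation needed. Replace your step~(3) by the $\zeta$-zero chase (modeled on the proof of Lemma~\ref{Gs2}) and the rest of your outline goes through.
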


This result (cf.~\cite[Lemma 3.17]{Tsy18}) features a ``\textbf{rank $1$ reduction}'': each $P_{\lambda_{h,\beta}}$
from~\eqref{hlp} can be viewed as the shuffle product $x^{r_{\beta}(h,1)}\star\cdots \star x^{r_{\beta}(h,d_\beta)}$
in the shuffle algebra of type $A_1$, evaluated at  $\{w_{\beta,s}\}_{s=1}^{d_\beta}$. The following is~\cite[Lemma 3.16]{Tsy18}
(keeping in mind that our total order~\eqref{eq:KP-order} on $\text{KP}(\underline{k})$ is opposite to that of~\cite{Tsy18}):

\begin{Lem}\label{vanish}
For  any $h\in H_{\unl{k},\unl{d}}$ and $\unl{d}'<\unl{d}$, we have $\phi_{\unl{d}'}(\Psi(E_{h}))=0$.
\end{Lem}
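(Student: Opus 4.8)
\textbf{Proof proposal for Lemma~\ref{vanish}.}

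The plan is to trace what the specialization map $\phi_{\unl{d}'}$ does to $\Psi(E_h)$ after expanding $E_h$ via the shuffle product, and argue that every resulting summand contains a factor forced to vanish by the wheel conditions~\eqref{wheelcon}. Recall from~\eqref{reppbwd} that $E_h$ is an ordered product (with respect to the convex order~\eqref{lynordera}) of quantum root vectors $E_{\beta,r}$, and each $E_{\beta,r}$ is an iterated $v$-commutator of the currents $e_{i_1},\dots,e_{i_\ell}$ where $\beta=[i_1,\dots,i_\ell]$ is the standard Lyndon word attached to $\beta$. Under $\Psi$, the image $\Psi(E_{\beta,r})$ lives in $S_{\nu_\beta}$ and is (up to a nonzero scalar) a single ``antisymmetrization-type'' term built from $\zeta$-factors; crucially, by the fusion/rank-$1$ reasoning already used in~\cite{Tsy18} (and underlying Lemma~\ref{shuffleelement}), $\Psi(E_{\beta,r})$ is nonzero \emph{only on the diagonal specialization} $x_{i_1}\mapsto v^{-i_1}y,\dots,x_{i_\ell}\mapsto v^{-i_\ell}y$ corresponding to the root $\beta$ itself — on any other grouping of its $\ell$ variables matching a different root the $\zeta$-factors produce zeros. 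First I would make this precise: $\phi_{\unl{d}}(\Psi(E_{\beta,r}))\doteq y^{r}$ on the group labeled $\beta$ and $\phi_{\unl{d}''}(\Psi(E_{\beta,r}))=0$ whenever $\unl{d}''$ is not the Kostant partition $\{\delta_{\gamma,\beta}\}_\gamma$.

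Next I would combine this with the behavior of $\phi$ under the shuffle product. By~\eqref{shuffleproduct}, $\Psi(E_h)=\Psi(E_{\beta_1,r_1})\star\cdots\star\Psi(E_{\beta_N,r_N})$ is a symmetrization over $\fS_{\unl{k}}$ of a product of the factors $\Psi(E_{\beta_t,r_t})$, each in its own block of variables, times cross-terms $\prod\zeta_{i,j}(x_{i,r}/x_{j,s})$. To compute $\phi_{\unl{d}'}(\Psi(E_h))$ we further partition the $\unl{k}$ variables into the $\sum d'_\beta$ groups of~\eqref{splitvariable} dictated by $\unl{d}'$ and perform the substitution~\eqref{eq:spec-A}. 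Since $\unl{d}'<\unl{d}=\deg(h)$ in the order~\eqref{eq:KP-order}, there is a smallest root $\gamma\in\Delta^+$ with $d'_\gamma<d_\gamma$ while $d'_\beta=d_\beta$ for all $\beta<\gamma$; in particular the $d_\gamma$ copies of $E_{\gamma,\ast}$ appearing in $E_h$ cannot all be ``matched'' to $\gamma$-labeled groups in the $\unl{d}'$-splitting, because there are strictly fewer such groups and the smaller roots' slots are already exhausted. Thus in \emph{every} term of the symmetrization, at least one factor $\Psi(E_{\gamma,r})$ has its $|\gamma|$ variables spread — either entirely into a group labeled by some root $\beta'\ne\gamma$, or split across two or more groups. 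In the first case the argument of the previous paragraph kills the factor outright; in the second case the variables of $\Psi(E_{\gamma,r})$ get sent to $v$-power multiples of two (or more) distinct $w_{\beta',s'}$'s, and the wheel conditions~\eqref{wheelcon} — which $\Psi(E_h)$ satisfies by Proposition~\ref{morphism} — force the numerator to vanish at exactly such a configuration (the chain $x_{i,s_1}=v_i^2 x_{i,s_2}=\cdots=v_i^{-a_{ij}}x_{j,r}$ is precisely the pattern produced by collapsing adjacent letters of the Lyndon word $\gamma$ onto the same $w$). Summing, $\phi_{\unl{d}'}(\Psi(E_h))=0$.

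The main obstacle is the careful bookkeeping in the second case: one must show that whenever the $\unl{d}'$-splitting forces two letters of a factor $\Psi(E_{\gamma,r})$ (or, more subtly, two letters coming from the interplay of $\Psi(E_{\gamma,r})$ with a cross-term $\zeta$-factor against a neighboring block) to land on the same group, the specialized variables fall into a configuration covered by~\eqref{wheelcon} rather than merely a harmless coincidence that the pole-denominator in~\eqref{polecondition} would cancel. Here I would argue exactly as in~\cite[Lemma 3.16]{Tsy18}: analyze which pairs $x_{i,r},x_{j,s}$ can collide under~\eqref{eq:spec-A}, observe that a collision within a single group forces $j=i\pm 1$-type adjacency (so $a_{ij}\ne 0$), track the power of $v$ and match it against the wheel pattern, and check that the order of vanishing from the wheel conditions strictly exceeds the order of the pole from~\eqref{polecondition} along that locus. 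Minimality of $\gamma$ is what guarantees there is \emph{no room} to avoid such a collision, which is the combinatorial heart of the argument and is identical in spirit to type $A_n$; the only thing to verify type-by-type (done in Sections~\ref{tG}--\ref{type B}) is that the standard Lyndon words~\eqref{eq:SL-words} and the specialization recipe are set up so that this matching goes through.
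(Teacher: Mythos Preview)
Your high-level combinatorics --- isolating the smallest root $\gamma$ with $d'_\gamma<d_\gamma$ and arguing that some block $E_{\gamma,*}$ must land in a ``wrong'' $\unl{d}'$-group --- matches the paper's strategy (Lemma~\ref{vanish} is quoted from~\cite[Lemma~3.16]{Tsy18}, and the paper reproves it in types $G_2,B_n$ as Propositions~\ref{vanishG},~\ref{vanishB}). However, the vanishing mechanism you invoke is wrong, and this is a genuine gap.

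First, the preliminary claim that $\phi_{\unl{d}''}(\Psi(E_{\beta,r}))=0$ whenever $\unl{d}''\ne\{\delta_{\alpha,\beta}\}_\alpha$ is false. In type $A_n$ with $\beta=[1,2]$, the numerator of $\Psi(E_{[1,2],r})$ is a monomial times a nonzero linear form in $x_{1,1},x_{2,1}$; specializing $x_{1,1}\mapsto v^{-1}w_1$, $x_{2,1}\mapsto v^{-2}w_2$ (the partition $[1]+[2]$) does not kill it. Individual root vectors carry no such hidden vanishing.

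Second, and more importantly, the wheel conditions~\eqref{wheelcon} are \emph{not} the source of vanishing here. They force $f=0$ along loci where several variables $x_{i,*}$ with the \emph{same} first index $i$ sit in prescribed $v_i^{2}$-ratios; under~\eqref{eq:spec-A} such variables map to $v^{-i}w_{\beta,s}$ with \emph{generically independent} $w$'s, so those loci are never hit. (In type $A_n$ there is not even a pair $x_{i,s_1},x_{i,s_2}$ inside a single $\Psi(E_{\gamma,r})$, since $\nu_{\gamma,i}\le 1$.) You appear to be conflating the role of wheel conditions in the proof of Lemma~\ref{span} --- where they produce the divisibility by $G_\beta,G_{\beta,\beta'}$, cf.\ Propositions~\ref{spanG},~\ref{spanB} --- with what is needed for Lemma~\ref{vanish}.

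The correct mechanism is the numerators of the \emph{cross $\zeta$-factors} in the expansion~\eqref{eq:part-symm}--\eqref{eq:part-symm-2}. One writes $\Psi(E_h)\doteq\sum_{\sigma\in\mathrm{Sh}_{\unl{d}}}\sigma(F_h)$ and shows $\phi_{\unl{d}'}(\sigma(F_h))=0$ termwise: once the variables of the $(\gamma,1)$-block are forced (after $\sigma$) into a $\unl{d}'$-group labeled by some $\alpha>\gamma=[i,j]$, the factor $F_h$ contains a $\zeta$-factor such as $\zeta_{j,j+1}(x'^{(\alpha,r)}_{j,1}/x'^{(\alpha,r)}_{j+1,1})$, present because $(\gamma,1)<(\alpha,r)$ in~\eqref{orderbetas}, and its numerator $v^{-j}w-v\cdot v^{-j-1}w$ vanishes identically under~\eqref{eq:spec-A}. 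The case-by-case analyses in Propositions~\ref{vanishG} and~\ref{vanishB} are exactly this, chasing which $\zeta$-numerator dies; no wheel condition is ever invoked for this lemma. Your argument would need to be rewritten around the $\zeta$-factor mechanism.
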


Let $S'_{\unl{k}}$ be the $\BQ(v)$-subspace of $S_{\unl{k}}$ spanned by $\{\Psi(E_{h})\}_{h\in H_{\unl{k}}}$.
The following result (which implies Theorem~\ref{mainthm} and establishes PBWD bases in type $A_n$) is proved
in~\cite[\S3.2.3]{Tsy18}:

\begin{Lem}\label{span}
For any $F\in S_{\unl{k}}$ and $\unl{d}\in \text{\rm KP}(\unl{k})$, if $\phi_{\unl{d}'}(F)=0$ for all
$\unl{d}'\in \text{\rm KP}(\unl{k})$ such that $\unl{d}'<\unl{d}$, then there exists $F_{\unl{d}}\in S'_{\unl{k}}$
such that $\phi_{\unl{d}}(F)=\phi_{\unl{d}}(F_{\unl{d}})$ and $\phi_{\unl{d}'}(F_{\unl{d}})=0$ for all $\unl{d}'<\unl{d}$.
\end{Lem}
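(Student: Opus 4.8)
\textbf{Proof plan for Lemma~\ref{span}.}
The plan is to descend through the variable specializations one Kostant partition at a time, using Lemma~\ref{shuffleelement} to match the ``leading term'' and the rank~$1$ reduction to realize arbitrary symmetric Laurent polynomials. Fix $F\in S_{\unl{k}}$ and $\unl{d}\in\mathrm{KP}(\unl{k})$ with $\phi_{\unl{d}'}(F)=0$ for all $\unl{d}'<\unl{d}$. First I would analyze the shape of $\phi_{\unl{d}}(F)$: by the pole and wheel conditions defining $S_{\unl{k}}$, together with the specific substitution~\eqref{eq:spec-A} (each group of variables collapsed onto a single $w_{\beta,s}$ along the string $v^{-i},\dots,v^{-j}$), one checks that $\phi_{\unl{d}}(F)$ is divisible by exactly the universal factors $\prod_{\beta<\beta'}G_{\beta,\beta'}\cdot\prod_\beta G_\beta$ appearing in~\eqref{speform}, and that the remaining factor is $\mathfrak{S}_{\unl{d}}$-symmetric; the vanishing hypothesis on $F$ rules out any spurious extra poles coming from lower partitions. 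Thus $\phi_{\unl{d}}(F)$ lies in the span, over $\prod_\beta\BQ(v)[\{w_{\beta,s}^{\pm1}\}]^{\mathfrak{S}_{d_\beta}}$, of the products $\prod_\beta G_{\beta,\beta'}\cdot\prod_\beta G_\beta$.

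The core step is then the \textbf{rank $1$ reduction}: by Lemma~\ref{shuffleelement}, as $h$ ranges over $H_{\unl{k},\unl{d}}$ the elements $\phi_{\unl{d}}(\Psi(E_h))$ are, up to the fixed scalar $\langle1\rangle_v^{\sum d_\beta\kappa_\beta}$ and the fixed universal factors, exactly $\prod_\beta P_{\lambda_{h,\beta}}$, where each $P_{\lambda_{h,\beta}}$ is the type-$A_1$ shuffle product $x^{r_\beta(h,1)}\star\cdots\star x^{r_\beta(h,d_\beta)}$ evaluated at $\{w_{\beta,s}\}$. It is a standard fact for the type $A_1$ shuffle algebra that such symmetrized products $\{P_\lambda\}_\lambda$ span the whole space of $\mathfrak{S}_{d_\beta}$-symmetric Laurent polynomials in $d_\beta$ variables (e.g.\ by a triangularity/leading-monomial argument, or by referring to the known type $A_1$ case of Theorem~\ref{mainthm} used in~\cite{Neg14,Tsy18}). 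Hence the $\prod_\beta P_{\lambda_{h,\beta}}$ span all of $\prod_\beta\BQ(v)[\{w_{\beta,s}^{\pm1}\}]^{\mathfrak{S}_{d_\beta}}$, so one can find a $\BQ(v)$-linear combination $F_{\unl{d}}=\sum_{h\in H_{\unl{k},\unl{d}}} c_h\,\Psi(E_h)\in S'_{\unl{k}}$ with $\phi_{\unl{d}}(F_{\unl{d}})=\phi_{\unl{d}}(F)$.

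Finally I would verify $\phi_{\unl{d}'}(F_{\unl{d}})=0$ for all $\unl{d}'<\unl{d}$: but $F_{\unl{d}}$ is a linear combination of $\Psi(E_h)$ with $h\in H_{\unl{k},\unl{d}}$, i.e.\ $\deg(h)=\unl{d}$, so Lemma~\ref{vanish} gives $\phi_{\unl{d}'}(\Psi(E_h))=0$ whenever $\unl{d}'<\unl{d}$, and the claim follows by linearity. This completes the descent step, and iterating it (starting from the minimal $\unl{d}$, for which the hypothesis is vacuous) writes any $F\in S_{\unl{k}}$ as an element of $S'_{\unl{k}}$ plus something killed by all $\phi_{\unl{d}'}$; since the specialization maps are known to be jointly injective on $S_{\unl{k}}$ (the remaining ingredient, established alongside Lemmas~\ref{shuffleelement}--\ref{vanish}), this forces $S_{\unl{k}}=S'_{\unl{k}}$, which is exactly Theorem~\ref{mainthm} together with the PBWD spanning in type $A_n$.

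\textbf{Main obstacle.} The delicate point is the first step — showing that the vanishing of $\phi_{\unl{d}'}(F)$ for $\unl{d}'<\unl{d}$ forces $\phi_{\unl{d}}(F)$ to be divisible by precisely the universal factors $\prod G_{\beta,\beta'}\cdot\prod G_\beta$ with no leftover denominators. This is where the wheel conditions enter essentially: a naive specialization of the numerator $f$ of $F$ could have poles along $w_{\beta,s}=v^{\BZ}w_{\beta',s'}$ coming from the product $\prod_{i<j}(x_{i,r}-x_{j,s})$ in~\eqref{polecondition}, and one must argue that each such would-be pole either cancels against a zero of $f$ forced by a wheel condition, or else corresponds to a further degeneration captured by some $\unl{d}'<\unl{d}$ and hence is excluded by hypothesis. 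Making this bookkeeping precise — identifying exactly which linear factors $G_{\beta,\beta'}$, $G_\beta$ occur and with what multiplicities, uniformly in $h$ — is the technical heart of the argument, and is exactly what~\cite[\S3.2.3]{Tsy18} carries out in type $A_n$ (and what Sections~\ref{tG}--\ref{type B} will redo for $G_2$ and $B_n$).
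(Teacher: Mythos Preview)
Your plan is correct and matches the paper's approach (which defers to \cite[\S3.2.3]{Tsy18} for type $A_n$ and carries out the analogue explicitly for $G_2$ and $B_n$ in Propositions~\ref{spanG} and~\ref{spanB}): divisibility of $\phi_{\unl{d}}(F)$ by the universal factors, then the rank~$1$ reduction to realize the remaining symmetric Laurent polynomial, then Lemma~\ref{vanish} for the vanishing of $F_{\unl{d}}$ under lower specializations.

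One point in your ``main obstacle'' paragraph is misstated and worth correcting, since it obscures what actually needs to be checked. Recall that $\phi_{\unl{d}}$ is defined by specializing the variables in the \emph{numerator} $f$ of $F$ (see the sentence preceding~\eqref{eq:spec-A}), so $\phi_{\unl{d}}(F)$ is automatically a Laurent polynomial and there are no poles to worry about --- the denominator $\prod_{i<j}(x_{i,r}-x_{j,s})$ from~\eqref{polecondition} never enters. The real work is in producing \emph{zeros}: one must show that $\phi_{\unl{d}}(f)$ vanishes (to the correct order) along each hyperplane $w_{\beta,s}=v^{\#}w_{\beta',s'}$ appearing in $G_\beta$ or $G_{\beta,\beta'}$. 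Some of these zeros are forced directly by the wheel conditions on $f$, which become vanishing conditions on $\phi_{\unl{d}}(f)$ after specialization. The others --- and this is precisely where the hypothesis $\phi_{\unl{d}'}(F)=0$ for $\unl{d}'<\unl{d}$ is used --- arise because imposing $w_{\beta,s}=v^{\#}w_{\beta',s'}$ for a suitable power $\#$ makes the composite specialization coincide with some $\phi_{\unl{d}'}$ with $\unl{d}'<\unl{d}$. The proof of Proposition~\ref{spanG} in this paper exhibits exactly this dichotomy, case by case over pairs $(\beta,\beta')$.
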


Combining Lemmas~\ref{shuffleelement}--\ref{span}, we derive the $A_n$-type counterparts of Theorems \ref{mainthm},~\ref{pbwtheorem} using
\begin{itemize}
\item[(1)]
  the validity of Theorems \ref{mainthm},~\ref{pbwtheorem} in type $A_1$, see~\cite[\S 3.2.1]{Tsy18},

\item[(2)]
  the observation that $\phi_{\unl{d}}(F)=0\ \forall\, \unl{d}\in \text{\rm KP}(\unl{k})$ implies $F=0$
  by taking the maximal element of $\text{\rm KP}(\unl{k})$ with respect to~\eqref{eq:KP-order} corresponding to
  a partition of $\unl{k}$ into a sum of simple roots, see~\cite[Proposition 1.6]{Tsy22}.
\end{itemize}
Let us emphasize that proofs of both theorems \underline{use only} the properties of the specialization maps
$\phi_{\unl{d}}$ stated above. With this in mind, we will now define similar specialization maps and verify
the validity of analogous lemmas in types $G_{2}$ (Section~\ref{tG}) and $B_{n}$ (Section~\ref{type B}). This
will yield a new proof of Theorem \ref{mainthm} and a proof of Theorem~\ref{pbwtheorem} in types $G_{2}$, $B_{n}$.
As an important benefit of this approach, in contrast to \cite{NT21}, we also establish the counterparts of the above
two results for integral $\BZ[v,v^{-1}]$-forms as well as for the Yangian version (treated in Section~\ref{yangian}).


\section{Specialization maps for type $G_{2}$}\label{tG}

In this section, we define specialization maps for the shuffle algebra of type $G_{2}$ and verify their key properties.
This implies the shuffle algebra realization and PBWD-type theorems for $\qfg$ as well as its Lusztig integral form $\integralgl$
introduced in Subsection~\ref{lusg}. Here, $\fg_{2}$ is the simple Lie algebra of type $G_{2}$.


\subsection{$\qfg$ and its shuffle algebra realization}

In type $G_{2}$, for any $F\in  S_{\unl{k}}$ with $\unl{k}=(k_{1},k_{2})\in\mathbb{N}^{2}$, the wheel conditions are:
\begin{equation}\label{eq:wheel-G2}
\begin{aligned}
  F(\{x_{i,r}\}_{1\leq i\leq 2}^{1\leq r\leq k_{i}})=0 \quad \text{once} \quad
  & x_{1,1}=v^{6}x_{1,2}=v^{3}x_{2,1}, \\ \text{or} \quad
  & x_{2,1}=v^{2}x_{2,2}=v^{4}x_{2,3}=v^{6}x_{2,4}=v^{3}x_{1,1}.
\end{aligned}
\end{equation}

For any $\unl{k}\in\mathbb{N}^{2}$ and $\unl{d}\in\text{\rm KP}(\unl{k})$, the specialization map $\phi_{\unl{d}}$
as in~\eqref{speentireshuffle} is defined by the following specialization of the $x^{(*,*)}_{*,*}$-variables
(replacing~\eqref{eq:spec-A} for type $A_n$):
\begin{equation}
  x^{(\beta,s)}_{1,t}\mapsto v^{2t}w_{\beta,s}\quad (1\leq t\leq \nu_{\beta,1}) \,,
  \qquad
  x^{(\beta,s)}_{2,t}\mapsto v^{-3+2t}w_{\beta,s}\quad (1\leq t\leq \nu_{\beta,2})
\label{speG}
\end{equation}
for any positive root $\beta\in \Delta^+$, where $\nu_{\beta,i}$ is as in Subsection~\ref{ssec:qlas}.

\begin{Lem}\label{imageEtildeg}
Consider the particular choices~\eqref{rvg1}--\eqref{rvg5} of quantum root vectors
$\{\tilde{E}^{\pm}_{\beta,s}\}_{\beta\in\Delta^{+}}^{s\in\BZ}$. Their images under $\Psi$
in the shuffle algebra $S$ of type $G_{2}$ are as follows:
\begin{itemize}[leftmargin=0.7cm]

\item
For $\{\tilde{E}^{+}_{\beta,s}\}_{\beta\in\Delta^{+}}^{s\in\BZ}\colon$
\begin{align}
  & \Psi(\tilde{E}^{+}_{[i],s})\doteq x_{i,1}^{s},\   i=1,2,\\
  & \Psi(\tilde{E}^{+}_{[1,2],s})\doteq \frac{\langle 3\rangle_{v}x_{1,1}^{s_{1}+1}x_{2,1}^{s_{2}}}{x_{1,1}-x_{2,1}},\
    \text{with}\ s=s_{1}+s_{2},\\
  & \Psi(\tilde{E}^{+}_{[1,2,2],s})\doteq \frac{\langle 3\rangle_{v}\langle 2\rangle_{v}[2]_{v}\cdot
    x_{1,1}^{s_{1}+2}(x_{2,1}x_{2,2})^{s_{2}}}{(x_{1,1}-x_{2,1})(x_{1,1}-x_{2,2})},\  \text{with}\ s=s_{1}+2s_{2},\\
  & \Psi(\tilde{E}^{+}_{[1,2,2,2],s})\doteq \frac{\langle 3\rangle^{2}_{v}\langle 2\rangle_{v}[2]_{v}\cdot x_{1,1}^{s_{1}+3}
    (x_{2,1}x_{2,2}x_{2,3})^{s_{2}}}{(x_{1,1}-x_{2,1})(x_{1,1}-x_{2,2})(x_{1,1}-x_{2,3})}, \ \text{with}\  s=s_{1}+3s_{2},\\
  & \Psi(\tilde{E}^{+}_{[1,2,1,2,2],s})\doteq \frac{\langle 3\rangle^{3}_{v}\langle 2\rangle_{v}[2]_{v}\cdot
    (x_{1,1}x_{1,2})^{s_{1}+1}(x_{2,1}x_{2,2}x_{2,3})^{s_{2}}\cdot g_{1}}{\prod_{1\leq r\leq 2}^{1\leq t\leq 3}(x_{1,r}-x_{2,t})},
    \ \text{with}\  s=2s_{1}+3s_{2},
\end{align}
where
\begin{equation}
\begin{aligned}
  & g_{1}=(v^{6}+1)x_{1,1}^{2}x_{1,2}^{2}+(v^{6}+1)x_{1,1}x_{1,2}(x_{2,1}x_{2,2}+x_{2,1}x_{2,3}+x_{2,2}x_{2,3})\\
  & \ \ \ \ \ \
    -v^{3}(x_{1,1}+x_{1,2})(x_{1,1}x_{1,2}x_{2,1}+x_{1,1}x_{1,2}x_{2,2}+x_{1,1}x_{1,2}x_{2,3}+x_{2,1}x_{2,2}x_{2,3}).
\end{aligned}
\end{equation}

\medskip

\item
For $\{\tilde{E}^{-}_{\beta,s}\}_{\beta\in\Delta^{+}}^{s\in\BZ}\colon$
\begin{align}
  & \Psi(\tilde{E}^{-}_{[i],s})\doteq x_{i,1}^{s},\  i=1,2,\\
  & \Psi(\tilde{E}^{-}_{[1,2],s})\doteq \frac{\langle 3\rangle_{v}x_{1,1}^{s_{1}}x_{2,1}^{s_{2}+1}}{x_{1,1}-x_{2,1}},\
    \text{with}\ s=s_{1}+s_{2},\\
  & \Psi(\tilde{E}^{-}_{[1,2,2],s})\doteq \frac{\langle 3\rangle_{v}\langle 2\rangle_{v}[2]_{v}\cdot x_{1,1}^{s_{1}}
    (x_{2,1}x_{2,2})^{s_{2}+1}}{(x_{1,1}-x_{2,1})(x_{1,1}-x_{2,2})},\ \text{with}\  s=s_{1}+2s_{2},\\
  & \Psi(\tilde{E}^{-}_{[1,2,2,2],s})\doteq
    \frac{\langle 3\rangle^{2}_{v}\langle 2\rangle_{v}[2]_{v}\cdot x_{1,1}^{s_{1}}
    (x_{2,1}x_{2,2}x_{2,3})^{s_{2}+1}}
         {(x_{1,1}-x_{2,1})(x_{1,1}-x_{2,2})(x_{1,1}-x_{2,3})}, \ \text{with}\  s=s_{1}+3s_{2},\\
  & \Psi(\tilde{E}^{-}_{[1,2,1,2,2],s})\doteq \frac{\langle 3\rangle^{3}_{v}\langle 2\rangle_{v}[2]_{v}\cdot
    (x_{1,1}x_{1,2})^{s_{1}}(x_{2,1}x_{2,2}x_{2,3})^{s_{2}+1}\cdot g_{2}}
    {\prod_{1\leq r\leq 2}^{1\leq t\leq 3}(x_{1,r}-x_{2,t})},\ \text{with}\ s=2s_{1}+3s_{2},
\end{align}
where
\begin{equation}
\begin{aligned}
  & g_{2}=(v^{6}+1)x_{2,1}x_{2,2}x_{2,3}+(v^{6}+1)x_{1,1}x_{1,2}(x_{2,1}+x_{2,2}+x_{2,3})\\
  & \ \ \ \ \ \  -v^{3}(x_{1,1}+x_{1,2})(x_{1,1}x_{1,2}+x_{2,1}x_{2,2}+x_{2,1}x_{2,3}+x_{2,2}x_{2,3}).
\end{aligned}
\end{equation}
\end{itemize}
\end{Lem}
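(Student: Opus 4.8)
The statement is a completely explicit computation of the images $\Psi(\tilde E^{\pm}_{\beta,s})$ for each of the six standard Lyndon positive roots of $G_2$, so the plan is to unravel the definition of $\Psi$ from Proposition~\ref{morphism} and the definition of the $u$-commutator~\eqref{eq:q-commutator}, and then evaluate the iterated brackets~\eqref{rvg1}--\eqref{rvg5} inside the shuffle algebra $(S,\star)$. The key observation that makes this feasible is that $\Psi(e_{i,s})=x_{i,1}^{s}$, so that each quantum root vector becomes a $\star$-product of monomials $x_{i,1}^{s_k}$ with prescribed $u$-commutator twists; since the shuffle product only records the exponents of the monomials, the whole computation reduces to manipulating the rational prefactors $\zeta_{i,j}$ coming from~\eqref{shuffleproduct}--\eqref{eq:zeta} together with the combinatorial constants $\lambda_k\in v^{\BZ}$ appearing in the brackets.

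First I would record the building-block shuffle products: $x_{i,1}^{a}\star x_{j,1}^{b}$ for $(i,j)\in\{(1,2),(2,2)\}$, using $(\alpha_1,\alpha_2)=-3$ and $(\alpha_2,\alpha_2)=2$ in~\eqref{eq:zeta}. This gives $\Psi(e_{1,a})\star\Psi(e_{2,b})$ and $\Psi(e_{2,a})\star\Psi(e_{2,b})$ as explicit symmetric rational functions with the prescribed denominators. Then, for the ``linear chain'' roots $[1],[1,2],[1,2,2],[1,2,2,2]$, I would proceed inductively: each $\tilde E^{\pm}_{[1,2,\dots,2],s}$ is obtained from the previous one by a single $v^{\pm*}$-commutator against $e_{2,s_2}$, and one checks that the new $\zeta$-factor combines with the two terms $xy-u\,yx$ of the commutator so that the pole $x_{1,1}-x_{2,t}$ is introduced while the spurious pole $x_{2,t}-x_{2,t'}$ cancels, leaving a Laurent polynomial numerator times the displayed constant (the powers of $\langle 3\rangle_v$, $\langle 2\rangle_v$, $[2]_v$). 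The precise choice of the twists $v^{\pm3},v^{\pm1},v^{\mp1}$ in~\eqref{rvg2}--\eqref{rvg4} is exactly what forces these cancellations, so I would verify the constants step by step. The $\tilde E^{-}$ case is handled identically (or by the evident symmetry swapping the roles of the leading/trailing exponents $s_1\leftrightarrow s_2$).

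The genuinely laborious case is $\beta=[1,2,1,2,2]$ in~\eqref{rvg5}: here $\tilde E^{\pm}_{\beta,s}=[A,B]_{v^{\mp1}}$ with $A=[e_{1,s_1},e_{2,s_2}]_{v^{\pm3}}$ and $B=\tilde E^{\pm}_{[1,2,2],*}$-type element, so one must first compute $\Psi(A)$ and $\Psi(B)$ by the above, then shuffle them together. This produces a sum over the $\mathfrak S_2$-shuffles of the two $x_1$-variables and the $\binom{3}{1}$ ways of distributing the $x_2$-variables, each weighted by a product of $\zeta$'s; the claim is that after clearing the common denominator $\prod_{r,t}(x_{1,r}-x_{2,t})$ all cross-poles cancel and the numerator collapses to $(x_{1,1}x_{1,2})^{s_1+1}(x_{2,1}x_{2,2}x_{2,3})^{s_2}\cdot g_1$ with $g_1$ the displayed (symmetric, degree-appropriate) polynomial, times $\langle 3\rangle_v^3\langle 2\rangle_v[2]_v$. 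I expect this symmetrization-and-cancellation for the fifth root to be the main obstacle: it is the only place where genuine cancellation among several non-obviously-proportional terms occurs, and verifying that the $v$-dependent coefficients assemble into the stated $g_1$ (and $g_2$ for the minus version) requires careful bookkeeping. A clean way to organize it is to first specialize/normalize using $\mathfrak S_2\times\mathfrak S_3$-symmetry to reduce to checking equality of two symmetric Laurent polynomials of bounded degree, then compare leading terms and a few evaluations; alternatively, one can extract $g_1$ as the image of $\tilde E^{+}_{[1,2,1,2,2],0}$ and note that the general-$s$ case follows by multiplying by the manifestly symmetric monomial factors, since increasing $s_1$ (resp. $s_2$) just multiplies every monomial by $x_{1,1}x_{1,2}$ (resp. $x_{2,1}x_{2,2}x_{2,3}$) after symmetrization.
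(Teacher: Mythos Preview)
Your proposal is correct and is precisely the ``straightforward computation'' that the paper invokes as its entire proof of this lemma. The paper gives no further details, so your step-by-step plan (building up through the chain roots and then handling $[1,2,1,2,2]$ by shuffling $\Psi(\tilde E^{\pm}_{[1,2]})$ against $\Psi(\tilde E^{\pm}_{[1,2,2]})$) is exactly what is intended.
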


\begin{proof}
Straightforward computation.
\end{proof}

For more general quantum root vectors $\{E_{\beta,s}\}_{\beta\in\Delta^{+}}^{s\in\BZ}$ of $U^>_v(L\fg_2)$
defined by \eqref{rootvector2}, their images under $\Psi$ in $S$ are not so well factorized as for
the particular choices above. Nevertheless, what is actually important is that they behave well under the
specialization maps. For $\beta\in\Delta^{+}$, we shall use $\phi_{\beta}$ to denote the specialization map
$\phi_{\unl{d}}$ with $\unl{d}=\{d_{\alpha}=\delta_{\beta,\alpha}\}_{\alpha\in\Delta^{+}}$.

\begin{Lem}\label{Gs1}
For any $s\in \BZ$ and any choices of $s_k$ and $\lambda_k$ in (\ref{rootvector1},~\ref{rootvector2}), we have:
\begin{align}
  & \phi_{[i]}(\Psi(E_{[i],s}))\doteq w_{[i],1}^{s},\ 1\leq i\leq 2, \label{cbetag1}\\
  & \phi_{[1,2]}(\Psi(E_{[1,2],s}))\doteq \langle 3\rangle_{v}\cdot w_{[1,2],1}^{s+1}, \label{cbetag2}\\
  & \phi_{[1,2,2]}(\Psi(E_{[1,2,2],s}))\doteq \langle 3\rangle_{v}\langle 2\rangle_{v}[2]_{v}\cdot w_{[1,2,2],1}^{s+2},
    \label{cbetag3}\\
  & \phi_{[1,2,2,2]}(\Psi(E_{[1,2,2,2],s}))\doteq
    \langle 3\rangle^{2}_{v}\langle 2\rangle_{v}[2]_{v}\cdot w_{[1,2,2,2],1}^{s+3},
    \label{cbetag4}\\
  & \phi_{[1,2,1,2,2]}(\Psi(E_{[1,2,1,2,2],s}))\doteq
    \langle 4\rangle_{v}\langle 3\rangle^{3}_{v}\langle 2\rangle^{2}_{v}[2]_{v} \cdot w_{[1,2,1,2,2],1}^{s+6}.
\label{cbetag5}
\end{align}
Thus, we have:
\begin{equation}
  \phi_{\beta}(\Psi(E_{\beta,s}))\doteq c_{\beta}\cdot w_{\beta,1}^{s+\kappa_{\beta}}
  \qquad \forall\ (\beta,s)\in\Delta^{+}\times\BZ,
\end{equation}
where the constant $c_{\beta}$ is explicitly specified in \eqref{cbetag1}--\eqref{cbetag5},
and $\kappa_{\beta}$ is given by
\begin{equation}\label{kappaG}
  \kappa_{\beta}=
  \begin{cases}
    |\beta|-1  &\ \text{if}\ \beta\neq [1,2,1,2,2]\\
    |\beta|+1 &\ \text{if}\ \beta=[1,2,1,2,2]
  \end{cases}.
\end{equation}
\end{Lem}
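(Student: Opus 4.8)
The plan is to compute the image under the specialization map $\phi_\beta$ of the shuffle element $\Psi(E_{\beta,s})$ for each positive root $\beta\in\Delta^+$ of type $G_2$, starting from the formula for $\Psi(E_{\beta,s})$ produced by iterated $v$-commutators. First I would recall that, under $\Psi$, a $u$-commutator $[x,y]_u$ of shuffle elements $F\in\SF_{\unl k}$, $G\in\SF_{\unl\ell}$ becomes $F\star G - u\cdot G\star F$, which by \eqref{shuffleproduct} is an explicit symmetrization against the $\zeta$-factors. Thus $\Psi(E_{\beta,s})$ is a sum over $\fS_{\unl k+\unl\ell}$-type shuffles of monomials $\prod x_{i,r}^{s_k}$ times products of $\zeta_{i,j}(x_{i,r}/x_{j,t})$ and scalars built from the $\lambda_k$. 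The key structural observation is that, although this expression depends on the choices $s_k$ and $\lambda_k$, \emph{after} applying $\phi_\beta$ — which specializes the $\nu_{\beta,i}$ variables of color $i$ into a single geometric progression $v^{2t}w_{\beta,1}$ resp.\ $v^{-3+2t}w_{\beta,1}$ according to \eqref{speG} — most summands either vanish (a $\zeta_{i,j}$ factor acquires a zero numerator, or two specialized variables coincide forcing antisymmetry) or collapse to a common monomial $w_{\beta,1}^{s+\kappa_\beta}$ times a scalar, because every remaining shuffle permutation contributes the same power of $w_{\beta,1}$ once the specialization is plugged in.

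Concretely, I would proceed root by root following the convex order \eqref{lynorderg}. The cases $\beta=[1],[2]$ are immediate: $E_{[i],s}=e_{i,s}$, $\Psi(e_{i,s})=x_{i,1}^s$, and $\phi_{[i]}$ sends $x_{i,1}\mapsto w_{[i],1}$, giving \eqref{cbetag1} with $c_\beta=1$. For $\beta=[1,2]$, $E_{[1,2],s}=[e_{1,s_1},e_{2,s_2}]_{\lambda_1}$, so $\Psi(E_{[1,2],s})$ is a two-variable expression $\zeta_{1,2}(x_{1,1}/x_{2,1})x_{1,1}^{s_1}x_{2,1}^{s_2}-\lambda_1\zeta_{2,1}(x_{2,1}/x_{1,1})x_{1,1}^{s_1}x_{2,1}^{s_2}$ up to normalization; specializing $x_{1,1}\mapsto v^2 w$, $x_{2,1}\mapsto v^{-1}w$ via \eqref{speG} (here $\nu_{[1,2],1}=\nu_{[1,2],2}=1$) one checks the $\zeta$-values are nonzero constants in $v^{\BZ}$ and the two terms combine, producing $\langle 3\rangle_v\cdot w^{s+1}$ after clearing the denominator $x_{1,1}-x_{2,1}$ from the pole condition; the power $s+1=s+\kappa_{[1,2]}$ with $\kappa_{[1,2]}=1=|[1,2]|-1$ comes from the single factor $(x_{1,1}-x_{2,1})^{-1}$ being specialized to a constant times $w^{-1}$. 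The roots $[1,2,2]$, $[1,2,2,2]$ are handled the same way, now with $\nu_{\beta,2}=2,3$: the relevant shuffle has $k_1=1$ and $k_2=2$ resp.\ $3$, the pole condition contributes $\prod_t(x_{1,1}-x_{2,t})^{-1}$ which specializes to a constant times $w^{-2}$ resp.\ $w^{-3}$, and the numerator, after specialization, reduces to a $v$-scalar times $w^{s+\kappa_\beta}$; I expect to reuse the explicit factored formulas from Lemma~\ref{imageEtildeg} for the particular choices and then argue that the general choices differ only by an overall $v^{\BZ}$-scalar \emph{after specialization} (the $\lambda_k$ only rescale terms that survive uniformly, and shifts in the $s_k$ only redistribute the same total exponent $s$). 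This uniformity-after-specialization is what makes the $\doteq$ robust against the choices.

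The genuinely delicate case is $\beta=[1,2,1,2,2]$, where $\kappa_\beta=|\beta|+1=6$ rather than $|\beta|-1=4$, and where \eqref{rootvector2} is a nested double commutator $[[e_{1},e_{2}]_{\lambda_1},[[e_{1},e_{2}]_{\lambda_2},e_{2}]_{\lambda_3}]_{\lambda_4}$ living in degree $\unl k=(2,3)$. Here $\Psi(E_{\beta,s})$ is a genuine $6$-variable shuffle expression; specializing $x_{1,1}\mapsto v^2w$, $x_{1,2}\mapsto v^4w$, $x_{2,1}\mapsto v^{-1}w$, $x_{2,2}\mapsto vw$, $x_{2,3}\mapsto v^3w$ per \eqref{speG} collapses all six variables onto the single $w=w_{[1,2,1,2,2],1}$. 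The point I would stress is the exponent count: the pole condition gives $\prod_{r=1}^2\prod_{t=1}^3(x_{1,r}-x_{2,t})^{-1}$, which after specialization is a nonzero $v$-scalar times $w^{-6}$; meanwhile the surviving numerator — which in the factored form of Lemma~\ref{imageEtildeg} for the $\pm$ choices is $\langle3\rangle_v^3\langle2\rangle_v[2]_v\cdot(x_{1,1}x_{1,2})^{s_1+1}(x_{2,1}x_{2,2}x_{2,3})^{s_2+1}\cdot g_{1/2}$ with $g_{1/2}$ a specific homogeneous polynomial of degree $4$ — specializes to a $v$-scalar times $w^{2(s_1+1)+3(s_2+1)+4}=w^{2s_1+3s_2+9}=w^{s+9}$, so the total is $w^{s+9-6}=w^{s+3}=w^{s+\kappa_\beta}$ with $\kappa_\beta=6$ since $|\beta|=5$; thus the extra $+2$ in \eqref{kappaG} precisely records the degree-$4$ (rather than degree-$2$) numerator $g$ needed to realize this root vector. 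I expect the main obstacle to be twofold: (i) verifying that $g$ specializes to a \emph{nonzero} multiple of $w^4$ (it could a priori vanish, which would break the claim — one must check the scalar $\langle 4\rangle_v\langle 3\rangle_v^3\langle 2\rangle_v^2[2]_v$ genuinely appears and is nonzero for generic $v$), and (ii) showing the answer is independent of the choices $s_k,\lambda_k$ up to $\doteq$, which for this root requires checking that the non-factored general $\Psi(E_{[1,2,1,2,2],s})$ still has the same leading behavior under $\phi_{[1,2,1,2,2]}$; I would handle (ii) by noting that different admissible choices produce elements of $S_{(2,3)}$ lying in the same rank-one line after specialization — concretely, any two quantum root vectors for $\beta$ differ by elements that $\phi_\beta$ either kills or scales uniformly, an argument parallel to the one in \cite[\S3.2]{Tsy18} for type $A_n$ — so that Lemma~\ref{imageEtildeg}'s explicit computation for one choice pins down the constant $c_\beta$ for all. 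Once all five displayed formulas \eqref{cbetag1}--\eqref{cbetag5} are established, the final uniform statement $\phi_\beta(\Psi(E_{\beta,s}))\doteq c_\beta\cdot w_{\beta,1}^{s+\kappa_\beta}$ with $\kappa_\beta$ as in \eqref{kappaG} follows by inspection, completing the proof.
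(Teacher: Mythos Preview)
Your approach of specializing the explicit formulas from Lemma~\ref{imageEtildeg} is sound for the \emph{particular} choices $\tilde E^\pm_{\beta,s}$, but there are two issues: an arithmetic slip in the $[1,2,1,2,2]$ case, and a genuine gap in the treatment of general choices $s_k,\lambda_k$.

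\textbf{Arithmetic.} Recall that $\phi_{\unl d}$ is defined by specializing the \emph{numerator} $f$ from~\eqref{polecondition}, not the full $F$; no pole factor $w^{-6}$ is to be subtracted. Moreover, you misquote Lemma~\ref{imageEtildeg}: for $\tilde E^+_{[1,2,1,2,2],s}$ the exponent on $(x_{2,1}x_{2,2}x_{2,3})$ is $s_2$, not $s_2+1$, and $g_1$ has degree $4$ while $g_2$ has degree $3$. The correct numerator degree is $2(s_1+1)+3s_2+4=s+6$ (and likewise $2s_1+3(s_2+1)+3=s+6$ for the $-$ case), giving $\kappa_\beta=6$ directly. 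Your displayed chain $w^{s+9-6}=w^{s+3}=w^{s+\kappa_\beta}$ with $\kappa_\beta=6$ is self-contradictory.

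\textbf{The gap: general $\lambda_k,s_k$.} Your step~(ii), that different admissible choices ``differ by elements that $\phi_\beta$ either kills or scales uniformly'', is the crux of the lemma and you do not actually prove it. The paper's mechanism is cleaner and worth adopting: for any $\alpha_1<\alpha_2$ in $\Delta^+$ with $\alpha_1+\alpha_2\in\Delta^+$, one has
\[
  \phi_{\alpha_1+\alpha_2}\big(\Psi(E_{\alpha_1,s_1})\star\Psi(E_{\alpha_2,s_2})\big)=0
  \qquad\forall\,s_1,s_2\in\BZ,
\]
a special case of Proposition~\ref{vanishG}. Applied inductively to $E_{\beta,s}=[E_{\alpha,r},e_{i,t}]_\lambda$ (or to the nested form~\eqref{rootvector2}), this kills the first term of each $v$-commutator under $\phi_\beta$, so only $-\lambda\cdot\Psi(e_{i,t})\star\Psi(E_{\alpha,r})$ survives. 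Since $\lambda\in v^{\BZ}$, the result is manifestly independent of $\lambda_k$ up to $\doteq$; and since the surviving specialization depends on the $s_k$ only through their sum, it is independent of the decomposition $s=\sum s_k$ as well. Your $[1,2]$ discussion already hints at this but misreads it: the two terms do not ``combine'' --- one of them vanishes outright (check that $x_{1,1}-v^3x_{2,1}\mapsto 0$ under~\eqref{speG}), which is precisely why $\lambda_1$ disappears from the answer.
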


\begin{proof}
This follows from straightforward computations and the fact that for any positive roots $\alpha_{1}<\alpha_{2}$
such that $\alpha_{1}+\alpha_{2}$ is a root, we have
  \[\phi_{\alpha_{1}+\alpha_{2}}(\Psi(E_{\alpha_{1},s_{1}})\star \Psi(E_{\alpha_{2},s_{2}}))=0
    \qquad \forall\  s_{1},s_{2}\in\BZ.\ \]
The latter fact is a special case of Proposition \ref{vanishG}.
\end{proof}

Let us now generalize the above lemma by computing $\phi_{\unl{d}}(\Psi(E_{h}))$ for any $h\in H_{\unl{k},\unl{d}}$.
According to~\eqref{reppbwd}, we have:
\begin{equation*}
  \Psi(E_{h})=\prod^{\rightarrow}_{\beta\in\Delta^{+}}
  \left(\Psi(E_{h,r_{\beta}(h,1)})\star\cdots\star \Psi(E_{h,r_{\beta}(h,d_\beta)})\right).
\end{equation*}
Here, the product refers to the shuffle product and the arrow $\rightarrow$ over the product sign refers to the order
\eqref{lynorderg}. Thus, we can choose a special splitting such that the variables in $\Psi(E_{\beta,r_{\beta}(h,s)})$
are taken to be the group $\{x^{(\beta,s)}_{i,t}\}^{1\leq t\leq \nu_{\beta,i}}_{1\leq i\leq 2}$, and under
$\phi_{\unl{d}}$ they are specialized as in \eqref{speG}.

For each $1\leq i\leq 2$, let us consider an order on all the variables
\begin{equation}
  X_{i}=\big\{x^{(\beta,s)}_{i,t} \, \big| \, \beta\in\Delta^{+},1\leq s\leq d_{\beta}, 1\leq t\leq \nu_{\beta,i}\big\},
\label{variablei}
\end{equation}
defined by (cf.~\eqref{orderbetas})
\begin{equation}\label{ordervariable}
  x^{(\beta,s)}_{i,t}<x^{(\beta',s')}_{i,t'} \quad \text{iff} \quad
  (\beta,s)<(\beta',s')\ \text{ or }\ (\beta,s)=(\beta',s'), t<t'.
\end{equation}
Once we fix the above splitting of the $x_{*,*}$-variables, the group $\fS_{\unl{k}}$ acts on each
$X_{i}\ (1\leq i\leq 2)$ by permuting the tuples
\begin{equation}\label{permutationtuples}
  \big\{(\beta,s,t)\in\Delta^{+}\times\BN\times\BN \, \big| \, 1\leq s\leq d_{\beta}, 1\leq t\leq \nu_{\beta,i}\big\}.
\end{equation}

For any $\unl{d}\in\text{KP}(\unl{k})$, let $\text{Sh}_{\unl{d}}\subset \mathfrak{S}_{\unl{k}}$ be the subset of
``$\unl{d}$-shuffle permutations'', defined via:
\begin{equation}\label{eq:d-perm-G}
  \text{Sh}_{\unl{d}}:=
  \Big\{\sigma\in \mathfrak{S}_{\unl{k}} \, \Big| \, \sigma(x^{(\beta,s)}_{i,t})<\sigma(x^{(\beta,s)}_{i,t'})\quad
    \forall\ t<t', \beta\in\Delta^{+},1\leq s\leq d_{\beta},1\leq i\leq 2\Big\},
\end{equation}
where the order $<$ on the variables is defined in \eqref{ordervariable}.
Then, by definition of the shuffle product~\eqref{shuffleproduct}, we have:
\begin{equation}\label{eq:part-symm}
  \Psi(E_{h})\doteq
  \sum_{\sigma\in\text{Sh}_{\unl{d}}} \sigma\big(F_{h}(\{x^{(*,*)}_{*,*}\})\big)=
  \sum_{\sigma\in\text{Sh}_{\unl{d}}} F_{h}\big(\{\sigma(x^{(*,*)}_{*,*})\}\big),
\end{equation}
where
\begin{equation}\label{eq:part-symm-2}
    F_{h}\coloneqq
    \prod_{\substack{\beta\in\Delta^{+}\\1\leq s\leq d_{\beta}}}\Psi(E_{\beta,r_{\beta}(h,s)})
    \prod^{(\beta,p)<(\beta',q)}_{\substack{\beta,\beta'\in\Delta^{+}\\ 1\leq p\leq d_{\beta},1\leq q\leq d_{\beta'}}}
    \prod_{1\leq i,j\leq 2}\prod_{1\leq \ell\leq \nu_{\beta,i}}^{1\leq r\leq \nu_{\beta',j}}
      \frac{x^{(\beta,p)}_{i,\ell}-v_{i}^{-a_{ij}}x^{(\beta',q)}_{j,r}}{x^{(\beta,p)}_{i,\ell}-x^{(\beta',q)}_{j,r}}.
\end{equation}


Let us now evaluate the $\phi_{\unl{d}}$-specialization of each term $\sigma(F_{h})$ in the symmetrization~\eqref{eq:part-symm},
many of which actually vanish. To this end, consider the elements $\sigma\in \fS_{\unl{k}}$ which satisfy the following condition:
as a permutation of the tuples~\eqref{permutationtuples} $\sigma$ fixes the indices $\beta$, $t$ (as well as $i$) and only
permutes the index $s$. In other words, for each $\beta\in \Delta^+$ there is $\sigma_\beta\in \fS_{d_{\beta}}$ such that:
\begin{equation}
   \sigma(x^{(\beta,s)}_{i,t})=x^{(\beta,\sigma_{\beta}(s))}_{i,t}\qquad
   \forall\ \beta\in \Delta^+,\ 1\leq s\leq d_\beta,\ i\in \beta,\ 1\leq t\leq \nu_{\beta,i}.
\label{fsunld}
\end{equation}
Such permutations $\sigma$ form a subgroup of $\fS_{\unl{k}}$ isomorphic to
$\fS_{\unl{d}}\coloneqq\prod_{\beta\in\Delta^{+}}\fS_{d_{\beta}}$, and we shall denote this subgroup simply by $\fS_{\unl{d}}$.

In what follows, given a collection $\mathsf{A}$ of the variables $x^{(*,*)}_{i,*}$ with a fixed index $i$ and a collection
$\mathsf{B}$ of the variables $x^{(*,*)}_{j,*}$ with a fixed index $j$, we shall use the following notation:
\begin{equation}\label{eq:zeta-sets}
  \zeta_{i,j}(\mathsf{A}/\mathsf{B}):=\prod_{x\in \mathsf{A}}^{y\in \mathsf{B}}\zeta_{i,j}(x/y).
\end{equation}
Then, we have:

\begin{Lem}\label{Gs2}
$\phi_{\unl{d}}(\sigma(F_{h}))=0$ for $\sigma\notin\mathfrak{S}_{\unl{d}}$.
 \end{Lem}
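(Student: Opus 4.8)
The idea is to trace which of the linear factors appearing in the denominator of $\sigma(F_h)$ get specialized (via the substitution~\eqref{speG}) into a \emph{vanishing} linear form, i.e. into $0$, while the numerator stays finite, thereby forcing $\phi_{\unl d}(\sigma(F_h))=\infty$ unless those bad specializations are cancelled; but since $\phi_{\unl d}$ is a well-defined map $S_{\unl k}\to \BQ(v)[\{w^{\pm1}_{\beta,s}\}]^{\fS_{\unl d}}$ and $\Psi(E_h)\in S_{\unl k}$, the full symmetrization $\sum_\sigma \sigma(F_h)$ has no pole, so the claim must instead be read as: for $\sigma\notin \fS_{\unl d}$, the specialization of the individual term $\sigma(F_h)$ is genuinely $0$ because its numerator acquires a vanishing factor from a wheel-type condition. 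Concretely, I will first examine how the substitution~\eqref{speG} interacts with the cross-terms $\frac{x^{(\beta,p)}_{i,\ell}-v_i^{-a_{ij}}x^{(\beta',q)}_{j,r}}{x^{(\beta,p)}_{i,\ell}-x^{(\beta',q)}_{j,r}}$ in $F_h$ and with the poles inside each factor $\Psi(E_{\beta,r_\beta(h,s)})$ (whose pole structure is governed by~\eqref{polecondition}). Under the special splitting chosen before the lemma, $\phi_{\unl d}$ applied to $F_h$ itself sends each group $\{x^{(\beta,s)}_{i,t}\}$ to $v^{2t}w_{\beta,s}$ (resp.\ $v^{-3+2t}w_{\beta,s}$), and one checks that no pole of $F_h$ specializes to $0$ — this is exactly the content of the first part of the rank-one reduction package and of Lemma~\ref{Gs1}.

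\textbf{Key steps.} First I would classify the elements $\sigma\in \text{Sh}_{\unl d}$ (the relevant shuffles, since $\Psi(E_h)\doteq \sum_{\sigma\in\text{Sh}_{\unl d}}\sigma(F_h)$) according to whether, after applying $\sigma$ and then the substitution~\eqref{speG}, two variables $x^{(\beta,s)}_{i,t}$ and $x^{(\beta',s')}_{i,t'}$ with $(\beta,s,t)\neq(\beta',s',t')$ get identified or, more importantly, whether a cross-factor numerator $x^{(\beta,p)}_{i,\ell}-v_i^{-a_{ij}}x^{(\beta',q)}_{j,r}$ specializes to $0$. Since $v_i^{-a_{ij}}$ equals a definite power of $v$ determined by the Cartan matrix of $G_2$, and since the image under~\eqref{speG} of each such variable is a monomial $v^{\ast}w_{\ast,\ast}$, such a numerator vanishes exactly when a specific relation among exponents holds; I would show that for $\sigma\in \fS_{\unl d}$ this never happens (the $F_h$ piece was designed so), whereas for $\sigma\notin \fS_{\unl d}$ — meaning $\sigma$ moves some index across the $(\beta,t)$-bookkeeping, not merely permuting the copies $s$ — the permuted term picks up a configuration of the $w$'s forming a ``sub-wheel'' of one of the two wheels in~\eqref{eq:wheel-G2}, and then the wheel conditions satisfied by $\Psi(E_h)\in S$ (Proposition~\ref{morphism}) force the numerator to vanish at that locus. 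The combinatorial heart is therefore: (a) set up the exponent bookkeeping for the substitution~\eqref{speG} carefully for each of the six positive roots of $G_2$; (b) show that a permutation $\sigma$ which is not of the form~\eqref{fsunld} necessarily sends the numerator of some cross-factor, or some factor from a higher root $[1,2,2]$, $[1,2,2,2]$ or $[1,2,1,2,2]$ whose own pole-and-wheel structure is nontrivial, into a vanishing locus; (c) conclude $\phi_{\unl d}(\sigma(F_h))=0$.

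\textbf{Main obstacle.} The delicate point is step (b): I must verify that \emph{every} $\sigma\notin\fS_{\unl d}$ — there are many such shuffles once some $\nu_{\beta,i}\geq 2$, which in $G_2$ happens for $\beta=[1,2,2]$, $[1,2,2,2]$ (with $\nu_{\beta,2}=2,3$) and $\beta=[1,2,1,2,2]$ (with $\nu_{\beta,1}=2,\nu_{\beta,2}=3$) — produces a term killed by the wheel conditions, and I will need to handle the root $[1,2,1,2,2]$ separately since its quantum root vector~\eqref{rootvector2} is not a simple iterated bracket and its shuffle image (governed by $g_1$, $g_2$ in Lemma~\ref{imageEtildeg}) is the least transparent. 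The cleanest route is probably to reduce to a local statement about a pair of adjacent factors in the ordered product~\eqref{eq:part-symm-2}: if $\sigma$ fails~\eqref{fsunld}, then after specialization some $\Psi(E_{\beta,r})$ has two of its own variables, or a variable shared with a $\beta'<\beta$ cross-factor, landing on a wheel locus of the form~\eqref{eq:wheel-G2}, at which point Proposition~\ref{morphism} applied to $\Psi(E_{\beta,r})$ (or to the relevant shuffle sub-product, which again lies in $S$) gives the vanishing. I expect the bulk of the write-up to be a finite but careful case check over the six roots and the possible ``crossing patterns'' of $\sigma$, with the genuinely new input being only the observation that the substitution~\eqref{speG} is engineered precisely so that illegal crossings align with the two $G_2$ wheels.
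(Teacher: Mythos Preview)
Your proposal contains the right germ of an idea — the cross-factor numerators $x^{(\beta,p)}_{i,\ell}-v_i^{-a_{ij}}x^{(\beta',q)}_{j,r}$ in $F_h$ are indeed what drive the vanishing — but it is wrapped in two misconceptions that would derail the write-up.

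\medskip
\textbf{Wheel conditions are not the mechanism here.} You repeatedly invoke the wheel conditions~\eqref{eq:wheel-G2} as the source of vanishing. They are not: the wheel conditions are used later, in Proposition~\ref{spanG}, to control $\phi_{\unl d}(F)$ for an \emph{arbitrary} $F\in S_{\unl k}$. In the present lemma $F_h$ is a specific product~\eqref{eq:part-symm-2}, and the vanishing of $\phi_{\unl d}(\sigma(F_h))$ comes entirely from the numerators of the cross $\zeta$-factors. Concretely, the specialization~\eqref{speG} is rigged so that $\zeta_{1,2}(x^{(\gamma,r)}_{1,t}/x^{(\gamma,r)}_{2,t})$ and $\zeta_{2,2}(x^{(\gamma,r)}_{2,t}/x^{(\gamma,r)}_{2,t+1})$ specialize to $0$ (the ratios become $v^3$ and $v^{-2}$, killing the respective numerators). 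The argument is then purely combinatorial: if $\sigma\notin\fS_{\unl d}$, one locates a variable $x^{(\beta,s)}_{i,t}$ that $\sigma$ sends into a ``foreign'' group $(\gamma,r)$, and shows that $\sigma(F_h)$ then contains one of the above lethal $\zeta$-factors among its cross-terms. No appeal to the wheel vanishing of $\Psi(E_{\beta,r})$ or of $\Psi(E_h)$ is needed, and trying to route the argument through wheels would force you to control the full symmetrized numerator rather than the single term $\sigma(F_h)$.

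\medskip
\textbf{The opening about poles is confused.} Your first paragraph frames the question as ``denominators specialize to $0$ so the term is $\infty$ unless cancelled''. This is backwards: the denominators of all $\zeta$-factors and of the $\Psi(E_{\beta,r})$ specialize to \emph{nonzero} quantities under~\eqref{speG} (the exponent bookkeeping shows no two variables in a single $(\gamma,r)$-block collide), so each $\phi_{\unl d}(\sigma(F_h))$ is a genuine Laurent polynomial in the $w$'s. The task is just to show its numerator vanishes.

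\medskip
\textbf{What the paper actually does.} The clean structure you are missing is an induction on $\beta\in\Delta^+$ along the convex order~\eqref{lynorderg}, combined (for $\beta=[1,2,1,2,2]$) with an inner induction on $s$. At each step one assumes $\sigma$ already behaves as in~\eqref{fsunld} on all smaller $(\beta',s')$, takes the first variable $x^{(\beta,s_0)}_{1,1}$ (or $x^{(\beta,s_0)}_{1,2}$) that could be misplaced, and uses the presence of the factors $\zeta_{1,2}(x^{(\beta,s_0)}_{1,*}/Z^{>(\beta,s_0)}_2)$ and $\zeta_{2,2}(x^{(\beta,s_0)}_{2,*}/Z^{>(\beta,s_0)}_2)$ in $F_h$ to force $\sigma(x^{(\beta,s_0)}_{i,t})$ to land in the correct $(\gamma,r)$-block — otherwise a lethal $\zeta$-factor appears. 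The shuffle condition $\sigma\in\text{Sh}_{\unl d}$ is used to pin down the $t$-index. The case $\beta=[1,2,1,2,2]$ is the only genuinely nontrivial one and requires tracking both $x^{(\beta,s_0)}_{1,1}$ and $x^{(\beta,s_0)}_{1,2}$ simultaneously; your instinct that this root needs separate handling is correct, but again the tool is $\zeta$-factors, not wheels.
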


\begin{proof}
Define the following sets of variables (cf.\ the notation~\eqref{eq:i-in-beta}):
\begin{gather}
  Z^{\beta}_{i}=\big\{x^{(\beta,s)}_{i,t} \, \big| \, 1\leq s\leq d_{\beta}, 1\leq t\leq \nu_{\beta,i}\big\}
    \qquad \forall\ \beta\in\Delta^{+}, i\in \beta, \\
  Z^{>\beta}_{i}=\big\{x^{(\alpha,s)}_{i,t} \, \big| \, \beta<\alpha, 1\leq s\leq d_{\alpha},
    i\in\alpha, 1\leq t\leq \nu_{\alpha,i}\} \qquad \forall\ \beta\in\Delta^{+}, 1\leq i\leq 2.
\end{gather}
It suffices to show that $\phi_{\unl{d}}(\sigma(F_{h}))\neq 0$ only if \eqref{fsunld} holds for every $\beta\in\Delta^{+}$.
We shall prove this by induction on $\beta$ with respect to the increasing order~\eqref{lynorderg}.

We note that $F_{h}$ contains $\zeta$-factors $\zeta_{1,2}(Z^{[1]}_{1}/Z^{>[1]}_{2})$, cf.~\eqref{eq:zeta-sets}, and clearly
$\sigma(Z^{>[1]}_{2})=Z^{>[1]}_{2}$. If $\sigma(Z^{[1]}_{1})\neq Z^{[1]}_{1}$, then there is some $x^{([1],s)}_{1,1}$
such that $\sigma(x^{([1],s)}_{1,1})=x^{(\gamma,r)}_{1,t}$ for some $\gamma>[1]$. In the latter case, $\sigma(F_{h})$
contains the $\zeta$-factor $\zeta_{1,2}(x^{(\gamma,r)}_{1,t}/x^{(\gamma,r)}_{2,t})$ and so $\phi_{\unl{d}}(\sigma(F_{h}))=0$,
which is a contradiction. This establishes \eqref{fsunld} for $\beta=[1]$, which is the base of induction.

Let us now prove \eqref{fsunld} for $\beta$, assuming \eqref{fsunld} holds for all $\alpha<\beta$.
The only nontrivial check is for the case $\beta=[1,2,1,2,2]$. Assuming by induction that for all $s<s_{0}$
there is $s'$ with $1\leq s'\leq d_{\beta}$ such that $\sigma(x^{(\beta,s)}_{i,t})=x^{(\beta,s')}_{i,t}$ for any
$i\in \beta$, $1\leq t\leq \nu_{\beta,i}$, we shall prove the same also holds for $s =s_{0}$.
The base of induction is $s_{0}=1$, in which case the statement is vacuous.

Let $\sigma(x^{(\beta,s_{0})}_{1,1})=x^{(\gamma_{1},r_{1})}_{1,t_{1}}$. By the induction hypothesis, we have
$(\beta,s_{0})\leq (\gamma_{1},r_{1})$. Suppose $\sigma(x^{(\beta_{1},s_{1})}_{2,\ell_{1}})=x^{(\gamma_{1},r_{1})}_{2,t_{1}}$.
If $\beta_{1}<\beta$, then by the induction hypothesis we get $\gamma_{1}=\beta_{1}<\beta$, a contradiction.
On the other hand, if $\beta_{1}> \beta$, then we note that $F_{h}$ contains the $\zeta$-factors
$\zeta_{1,2}(x^{(\beta,s_{0})}_{1,1}/Z^{>\beta}_{2})$, and thus $\sigma(F_{h})$ contains the factor
$\zeta_{1,2}(x^{(\gamma_{1},r_{1})}_{1,t_{1}}/x^{(\gamma_{1},r_{1})}_{2,t_{1}})$, which specializes to zero under
$\phi_{\unl{d}}$, a contradiction. Thus $\beta_{1}=\beta$. Similarly, there is some
$x^{(\beta,s_{2})}_{2,\ell_{2}}$ such that $\sigma(x^{(\beta,s_{2})}_{2,\ell_{2}})=x^{(\gamma_{1},r_{1})}_{2,t_{1}+1}$.
Moreover, if $s_{0}<s_{1}$ or $s_{1}<s_{2}$, then $\sigma(F_{h})$ contains the $\zeta$-factor
$\zeta_{1,2}(x^{(\gamma_{1},r_{1})}_{1,t_{1}}/x^{(\gamma_{1},r_{1})}_{2,t_{1}})$ or
$\zeta_{2,2}(x^{(\gamma_{1},r_{1})}_{2,t_{1}}/x^{(\gamma_{1},r_{1})}_{2,t_{1}+1})$, which implies
$\phi_{\unl{d}}(\sigma(F_{h}))=0$, hence, a contradiction. Thus we have $s_{0}\geq s_{1}\geq s_{2}$.
If $s_{1}<s_{0}$, then by induction hypothesis, we get a contradiction. Similarly, if $s_{2}<s_{0}$,
we also get a contradiction. Thus $s_{0}=s_{1}=s_{2}$. If $\ell_{2}<\ell_{1}$, then
$x^{(\beta,s_{0})}_{2,\ell_{2}}<x^{(\beta,s_{0})}_{2,\ell_{1}}$, and the condition $\sigma\in\text{Sh}_{\unl{d}}$ implies
  \[x^{(\gamma_{1},r_{1})}_{2,t_{1}+1}=\sigma(x^{(\beta,s_{0})}_{2,\ell_{2}})
    < \sigma(x^{(\beta,s_{0})}_{2,\ell_{1}})=x^{(\gamma_{1},r_{1})}_{2,t_{1}},\]
    a contradiction. Thus $\ell_{1}<\ell_{2}$.
Combining all the above, we get $s_{1}=s_{2}=s_{0}$ and $\ell_{1}<\ell_{2}$.

Similarly, let $\sigma(x^{(\beta,s_{0})}_{1,2})=x^{(\gamma_{2},r_{2})}_{1,t_{2}}$. Then, there are
$x^{(\beta,s_{0})}_{2,\ell'_{1}}$, $x^{(\beta,s_{0})}_{2,\ell'_{2}}$ with $\ell'_{1}<\ell'_{2}$ and
  \[\sigma(x^{(\beta,s_{0})}_{2,\ell'_{1}})=x^{(\gamma_{2},r_{2})}_{2,t_{2}},\quad
    \sigma(x^{(\beta,s_{0})}_{2,\ell'_{2}})=x^{(\gamma_{2},r_{2})}_{2,t_{2}+1}.\]
Note that $1\leq \ell_{1}<\ell_{2}\leq 3$ and $1\leq \ell'_{1}<\ell'_{2}\leq 3$.
Then, we have three cases:
\begin{itemize}[leftmargin=*]

\item
if $\ell_{1}=\ell'_{1}$, then $(\gamma_{1},r_{1})=(\gamma_{2},r_{2})$ and $t_{1}=t_{2}$, a contradiction.

\item
if $\ell_{1}>\ell'_{1}$, then $\ell'_{1}=1, \ell_{1}=2, \ell_{2}=3$, $\ell'_{2}=\ell_{1}$ or $\ell_{2}$. In either case,
we get $(\gamma_{1},r_{1})=(\gamma_{2},r_{2})$. However, since $x^{(\beta,s_{0})}_{1,1}<x^{(\beta,s_{0})}_{1,2}$ we have
$t_{1}<t_{2}$; since  $x^{(\beta,s_{0})}_{2,1}<x^{(\beta,s_{0})}_{2,2}$, we have $t_{2}<t_{1}$, which is a contradiction.

\item
if $\ell_{1}<\ell_{2}$, then $\ell_{1}=1, \ell'_{1}=2, \ell'_{2}=3$. Similarly to above, we must have
$(\gamma_{1},r_{1})=(\gamma_{2},r_{2})$, $t_{1}=1$, $t_{2}=2$. Then $\gamma_{1}=\gamma_{2}=\beta$, which is precisely
what we wanted.

\end{itemize}
This completes our proof of the induction step.
\end{proof}

Combining Lemmas \ref{Gs1}--\ref{Gs2}, we obtain the following analogue of Lemma \ref{shuffleelement} for type $G_{2}$:

\begin{Prop}\label{spekpG}
For any $h\in H_{\unl{k},\unl{d}}$, we have
\begin{equation}\label{eq:Gfactors-G2}
  \phi_{\unl{d}}(\Psi(E_{h}))\doteq
  \prod_{\beta,\beta'\in \Delta^+}^{\beta<\beta'}G_{\beta,\beta'} \cdot
  \prod_{\beta\in\Delta^{+}}\big(c_{\beta}^{d_{\beta}}\cdot G_{\beta}\big) \cdot
  \prod_{\beta\in\Delta^{+}}P_{\lambda_{h,\beta}},
\end{equation}
where the factors $\{P_{\lambda_{h,\beta}}\}_{\beta\in\Delta^{+}}$ are given by \eqref{hlp}, the constants
$\{c_{\beta}\}_{\beta\in\Delta^{+}}$ are as in Lemma~{\rm \ref{Gs1}}, and the factors $G_{\beta}$, $G_{\beta,\beta'}$
are explicitly given in the proof below.
\end{Prop}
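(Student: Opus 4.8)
The plan is to compute the specialization $\phi_{\unl{d}}(\Psi(E_h))$ by plugging the symmetrized expression~\eqref{eq:part-symm} into the definition of $\phi_{\unl d}$, using Lemma~\ref{Gs2} to discard all terms $\sigma(F_h)$ with $\sigma\notin\fS_{\unl d}$, and then analyzing what survives. After this reduction, we are left with
  \[\phi_{\unl{d}}(\Psi(E_{h}))\doteq
    \sum_{\sigma\in\fS_{\unl{d}}}\phi_{\unl{d}}\big(\sigma(F_{h})\big),\]
so it remains to evaluate $\phi_{\unl d}(\sigma(F_h))$ for $\sigma\in\fS_{\unl d}$, i.e.\ for permutations that only shuffle the superscript $s$ within each fixed root~$\beta$. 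First I would split the product defining $F_h$ in~\eqref{eq:part-symm-2} into three parts according to whether the two groups of variables involved lie over \emph{the same} root $\beta$ with the same $s$, the same root $\beta$ with distinct $s\neq p$, or two \emph{distinct} roots $\beta<\beta'$.

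The first part, together with the factors $\Psi(E_{\beta,r_\beta(h,s)})$ themselves, specializes (by Lemma~\ref{Gs1}, applied block by block since $\sigma\in\fS_{\unl d}$ fixes each root-block) to $\prod_\beta\prod_{s} c_\beta w_{\beta,\sigma_\beta(s)}^{\,r_\beta(h,s)+\kappa_\beta}$ times the ``intra-block'' cross terms; the latter, once specialized via~\eqref{speG}, produce exactly the rational prefactor $\prod_{1\le i<j\le d_\beta}\frac{w_{\beta,i}-v_\beta^{-2}w_{\beta,j}}{w_{\beta,i}-w_{\beta,j}}$ appearing in~\eqref{hlp} — this is the ``rank $1$ reduction'' observation, and here one checks it directly from the explicit specializations~\eqref{speG} for each of the six roots of $G_2$ (the only subtle cases being $[1,2,1,2,2]$, where $\nu_{\beta,1}=2,\nu_{\beta,2}=3$). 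Summing over $\sigma_\beta\in\fS_{d_\beta}$ assembles these into $\prod_\beta\big(c_\beta^{d_\beta}\cdot P_{\lambda_{h,\beta}}\big)$, using that $P_{\lambda_{h,\beta}}$ is precisely the $A_1$-shuffle product evaluated at $\{w_{\beta,s}\}$. The second and third parts, which only involve variables in distinct groups, become after specialization products of linear factors in the $w_{\beta,s}$ and $w_{\beta,s}-v^{\BZ}w_{\beta',s'}$; these are manifestly independent of $h$ (they depend only on $\unl d$ and the exponents in~\eqref{speG}), $\fS_{\unl d}$-symmetric after summing over $\sigma$, and give the factors $G_\beta$ (same-root, distinct-$s$, plus the $w_{\beta,s}$ monomials coming from wheel-condition-safe leftover powers) and $G_{\beta,\beta'}$ (distinct-root); I would record their explicit form by reading off~\eqref{speG} and~\eqref{eq:part-symm-2}.

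The main obstacle I expect is \emph{bookkeeping rather than conceptual}: one must carefully track which cross-factors $\frac{x^{(\beta,p)}_{i,\ell}-v_i^{-a_{ij}}x^{(\beta',q)}_{j,r}}{x^{(\beta,p)}_{i,\ell}-x^{(\beta',q)}_{j,r}}$ survive the specialization~\eqref{speG} without producing a $0/0$, and show that the genuine poles in the denominators of the $\Psi(E_{\beta,s})$ (which are present, cf.\ Lemma~\ref{imageEtildeg}) are exactly cancelled so that $\phi_{\unl d}(\Psi(E_h))$ is a Laurent polynomial in the $w$'s, as claimed. For the worst root $\beta=[1,2,1,2,2]$ one also has to verify that, after the specialization, the numerator $g_1$ (or its analogue for general $E_{\beta,s}$) collapses to the single constant $\langle4\rangle_v\langle3\rangle_v^3\langle2\rangle_v^2[2]_v$ of~\eqref{cbetag5} up to the stated $w$-monomial — this is the one place where a short but nontrivial computation is unavoidable. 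Once all factors are identified, comparing with~\eqref{hlp} gives~\eqref{eq:Gfactors-G2} and completes the proof; I would then remark that the $\langle1\rangle_v$-power seen in the $A_n$ formula~\eqref{speform} is here absorbed into the constants $c_\beta$ (which is why the statement of Proposition~\ref{spekpG} writes $c_\beta^{d_\beta}$ in place of a separate $\langle1\rangle_v$-power).
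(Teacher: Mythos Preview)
Your proposal is correct and follows essentially the same approach as the paper: restrict to $\sigma\in\fS_{\unl d}$ via Lemma~\ref{Gs2}, observe that the cross-root $\zeta$-factors are $\sigma$-invariant and specialize to $\prod_{\beta<\beta'}G_{\beta,\beta'}$, and then reduce to a direct root-by-root computation showing $\phi_{\unl d}\big(\Psi(E_{\beta,r_\beta(h,1)}\star\cdots\star E_{\beta,r_\beta(h,d_\beta)})\big)\doteq c_\beta^{d_\beta}\,G_\beta\,P_{\lambda_{h,\beta}}$. The paper's write-up is organized as a two-way split (cross-root vs.\ same-root) rather than your three-way split, and simply records the explicit $G_\beta$, $G_{\beta,\beta'}$ from the computation---your ``intra-block cross terms'' paragraph slightly conflates the $G_\beta$ contribution with the rank-$1$ prefactor (both come from the same-root, different-$s$ $\zeta$-factors), but the underlying argument is identical.
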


\begin{proof}
If $\sigma\in\mathfrak{S}_{\unl{d}}$, then in $F_{h}$ the following factor is invariant under $\sigma$:
\begin{equation}
  \prod_{\beta<\beta'}^{\beta,\beta'\in \Delta^+}
  \prod_{1\leq p\leq d_{\beta}}^{1\leq q\leq d_{\beta'}}
  \prod_{1\leq i,j\leq 2}\prod_{1\leq l\leq \nu_{\beta,i}}^{1\leq r\leq \nu_{\beta',j}}
  \frac{x^{(\beta,p)}_{i,l}-v_{i}^{-a_{ij}}x^{(\beta',q)}_{j,r}}{x^{(\beta,p)}_{i,l}-x^{(\beta',q)}_{j,r}},
\end{equation}
and we denote its $\phi_{\unl{d}}$-specialization by $\prod_{\beta<\beta'} G_{\beta,\beta'}$
(with matching indices $\beta,\beta'$). Thus, we only need to prove the following equality for any $\beta\in\Delta^{+}$:
\begin{equation}
  \phi_{\unl{d}}\left(\Psi\Big(E_{\beta,r_{\beta}(h,1)}\star\cdots\star E_{\beta,r_{\beta}(h,d_\beta)}\Big)\right)\doteq
  c_{\beta}^{d_{\beta}}\cdot G_{\beta} \cdot P_{\lambda_{h,\beta}},
\end{equation}
with the factor $G_{\beta}$ being independent of $\sigma\in\mathfrak{S}_{\unl{d}}$.
This is proved by straightforward computation. Explicitly, we have the following formulas:
\vspace{0.2cm}

\noindent
$G_{\beta} \doteq $

\begin{itemize}[leftmargin=0.7cm]

\item
$1$, for $\beta=[1],[2]$,

\item
  $\prod_{1\leq s\neq r\leq d_{\beta}}(w_{\beta,s}-v^{6}w_{\beta,r})\cdot \prod_{\ell=1}^{d_{\beta}} w_{\beta,\ell}$,
for $\beta=[1,2]$,

\item
  $\prod_{1\leq s\neq r\leq d_{\beta}} \big\{(w_{\beta,s}-v^{6}w_{\beta,r})(w_{\beta,s}-v^{4}w_{\beta,r})\big\}
   \cdot \prod_{\ell=1}^{d_{\beta}} w_{\beta,\ell}^{2}$,
for $\beta=[1,2,2]$,

\item
  $\prod_{1\leq s\neq r\leq d_{\beta}} \big\{(w_{\beta,s}-v^{6}w_{\beta,r})(w_{\beta,s}-v^{4}w_{\beta,r})
   (w_{\beta,s}-v^{2}w_{\beta,r})\big\} \cdot \prod_{\ell=1}^{d_{\beta}} w_{\beta,\ell}^{3}$,
for $\beta=[1,2,2,2]$,

\item
  $\prod_{1\leq s\neq r\leq d_{\beta}} \big\{ (w_{\beta,s}-v^{8}w_{\beta,r})(w_{\beta,s}-v^{6}w_{\beta,r})^{2}
   (w_{\beta,s}-v^{4}w_{\beta,r})^{2}(w_{\beta,s}-v^{2}w_{\beta,r})\big\} \cdot \prod_{\ell=1}^{d_{\beta}} w_{\beta,\ell}^{6}$,
for $\beta=[1,2,1,2,2]$.

\end{itemize}

\noindent
$G_{\beta,\beta'} \doteq$
\begin{itemize}[leftmargin=0.7cm]

\item
  $\prod^{1\leq r\leq d_{\beta'}}_{1\leq s\leq d_{\beta}}(w_{\beta,s}-v^{-6}w_{\beta',r})$,
for $\beta=[1]$ and $\beta'=[1,2]$,

\item
  $\prod^{1\leq r\leq d_{\beta'}}_{1\leq s\leq d_{\beta}}
   \big\{(w_{\beta,s}-v^{-6}w_{\beta',r})(w_{\beta,s}-v^{-4}w_{\beta',r})(w_{\beta,s}-v^{4}w_{\beta',r})\big\}$,
for $\beta=[1]$ and $\beta'=[1,2,1,2,2]$,

\item
  $\prod^{1\leq r\leq d_{\beta'}}_{1\leq s\leq d_{\beta}}
   \big\{(w_{\beta,s}-v^{-6}w_{\beta',r})(w_{\beta,s}-v^{2}w_{\beta',r})\big\}$,
for $\beta=[1]$ and $\beta'=[1,2,2]$,

\item
  $\prod^{1\leq r\leq d_{\beta'}}_{1\leq s\leq d_{\beta}}
   \big\{(w_{\beta,s}-v^{-6}w_{\beta',r})(w_{\beta,s}-v^{2}w_{\beta',r})(w_{\beta,s}-v^{4}w_{\beta',r})\big\}$,
for $\beta=[1]$ and $\beta'=[1,2,2,2]$,

\item
  $\prod^{1\leq r\leq d_{\beta'}}_{1\leq s\leq d_{\beta}}(w_{\beta,s}-w_{\beta',r})$,
for $\beta=[1]$ and $\beta'=[2]$,

\item
  $\prod^{1\leq r\leq d_{\beta'}}_{1\leq s\leq d_{\beta}}
   \big\{ (w_{\beta,s}-v^{8}w_{\beta',r})(w_{\beta,s}-v^{-6}w_{\beta',r})(w_{\beta,s}-v^{6}w_{\beta',r})
   (w_{\beta,s}-v^{-4}w_{\beta',r})(w_{\beta,s}-v^{-2}w_{\beta',r})\big\}$,
for $\beta=[1,2]$ and $\beta'=[1,2,1,2,2]$,

\item
  $\prod^{1\leq r\leq d_{\beta'}}_{1\leq s\leq d_{\beta}}
   \big\{ (w_{\beta,s}-v^{-6}w_{\beta',r})(w_{\beta,s}-v^{6}w_{\beta',r}) (w_{\beta,s}-v^{-2}w_{\beta',r})\big\}$,
for $\beta=[1,2]$ and $\beta'=[1,2,2]$,

\item
  $\prod^{1\leq r\leq d_{\beta'}}_{1\leq s\leq d_{\beta}}
   \big\{ (w_{\beta,s}-v^{-6}w_{\beta',r})(w_{\beta,s}-v^{6}w_{\beta',r}) (w_{\beta,s}-v^{-2}w_{\beta',r})
    (w_{\beta,s}-v^{2}w_{\beta',r})\big\}$,
for $\beta=[1,2]$ and $\beta'=[1,2,2,2]$,

\item
  $\prod^{1\leq r\leq d_{\beta'}}_{1\leq s\leq d_{\beta}}(w_{\beta,s}-v^{-2}w_{\beta',r})$,
for $\beta=[1,2]$ and $\beta'=[2]$,

\item
  $\prod^{1\leq r\leq d_{\beta'}}_{1\leq s\leq d_{\beta}}
   \big\{ (w_{\beta,s}-v^{-8}w_{\beta',r})(w_{\beta,s}-v^{-6}w_{\beta',r})^{2}(w_{\beta,s}-v^{6}w_{\beta',r})
   (w_{\beta,s}-v^{-4}w_{\beta',r})(w_{\beta,s}-v^{4}w_{\beta',r})(w_{\beta,s}-v^{2}w_{\beta',r})\big\}$,
for $\beta=[1,2,1,2,2]$ and $\beta'=[1,2,2]$,

\item
  $\prod^{1\leq r\leq d_{\beta'}}_{1\leq s\leq d_{\beta}}
   \big\{(w_{\beta,s}-v^{-8}w_{\beta',r})(w_{\beta,s}-v^{-6}w_{\beta',r})^{2}(w_{\beta,s}-v^{6}w_{\beta',r})
   (w_{\beta,s}-v^{-4}w_{\beta',r})(w_{\beta,s}-v^{4}w_{\beta',r})(w_{\beta,s}-v^{2}w_{\beta',r})^{2}
   (w_{\beta,s}-v^{-2}w_{\beta',r})\big\}$,
for $\beta=[1,2,1,2,2]$ and $\beta'=[1,2,2,2]$,

\item
  $\prod^{1\leq r\leq d_{\beta'}}_{1\leq s\leq d_{\beta}}
   \big\{ (w_{\beta,s}-v^{-6}w_{\beta',r})(w_{\beta,s}-v^{-2}w_{\beta',r})\big\}$,
for $\beta=[1,2,1,2,2]$ and $\beta'=[2]$,

\item
  $\prod^{1\leq r\leq d_{\beta'}}_{1\leq s\leq d_{\beta}}
   \big\{ (w_{\beta,s}-v^{-6}w_{\beta',r})(w_{\beta,s}-v^{6}w_{\beta',r})(w_{\beta,s}-v^{-4}w_{\beta',r})
   (w_{\beta,s}-v^{4}w_{\beta',r})(w_{\beta,s}-v^{-2}w_{\beta',r})\big\}$,
for $\beta=[1,2,2]$ and $\beta'=[1,2,2,2]$,

\item
  $\prod^{1\leq r\leq d_{\beta'}}_{1\leq s\leq d_{\beta}}(w_{\beta,s}-v^{-4}w_{\beta',r})$,
for $\beta=[1,2,2]$ and $\beta'=[2]$,

\item
  $\prod^{1\leq r\leq d_{\beta'}}_{1\leq s\leq d_{\beta}}(w_{\beta,s}-v^{-6}w_{\beta',r})$,
for $\beta=[1,2,2,2]$ and $\beta'=[2]$.

\end{itemize}
This completes our proof.
\end{proof}

\begin{Prop}\label{vanishG}
Lemma {\rm \ref{vanish}}  is valid for type $G_{2}$ and specialization maps $\phi_{\unl{d}}$ as in \eqref{speG}.
\end{Prop}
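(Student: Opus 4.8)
The plan is to carry over, with only cosmetic changes, the argument in the proof of Lemma~\ref{Gs2}, combined with the inductive scheme of the type $A_n$ case (\cite[Lemma~3.16]{Tsy18}). Fix $h\in H_{\unl{k},\unl{d}}$ and $\unl{d}'<\unl{d}$. First I would rewrite $\Psi(E_h)$ as the iterated shuffle product $\Psi(E_{\beta_1,r_1})\star\cdots\star\Psi(E_{\beta_N,r_N})$ of the quantum root vectors taken in the convex order~\eqref{lynorderg}, where $N=\sum_{\beta}d_\beta$, and expand it through the symmetrization in~\eqref{shuffleproduct}. Applying $\phi_{\unl{d}'}$ then turns $\phi_{\unl{d}'}(\Psi(E_h))$ into a finite sum indexed by the ways of distributing the variables of the $\unl{d}'$-groups $\{(\beta',s')\}$ among the $N$ factors (with $\Psi(E_{\beta_k,r_k})$ receiving exactly $\nu_{\beta_k,i}$ variables of colour~$i$), and it suffices to prove that every summand vanishes.

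The vanishing mechanism is the one already exploited in the proof of Lemma~\ref{Gs2}. By~\eqref{speG}, any two variables placed in the same $\unl{d}'$-group have ratio equal to a fixed power of $v$, on which the $\zeta$-factors take the values $\zeta_{1,2}(v^3)=0$ (colour~$1$ against colour~$2$ at equal positions) and $\zeta_{2,2}(v^{-2})=0$ (colour~$2$ against colour~$2$ at consecutive positions); the parities of the exponents in~\eqref{speG} guarantee that these appear as honest constant factors, never cancelled by a pole. Hence, in any summand in which some $\unl{d}'$-group $(\beta',s')$ has its variables split between two distinct factors $\Psi(E_{\beta_k,r_k})$ and $\Psi(E_{\beta_{k'},r_{k'}})$, one of the cross-$\zeta$ factors between these two factors specializes to $0$, so the summand vanishes. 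Running the induction along~\eqref{lynorderg} with the same bookkeeping of within-group positions and of the shuffle condition as in Lemma~\ref{Gs2}, one moreover obtains the following ``alignment'' statement: for every root $\gamma$, in a surviving summand the variables of each factor $\Psi(E_{\beta,r})$ with $\beta\leq\gamma$ are assigned to $\unl{d}'$-groups $(\beta',s')$ with $\beta'\leq\gamma$.

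It remains to use the hypothesis $\unl{d}'<\unl{d}$. Let $\gamma\in\Delta^+$ be the smallest root in the order~\eqref{lynorderg} with $d'_\gamma\neq d_\gamma$; then $d'_\gamma<d_\gamma$ and $d'_\beta=d_\beta$ for all $\beta<\gamma$. Fix a distribution. If it violates the alignment statement for this $\gamma$, the associated summand vanishes. Otherwise all the variables of the factors $\Psi(E_{\beta,r})$ with $\beta\leq\gamma$ sit in the $\unl{d}'$-groups $(\beta',s')$ with $\beta'\leq\gamma$; but these factors carry $\sum_{\beta\leq\gamma}d_\beta\,\nu_{\beta,i}$ variables of colour~$i$, whereas the groups $(\beta',s')$ with $\beta'\leq\gamma$ hold only $\sum_{\beta'<\gamma}d_{\beta'}\nu_{\beta',i}+d'_\gamma\,\nu_{\gamma,i}$ of them, which is strictly fewer for every $i\in\gamma$ since $d'_\gamma<d_\gamma$. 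Thus some group $(\beta',s')$ with $\beta'\leq\gamma$ must be split, and the summand vanishes once more. Therefore $\phi_{\unl{d}'}(\Psi(E_h))=0$.

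I expect the main obstacle to be the induction underlying the alignment statement for the long root $\gamma=[1,2,1,2,2]$, where a single $\unl{d}'$-group carries two colour-$1$ and three colour-$2$ variables: exactly as in the most involved case of the proof of Lemma~\ref{Gs2}, one has to track carefully the interplay of the cross-$\zeta$ factors, the within-group positions, and the shuffle condition in order to rule out every misaligned distribution. This is the only point where the precise shape of the wheel conditions~\eqref{eq:wheel-G2} and of the $G_2$ root system genuinely intervene.
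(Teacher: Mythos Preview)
Your overall strategy matches the paper's: both rerun the induction of Lemma~\ref{Gs2} against the $\unl{d}'$-splitting, so that in a surviving summand each factor $\Psi(E_{\beta,r})$ is forced into a $\unl{d}'$-group of the \emph{same} root $\beta$; since $d'_\gamma<d_\gamma$ at the first discrepancy $\gamma$ (with $d'_\alpha=d_\alpha$ for $\alpha<\gamma$), some factor for $\gamma$ must land in a group of strictly larger root, and the case analysis of Lemma~\ref{Gs2} (which you rightly flag as the crux, especially at $[1,2,1,2,2]$) then produces the zero. The paper makes this explicit by stripping off the matched $\alpha<\gamma$ part and running the five cases for the first leftover $\gamma$-factor.

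Two points in your write-up need fixing. First, the claim ``if a $\unl{d}'$-group is split between two factors, one of the cross-$\zeta$ factors specializes to $0$'' is false as stated. In a $[1,2,2,2]$-group, put $x'^{(\beta',s')}_{2,3}$ in an earlier factor and $x'^{(\beta',s')}_{1,1},x'^{(\beta',s')}_{2,1},x'^{(\beta',s')}_{2,2}$ in a later one: the three cross-$\zeta$'s are $\zeta_{2,1}(v),\,\zeta_{2,2}(v^4),\,\zeta_{2,2}(v^2)$, none of which vanish. What the Lemma~\ref{Gs2} induction actually yields is the exact matching (factor root $=$ group root), which is stronger than ``groups are unsplit'' and is what you need; the split-group heuristic should be dropped. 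Second, your closing contradiction is misrouted. If alignment holds at $\gamma$, then the variables of the factors with $\beta\le\gamma$ inject (via $\sigma$) into the variables of the groups with $\beta'\le\gamma$; your inequality $\sum_{\beta\le\gamma}d_\beta\nu_{\beta,i}>\sum_{\beta'\le\gamma}d'_{\beta'}\nu_{\beta',i}$ already makes this impossible, so alignment cannot hold and every summand vanishes. There is no step ``some group with $\beta'\le\gamma$ must be split'', and you cannot invoke the (false) split-group claim here.
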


\begin{proof}
Given $\unl{d},\unl{d}'\in \mathrm{KP}(\unl{k})$ with $\unl{d}'<\unl{d}$, and any $\sigma\in\mathfrak{S}_{\unl{k}}$,
we will show that $\phi_{\unl{d}'}(\sigma(F_{h}))=0$. Let $x^{(*,*)}_{*,*}$ be the above special splitting of the
variables $\{x_{i,l}\}_{i=1,2}^{1\leq l\leq k_i}$ for $\phi_{\unl{d}}$, see~(\ref{eq:part-symm},~\ref{eq:part-symm-2}),
and let $x'^{(*,*)}_{*,*}$ be any splitting of the variables $\{x_{i,l}\}_{i=1,2}^{1\leq l\leq k_i}$ for $\phi_{\unl{d}'}$,
see~\eqref{splitvariable}. We consider the following sets of $x^{(*,*)}_{*,*}$-variables:
\begin{equation}\label{Zbetas}
  Z^{>(\beta,s)}_{i}=\big\{x^{(\alpha,r)}_{i,*} \, \big| \, (\alpha,r)>(\beta,s) \big\}
  \qquad \forall\ (\beta,s)\in\Delta^{+}\times\BN, 1\leq i\leq 2.
\end{equation}

Let $\beta$ be the smallest positive root such that $d'_{\beta}<d_{\beta}$. Following Lemma \ref{Gs2}, we can assume
that for any $\alpha\leq \beta$ the property \eqref{fsunld} holds for the variables $x'^{(\alpha,*)}_{*,*}$, since
otherwise the $\phi_{\unl{d}'}$-specialization is zero. Thus, without loss of generality, we can assume $d'_{\alpha}=0$
for all $\alpha\leq \beta$. Then, we have the following case-by-case analysis:
\begin{itemize}[leftmargin=0.7cm]

\item
Case $\beta=[1]$. Let $\sigma(x^{(\beta,1)}_{1,1})=x'^{(\gamma,r)}_{1,t}$ for some $\gamma>\beta$. Since $F_{h}$ contains
the factor $\zeta_{1,2}(x^{(\beta,1)}_{1,1}/Z^{>(\beta,1)}_{2})$, and $x'^{(\gamma,r)}_{2,t}\in \sigma(Z^{>(\beta,1)}_{2})$,
we get $\phi_{\unl{d}'}(\sigma(F_{h}))=0$.

\item
Case $\beta=[1,2]$. Let $\sigma(x^{(\beta,1)}_{1,1})=x'^{(\gamma,r)}_{1,t}$ for some $\gamma>\beta$. Since $F_{h}$
contains the factor $\zeta_{1,2}(x^{(\beta,1)}_{1,1}/Z^{>(\beta,1)}_{2})$, we have $\phi_{\unl{d}'}(\sigma(F_{h}))=0$
unless $\sigma(x^{(\beta,1)}_{2,1})=x'^{(\gamma,r)}_{2,t}$. In this case, we again get $\phi_{\unl{d}'}(\sigma(F_{h}))=0$
as $F_{h}$ contains the factor $\zeta_{2,2}(x^{(\beta,1)}_{2,1}/Z^{>(\beta,1)}_{2})$ and
$x'^{(\gamma,r)}_{2,t+1}\in \sigma(Z^{>(\beta,1)}_{2})$.

\item
Case $\beta=[1,2,1,2,2]$. Let $\sigma(x^{(\beta,1)}_{1,1})=x'^{(\gamma_{1},r_{1})}_{1,t_{1}}$ for some $\gamma_{1}>\beta$.
Then, we get $\phi_{\unl{d}'}(\sigma(F_{h}))=0$ unless there is some $1\leq \ell_{1}<\ell_{2}\leq 3$
such that $\sigma(x^{(\beta,1)}_{2,\ell_{1}})=x'^{(\gamma_{1},r_{1})}_{2,t_{1}}$,
$\sigma(x^{(\beta,1)}_{2,\ell_{2}})=x'^{(\gamma_{1},r_{1})}_{2,t_{1}+1}$. Similarly, let
$\sigma(x^{(\beta,1)}_{1,2})=x'^{(\gamma_{2},r_{2})}_{1,t_{2}}$ for some $\gamma_{2}>\beta$, then
$\phi_{\unl{d}'}(\sigma(F_{h}))=0$ unless there is some $1\leq \ell_{3}<\ell_{4}\leq 3$ such that
$\sigma(x^{(\beta,1)}_{2,\ell_{3}})=x'^{(\gamma_{2},r_{2})}_{2,t_{2}}$,
$\sigma(x^{(\beta,1)}_{2,\ell_{4}})=x'^{(\gamma_{2},r_{2})}_{2,t_{2}+1}$. This implies that
$(\gamma_{1},r_{1})=(\gamma_{2},r_{2})$, $t_{1}=1$, $t_{2}=2$, thus contradicting $\gamma_{1}>\beta$.
Hence, $\phi_{\unl{d}'}(\sigma(F_{h}))=0$.

\item
Case $\beta=[1,2,2]$. Let $\sigma(x^{(\beta,1)}_{1,1})=x'^{(\gamma,r)}_{1,1}$ for some $\gamma>\beta$.
Then, we have $\phi_{\unl{d}'}(\sigma(F_{h}))=0$ unless $\sigma(x^{(\beta,1)}_{2,1})=x'^{(\gamma,r)}_{2,1}$
and $\sigma(x^{(\beta,1)}_{2,2})=x'^{(\gamma,r)}_{2,2}$. As $F_{h}$ contains the factor
$\zeta_{2,2}(x^{(\beta,1)}_{2,2}/Z^{>(\beta,1)}_{2})$ and $x'^{(\gamma,r)}_{2,3}\in \sigma(Z^{>(\beta,1)}_{2})$,
we thus again obtain $\phi_{\unl{d}'}(\sigma(F_{h}))=0$.

\item
Case $\beta=[1,2,2,2]$. This case is impossible with our assumptions $d'_{\leq \beta}=0$ and $d_{\beta}\ne 0$, as we have
  $d'_{[2]}\cdot [2] = \sum_{\alpha\in \Delta^+} d'_\alpha \cdot \alpha = \sum_{i=1}^2 k_i \alpha_i =
   \sum_{\alpha\in \Delta^+} d_\alpha \cdot \alpha = d_{[1,2,2,2]}\cdot [1,2,2,2] + d_{[2]}\cdot [2]$.

\end{itemize}
This completes our proof.
\end{proof}

\begin{Prop}\label{spanG}
Lemma {\rm \ref{span}}  is valid for type $G_{2}$ and specialization maps $\phi_{\unl{d}}$ as in \eqref{speG}.
\end{Prop}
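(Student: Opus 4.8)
The plan is to mimic the proof of Lemma \ref{span} in type $A_n$ from \cite[\S3.2.3]{Tsy18}, using the $G_2$-specific tools (the specialization formula \eqref{speG}, the factorization formula of Proposition \ref{spekpG}, and the vanishing Proposition \ref{vanishG}) developed above. Fix $F\in S_{\unl{k}}$ and $\unl{d}\in\text{KP}(\unl{k})$ with $\phi_{\unl{d}'}(F)=0$ for all $\unl{d}'<\unl{d}$. The first step is to identify the shape of $\phi_{\unl{d}}(F)$: having killed all the lower specializations, one checks that $\phi_{\unl{d}}(F)$ is divisible by all the $G_{\beta,\beta'}$ and $G_\beta$ factors listed in the proof of Proposition \ref{spekpG} --- this is the ``wheel condition forces divisibility'' argument, exactly parallel to \cite{Tsy18}, where the vanishing of the lower $\phi_{\unl{d}'}$ (by Proposition \ref{vanishG}) and the wheel conditions \eqref{eq:wheel-G2} combine to show that the numerator of $\phi_{\unl{d}}(F)$ vanishes along the appropriate diagonals $w_{\beta,s}=v^{\mathbb Z}w_{\beta',s'}$ and $w_{\beta,s}=v^{\mathbb Z}w_{\beta,r}$. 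Dividing out these known factors, one is left with an element
\[
  \phi_{\unl{d}}(F)\ \doteq\
  \prod_{\beta<\beta'}G_{\beta,\beta'}\cdot\prod_{\beta\in\Delta^+}\big(c_\beta^{d_\beta}G_\beta\big)\cdot
  Q\big(\{w_{\beta,s}\}\big),
\]
where $Q$ is a Laurent polynomial, $\fS_{\unl{d}}$-symmetric in each block $\{w_{\beta,s}\}_{s=1}^{d_\beta}$, and --- crucially --- the extra factor $\prod_{1\le i<j\le d_\beta}\frac{w_{\beta,i}-v_\beta^{-2}w_{\beta,j}}{w_{\beta,i}-w_{\beta,j}}$ appearing in \eqref{hlp} can be absorbed so that $Q$ factors through the rank-one shuffle algebras of type $A_1$ (one copy of $S^{A_1}$ for each $\beta\in\Delta^+$, in the variables $\{w_{\beta,s}\}$). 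This is the ``rank $1$ reduction'' already flagged after Lemma \ref{shuffleelement}.

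The second step is to invoke the known type $A_1$ result: by \cite[\S3.2.1]{Tsy18} the shuffle algebra of type $A_1$ coincides with the image of $\Psi$, so each block-component of $Q$ lies in the span of the $P_{\lambda_{h_\beta}}$ of \eqref{hlp}; running over all $\beta$ and taking products, $Q$ lies in the span of $\prod_{\beta}P_{\lambda_{h,\beta}}$ over $h\in H_{\unl{k},\unl{d}}$. Combining with Proposition \ref{spekpG}, this exhibits $\phi_{\unl{d}}(F)$ as a $\BQ(v)$-linear combination $\phi_{\unl{d}}(F)=\phi_{\unl{d}}(F_{\unl{d}})$ with $F_{\unl{d}}\in S'_{\unl{k}}$ a linear combination of the $\Psi(E_h)$, $h\in H_{\unl{k},\unl{d}}$. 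Finally, by Proposition \ref{vanishG} (the $G_2$-analogue of Lemma \ref{vanish}) each such $\Psi(E_h)$ with $\deg(h)=\unl{d}$ satisfies $\phi_{\unl{d}'}(\Psi(E_h))=0$ for all $\unl{d}'<\unl{d}$, hence so does $F_{\unl{d}}$; this gives the last required property and completes the proof.

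The main obstacle I expect is the first step --- verifying that the lower-vanishing hypothesis plus the $G_2$ wheel conditions \eqref{eq:wheel-G2} force $\phi_{\unl{d}}(F)$ to be divisible by exactly the product of all the explicit $G_{\beta,\beta'}$ and $G_\beta$ factors catalogued in Proposition \ref{spekpG}. The subtlety is bookkeeping: the long root $[1,2,1,2,2]$ has $\nu_{\beta,1}=2,\ \nu_{\beta,2}=3$, so its block carries a nontrivial internal structure (the polynomial $g_1$, resp.\ $g_2$, from Lemma \ref{imageEtildeg}), the factors $G_\beta$ for $\beta=[1,2,1,2,2]$ involve the high powers $(w_{\beta,s}-v^8w_{\beta,r})(w_{\beta,s}-v^6w_{\beta,r})^2\cdots$, and one must be careful that after dividing these out the residual $Q$ still factors through the $A_1$-shuffle algebras without spurious poles or missing zeros. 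One handles this, as in \cite{Tsy18}, by specializing the $w$-variables along the diagonals dictated by the wheel conditions \eqref{eq:wheel-G2}, using the vanishing of $\phi_{\unl{d}'}(F)$ for the Kostant partition $\unl{d}'$ obtained from $\unl{d}$ by merging two parts, and tracking the order of vanishing; the explicit forms of $G_\beta,G_{\beta,\beta'}$ from the proof of Proposition \ref{spekpG} tell us exactly which diagonals and with what multiplicities must be checked.
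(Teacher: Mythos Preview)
Your plan is correct and matches the paper's approach exactly: the paper reduces to the local verification you flag as the ``main obstacle'' (divisibility of $\phi_{\unl{d}}(F)$ by each $G_\beta$ and $G_{\beta,\beta'}$), and carries it out by the very mechanism you describe---for each pair $(\beta\le\beta')$ it checks the required vanishing factor-by-factor, some coming directly from the wheel conditions~\eqref{eq:wheel-G2} under $\phi_{\unl{d}}$, and the remaining ones from $\phi_{\unl{d}'}(F)=0$ for an explicitly chosen smaller Kostant partition $\unl{d}'$ (your ``merge two parts'' move). The rank-$1$ reduction and the appeal to Proposition~\ref{vanishG} for the final clause are likewise identical to the paper's argument.
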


\begin{proof}
The wheel conditions~\eqref{eq:wheel-G2} for $F \in S_{\unl{k}}$ guarantee that $\phi_{\unl{d}}(F)$
(which is a Laurent polynomial in the variables $\{w_{\beta,s}\}$) vanishes under specific
specializations $w_{\beta,s}=v^\#\cdot w_{\beta',s'}$. To evaluate the aforementioned powers $\#$ of $v$
and the orders of vanishing, let us view $\phi_{\unl{d}}$ as a step-by-step specialization in each
interval $[\beta]$, ordered in the decreasing order with respect to
\begin{equation*}\label{eq:order-rootandinteger}
  (\beta,s)<(\beta',s') \quad \mathrm{iff}\quad \beta<\beta' \ \ \mathrm{or}\ \ \beta=\beta' \ \mathrm{and}\ s<s' \,.
\end{equation*}
We note that this computation is local with respect to any fixed pair $(\beta,s)\leq (\beta',s')$. Consider
\begin{equation}
  \unl{d}_{1}=\{d_{\beta}=2,d_{\alpha}=0,\alpha\neq \beta\}\in\text{KP}(\unl{k}_{1}),
\end{equation}
\begin{equation}
  \unl{d}_{2}=\{d_{\beta}=d_{\beta'}=1,d_{\alpha}=0, \alpha\neq \beta,\beta'\}\in\text{KP}(\unl{k}_{2}).
\end{equation}
For any $F_{1}\in S_{\unl{k}_{1}}, F_{2}\in S_{\unl{k}_{2}}$, it thus suffices to show that $\phi_{\unl{d}_{1}}(F_{1})$
is divisible by $G_{\beta}$ if $\phi_{\unl{d}}(F_{1})=0$ for any $\unl{d}<\unl{d}_{1}$ and $\phi_{\unl{d}_{2}}(F_{2})$
is divisible by $G_{\beta,\beta'}$ if $\phi_{\unl{d}}(F_{2})=0$ for any $\unl{d}<\unl{d}_{2}$, cf.~\eqref{eq:Gfactors-G2}.

\medskip

For $\unl{d}_{1}=\{d_{\beta}=2,\ d_{\alpha\neq \beta}=0\}$, the nontrivial cases are
$\beta=[1,2,2],[1,2,2,2],[1,2,1,2,2]$.
\begin{itemize}[leftmargin=*]

\item
$\beta=[1,2,2]$. For $F_{1}$, under $\phi_{\unl{d}_{1}}$ the wheel condition $F_{1}=0$ once
$x^{(\beta,1)}_{1,1}=v^{6}x^{(\beta,2)}_{1,1}=v^{3}x^{(\beta,1)}_{2,1}$  becomes $\phi_{\unl{d}_{1}}(F_{1})=0$
once $w_{\beta,1}=v^{6}w_{\beta,2}$, thus we get the vanishing factor $(w_{\beta,1}-v^{6}w_{\beta,2})$;
the wheel condition $F_{1}=0$ once
  $x^{(\beta,1)}_{2,2}=v^{2}x^{(\beta,1)}_{2,1}=v^{4}x^{(\beta,2)}_{2,2}=v^{6}x^{(\beta,2)}_{2,1}=v^{3}x^{(\beta,2)}_{1,1}$
becomes $\phi_{\unl{d}_{1}}(F_{1})=0$ once $w_{\beta,1}=v^{4}w_{\beta,2}$, thus we get the vanishing factor
$(w_{\beta,1}-v^{4}w_{\beta,2})$. Since $\phi_{\unl{d}_{1}}(F_{1})$ is symmetric with respect to $w_{\beta,1}$ and
$w_{\beta,2}$, we also have the vanishing factor $(w_{\beta,2}-v^{6}w_{\beta,1})$ and $(w_{\beta,2}-v^{4}w_{\beta,1})$,
thus we get the vanishing factor $G_{\beta}$.

\item
$\beta=[1,2,2,2]$. Besides the vanishing factors appearing in $G_{[1,2,2]}$, the wheel condition $F_{1}=0$ once
$x^{(\beta,1)}_{2,3}=v^{2}x^{(\beta,1)}_{2,2}=v^{4}x^{(\beta,1)}_{2,1}=v^{6}x^{(\beta,2)}_{2,1}=v^{3}x^{(\beta,2)}_{1,1}$
becomes $\phi_{\unl{d}_{1}}(F_{1})=0$ once $w_{\beta,1}=v^{2}w_{\beta,2}$, thus we get the vanishing factor
$(w_{\beta,1}-v^{2}w_{\beta,2})(w_{\beta,2}-v^{2}w_{\beta,1})$, and we get all the vanishing factors in $G_{\beta}$.

\item
$\beta=[1,2,1,2,2]$. Besides the vanishing factors in $G_{[1,2,2,2]}$, the wheel conditions $F_{1}=0$ once
$x^{(\beta,1)}_{1,1}=v^{6}x^{(\beta,2)}_{1,2}=v^{3}x^{(\beta,1)}_{2,1}$ or
$x^{(\beta,1)}_{1,2}=v^{6}x^{(\beta,2)}_{1,2}=v^{3}x^{(\beta,1)}_{2,2}$ or
$x^{(\beta,1)}_{1,2}=v^{6}x^{(\beta,2)}_{1,1}=v^{3}x^{(\beta,1)}_{2,2}$ give us the rest vanishing factors in $G_{\beta}$.

\end{itemize}

For $\unl{d}_{2}=\{d_{\beta}=d_{\beta'}=1,\ d_{\alpha\neq \beta,\beta'}=0\}$, take $F_{2}\in S_{\unl{k}}$, then
\begin{itemize}[leftmargin=*]

\item
$(\beta,\beta')=([1],[1,2])$. Under $\phi_{\unl{d}_{2}}$ the wheel condition $F_{2}=0$ once
$x^{(\beta',1)}_{1,1}=v^{6}x^{(\beta,1)}_{1,1}=v^{3}x^{(\beta',1)}_{2,1}$ becomes $\phi_{\unl{d}_{2}}(F_{2})=0$
once $w_{\beta,1}=v^{-6}w_{\beta',1}$, thus we have the factor $G_{\beta,\beta'}$.

\item
$(\beta,\beta')=([1],[1,2,1,2,2])$. Besides the factor $G_{[1],[1,2]}$, the wheel condition
$x^{(\beta',1)}_{1,2}=v^{6}x^{(\beta,1)}_{1,1}=v^{3}x^{(\beta',1)}_{2,2}$ gives the vanishing factor
$w_{\beta,1}=v^{-4}w_{\beta',1}$. Let $\unl{d}_{3}=\{d_{[1,2]}=3, d_{\alpha\neq [1,2]}=0\}$, then $\unl{d}_{3}<\unl{d}_{2}$,
by $\phi_{\unl{d}_{3}}(F_{2})=0$ we know $\phi_{\unl{d}_{2}}(F_{2})=0$ once $w_{\beta,1}=v^{4}w_{\beta',1}$.

\item
$(\beta,\beta')=([1],[1,2,2])$. Besides the factor $G_{[1],[1,2]}$, let
$\unl{d}_{3}=\{d_{[1,2]}=2, d_{\alpha\neq [1,2]}=0\}$, then $\unl{d}_{3}<\unl{d}_{2}$, by $\phi_{\unl{d}_{3}}(F_{2})=0$
we know $\phi_{\unl{d}_{2}}(F_{2})=0$ once $w_{\beta,1}=v^{2}w_{\beta',1}$.

\item
$(\beta,\beta')=([1],[1,2,2,2])$. Besides the factor $G_{[1],[1,2]}$, let
$\unl{d}_{3}=\{d_{[1,2,1,2,2]}=1, d_{\alpha\neq [1,2,1,2,2]}=0\}$, then $\unl{d}_{3}<\unl{d}_{2}$,
by $\phi_{\unl{d}_{3}}(F_{2})=0$ we know $\phi_{\unl{d}_{2}}(F_{2})=0$ once $w_{\beta,1}=v^{2}w_{\beta',1}$.
Let $\unl{d}_{4}=\{d_{[1,2]}=1, d_{[1,2,2]}=1, d_{\alpha\neq [1,2],[1,2,2]}=0\}$, then $\unl{d}_{4}<\unl{d}_{2}$,
by $\phi_{\unl{d}_{4}}(F_{2})=0$ we know $\phi_{\unl{d}_{2}}(F_{2})=0$ once $w_{\beta,1}=v^{4}w_{\beta',1}$.

\item
$(\beta,\beta')=([1],[2])$. Let $\unl{d}_{3}=\{d_{[1,2]}=1, d_{\alpha\neq [1,2]}=0\}$, then $\unl{d}_{3}<\unl{d}_{2}$,
by $\phi_{\unl{d}_{3}}(F_{2})=0$ we know $\phi_{\unl{d}_{2}}(F_{2})=0$ once $w_{\beta,1}=w_{\beta',1}$.

\item
$(\beta,\beta')=([1,2],[1,2,1,2,2])$. The factors in $G_{[1,2],[1,2,1,2,2]}$ all appear in $G_{[1,2,1,2,2]}$,
and they appear due to similar wheel conditions.

\item
$(\beta,\beta')=([1,2],[1,2,2])$. Besides the factors in $G_{[1,2]}$, let
$\unl{d}_{3}=\{d_{[1,2,1,2,2]}=1, d_{\alpha\neq [1,2,1,2,2]}=0\}$, then $\unl{d}_{3}<\unl{d}_{2}$,
by $\phi_{\unl{d}_{3}}(F_{2})=0$ we know $\phi_{\unl{d}_{2}}(F_{2})=0$ once $w_{\beta,1}=v^{-2}w_{\beta',1}$.

\item
$(\beta,\beta')=([1,2],[1,2,2,2])$. Besides the factors in $G_{[1,2]}$, let
$\unl{d}_{3}=\{d_{[1,2,1,2,2]}=1, d_{[2]}=1, d_{\alpha\neq [1,2,1,2,2],[2]}=0\}$,
$\unl{d}_{4}=\{d_{[1,2,2]}=2, d_{\alpha\neq [1,2,2]}=0\}$, then $\unl{d}_{3},\unl{d}_{4}<\unl{d}_{2}$.
By $\phi_{\unl{d}_{3}}(F_{2})=0$ we know $\phi_{\unl{d}_{2}}(F_{2})=0$ once $w_{\beta,1}=v^{-2}w_{\beta',1}$,
and by $\phi_{\unl{d}_{4}}(F_{2})=0$ we know $\phi_{\unl{d}_{2}}(F_{2})=0$ once $w_{\beta,1}=v^{2}w_{\beta',1}$,
thus we get the vanishing factor $G_{[1,2],[1,2,2,2]}$.

\item
$(\beta,\beta')=([1,2],[2])$. Let $\unl{d}_{3}=\{d_{[1,2,2]}=1, d_{\alpha\neq [1,2,2]}=0\}$, then
$\unl{d}_{3}<\unl{d}_{2}$, by $\phi_{\unl{d}_{3}}(F_{2})=0$ we know $\phi_{\unl{d}_{2}}(F_{2})=0$ once
$w_{\beta,1}=v^{-2}w_{\beta',1}$.

\item
$(\beta,\beta')=([1,2,1,2,2],[1,2,2])$. The factors in $G_{[1,2,1,2,2],[1,2,2]}$ all appear in $G_{[1,2,1,2,2]}$,
and they appear due to similar wheel conditions.

\item
$(\beta,\beta')=([1,2,1,2,2],[1,2,2,2])$. Besides the factors appearing in $G_{[1,2,1,2,2]}$,
let $\unl{d}_{3}=\{d_{[1,2,2]}=3, d_{\alpha\neq [1,2,2]}=0\}$, then $\unl{d}_{3}<\unl{d}_{2}$,
by $\phi_{\unl{d}_{3}}(F_{2})=0$ we know $\phi_{\unl{d}_{2}}(F_{2})=0$ once $w_{\beta,1}=v^{2}w_{\beta',1}$.

\item
$(\beta,\beta')=([1,2,1,2,2],[2])$. Besides the factors appearing in $G_{[1,2,1,2,2]}$,
let $\unl{d}_{3}=\{d_{[1,2,2]}=2, d_{\alpha\neq [1,2,2]}=0\}$, then $\unl{d}_{3}<\unl{d}_{2}$,
by $\phi_{\unl{d}_{3}}(F_{2})=0$ we know $\phi_{\unl{d}_{2}}(F_{2})=0$ once $w_{\beta,1}=v^{-2}w_{\beta',1}$.

\item
$(\beta,\beta')=([1,2,2],[1,2,2,2])$. The factors in $G_{[1,2,2],[1,2,2,2]}$ all appear in $G_{[1,2,2,2]}$,
and they appear due to similar wheel conditions.

\item
$(\beta,\beta')=([1,2,2],[2])$. Let $\unl{d}_{3}=\{d_{[1,2,2,2]}=1, d_{\alpha\neq [1,2,2,2]}=0\}$, then
$\unl{d}_{3}<\unl{d}_{2}$, by $\phi_{\unl{d}_{3}}(F_{2})=0$ we know $\phi_{\unl{d}_{2}}(F_{2})=0$ once
$w_{\beta,1}=v^{-4}w_{\beta',1}$.

\item
$(\beta,\beta')=([1,2,2,2],[2])$. The wheel condition $F_{2}=0$ once
$x^{(\beta',1)}_{2,1}=v^{2}x^{(\beta,1)}_{2,3}=v^{4}x^{(\beta,1)}_{2,2}=v^{6}x^{(\beta,1)}_{2,1}=v^{3}x^{(\beta,1)}_{1,1}$
becomes $\phi_{\unl{d}_{2}}(F_{2})=0$ once $w_{\beta,1}=v^{-6}w_{\beta',1}$.

\end{itemize}
This completes our proof.
\end{proof}


Combining Propositions \ref{spekpG}--\ref{spanG}, we immediately obtain the shuffle algebra realization
and the PBWD theorem for $\qfg$:

\begin{Thm}\label{shufflePBWDG}
(a) $\Psi\colon \qfg \,\iso\, S$ of~\eqref{eq:Psi-homom} is a $\BQ(v)$-algebra isomorphism.

\medskip
\noindent
(b) For any choices of $s_k$ and $\lambda_k$ in the definition~(\ref{rootvector1},~\ref{rootvector2})
of quantum root vectors $E_{\beta,s}$, the ordered PBWD monomials $\{E_{h}\}_{h\in H}$ from \eqref{PBWDbases}
form a $\BQ(v)$-basis of $\qfg$.
\end{Thm}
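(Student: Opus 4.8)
The plan is to follow, mutatis mutandis, the type $A_n$ argument of~\cite[\S3.2]{Tsy18}: the $G_2$-analogues of Lemmas~\ref{shuffleelement},~\ref{vanish},~\ref{span} are precisely Propositions~\ref{spekpG},~\ref{vanishG},~\ref{spanG} established above, and beyond these the only ingredients needed are the validity of the theorem in type $A_1$ (see~\cite[\S3.2.1]{Tsy18}) and~\cite[Proposition~1.6]{Tsy22}. Since $\Psi$ is injective by Proposition~\ref{inj} and sends each $E_h$ from~\eqref{PBWDbases} into the graded piece $S_{\mathrm{gr}(h)}$, both parts of the theorem reduce to the single claim that $\{\Psi(E_h)\}_{h\in H}$ is a $\BQ(v)$-basis of $S$: once this is known, $\mathrm{Im}(\Psi)\supseteq\mathrm{span}_{\BQ(v)}\{\Psi(E_h)\}=S$ forces $\Psi$ onto, hence an isomorphism (part~(a)), and transporting this basis back along $\Psi^{-1}$ gives part~(b). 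So the whole task is to prove linear independence and spanning of $\{\Psi(E_h)\}_{h\in H}$ in $S$.

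For linear independence I would take a homogeneous relation $\sum_{h\in H_{\unl{k}}}c_h\Psi(E_h)=0$, choose $\unl{d}\in\mathrm{KP}(\unl{k})$ minimal with respect to~\eqref{eq:KP-order} among the degrees $\mathrm{deg}(h)$ with $c_h\neq 0$, and apply $\phi_{\unl{d}}$. By Proposition~\ref{vanishG} every term with $\mathrm{deg}(h)>\unl{d}$ dies and, by minimality, no term has $\mathrm{deg}(h)<\unl{d}$, so $\sum_{h\in H_{\unl{k},\unl{d}}}c_h\,\phi_{\unl{d}}(\Psi(E_h))=0$. Proposition~\ref{spekpG} then writes $\phi_{\unl{d}}(\Psi(E_h))=a_h\cdot C\cdot\prod_{\beta\in\Delta^{+}}P_{\lambda_{h,\beta}}$, where $a_h\in\BQ^{\times}\cdot v^{\BZ}$ and $C:=\prod_{\beta}c_{\beta}^{d_{\beta}}\cdot\prod_{\beta<\beta'}G_{\beta,\beta'}\cdot\prod_{\beta}G_{\beta}$ is a fixed nonzero element independent of $h\in H_{\unl{k},\unl{d}}$; cancelling $C$ reduces the relation to $\sum_{h\in H_{\unl{k},\unl{d}}}a_h c_h\prod_{\beta}P_{\lambda_{h,\beta}}=0$. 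As $h\mapsto(\lambda_{h,\beta})_{\beta\in\Delta^{+}}$ ranges bijectively over all tuples of multisets of integers of sizes $(d_{\beta})_{\beta}$, and each $P_{\lambda_{h,\beta}}$ is the rank~$1$ reduction of a PBWD monomial of the shuffle algebra of type $A_1$ in the disjoint variable set $\{w_{\beta,s}\}$, the linear independence of $A_1$-PBWD monomials (see~\cite[\S3.2.1]{Tsy18}) forces every $c_h$ to vanish, contradicting the choice of $\unl{d}$.

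For spanning I would set $S'_{\unl{k}}:=\mathrm{span}_{\BQ(v)}\{\Psi(E_h):h\in H_{\unl{k}}\}\subseteq S_{\unl{k}}$ and prove $S'_{\unl{k}}=S_{\unl{k}}$ by an induction over Kostant partitions. Enumerate $\mathrm{KP}(\unl{k})=\{\unl{d}_1<\cdots<\unl{d}_N\}$ in the total order~\eqref{eq:KP-order}; given $F\in S_{\unl{k}}$ I would construct $F_1,\dots,F_N\in S'_{\unl{k}}$ recursively so that $\phi_{\unl{d}_i}\!\big(F-\sum_{\ell\le j}F_{\ell}\big)=0$ for all $i\le j$: at step $j+1$ this vanishing gives $\phi_{\unl{d}'}\!\big(F-\sum_{\ell\le j}F_{\ell}\big)=0$ for every $\unl{d}'<\unl{d}_{j+1}$, whence Proposition~\ref{spanG} yields $F_{j+1}\in S'_{\unl{k}}$ matching $\phi_{\unl{d}_{j+1}}$ and vanishing under all smaller $\phi_{\unl{d}'}$, which closes the recursion. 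After $N$ steps $F-\sum_{\ell}F_{\ell}$ is killed by every $\phi_{\unl{d}}$, $\unl{d}\in\mathrm{KP}(\unl{k})$; evaluating at the maximal Kostant partition — the one supported on simple roots, which is the $<$-maximum since $[1]$ is the least root in~\eqref{lynorderg} so that $d_{[1]}<k_1$ for every other partition — the map $\phi_{\unl{d}}$ merely rescales and renames the variables of the numerator~\eqref{polecondition}, and~\cite[Proposition~1.6]{Tsy22} gives $F=\sum_{\ell}F_{\ell}\in S'_{\unl{k}}$. Combining the two steps yields that $\{\Psi(E_h)\}_{h\in H}$ is a $\BQ(v)$-basis of $S$, and the theorem follows as explained.

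I expect that the only genuine obstacle has already been overcome in Propositions~\ref{spekpG}--\ref{spanG}, and in particular in the lengthy case analysis behind Proposition~\ref{spanG}; the assembly above is essentially bookkeeping, the two delicate points being the correct use of the \emph{minimal}, respectively \emph{maximal}, Kostant partition in the two steps, and the observation that the rank~$1$ reduction faithfully transfers the type $A_1$ linear independence into the $w_{\beta,*}$-variables.
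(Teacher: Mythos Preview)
Your proposal is correct and follows essentially the same approach as the paper: the paper's proof consists of the single sentence ``Combining Propositions~\ref{spekpG}--\ref{spanG}, we immediately obtain the shuffle algebra realization and the PBWD theorem for $\qfg$'', relying on exactly the two auxiliary inputs you name (the type $A_1$ case from~\cite[\S3.2.1]{Tsy18} and the vanishing criterion via the maximal Kostant partition from~\cite[Proposition~1.6]{Tsy22}), which are spelled out just after Lemma~\ref{span}. You have simply unpacked this bookkeeping explicitly, and your identification of the simple-root partition as the $<$-maximum and your use of the minimal partition in the independence step are both correct.
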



\subsection{Integral form $\integralgl$ and its shuffle algebra realization}\label{lusg}

Let us consider the {\em divided powers}
\begin{equation}\label{dividedpowers}
  \mathbf{E}_{i,r}^{(k)}\coloneqq \frac{e_{i,r}^{k}}{[k]_{v_{i}}!} \quad \forall\ 1\leq i\leq 2,\  r\in\mathbb{Z},\ k\in\BN.
\end{equation}
Following \cite[\S 7.7]{Gro94}, we define the  integral form $\integralgl$ as the $\BZ[v,v^{-1}]$-subalgebra of $\qfg$
generated by $\{\mathbf{E}_{i,r}^{(k)}\}_{1\leq i\leq 2, r\in\BZ}^{k\in\BN}$. For any $(\beta,s)\in\Delta^{+}\times \BZ$,
we define the \emph{normalized divided powers} of the quantum root vectors from~\eqref{rvg1}--\eqref{rvg5} via:
\begin{equation}\label{looplusg}
  \tilde{\mathbf{E}}_{\beta,s}^{\pm,(k)}\coloneqq
  \begin{cases}
    \frac{(\tilde{E}_{\beta,s}^{\pm})^{k}}{[k]_{v_{\beta}}!} &\quad \text{if}\ \beta=[1],[2],[1,2]\\
    \frac{(\tilde{E}_{\beta,s}^{\pm})^{k}}{([2]_{v}!)^{k}[k]_{v_{\beta}}!} &\quad \text{if}\ \beta=[1,2,2]\\
    \frac{(\tilde{E}_{\beta,s}^{\pm})^{k}}{([3]_{v}!)^{k}[k]_{v_{\beta}}!} &\quad \text{if}\   \beta=[1,2,2,2],[1,2,1,2,2]
  \end{cases}.
\end{equation}
Similarly to \cite[Proposition 1.2]{Tsy22}, we have:

\begin{Prop}\label{integralrvg}
For any $\beta\in\Delta^{+}$, $s\in\BZ$, $k\in\BN$, we have $\tilde{\mathbf{E}}_{\beta,s}^{\pm,(k)}\in \integralgl$.
\end{Prop}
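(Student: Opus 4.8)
The plan is to reduce the claim $\tilde{\mathbf{E}}_{\beta,s}^{\pm,(k)}\in\integralgl$ to a statement about the shuffle algebra, exploiting the isomorphism $\Psi\colon\qfg\iso S$ of Theorem~\ref{shufflePBWDG}(a). Concretely, I would first define the \emph{integral shuffle algebra} $\mathbf{S}\subset S$ to be the $\BZ[v,v^{-1}]$-subalgebra generated by the elements $x_{i,1}^r\ (1\leq i\leq 2,\ r\in\BZ)$ together with all ``divided-power'' symmetric monomials $\frac{1}{[k]_{v_i}!}\,\mathrm{Sym}_{\fS_k}\big(x_{i,1}^{r_1}\cdots x_{i,k}^{r_k}\prod_{a<b}\frac{x_{i,a}-v_i^{-2}x_{i,b}}{x_{i,a}-x_{i,b}}\big)$, which are precisely $\Psi(\mathbf{E}_{i,r}^{(k)})$ up to the rank-one computation underlying Lemma~\ref{imageEtildeg}. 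Thus $\Psi(\integralgl)\subseteq\mathbf{S}$ by construction, and it suffices to show that $\Psi(\tilde{\mathbf{E}}_{\beta,s}^{\pm,(k)})\in\mathbf{S}$ for every $\beta\in\Delta^+$, $s\in\BZ$, $k\in\BN$.

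\textbf{Key steps.} The argument then proceeds by induction on the height $|\beta|$, processing the roots $[1],[2],[1,2],[1,2,2],[1,2,2,2],[1,2,1,2,2]$ in increasing order. For $\beta=[1],[2]$ the claim is the definition of $\mathbf{E}_{i,s}^{(k)}$. For the remaining roots I would use the explicit shuffle-product formulas of Lemma~\ref{imageEtildeg}: each $\Psi(\tilde E_{\beta,s}^{\pm})$ is an honest element of $S$ whose numerator is a Laurent polynomial with coefficients in $\BZ[v,v^{-1}]$ (not just $\BQ(v)$), and one checks that the normalizing factors $([2]_v!)^{-k}$ or $([3]_v!)^{-k}$ in~\eqref{looplusg} are exactly what is needed to make $\Psi(\tilde{\mathbf{E}}_{\beta,s}^{\pm,(k)})=\frac{1}{[k]_{v_\beta}!}\Psi(\tilde E_{\beta,s}^\pm)^{\star k}$ lie in $\mathbf{S}$. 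The mechanism is the same ``rank-one reduction'' highlighted after Lemma~\ref{shuffleelement}: the $k$-th shuffle power of $\Psi(\tilde E_{\beta,s}^\pm)$ has a factorized form in which a copy of the type-$A_1$ shuffle element $x^{r}\star\cdots\star x^{r}$ appears, whose division by $[k]_{v_\beta}!$ is integral by the $A_1$ case from~\cite[\S3.2.1]{Tsy18}; the surviving cross-terms between the $k$ groups of variables are integral Laurent polynomials, and the prefactors $\langle 2\rangle_v,\langle 3\rangle_v,[2]_v$ appearing in Lemma~\ref{imageEtildeg} account for the extra denominators. One must also verify that $\Psi(\tilde E_{\beta,s}^\pm)$ itself, and more generally the relevant products, actually land in the integral span $\mathbf{S}$ rather than merely in $S$; this follows from expressing $\tilde E_{\beta,s}^\pm$ via iterated $v$-commutators of the $e_{i,r}$ as in~\eqref{rvg1}--\eqref{rvg5} and observing that each such commutator is a $\BZ[v,v^{-1}]$-combination of products of generators.

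\textbf{Main obstacle.} The delicate point — and the step I expect to require the most care — is the root $\beta=[1,2,1,2,2]$ with its $g_1,g_2$ polynomials: here $\tilde E_{\beta,s}^\pm$ involves \emph{two} copies of $e_{1,s_1}$ and three copies of $e_{2,s_2}$, so the shuffle power $\Psi(\tilde E^\pm_{\beta,s})^{\star k}$ mixes $2k$ variables $x_{1,\ast}$ and $3k$ variables $x_{2,\ast}$, and one must show that after dividing by $[k]_{v_\beta}!=[k]_{v^3}!$ (recall $d_1=3$) and by $([3]_v!)^k$ the result is still integral. The presence of the non-monomial numerator $g_1$ (resp. $g_2$) means the rank-one reduction is not literally a shuffle power of a monomial; instead I would argue that $g_1$ is divisible, after symmetrization over the $k$ groups, by the appropriate products of $(x_{1,a}-v^\#x_{1,b})$ and $(x_{2,a}-v^\#x_{2,b})$ factors, using the wheel conditions~\eqref{eq:wheel-G2} exactly as in the proof of Proposition~\ref{spanG}, and that the resulting quotient times $\frac{1}{[k]_{v^3}!}$ is integral by a direct $q$-binomial estimate. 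An alternative, cleaner route — which I would pursue if the direct computation becomes unwieldy — is to mimic \cite[Proposition~1.2]{Tsy22} verbatim: show by induction that $\integralgl$ is stable under the relevant $v$-commutator operations and contains the divided powers of all $\tilde E^\pm_{\beta,s}$ simultaneously, deducing the $[1,2,1,2,2]$ case from the lower roots together with the defining relation~\eqref{rvg5} and the already-established integrality of $\tilde{\mathbf{E}}^{\pm,(k)}_{[1,2],s}$, $\tilde{\mathbf{E}}^{\pm,(k)}_{[1,2,2],s}$, $\mathbf{E}^{(k)}_{2,s}$ via the quantum Serre relations and Chevalley-type identities in $\qfg$.
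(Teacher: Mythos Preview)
Your approach has a genuine gap. You define $\mathbf{S}$ as the $\BZ[v,v^{-1}]$-subalgebra of $S$ generated by the $\Psi(\mathbf{E}_{i,r}^{(k)})$; since $\Psi$ is an algebra map and $\integralgl$ is generated by the $\mathbf{E}_{i,r}^{(k)}$, this $\mathbf{S}$ equals $\Psi(\integralgl)$ on the nose, so the reduction ``show $\Psi(\tilde{\mathbf{E}}_{\beta,s}^{\pm,(k)})\in\mathbf{S}$'' is tautologically the original claim transported through $\Psi$. The ``key steps'' then conflate two quite different properties: having a numerator in $\BZ[v,v^{-1}][x^{\pm1}_{*,*}]$ versus lying in the subalgebra generated by the divided powers. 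Checking that $\Psi(\tilde E^\pm_{\beta,s})^{\star k}$ divided by $[k]_{v_\beta}!\cdot([c]_v!)^k$ has integral Laurent-polynomial coefficients (even granting the rank-one reduction) does not express it as a $\BZ[v,v^{-1}]$-combination of products of the $\Psi(\mathbf{E}_{i,r}^{(m)})$. If instead you take $\mathbf{S}$ to be the integral shuffle algebra defined later by divisibility conditions~(\ref{li1G},~\ref{li2G}), the identification $\Psi(\integralgl)=\mathbf{S}$ of Theorem~\ref{srig} itself requires Proposition~\ref{integralrvg}, so that route is circular.

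The paper's argument avoids shuffle algebras entirely and reduces to the \emph{finite} quantum group $\qgg$. For each $(s_1,s_2)\in\BZ^2$ the assignment $E_i\mapsto e_{i,s_i}$ defines an algebra homomorphism $\eta_{s_1,s_2}\colon\qgg\to\qfg$ (the $v$-Serre relations follow from~\eqref{serreloop}), which carries the Lusztig form $\qggi$ into $\integralgl$. One then invokes Lusztig's theorem \cite[Theorem~6.6]{Lus90} that divided powers of the braid-group root vectors $\hat E^\pm_\beta$ lie in $\qggi$, together with the explicit scalar comparison $\tilde E^\pm_\beta = c_\beta\cdot\hat E^\pm_\beta$ with $c_\beta\in\{1,[2]_v!,[3]_v!\}$ supplied by \cite[Proposition~5.5.2]{LS91} and \cite[Theorem~4.2]{BKM14}; the normalizations in~\eqref{looplusg} are chosen precisely to cancel these $c_\beta$. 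Your ``alternative route'' via Chevalley-type identities is closer in spirit, but the actual content --- that these specific normalizations suffice --- is exactly what the Lusztig/Levendorskii--Soibelman/BKM machinery provides, and you have not indicated an independent argument.
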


\begin{proof}
Let $\qgg$ be the ``positive subalgebra'' of the Drinfeld-Jimbo quantum group of $\fg_{2}$. Thus, $\qgg$ is the
$\BQ(v)$-algebra generated by $\{E_{1},E_{2}\}$ subject to the $v$-Serre relations:
\begin{equation}
  \sum_{k=0}^{1-a_{ij}} (-1)^{k} \left[\begin{matrix} 1-a_{ij}\\k\end{matrix}\right]_{v_{i}}
  E_{i}^{k}E_{j}E_{i}^{1-a_{ij}-k}=0,\quad i\neq j.
\end{equation}
Let $\qggi$ be the \emph{Lusztig integral form} defined as the $\BZ[v,v^{-1}]$-subalgebra of $\qgg$ generated
by the divided powers
  \[E_{i}^{(k)}\coloneqq \frac{E_{i}^{k}}{[k]_{v_{i}}!}\qquad \forall\ 1\leq i\leq 2,\  k\in\BN.\]

Recall our specific convex order~\eqref{lynorderg} on $\Delta^{+}$. Let $\{\hat{E}_{\beta}^{-}\}_{\beta\in\Delta^{+}}$
denote Lusztig's quantum root vectors of $\qgg$ associated to this convex order (defined through the use
of Lusztig's braid group action, see~\cite[\S37.1.3]{Lus-book}). According to~\cite[Theorem 6.6]{Lus90}, we have:
\begin{equation}
  \hat{\mathbf{E}}_{\beta}^{-,(k)}\coloneqq \frac{(\hat{E}^{-}_{\beta})^{k}}{[k]_{v_{\beta}}!}\in\qggi
  \qquad \forall\ \beta\in\Delta^{+},\ k\in\BN.\label{lusrvg}
\end{equation}
On the other hand, let us define another set of quantum root vectors $\{\tilde{E}^{-}_{\beta}\}_{\beta\in\Delta^{+}}$
in $\qgg$ using $v$-commutators similar to \eqref{rvg1}--\eqref{rvg5}:
\begin{equation}
\begin{aligned}
  & \tilde{E}^{-}_{[i]}\coloneqq E_{i},\ 1\leq i\leq 2;\quad  \tilde{E}^{-}_{[1,2]}\coloneqq [E_{1},E_{2}]_{v^{-3}};
    \quad \tilde{E}^{-}_{[1,2,2]}\coloneqq [[E_{1},E_{2}]_{v^{-3}},E_{2}]_{v^{-1}};\\
  & \tilde{E}^{-}_{[1,2,2,2]}\coloneqq [[[E_{1},E_{2}]_{v^{-3}},E_{2}]_{v^{-1}},E_{2}]_{v};\
    \tilde{E}^{-}_{[1,2,1,2,2]}\coloneqq [[E_{1},E_{2}]_{v^{-3}},[[E_{1},E_{2}]_{v^{-3}},E_{2}]_{v^{-1}}]_{v}.
\label{lynrvf}
\end{aligned}
\end{equation}
Due to \cite[Proposition 5.5.2]{LS91}, the quantum root vectors $\tilde{E}^{-}_{\beta}$ and $\hat{E}^{-}_{\beta}$
differ only by a scalar multiple for any $\beta\in\Delta^{+}$. These scalars are determined explicitly for Drinfeld-Jimbo
quantum groups of any simple Lie algebra $\fg$ in \cite[Theorem 4.2]{BKM14} (note that the parameter $q$ of~\cite{BKM14}
equals our $v^{-1}$). Specifically, in our case we have:
\begin{equation}
  \tilde{E}^{-}_{\beta}=
  \begin{cases}
    \hat{E}^{-}_{\beta} & \quad \text{if}\ \beta=[1],[2],[1,2]\\
    [2]_{v}!\hat{E}^{-}_{\beta} &\quad \text{if}\ \beta=[1,2,2]\\
    [3]_{v}!\hat{E}^{-}_{\beta} & \quad \text{if}\ \beta=[1,2,2,2],[1,2,1,2,2]
  \end{cases}.
\label{scalarluslyn}
\end{equation}

Let us now pass from the finite to the loop setup. First, we note that comparing the coefficients of
$z_{1}^{-s_{1}}\cdots z_{1-a_{ij}}^{-s_{1}}w^{-s_{2}}$ in \eqref{serreloop} for any $s_{1},s_{2}\in\BZ$, we obtain:
  \[\sum_{k=0}^{1-a_{ij}}(-1)^{k}\left[\begin{matrix} 1-a_{ij}\\k\end{matrix}\right]_{v_{i}}
    e_{i,s_{1}}^{k}e_{j,s_{2}}e_{i,s_{1}}^{1-a_{ij}-k}=0,\quad i\neq j.\]
Thus, the assignment $E_{1}\mapsto e_{1,s_{1}},E_{2}\mapsto e_{2,s_{2}}$ gives rise to an algebra homomorphism
  \[\eta_{s_{1},s_{2}}\colon \qgg\longrightarrow \qfg.\]
Clearly, we have $\eta_{s_{1},s_{2}}(\qggi)\subset \integralgl$.
Combining \eqref{lusrvg}--\eqref{scalarluslyn} with~\eqref{looplusg}, we thus~get:
\begin{equation*}
  \tilde{\mathbf{E}}^{-,(k)}_{\beta,s}=\eta_{s_{1},s_{2}}(\hat{\mathbf{E}}_{\beta}^{-,(k)})\in \integralgl.
\end{equation*}

To prove the other inclusions $\tilde{\mathbf{E}}^{+,(k)}_{\beta,s}\in \integralgl$, let us consider the convex order
on $\Delta^{+}$ opposite to \eqref{lynorderg}:
\begin{equation}
  [2]<[1,2,2,2]<[1,2,2]<[1,2,1,2,2]<[1,2]<[1].
\end{equation}
Let $\{\hat{E}^{+}_{\beta}\}_{\beta\in\Delta^{+}}$ denote the set of Lusztig's quantum root vectors associated to
that convex order, and define another set of quantum root vectors $\{\tilde{E}^{+}_{\beta}\}_{\beta\in\Delta^{+}}$
via $v$-commutators:
\begin{equation}
\begin{aligned}
  & \tilde{E}^{+}_{[i]}\coloneqq E_{i},\ 1\leq i\leq 2;\
    \tilde{E}^{+}_{[1,2]}\coloneqq [E_{2},E_{1}]_{v^{-3}}\doteq [E_{1},E_{2}]_{v^{3}};\\
  & \tilde{E}^{+}_{[1,2,2]}\coloneqq [E_{2},[E_{2},E_{1}]_{v^{-3}}]_{v^{-1}}\doteq [[E_{1},E_{2}]_{v^{3}},E_{2}]_{v};\\
  & \tilde{E}^{+}_{[1,2,2,2]}\coloneqq [E_{2},[E_{2},[E_{2},E_{1}]_{v^{-3}}]_{v^{-1}}]_{v}\doteq
    [[[E_{1},E_{2}]_{v^{3}},E_{2}]_{v},E_{2}]_{v^{-1}} ;\\
  & \tilde{E}^{+}_{[1,2,1,2,2]}\coloneqq [[E_{2},[E_{2},E_{1}]_{v^{-3}}]_{v^{-1}},[E_{2},E_{1}]_{v^{-3}}]_{v}\doteq
    [[E_{1},E_{2}]_{v^{3}},[[E_{1},E_{2}]_{v^{3}},E_{2}]_{v}]_{v^{-1}}.
\end{aligned}
\end{equation}
Then, due to~\cite[Theorem 6.6]{Lus90} and~\cite[Theorem 4.2]{BKM14} the analogues of~\eqref{lusrvg}
and~\eqref{scalarluslyn} with the superscript $-$ replaced by $+$ hold. Therefore, we likewise obtain:
  \[\tilde{\mathbf{E}}^{+,(k)}_{\beta,s}\doteq \eta_{s_{1},s_{2}}(\hat{\mathbf{E}}_{\beta}^{+,(k)})\in \integralgl.\]
This completes our proof.
\end{proof}

For any $\unl{k}\in\BN^{2}$, consider the $\BZ[v,v^{-1}]$-submodule $\mathbf{S}_{\unl{k}}$ of $S_{\unl{k}}$
consisting of rational functions $F$ satisfying the following two conditions:
\begin{enumerate}[leftmargin=1cm]

\item
If $f$ denotes the numerator of $F$ from \eqref{polecondition}, then
\begin{equation}
  f\in \BZ[v,v^{-1}][\{x_{i,r}^{\pm 1}\}_{1\leq i\leq 2}^{1\leq r\leq k_{i}}]^{\mathfrak{S}_{\underline{k}}}.
\label{li1G}
\end{equation}

\item
For any $\unl{d}\in\text{KP}(\underline{k})$, the specialization $\phi_{\unl{d}}(F)$ is divisible by the product
\begin{equation}
  \prod_{\beta\in\Delta^{+}}{\tilde{c}_{\beta}}^{d_{\beta}},
\label{li2G}
\end{equation}
where we define $\{\tilde{c}_{\beta}\}_{\beta\in\Delta^{+}}$ via $\{c_\beta\}_{\beta\in \Delta^+}$ of Lemma~\ref{Gs1}:
\begin{equation}
  \tilde{c}_{\beta}=
  \begin{cases}
    c_{\beta} & \quad \text{if}\  \beta=[1],[2],[1,2]\\
    \frac{c_{\beta}}{[2]_{v}!} & \quad \text{if}\ \beta=[1,2,2]\\
    \frac{c_{\beta}}{[3]_{v}!} & \quad \text{if}\ \beta=[1,2,2,2],[1,2,1,2,2]
  \end{cases}.
\label{ctildebeta}
\end{equation}
\end{enumerate}
We define $\mathbf{S}:=\bigoplus_{\unl{k}\in\BN^{2}}\mathbf{S}_{\unl{k}}$. Then, we have:

\begin{Prop}\label{lintegralG}
$\Psi(\integralgl) \subset \mathbf{S}$.
\end{Prop}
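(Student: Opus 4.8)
The plan is to deduce the inclusion from two separate facts: (a) every generator $\Psi(\mathbf{E}_{i,r}^{(k)})$ of $\Psi(\integralgl)$ lies in $\mathbf{S}$; and (b) $\mathbf{S}$ is a $\BZ[v,v^{-1}]$-subalgebra of $S$ under the shuffle product $\star$. Granting these, since $\integralgl$ is generated over $\BZ[v,v^{-1}]$ by the divided powers $\mathbf{E}_{i,r}^{(k)}$ and $\Psi$ is an algebra homomorphism (Proposition~\ref{morphism}), one gets $\Psi(\integralgl)\subseteq\mathbf{S}$ at once. This follows the template of the type-$A_{n}$ argument in~\cite{Tsy18}.

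For (a), I would run the rank-one computation. By Proposition~\ref{morphism}, $\Psi(e_{i,r})=x_{i,1}^{r}$, hence $\Psi(\mathbf{E}_{i,r}^{(k)})=(x_{i,1}^{r})^{\star k}/[k]_{v_{i}}!$, and since all factors are supported at the single node $i$ this shuffle power unravels to
\[
  (x_{i,1}^{r})^{\star k}
  ={\mathop{Sym}}_{\fS_{k}}\!\Big(\prod_{a=1}^{k}x_{i,a}^{r}\prod_{1\le a<b\le k}\tfrac{x_{i,a}-v_{i}^{-2}x_{i,b}}{x_{i,a}-x_{i,b}}\Big)
  =v_{i}^{-\binom{k}{2}}[k]_{v_{i}}!\cdot(x_{i,1}\cdots x_{i,k})^{r},
\]
the last equality being the classical symmetrization identity ${\mathop{Sym}}_{\fS_{k}}\prod_{a<b}\tfrac{x_{a}-tx_{b}}{x_{a}-x_{b}}=\prod_{j=1}^{k}\tfrac{1-t^{j}}{1-t}$ specialized at $t=v_{i}^{-2}$. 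Thus $\Psi(\mathbf{E}_{i,r}^{(k)})=v_{i}^{-\binom{k}{2}}(x_{i,1}\cdots x_{i,k})^{r}$, which obviously satisfies the integrality condition~\eqref{li1G}. For~\eqref{li2G}: the grade of this element is $k\mathbf{1}_{i}$, the only positive root supported solely on $\alpha_{i}$ is $[i]$, so $\mathrm{KP}(k\mathbf{1}_{i})$ consists of the single element $\{d_{[i]}=k\}$; as $\tilde c_{[i]}=c_{[i]}\doteq 1$ by~\eqref{cbetag1}, the required divisibility is vacuous.

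For (b), the preservation of~\eqref{li1G} under $\star$ is the routine part: the $\zeta$-factors in~\eqref{shuffleproduct} being $\fS_{\unl k}\times\fS_{\unl\ell}$-invariant, the prefactor $\tfrac{1}{\unl k!\,\unl\ell!}$ cancels and $F\star G=\sum_{\gamma}\gamma(\cdots)$ is a genuine sum over shuffle permutations; the numerators of the $\zeta$'s are integral, the denominators $x_{i,r}-x_{j,s}$ with $i\ne j$ merge into the pole denominator~\eqref{polecondition} of $F\star G$, and the remaining same-node denominators $x_{i,a}-x_{i,b}$ cancel in the sum because $F\star G$ again lies in $S$ — so no $\BZ[v,v^{-1}]$-denominators are introduced, exactly as in~\cite[\S3]{Tsy18}.

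The hard part, and what I expect to be the main obstacle, is the preservation of~\eqref{li2G} under $\star$. Given $F\in\mathbf{S}_{\unl k}$, $G\in\mathbf{S}_{\unl\ell}$ and $\unl d\in\mathrm{KP}(\unl k+\unl\ell)$, I would expand $F\star G$ over shuffle permutations $\gamma$ and apply $\phi_{\unl d}$, which distributes the $(\unl k+\unl\ell)$ variables into the root-groups $\{x^{(\beta,s)}_{i,t}\}$. Just as in the proofs of Lemmas~\ref{Gs2} and~\ref{vanishG}, the wheel conditions of $F$ and $G$ force $\phi_{\unl d}(\gamma(\cdots))=0$ unless, within each group $(\beta,s)$, the variables inherited from $F$ (resp. from $G$) assemble — up to the overall $v$-shifts dictated by~\eqref{speG} — into the variable set of a positive root; summing the surviving $\gamma$'s reorganizes $\phi_{\unl d}(F\star G)$ into a $\BZ[v,v^{-1}]$-combination of $\fS_{\unl d}$-symmetrizations of products $\phi_{\unl d_{1}}(F)\cdot\phi_{\unl d_{2}}(G)\cdot\prod(\text{residual }\zeta\text{-factors})$ indexed by the ways $\unl d$ refines through a Kostant partition $\unl d_{1}$ of $\unl k$ and a Kostant partition $\unl d_{2}$ of $\unl\ell$. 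Since $F,G\in\mathbf{S}$, each $\phi_{\unl d_{j}}(\cdot)$ is divisible by $\prod_{\beta}\tilde c_{\beta}^{(d_{j})_{\beta}}$, so what remains is to check that for every such refinement the residual $\zeta$-factors, evaluated at the specializations~\eqref{speG}, supply precisely the powers of $v$ and of $[2]_{v},[3]_{v}$ needed to upgrade $\prod_{\beta}\tilde c_{\beta}^{(d_{1})_{\beta}+(d_{2})_{\beta}}$ to $\prod_{\beta}\tilde c_{\beta}^{d_{\beta}}$. This root-by-root bookkeeping — matching the $[2]_{v}!$- and $[3]_{v}!$-normalizations of~\eqref{ctildebeta} against the $\langle m\rangle_{v}$-type products generated by the $\zeta_{1,2}$- and $\zeta_{2,2}$-specializations, with the long roots $[1,2,2]$, $[1,2,2,2]$, $[1,2,1,2,2]$ being the delicate cases — is the technical heart of the argument, and it runs along the same lines as the computations already performed for Propositions~\ref{spekpG} and~\ref{spanG}.
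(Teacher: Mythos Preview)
Your plan diverges from the paper's. The paper does not first show that $\mathbf{S}$ is a $\star$-subalgebra; instead it takes an arbitrary monomial $F=\Psi(\mathbf{E}^{(\ell_1)}_{i_1,r_1}\cdots\mathbf{E}^{(\ell_m)}_{i_m,r_m})$ and verifies~\eqref{li1G}--\eqref{li2G} for it directly. Condition~\eqref{li1G} follows from Lemma~\ref{rank2}, exactly your part~(a). For~\eqref{li2G} the paper introduces a ``spot'' function $o(x^{(\beta,s)}_{i,t})=q$ recording into which factor $\Psi(\mathbf{E}^{(\ell_q)}_{i_q,r_q})$ a given variable falls, and then checks, root by root, that for every summand the product of $\zeta$-numerators among the variables of a single group $(\beta,s)$ already contributes a multiple of $\tilde c_\beta$. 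This is a short case analysis: for $\beta=[1,2,2]$ the non-vanishing $o$-orderings give $\langle 3\rangle_v\langle 2\rangle_v$; for $[1,2,2,2]$ they give $\langle 3\rangle_v\langle 2\rangle_v\langle 1\rangle_v$; for $[1,2,1,2,2]$ the two surviving $o$-orderings yield $\tilde c_\beta$ and $[3]_v\cdot\tilde c_\beta$ respectively. Since each divided power is a bare monomial, all nontrivial contributions under $\phi_{\unl d}$ are these $\zeta$-numerators, and the check is local to each group $(\beta,s)$.

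Your route through (b) is a valid strategy in principle ($\mathbf{S}$ is a subalgebra, a posteriori by Theorem~\ref{srig}), but the sketch has a genuine gap. You write that ``the wheel conditions of $F$ and $G$'' force the surviving shuffle summands to split root-compatibly, citing Lemmas~\ref{Gs2} and~\ref{vanishG}. Those lemmas, however, use no wheel conditions at all: the vanishing there is produced by the explicit $\zeta$-factors between the distinct root-vector blocks of an ordered PBWD monomial $E_h$. For arbitrary $F,G\in\mathbf{S}$ there are no such internal $\zeta$-factors---only the ones between $F$-variables and $G$-variables appearing in $F\star G$---and the wheel conditions on $F,G$ do not by themselves force the structure you claim. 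One can salvage your argument by analyzing those cross-$\zeta$-factors within each group $(\beta,s)$, but that is precisely the $o$-ordering case check the paper performs; packaging it as ``closure of $\mathbf{S}$ under $\star$ for arbitrary $F,G$'' adds a layer of indirection (you must also identify which $\unl d_1,\unl d_2$ to invoke for $F,G$, which is not canonical) without saving any of the actual work. The paper's direct approach, exploiting that divided powers are monomials so that every nontrivial factor is a $\zeta$-numerator, is both shorter and avoids controlling general elements of~$\mathbf{S}$.
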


The proof is based on the following simple ``rank 1'' computation from \cite[Lemma 1.3]{Tsy22}:

\begin{Lem}\label{rank2}
For any $\ell\geq 1$, $r\in\BZ$, and $1\leq i\leq 2$, we have
\begin{equation}\label{eq:rank1-power-trig}
  \underbrace{x_{i,1}^{r}\star\cdots\star x_{i,1}^{r}}_{\ell\rm\ times}=
  v_{i}^{-\frac{\ell(\ell-1)}{2}}[\ell]_{v_{i}}!\cdot (x_{i,1}\cdots x_{i,\ell})^{r}.
\end{equation}
\end{Lem}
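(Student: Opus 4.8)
The plan is to induct on $\ell\geq 1$, the base case $\ell=1$ being the tautology $x_{i,1}^{r}=[1]_{v_{i}}!\cdot x_{i,1}^{r}$. For the inductive step I would write the $\ell$-fold product as $\bigl(x_{i,1}^{r}\star\cdots\star x_{i,1}^{r}\bigr)\star x_{i,1}^{r}$ with the first group consisting of $\ell-1$ factors, and use the induction hypothesis to replace it by $v_{i}^{-(\ell-1)(\ell-2)/2}[\ell-1]_{v_{i}}!\cdot(x_{i,1}\cdots x_{i,\ell-1})^{r}$. Unfolding the definition~\eqref{shuffleproduct} of $\star$ with $\unl{k}=(\ell-1)\mathbf{1}_{i}$ and $\unl{\ell}=\mathbf{1}_{i}$, the only cross terms that occur are $\zeta_{i,i}(x_{i,a}/x_{i,\ell})=\frac{x_{i,a}-v_{i}^{-2}x_{i,\ell}}{x_{i,a}-x_{i,\ell}}$ for $1\leq a\leq \ell-1$ (recall $v^{-(\alpha_{i},\alpha_{i})}=v^{-2d_{i}}=v_{i}^{-2}$ by~\eqref{eq:zeta}), while the fully symmetric monomial $(x_{i,1}\cdots x_{i,\ell})^{r}$ pulls out of the symmetrization. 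Since the remaining integrand $\prod_{a=1}^{\ell-1}\zeta_{i,i}(x_{i,a}/x_{i,\ell})$ is symmetric in $x_{i,1},\dots,x_{i,\ell-1}$, applying $\frac{1}{(\ell-1)!}\mathop{Sym}_{\mathfrak{S}_{\ell}}$ to it just amounts to summing over the choice of which variable plays the role of $x_{i,\ell}$. Hence the inductive step reduces to the scalar identity
\begin{equation*}
  \sum_{k=1}^{\ell}\ \prod_{\substack{1\leq a\leq \ell\\ a\neq k}}\frac{x_{a}-v_{i}^{-2}x_{k}}{x_{a}-x_{k}}\ =\ v_{i}^{-(\ell-1)}[\ell]_{v_{i}}
\end{equation*}
together with the elementary bookkeeping $v_{i}^{-(\ell-1)(\ell-2)/2}[\ell-1]_{v_{i}}!\cdot v_{i}^{-(\ell-1)}[\ell]_{v_{i}}=v_{i}^{-\ell(\ell-1)/2}[\ell]_{v_{i}}!$, valid because $\frac{(\ell-1)(\ell-2)}{2}+(\ell-1)=\frac{\ell(\ell-1)}{2}$.

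To establish the displayed scalar identity, set $q=v_{i}^{-2}$ and consider the auxiliary rational function $P(y)=\prod_{a=1}^{\ell}\frac{x_{a}-qy}{x_{a}-y}$ in a fresh variable $y$. It has only simple poles, located at $y=x_{k}$, tends to $q^{\ell}$ as $y\to\infty$, and satisfies $P(0)=1$. Its partial-fraction decomposition therefore reads $P(y)=q^{\ell}+\sum_{k=1}^{\ell}\frac{d_{k}}{y-x_{k}}$ with residues $d_{k}=-(1-q)\,x_{k}\prod_{a\neq k}\frac{x_{a}-qx_{k}}{x_{a}-x_{k}}$; evaluating both sides at $y=0$ gives $1=q^{\ell}+(1-q)\sum_{k}\prod_{a\neq k}\frac{x_{a}-qx_{k}}{x_{a}-x_{k}}$, i.e.\ $\sum_{k}\prod_{a\neq k}\frac{x_{a}-qx_{k}}{x_{a}-x_{k}}=\frac{1-q^{\ell}}{1-q}=1+q+\cdots+q^{\ell-1}$, which is precisely $v_{i}^{-(\ell-1)}[\ell]_{v_{i}}$.

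Everything above is a routine computation, and the only step with any content is the scalar identity, which has the short residue proof just sketched (it is the standard $q$-analogue of $\sum_{\sigma\in\mathfrak{S}_{\ell}}\prod_{a<b}\frac{x_{\sigma(a)}-qx_{\sigma(b)}}{x_{\sigma(a)}-x_{\sigma(b)}}=[\ell]_{q}!$). Since the statement is verbatim~\cite[Lemma 1.3]{Tsy22}, one may alternatively just cite it.
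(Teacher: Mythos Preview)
Your proof is correct. The paper itself does not supply any argument for this lemma; it merely invokes \cite[Lemma~1.3]{Tsy22}, the same reference you mention at the end, so your write-up in fact provides the details the paper omits.
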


\begin{proofprop}
For any $m\in\BN, 1\leq i_{1},\dots,i_{m}\leq 2, r_{1},\dots,r_{m}\in\BZ, \ell_{1},\dots,\ell_{m}\in\BN$,~let
  \[F\coloneqq \Psi\big(\mathbf{E}^{(\ell_{1})}_{i_{1},r_{1}}\cdots \mathbf{E}^{(\ell_{m})}_{i_{m},r_{m}}\big),\]
and $f$ be the numerator of $F$ from \eqref{polecondition}. If a variable $x^{(*,*)}_{*,*}$ is plugged into
$\Psi(\mathbf{E}^{(\ell_{q})}_{i_{q},r_{q}})$, then we shall use the following notation:
\begin{equation}\label{eq:o-spot}
  o(x^{(*,*)}_{*,*})=q.
\end{equation}
Thus, $o(x^{(\beta,s)}_{i,t})=q$ means that in the corresponding summand from the symmetrization of
$\Psi(\mathbf{E}^{(\ell_{1})}_{i_{1},r_{1}})\cdots \Psi(\mathbf{E}^{(\ell_{m})}_{i_{m},r_{m}})\cdot (\mathrm{rational\ factor})$,
this $x$-variable is placed as an argument of $\Psi(\mathbf{E}^{(\ell_{q})}_{i_{q},r_{q}})$.

According to Lemma \ref{rank2}:
  \[\Psi(\mathbf{E}^{(\ell_{q})}_{i_{q},r_{q}})=
    v_{i_{q}}^{-\frac{\ell_{q}(\ell_{q}-1)}{2}}(x_{i_{q},1}\cdots x_{i_{q},\ell_{q}})^{r_{q}}
    \qquad \forall\ 1\leq q\leq m,\]
hence, the condition \eqref{li1G} holds. To verify the validity of the divisibility \eqref{li2G}, it suffices to show that
for any $\beta\in\Delta^{+}$ and $1\leq s\leq d_{\beta}$, the total contribution of $\phi_{\unl{d}}$-specializations of the
$\zeta$-factors between the variables $\{x^{(\beta,s)}_{i,t}\}_{i\in\beta}^{1\leq t\leq \nu_{\beta,i}}$ is a multiple of
$\tilde{c}_{\beta}$. This is obvious  for $\beta=[1],[2],[1,2]$. Let us now treat the remaining three cases:
\begin{itemize}[leftmargin=0.7cm]

\item
$\beta=[1,2,2]$.
In this case, the $\phi_{\unl{d}}$-specialization of the corresponding product of the $\zeta$-factors vanishes unless
  \[o(x^{(\beta,s)}_{1,1})\geq o(x^{(\beta,s)}_{2,1})\geq o(x^{(\beta,s)}_{2,2}).\]
On the other hand, $o(x^{(\beta,s)}_{i,t})\neq o(x^{(\beta,s)}_{i',t'})$ for $i\neq i'$.
Thus, we only need to deal with the case
  \[o(x^{(\beta,s)}_{1,1})> o(x^{(\beta,s)}_{2,1})\geq o(x^{(\beta,s)}_{2,2}).\]
In this case, the product of the $\zeta$-factors
  \[ \zeta_{2,1}(x^{(\beta,s)}_{2,1}/x^{(\beta,s)}_{1,1})\cdot\zeta_{2,1}(x^{(\beta,s)}_{2,2}/x^{(\beta,s)}_{1,1})\]
has a numerator
  \[(x^{(\beta,s)}_{2,1}-v^{3}x^{(\beta,s)}_{1,1})(x^{(\beta,s)}_{2,2}-v^{3}x^{(\beta,s)}_{1,1}),\]
which contributes precisely the factor $\langle 3\rangle_{v}\langle 2\rangle_{v}=\tilde{c}_{[1,2,2]}$ under $\phi_{\unl{d}}$.

\item
$\beta=[1,2,2,2]$.
In this case, the $\phi_{\unl{d}}$-specialization of the corresponding product of the $\zeta$-factors vanishes unless
  \[o(x^{(\beta,s)}_{1,1})> o(x^{(\beta,s)}_{2,1})\geq o(x^{(\beta,s)}_{2,2})\geq o(x^{(\beta,s)}_{2,3}).\]
In the latter case, the product of the $\zeta$-factors
  \[\zeta_{2,1}(x^{(\beta,s)}_{2,1}/x^{(\beta,s)}_{1,1})\cdot\zeta_{2,1}(x^{(\beta,s)}_{2,2}/x^{(\beta,s)}_{1,1})\cdot
   \zeta_{2,1}(x^{(\beta,s)}_{2,3}/x^{(\beta,s)}_{1,1})\]
has a numerator
  \[(x^{(\beta,s)}_{2,1}-v^{3}x^{(\beta,s)}_{1,1}) (x^{(\beta,s)}_{2,2}-v^{3}x^{(\beta,s)}_{1,1})
    (x^{(\beta,s)}_{2,3}-v^{3}x^{(\beta,s)}_{1,1}),\]
which contributes precisely the factor $\langle 3\rangle_{v}\langle 2\rangle_{v}\langle 1\rangle_{v}=\tilde{c}_{[1,2,2,2]}$
under $\phi_{\unl{d}}$.

\item
$\beta=[1,2,1,2,2]$.
In this case, the $\phi_{\unl{d}}$-specialization of the corresponding product of the $\zeta$-factors vanishes unless
\begin{equation}
  o(x^{(\beta,s)}_{1,1})> o(x^{(\beta,s)}_{2,1})\geq o(x^{(\beta,s)}_{2,2})\geq o(x^{(\beta,s)}_{2,3})\quad
  \text{and}\quad o(x^{(\beta,s)}_{1,2})>o(x^{(\beta,s)}_{2,2}),
\label{longestG}
\end{equation}
If $o(x^{(\beta,s)}_{1,2})>o(x^{(\beta,s)}_{2,1})$, then the product
  $\prod_{1\leq k\leq 2}^{1\leq l\leq 3}\zeta_{2,1}(x^{(\beta,s)}_{2,l}/x^{(\beta,s)}_{1,k})$
has a numerator
  \[\prod_{1\leq k\leq 2}^{1\leq l\leq 3}(x^{(\beta,s)}_{2,l}-v^{3}x^{(\beta,s)}_{1,k}),\]
which contributes precisely the factor
  $\langle 4\rangle_{v}\langle 3\rangle_{v}^{2}\langle 2\rangle_{v}^{2}\langle 1\rangle_{v}=\tilde{c}_{[1,2,1,2,2]}$
under $\phi_{\unl{d}}$. On the other hand, if $o(x^{(\beta,s)}_{1,2})<o(x^{(\beta,s)}_{2,1})$, then we have
  \[o(x^{(\beta,s)}_{1,1})> o(x^{(\beta,s)}_{2,1})>o(x^{(\beta,s)}_{1,2})> o(x^{(\beta,s)}_{2,2})\geq o(x^{(\beta,s)}_{2,3}),\]
in which case the $\phi_{\unl{d}}$-specialization of the $\zeta$-factors between the $x^{(\beta,s)}_{*,*}$-variables
contributes a total of $[3]_{v}\cdot \tilde{c}_{[1,2,1,2,2]}$.

\end{itemize}
This completes the verification of the divisibility \eqref{li2G}, thus concluding the proof.
\end{proofprop}

\vspace{5pt}
For any $h\in H$, define the ordered monomials (cf.~\eqref{PBWDbases})
\begin{equation}\label{eq:ordered-pbw-int-G}
  \tilde{\mathbf{E}}^{+}_{h} \ =
  \prod_{(\beta,s)\in\Delta^{+}\times\mathbb{Z}}\limits^{\rightarrow}\tilde{\mathbf{E}}_{\beta,s}^{+,(h(\beta,s))},\qquad
  \tilde{\mathbf{E}}^{-}_{h} \ =
  \prod_{(\beta,s)\in\Delta^{+}\times\mathbb{Z}}\limits^{\rightarrow}\tilde{\mathbf{E}}_{\beta,s}^{-,(h(\beta,s))}.
\end{equation}
According to Proposition~\ref{lintegralG}, we have $\tilde{\mathbf{E}}^{\pm}_{h}\in \mathbf{S}$.
For any $\epsilon\in\{\pm\}$, let $\mathbf{S}^{\epsilon}_{\unl{k}}$ be the $\BZ[v,v^{-1}]$-submodule of
$\mathbf{S}_{\unl{k}}$ spanned by $\{\Psi(\tilde{\mathbf{E}}^{\epsilon}_{h})\}_{h\in H_{\unl{k}}}$. Then,
the following analogue of Lemma~\ref{span}~holds:

\begin{Prop}\label{spanig}
For any $F\in \mathbf{S}_{\unl{k}}$ and $\unl{d}\in\mathrm{KP}(\unl{k})$, if $\phi_{\unl{d}'}(F)=0$ for all
$\unl{d}'\in \mathrm{KP}(\unl{k})$ such that $\unl{d}'<\unl{d}$, then there exists
$F_{\unl{d}}\in \mathbf{S}^{\epsilon}_{\unl{k}}$ such that $\phi_{\unl{d}}(F)=\phi_{\unl{d}}(F_{\unl{d}})$
and $\phi_{\unl{d}'}(F_{\unl{d}})=0$ for all $\unl{d}'<\unl{d}$.
\end{Prop}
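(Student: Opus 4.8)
The plan is to re-run the proof of Proposition~\ref{spanG}, tracking $\BZ[v,v^{-1}]$-integrality throughout. Fix $\epsilon\in\{\pm\}$ and $F\in\mathbf{S}_{\unl{k}}$ with $\phi_{\unl{d}'}(F)=0$ for all $\unl{d}'<\unl{d}$. The first step is to record the \emph{divided-power analogue} of Proposition~\ref{spekpG}. Since $\Psi(\tilde{\mathbf{E}}^{\epsilon,(\ell)}_{\beta,s})=\Psi(\tilde{E}^{\epsilon}_{\beta,s})^{\star\ell}/\big(N_{\beta}^{\ell}[\ell]_{v_{\beta}}!\big)$ with $N_{\beta}\in\{1,[2]_{v}!,[3]_{v}!\}$ read off from~\eqref{looplusg}, combining Proposition~\ref{spekpG}, Lemma~\ref{rank2} and the elementary identity $\prod_{\beta}N_{\beta}^{-d_{\beta}}\cdot\prod_{\beta}c_{\beta}^{d_{\beta}}=\prod_{\beta}\tilde{c}_{\beta}^{\,d_{\beta}}$ (compare~\eqref{ctildebeta}) yields, for every $h\in H_{\unl{k},\unl{d}}$,
\[
  \phi_{\unl{d}}\big(\Psi(\tilde{\mathbf{E}}^{\epsilon}_{h})\big)\doteq
  \prod_{\beta<\beta'}G_{\beta,\beta'}\cdot\prod_{\beta\in\Delta^{+}}\big(\tilde{c}_{\beta}^{\,d_{\beta}}G_{\beta}\big)\cdot
  \prod_{\beta\in\Delta^{+}}\tilde{P}_{\lambda_{h,\beta}},
\]
where $G_{\beta},G_{\beta,\beta'}$ are as in Proposition~\ref{spekpG} and $\tilde{P}_{\lambda}$ is $P_{\lambda}$ of~\eqref{hlp} divided by the product of $[\,\cdot\,]_{v_{\beta}}!$ over the multiplicities of $\lambda$; equivalently, $\tilde{P}_{\lambda_{h,\beta}}$ is the image in the variables $\{w_{\beta,s}\}$ of the corresponding shuffle product of divided powers in the rank-one shuffle algebra attached to $\beta$.

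The second, and main, step is to analyze $\phi_{\unl{d}}(F)$ itself. By~\eqref{li1G} it lies in $\BZ[v,v^{-1}][\{w_{\beta,s}^{\pm1}\}]^{\fS_{\unl{d}}}$; by~\eqref{li2G} it is divisible in this ring by the scalar $\prod_{\beta}\tilde{c}_{\beta}^{\,d_{\beta}}$; and, by the local, pairwise computation in the proof of Proposition~\ref{spanG} — which uses only the hypothesis $\phi_{\unl{d}'}(F)=0$ for $\unl{d}'<\unl{d}$ together with the wheel conditions~\eqref{eq:wheel-G2} — it vanishes, with the prescribed multiplicities, along every hyperplane $w_{\beta,s}=v^{\#}w_{\beta',s'}$ appearing in $\prod_{\beta<\beta'}G_{\beta,\beta'}\cdot\prod_{\beta}G_{\beta}$. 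Now $\BZ[v,v^{-1}][\{w_{\beta,s}^{\pm1}\}]$ is a unique factorization domain in which each such linear form is primitive, so by Gauss's lemma the vanishing upgrades to divisibility by $\prod_{\beta<\beta'}G_{\beta,\beta'}\cdot\prod_{\beta}G_{\beta}$ with integral quotient; moreover $\prod_{\beta}\tilde{c}_{\beta}^{\,d_{\beta}}$ — a product of cyclotomic factors in $v$ alone — is coprime to these linear forms. Hence
\[
  \phi_{\unl{d}}(F)\doteq
  \prod_{\beta<\beta'}G_{\beta,\beta'}\cdot\prod_{\beta\in\Delta^{+}}\big(\tilde{c}_{\beta}^{\,d_{\beta}}G_{\beta}\big)\cdot R
  \qquad\text{for some }\ R\in\BZ[v,v^{-1}][\{w_{\beta,s}^{\pm1}\}]^{\fS_{\unl{d}}},
\]
and $R$ is symmetric in each block $\{w_{\beta,s}\}_{s}$. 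This passage from $\BQ(v)$- to $\BZ[v,v^{-1}]$-divisibility is the crux of the argument: over $\BQ(v)$ the factorization is immediate from Proposition~\ref{spanG}, but there the ``scalar'' divisor $\prod_{\beta}\tilde{c}_{\beta}^{\,d_{\beta}}$ becomes a unit, so~\eqref{li2G} carries genuinely new information, and separating it from the ``hyperplane'' divisor requires exactly the coprimality bookkeeping above — which in turn is what dictates the precise choices of $\tilde{c}_{\beta}$ and of the divided powers $\tilde{\mathbf{E}}^{\pm,(k)}_{\beta,s}$ in~\eqref{looplusg}--\eqref{ctildebeta}.

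Finally, since the variable blocks $\{w_{\beta,s}\}_{s}$ for distinct $\beta$ are disjoint, the last display places $R$ in $\bigotimes_{\beta\in\Delta^{+}}\BZ[v,v^{-1}][\{w_{\beta,s}^{\pm1}\}]^{\fS_{d_{\beta}}}$. By the type $A_{1}$ case, each integral rank-one shuffle algebra $\BZ[v,v^{-1}][\{w^{\pm1}\}]^{\fS_{m}}$ is spanned over $\BZ[v,v^{-1}]$ by the divided-power shuffle elements $\tilde{P}_{\mu}$, cf.~\cite{Tsy18,Tsy22}; expanding each tensor factor and invoking the bijection $h\mapsto(\lambda_{h,\beta})_{\beta\in\Delta^{+}}$ between $H_{\unl{k},\unl{d}}$ and tuples of multisets, we may write $R=\sum_{h\in H_{\unl{k},\unl{d}}}c_{h}\prod_{\beta}\tilde{P}_{\lambda_{h,\beta}}$ with $c_{h}\in\BZ[v,v^{-1}]$. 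Set $F_{\unl{d}}:=\sum_{h\in H_{\unl{k},\unl{d}}}c_{h}\,\Psi(\tilde{\mathbf{E}}^{\epsilon}_{h})$, which lies in $\mathbf{S}^{\epsilon}_{\unl{k}}$ (and in $\mathbf{S}_{\unl{k}}$ by Propositions~\ref{integralrvg} and~\ref{lintegralG}). The first step gives $\phi_{\unl{d}}(F_{\unl{d}})=\phi_{\unl{d}}(F)$, while Proposition~\ref{vanishG} gives $\phi_{\unl{d}'}(F_{\unl{d}})=0$ for all $\unl{d}'<\unl{d}$, which completes the proof. The only genuinely new ingredient beyond the $\BQ(v)$-argument of Proposition~\ref{spanG} is the UFD/coprimality step of the second paragraph; everything else is a routine upgrade of the corresponding rational computation.
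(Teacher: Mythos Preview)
Your proof is correct and follows essentially the same approach as the paper's: compute $\phi_{\unl{d}}(\Psi(\tilde{\mathbf{E}}^{\epsilon}_{h}))$ via the divided-power renormalization of Proposition~\ref{spekpG}, use Proposition~\ref{spanG} together with conditions~\eqref{li1G}--\eqref{li2G} to factor $\phi_{\unl{d}}(F)$, and invoke the rank-$1$ integral basis (your $\tilde{P}_{\lambda}$ is the paper's $\mathbf{P}_{\lambda}$). The paper compresses all of this into a few lines and simply asserts that ``the claimed result follows from Proposition~\ref{spanG} and conditions~(\ref{li1G},~\ref{li2G})''; you have made explicit the UFD/coprimality bookkeeping that justifies passing from divisibility over $\BQ(v)$ to divisibility over $\BZ[v,v^{-1}]$, which is implicit in the paper.
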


\begin{proof}
Combining \eqref{looplusg}, \eqref{ctildebeta}, and Proposition \ref{spekpG}, we obtain:
\begin{equation}
  \phi_{\unl{d}}(\Psi(\tilde{\mathbf{E}}^{\epsilon}_{h}))\doteq
  \prod_{\beta,\beta'\in \Delta^+}^{\beta<\beta'}G_{\beta,\beta'} \cdot
  \prod_{\beta\in\Delta^{+}}({\tilde{c}_{\beta}}^{d_{\beta}}\cdot G_{\beta}) \ \cdot
  \prod_{\beta\in\Delta^{+}}\mathbf{P}_{\lambda_{h,\beta}},
\end{equation}
where
\begin{equation}
   \mathbf{P}_{\lambda_{h,\beta}}\coloneqq \frac{1}{\prod_{r\in\BZ}[h(\beta,r)]_{v_{\beta}}!}\cdot
   {\mathop{Sym}}_{\mathfrak{S}_{d_{\beta}}}
   \left(w_{\beta,1}^{r_{\beta}(h,1)}\cdots w_{\beta,d_{\beta}}^{r_{\beta}(h,d_{\beta})}
        \prod_{1\leq i<j\leq d_{\beta}}\frac{w_{\beta,i}-v_{\beta}^{-2}w_{\beta,j}}{w_{\beta,i}-w_{\beta,j}}\right),
\end{equation}
cf.~\eqref{eq:lambda-collection}. As $\{\mathbf{P}_{\lambda_{h,\beta}}\}_{h\in H_{\unl{k},\unl{d}}}$ form a $\BZ[v,v^{-1}]$-basis
of $\BZ[v,v^{-1}][\{w_{\beta,s}^{\pm 1}\}_{s=1}^{d_{\beta}}]^{\mathfrak{S}_{d_{\beta}}}$
(this is a ``rank~$1$'' computation, cf. \cite[Proposition 1.4, Lemma 1.14]{Tsy22}), the claimed result
follows from Proposition~\ref{spanG} and conditions~(\ref{li1G},~\ref{li2G}).
\end{proof}

Combining Propositions \ref{lintegralG} and \ref{spanig}, we obtain the following upgrade of Theorem \ref{shufflePBWDG}:

\begin{Thm}\label{srig}
(a) The $\BQ(v)$-algebra isomorphism $\Psi\colon \qfg \,\iso\, S$  of Theorem {\rm \ref{shufflePBWDG}(a)} gives rise
to a $\BZ[v,v^{-1}]$-algebra isomorphism $\Psi\colon \integralgl \,\iso\, \mathbf{S}$.

\medskip
\noindent
(b) For any choices of  $s_1,s_2$ in \eqref{rvg2}--\eqref{rvg5} and $\epsilon\in \{\pm\}$,
the ordered monomials $\{\tilde{\mathbf{E}}^{\epsilon}_{h}\}_{h\in H}$ of~\eqref{eq:ordered-pbw-int-G}
form a basis of the free $\BZ[v,v^{-1}]$-module $\integralgl$.
\end{Thm}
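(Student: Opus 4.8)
The plan is to deduce Theorem~\ref{srig} from Propositions~\ref{lintegralG} and~\ref{spanig} in essentially the same way that Theorem~\ref{shufflePBWDG} was deduced from Propositions~\ref{spekpG}--\ref{spanG}, but keeping careful track of integrality. First I would fix $\epsilon\in\{\pm\}$ and recall that by Proposition~\ref{integralrvg} each $\tilde{\mathbf{E}}^{\epsilon,(k)}_{\beta,s}$ lies in $\integralgl$, so the ordered monomials $\tilde{\mathbf{E}}^{\epsilon}_h$ of~\eqref{eq:ordered-pbw-int-G} all lie in $\integralgl$, and by Proposition~\ref{lintegralG} their images $\Psi(\tilde{\mathbf{E}}^{\epsilon}_h)$ all lie in $\mathbf{S}$. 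Since Theorem~\ref{shufflePBWDG}(b) already tells us $\{E_h\}_{h\in H}$ is a $\BQ(v)$-basis of $\qfg$, and the $\tilde{\mathbf{E}}^{\epsilon}_h$ differ from the corresponding $E_h$ only by nonzero scalars in $\BQ(v)$ (via the particular choices~\eqref{rvg1}--\eqref{rvg5} and the normalizations~\eqref{looplusg}), the family $\{\tilde{\mathbf{E}}^{\epsilon}_h\}_{h\in H}$ is also a $\BQ(v)$-basis of $\qfg$. Hence it is automatically $\BZ[v,v^{-1}]$-linearly independent, and the only thing to prove for part~(b) is that these monomials \emph{span} $\integralgl$ over $\BZ[v,v^{-1}]$; part~(a) will follow once we show $\Psi(\integralgl)=\mathbf{S}$, since injectivity of $\Psi$ is already known (Proposition~\ref{inj}).

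The core of the argument is the spanning statement, and I would run it grading-component by grading-component. Fix $\unl{k}\in\BN^2$ and let $F\in\Psi(\integralgl)\cap S_{\unl{k}}\subset\mathbf{S}_{\unl{k}}$. I would prove by downward induction on $\unl{d}\in\mathrm{KP}(\unl{k})$ (with respect to the total order~\eqref{eq:KP-order}, starting from the maximal Kostant partition, which is the decomposition of $\unl{k}$ into simple roots) the following claim: there is a $\BZ[v,v^{-1}]$-linear combination $F'$ of the $\Psi(\tilde{\mathbf{E}}^{\epsilon}_h)$ with $h\in H_{\unl{k}}$ such that $\phi_{\unl{d}'}(F-F')=0$ for all $\unl{d}'\ge\unl{d}$. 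The base case is $\unl{d}$ maximal, where there is nothing smaller to worry about and the hypothesis of Proposition~\ref{spanig} is vacuous; the inductive step applies Proposition~\ref{spanig} to $F-F'$ (whose specializations vanish below the current level by the induction hypothesis combined with Lemma~\ref{vanish}/Proposition~\ref{vanishG}, recast for the opposite order) to produce a correction term $F_{\unl{d}}\in\mathbf{S}^{\epsilon}_{\unl{k}}$ matching $\phi_{\unl{d}}(F-F')$ while not disturbing the already-arranged vanishings. After finitely many steps we reach the minimal $\unl{d}$, obtaining $F''$ with $\phi_{\unl{d}}(F-F'')=0$ for all $\unl{d}\in\mathrm{KP}(\unl{k})$; by observation~(2) recalled after Lemma~\ref{span} (joint vanishing of all $\phi_{\unl{d}}$ forces the element to be $0$, cf.~\cite[Proposition 1.6]{Tsy22}), this gives $F=F''\in\mathbf{S}^{\epsilon}_{\unl{k}}$.

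To finish part~(a) I would then argue the reverse inclusion $\mathbf{S}\subset\Psi(\integralgl)$. Given $F\in\mathbf{S}_{\unl{k}}$, the same downward induction on $\unl{d}$ applies verbatim — it only used that $F$ satisfies~\eqref{li1G}--\eqref{li2G} and the wheel conditions, which is exactly the definition of $\mathbf{S}_{\unl{k}}$ — producing a $\BZ[v,v^{-1}]$-combination of $\Psi(\tilde{\mathbf{E}}^{\epsilon}_h)$ equal to $F$; since each $\tilde{\mathbf{E}}^{\epsilon}_h\in\integralgl$, we get $F\in\Psi(\integralgl)$. Combined with $\Psi(\integralgl)\subset\mathbf{S}$ from Proposition~\ref{lintegralG} and injectivity of $\Psi$, this yields the $\BZ[v,v^{-1}]$-algebra isomorphism $\Psi\colon\integralgl\iso\mathbf{S}$, which is part~(a); and the spanning just proved, together with the linear independence noted above, gives that $\{\tilde{\mathbf{E}}^{\epsilon}_h\}_{h\in H}$ is a $\BZ[v,v^{-1}]$-basis of $\integralgl$ — in particular $\integralgl$ is a free $\BZ[v,v^{-1}]$-module — which is part~(b).

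The main obstacle I anticipate is not conceptual but bookkeeping: making sure the key ``rank~$1$'' input used inside Proposition~\ref{spanig} is genuinely integral, i.e.\ that the normalized symmetrizations $\mathbf{P}_{\lambda_{h,\beta}}$ really do form a $\BZ[v,v^{-1}]$-basis of $\BZ[v,v^{-1}][\{w_{\beta,s}^{\pm1}\}_{s=1}^{d_\beta}]^{\mathfrak{S}_{d_\beta}}$ (this is where the normalization constants $[\ell]_{v_\beta}!$ and the extra factors $([2]_v!)^k$, $([3]_v!)^k$ in~\eqref{looplusg}, reflected in $\tilde{c}_\beta$ via~\eqref{ctildebeta}, must be exactly the right ones), and that the divisibility condition~\eqref{li2G} is tight — i.e.\ that one does not need to divide $\phi_{\unl{d}}(F)$ by anything more than $\prod_\beta \tilde{c}_\beta^{\,d_\beta}$ to land in the span of the $\mathbf{P}_{\lambda_{h,\beta}}$. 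Both points reduce, after the rank~$1$ reduction built into Proposition~\ref{spekpG} (the factorization~\eqref{eq:Gfactors-G2} with the $h$-independent $G_\beta$, $G_{\beta,\beta'}$), to the $A_1$-type computations of~\cite[Proposition 1.4, Lemma 1.14]{Tsy22}, so the real content is checking that the $G_2$ normalizations in~\eqref{looplusg} and~\eqref{ctildebeta} are consistent with those — which is exactly what the case analysis in the proof of Proposition~\ref{lintegralG} was set up to guarantee.
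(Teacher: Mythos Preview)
Your overall strategy is exactly the paper's: the paper simply says the theorem follows by combining Propositions~\ref{lintegralG} and~\ref{spanig}, and you correctly flesh this out via an iterated application of Proposition~\ref{spanig} together with the observation that joint vanishing of all $\phi_{\unl{d}}$ forces an element of $S_{\unl{k}}$ to be zero.

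There is, however, a genuine slip in the direction of your induction. You write that the base case is ``$\unl{d}$ maximal, where there is nothing smaller to worry about and the hypothesis of Proposition~\ref{spanig} is vacuous.'' That is backwards: the hypothesis of Proposition~\ref{spanig} is ``$\phi_{\unl{d}'}(F)=0$ for all $\unl{d}'<\unl{d}$,'' which is vacuous precisely when $\unl{d}$ is \emph{minimal}, not maximal. The correct procedure is an \emph{upward} induction: start at the minimal $\unl{d}$, obtain $F_{\unl{d}_{\min}}\in\mathbf{S}^{\epsilon}_{\unl{k}}$ matching $\phi_{\unl{d}_{\min}}(F)$, then pass to $F-F_{\unl{d}_{\min}}$ (which now has $\phi_{\unl{d}'}=0$ for all $\unl{d}'\leq\unl{d}_{\min}$) and move to the next-larger $\unl{d}$; iterate. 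Your claim ``$\phi_{\unl{d}'}(F-F')=0$ for all $\unl{d}'\geq\unl{d}$'' cannot be fed back into Proposition~\ref{spanig}, since the proposition requires vanishing \emph{below}, not above, the current level. Once you flip the direction, the argument goes through exactly as you outlined, and the ``rank~$1$'' integrality you flag as an obstacle is already handled inside the proof of Proposition~\ref{spanig} via the cited results of~\cite{Tsy22}.
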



\section{Specialization maps for type $B_{n}$}\label{type B}

In this section, we define specialization maps for the shuffle algebras of type $B_{n}$ and verify their key properties.
This implies the shuffle algebra realizations and PBWD-type theorems for $\qfb$, as well as for its two integral forms
$\integralb$ and $\integralbl$. Using arguments similar to those from Section~\ref{tG} we establish the counterparts of
Lemmas~\ref{shuffleelement},~\ref{vanish},~\ref{span} for $B_{2}$ case, and then use induction to treat the general $B_n$ case.


\subsection{$\qfb$ and its shuffle algebra realization}

In type $B_{n}$, for any $F\in S_{\unl{k}}$ with $\unl{k}\in\mathbb{N}^{n}$, the wheel conditions are:
\begin{equation}
\begin{aligned}
  F(\{x_{i,r}\}_{1\leq i\leq n}^{1\leq r\leq k_{i}})=0 \quad \text{once} \quad
  & x_{i,1}=v^{4}x_{i,2}=v^{2}x_{i+1,1} \quad \text{for some} \quad 1\leq i\leq n-1,\\
  \text{or} \quad
  & x_{i,1}=v^{4}x_{i,2}=v^{2}x_{i-1,1} \quad \text{for some} \quad 2\leq i\leq n-1,\\
  \text{or} \quad
  & x_{n,1}=v^{2}x_{n,2}=v^{4}x_{n,3}=v^{2}x_{n-1,1}.
\end{aligned}
\end{equation}

For any $\unl{k}\in\mathbb{N}^{n}$ and $\unl{d}\in\text{KP}(\unl{k})$, the specialization map $\phi_{\unl{d}}$
as in~\eqref{speentireshuffle} is defined by the following specialization of the $x^{(*,*)}_{*,*}$-variables
(replacing~\eqref{eq:spec-A} for type $A_n$):
\begin{equation}
  x^{(\beta,s)}_{i,1}\mapsto v^{-2i}w_{\beta,s}, \quad x^{(\beta,s)}_{i,2}\mapsto v^{-4n+2i+2}w_{\beta,s} \qquad
  \forall\ \beta\in\Delta^{+},\ 1\leq s\leq d_\beta,\ i\in\beta.
\label{spe}
\end{equation}

\begin{Lem}\label{phirvns}
Consider the particular choices~\eqref{rvb1}--\eqref{rvb2} of quantum root vectors
  $\{\tilde{E}^{\pm}_{\beta,s}\}_{\beta\in\Delta^{+}}^{s\in\BZ}$.
Their images under $\Psi$ in the shuffle algebra $S$ of type $B_{n}$  are as follows:
\begin{itemize}[leftmargin=0.7cm]

\item
If $\beta=[i,j]$, then for any decomposition $s=s_{i}+\cdots+s_{j}$ used in \eqref{rvb1}, we have:
\begin{align}
  & \Psi(\tilde{E}^{+}_{[i,j],s})\doteq
    \frac{{\langle 2\rangle_{v}}^{|\beta|-1} \cdot x_{i,1}^{s_{i}+1}\cdots x_{j-1,1}^{s_{j-1}+1}x_{j,1}^{s_{j}}}
         {\prod_{\ell=i}^{j-1}(x_{\ell,1}-x_{\ell+1,1})},\\
  & \Psi(\tilde{E}^{-}_{[i,j],s})\doteq
    \frac{{\langle 2\rangle_{v}}^{|\beta|-1} \cdot x_{i,1}^{s_{i}}x_{i+1,1}^{s_{i+1}+1}\cdots x_{j,1}^{s_{j}+1}}
         {\prod_{\ell=i}^{j-1}(x_{\ell,1}-x_{\ell+1,1})}.
\end{align}

\item
If $\beta=[i,n,j]$, then for any decomposition $s=s_{i}+\cdots +s_{j-1}+2s_{j}+\cdots +2s_{n}$ used in \eqref{rvb2}, we have:
\begin{align}
  & \Psi(\tilde{E}^{+}_{[i,n,j],s})\doteq
    \frac{{\langle 2\rangle_{v}}^{|\beta|-1} \cdot g_{1}\cdot
          \prod_{\ell=j}^{n-1}(v^{4}x_{\ell,1}-x_{\ell,2})(v^{4}x_{\ell,2}-x_{\ell,1})}
         {(x_{i,1}-x_{i+1,1})\cdots (x_{j-1,1}-x_{j,1})(x_{j-1,1}-x_{j,2})\prod_{\ell=j}^{n-1}
          \prod_{1\leq r,t\leq 2}(x_{\ell,r}-x_{\ell+1,t})},\\
  & \Psi(\tilde{E}^{-}_{[i,n,j],s})\doteq
    \frac{{\langle 2\rangle_{v}}^{|\beta|-1} \cdot g_{2}\cdot
          \prod_{\ell=j}^{n-1}(v^{4}x_{\ell,1}-x_{\ell,2})(v^{4}x_{\ell,2}-x_{\ell,1})}
         {(x_{i,1}-x_{i+1,1})\cdots (x_{j-1,1}-x_{j,1})(x_{j-1,1}-x_{j,2})\prod_{\ell=j}^{n-1}
          \prod_{1\leq r,t\leq 2}(x_{\ell,r}-x_{\ell+1,t})},
\end{align}
where
\begin{equation}
\begin{aligned}
  & g_{1}=\prod_{\ell=i}^{j-2}x_{\ell,1}^{s_{\ell}+1} \cdot x_{j-1,1}^{s_{j-1}+2}(x_{j,1}x_{j,2})^{s_{j}} \cdot
     \prod_{\ell=j+1}^{n}(x_{\ell,1}x_{\ell,2})^{s_{\ell}+1},\\
  & g_{2}=x_{i,1}^{s_{i}}\prod_{\ell=i+1}^{j-1} x_{\ell,1}^{s_{\ell}+1}
    \prod_{\ell=j}^{n}(x_{\ell,1}x_{\ell,2})^{s_{\ell}+1}.
\end{aligned}
\end{equation}
\end{itemize}
\end{Lem}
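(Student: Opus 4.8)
The plan is to evaluate the iterated brackets~\eqref{rvb1}--\eqref{rvb2} directly in $\SF$, using that $\Psi$ is a $\BQ(v)$-algebra homomorphism with $\Psi(e_{\ell,r})=x_{\ell,1}^{r}$ (Proposition~\ref{morphism}): thus $\Psi(\tilde{E}^{\pm}_{\beta,s})$ is obtained from the defining bracket expression by replacing every generator $e_{\ell,s_\ell}$ by the monomial $x_{\ell,1}^{s_\ell}$ and every product by the shuffle product~\eqref{shuffleproduct}. The basic building block is a rank~$2$ computation. If $A\in\SF$ is an element in which a color $k$ does not occur while an adjacent color $k'$ occurs with multiplicity one, and $b\in\BZ$, then unravelling~\eqref{shuffleproduct} gives $A\star x_{k,1}^{b}=A\cdot x_{k,1}^{b}\cdot\zeta_{k',k}(x_{k',1}/x_{k,1})$ and $x_{k,1}^{b}\star A=A\cdot x_{k,1}^{b}\cdot\zeta_{k,k'}(x_{k,1}/x_{k',1})$, so that
\[
  [A,x_{k,1}^{b}]_{u}=A\cdot x_{k,1}^{b}\cdot
  \frac{\bigl(x_{k',1}-v^{-(\alpha_{k'},\alpha_{k})}x_{k,1}\bigr)-u\bigl(v^{-(\alpha_{k'},\alpha_{k})}x_{k',1}-x_{k,1}\bigr)}{x_{k',1}-x_{k,1}}\,.
\]
For every adjacent pair in the $B_n$ Dynkin diagram~\eqref{dyb} one has $(\alpha_{k'},\alpha_{k})=-2$, so the choice $u=v^{+2}$ makes the $x_{k,1}$-terms in the numerator cancel, leaving $(1-v^{4})x_{k',1}/(x_{k',1}-x_{k,1})\doteq\langle 2\rangle_{v}\cdot x_{k',1}/(x_{k',1}-x_{k,1})$: it raises the exponent of the left color by one, creates the pole $(x_{k',1}-x_{k,1})$, and contributes the constant $\langle 2\rangle_{v}$. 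Symmetrically, $u=v^{-2}$ cancels the $x_{k',1}$-terms and instead raises the exponent of the right color.

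This settles the case $\beta=[i,j]$ by induction on $j-i$. Writing $\tilde{E}^{+}_{[i,j],s}=[\tilde{E}^{+}_{[i,j-1],s-s_j},e_{j,s_j}]_{v^{+2}}$ and observing that in $\Psi(\tilde{E}^{+}_{[i,j-1],\cdot})$ only the color $j-1$ can interact with $j$ (as $a_{m,j}=0$ for $m<j-1$), and does so with multiplicity one, the rank~$2$ identity applies verbatim; after the $j-i=|\beta|-1$ successive brackets one obtains the stated formula, the brackets accounting for the factor $\langle 2\rangle_{v}^{|\beta|-1}$, the cascade of poles $\prod_{\ell=i}^{j-1}(x_{\ell,1}-x_{\ell+1,1})$, and the exponent shifts $s_\ell\mapsto s_\ell+1$ for $i\le\ell\le j-1$ (with $x_{j,1}$ unshifted). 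The $-$ case is identical using $u=v^{-2}$, which moves the shifts to $i+1\le\ell\le j$.

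For $\beta=[i,n,j]$ the plan is again inductive, building $[i,n,j]$ from $[i,n]$ (just computed) along the chain in~\eqref{rvb2}: first the \emph{plain} commutator with $e_{n,s_n}$ (here $u=1$), which introduces a second variable $x_{n,2}$ of the short color $n$, and then the $v^{\pm2}$-commutators with $e_{n-1,s_{n-1}},\dots,e_{j,s_j}$, each introducing a second variable $x_{\ell,2}$ of a long color $\ell$. At a step doubling color $\ell$ one must now symmetrize over $x_{\ell,1},x_{\ell,2}$, so the shuffle product becomes a genuine two-term sum: one term carries the pole $(x_{\ell,1}-x_{\ell,2})$ coming from the self-factor $\zeta_{\ell,\ell}$, and combining the two terms with the output of the previous step collapses everything to a single rational function whose denominator acquires the blocks $\prod_{1\le r,t\le 2}(x_{\ell,r}-x_{\ell+1,t})$ (and $(x_{j-1,1}-x_{j,1})(x_{j-1,1}-x_{j,2})$ at the junction), whose numerator picks up the factors $(v^{4}x_{\ell,1}-x_{\ell,2})(v^{4}x_{\ell,2}-x_{\ell,1})$ for $j\le\ell\le n-1$ (from the numerator of $\zeta_{\ell,\ell}$, recalling $(\alpha_\ell,\alpha_\ell)=4$ for $\ell<n$), and the monomial $g_1$ (resp.\ $g_2$) whose exponent pattern — shifts $s_\ell+1$ on colors $i,\dots,j-2$, a double shift $s_{j-1}+2$, no shift on color $j$, and shifts $s_\ell+1$ on colors $j+1,\dots,n$ — accumulates from the numerators of the $\zeta$-factors killed at each bracket. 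One finally checks that the $|\beta|-1$ brackets together contribute exactly the constant $\langle 2\rangle_{v}^{|\beta|-1}$; the $-$ case is the mirror image, with the shifts organized from the right and yielding $g_2$.

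The main obstacle is this last case: unlike the $[i,j]$ computation, it mixes nontrivial symmetrizations over the doubled colors with a cascade of partial cancellations, and the substantive part of the proof is the bookkeeping that verifies exactly which summands vanish (by the algebraic identities among the $\zeta$-factors forced by the chosen bracket parameters) and that the surviving term is precisely the displayed product with the stated numerators $g_1,g_2$. This is carried out by the explicit, if lengthy, recursion described above.
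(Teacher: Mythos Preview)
Your proposal is correct and takes the same approach as the paper, which proves this lemma in a single line: ``Straightforward computation.'' Your outline in fact supplies considerably more detail than the paper does, correctly identifying the basic rank~$2$ bracket identity and how the $v^{\pm 2}$-commutator choices force the cancellations that produce the displayed numerators and poles; the only portion you leave implicit---the explicit bookkeeping in the $[i,n,j]$ case---is precisely what the paper also omits.
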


\begin{proof}
Straightforward computation.
\end{proof}

For more general quantum root vectors $\{E_{\beta,s}\}_{\beta\in\Delta^{+}}^{s\in\BZ}$ defined by \eqref{rootvector1},
we have  the following counterpart of Lemma~\ref{Gs1}:

\begin{Lem}\label{phirv}
For any choices of $s_k$ and $\lambda_k$ in \eqref{rootvector1}, we have:
\begin{equation}\label{eq:B-general}
  \phi_{\beta}(\Psi(E_{\beta,s}))\doteq c_{\beta}\cdot w_{\beta,1}^{s+\kappa_{\beta}}
  \qquad \forall\ (\beta,s)\in\Delta^{+}\times \BZ,
\end{equation}
where $\{\kappa_{\beta}\}_{\beta\in\Delta^{+}}$ are explicitly given by
\begin{equation}
  \kappa_{\beta}=
  \begin{cases}
    |\beta|-1 &\quad\text{if}\  \beta=[i,j]\\
    |\beta|+2(n-j)-1 &\quad\text{if}\  \beta=[i,n,j]
  \end{cases}
\label{kappaB}
\end{equation}
and the constants $\{c_{\beta}\}_{\beta\in \Delta^{+}}$ are explicitly given by
\begin{equation}\label{eq:c-factor-B}
  c_{\beta}=
  \begin{cases}
    {\langle 2\rangle_{v}}^{|\beta|-1} &\quad\text{if}\  \beta=[i,j]\\
    {\langle 2\rangle_{v}}^{|\beta|-1} \cdot \prod_{\ell=j}^{n-1} \big\{(v^{-4n+4\ell-2}-1)(v^{-4n+4\ell+6}-1)\big\}
    &\quad\text{if}\  \beta=[i,n,j]
  \end{cases}.
\end{equation}
\end{Lem}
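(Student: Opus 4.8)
The plan is to mimic the inductive computation behind Lemma~\ref{Gs1} in type $G_2$ (and its type $A_n$ predecessor used in the proof of Lemma~\ref{shuffleelement}). The only input beyond a direct calculation is the vanishing property: if $\alpha_1<\alpha_2$ are positive roots with $\alpha_1+\alpha_2\in\Delta^+$, then $\phi_{\alpha_1+\alpha_2}\big(\Psi(E_{\alpha_1,s_1})\star\Psi(E_{\alpha_2,s_2})\big)=0$ for all $s_1,s_2\in\BZ$; this is the $B_n$-analogue of the fact used in the proof of Lemma~\ref{Gs1}, i.e.\ a special case of the $B_n$-counterpart of Proposition~\ref{vanishG}. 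Granting it, independence of $\phi_\beta(\Psi(E_{\beta,s}))$ of all auxiliary choices up to $\doteq$ follows at once: deleting the last letter $k$ of the standard Lyndon word \eqref{eq:SL-words} of $\beta$ yields a positive root $\gamma$ with $\gamma<[k]$ and $\gamma+\alpha_k=\beta$ (visible from \eqref{lynorderb}), so that the left-nested $v$-commutator \eqref{rootvector1} factors as $E_{\beta,s}=[E_{\gamma,s'},e_{k,s''}]_{\lambda}$ with $s'+s''=s$ and $\lambda$ the outermost parameter; applying $\Psi$ and the vanishing property kills the summand $\Psi(E_\gamma)\star\Psi(e_k)$ and leaves
\begin{equation}\label{eq:proposal-rec}
  \phi_\beta\big(\Psi(E_{\beta,s})\big)=-\lambda\cdot\phi_\beta\big(\Psi(e_{k,s''})\star\Psi(E_{\gamma,s'})\big).
\end{equation}
Since $\lambda\in v^{\BZ}$ and the extra powers of $v$ created by specializing the monomial $\Psi(e_{k,s''})=x_{k,*}^{s''}$ via \eqref{spe} lie in $v^{\BZ}$, the right side of \eqref{eq:proposal-rec} is determined by $\beta$ and $s$ up to $\doteq$.

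It then remains to evaluate \eqref{eq:proposal-rec} by induction along the resulting chain of subroots, with base $\phi_{[i]}(\Psi(e_{i,s}))=v^{-2is}w_{[i],1}^{s}\doteq w_{[i],1}^{s}$, so $c_{[i]}=1$ and $\kappa_{[i]}=0$. For $\beta=[i,j]$ take $\gamma=[i,j-1]$, $k=j$: the shuffle product in \eqref{eq:proposal-rec} has a trivial $\fS$-symmetrization (one variable per index) and equals $x_{j,1}^{s''}\cdot\Psi(E_{[i,j-1],s'})\cdot\zeta_{j,j-1}(x_{j,1}/x_{j-1,1})$, so its numerator is that of $\Psi(E_{[i,j-1],s'})$ times the extra linear factor $x_{j,1}-v^{2}x_{j-1,1}$. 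As $\phi_{[i,j]}$ restricts on the $[i,j-1]$-variables to $\phi_{[i,j-1]}$ and sends $x_{j,1}-v^{2}x_{j-1,1}$ to $v^{-2j}(1-v^{4})w\doteq\langle2\rangle_v w$, the inductive hypothesis gives $\phi_{[i,j]}(\Psi(E_{[i,j],s}))\doteq\langle2\rangle_v^{\,j-i}w^{s+(j-i)}$, which is \eqref{eq:B-general}--\eqref{eq:c-factor-B} in this case since $|[i,j]|-1=j-i$.

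For $\beta=[i,n,j]$ one would first construct the subroot $[i,n]$ as above, then run the $n-j+1$ ``doubling steps'' $\beta^{(n+1)}\coloneqq[i,n]$, $\beta^{(m)}\coloneqq[i,n,m]$ with $E_{\beta^{(m)},s}=[E_{\beta^{(m+1)},*},e_{m,*}]_{\lambda}$ for $m=n,n-1,\dots,j$. Now the shuffle product in \eqref{eq:proposal-rec} carries a genuine $\fS_2$-symmetrization over the two index-$m$ variables (as $\nu_{\beta^{(m)},m}=2$). The crucial — and, I expect, most laborious — point is that under $\phi_{\beta^{(m)}}$, which sends the two index-$m$ slots to $v^{-2m}w$ and $v^{-4n+2m+2}w$, exactly one of the two symmetrization terms survives: the one placing the newly adjoined $e_m$-variable into the second slot, because in the other term the factor $\zeta_{m,m+1}(v^{-2m}/v^{-2m-2})=\zeta_{m,m+1}(v^{2})$ (or, when $m=n$, the factor $\zeta_{n,n}(v^{-2})$) has vanishing numerator. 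In the surviving term the $\Psi(E_{\beta^{(m+1)}})$-part specializes to $\phi_{\beta^{(m+1)}}(\Psi(E_{\beta^{(m+1)},*}))$, while the $\zeta$-numerators involving the new index-$m$ variable — one of which cancels the spurious same-index denominator $x_{m,1}-x_{m,2}$ created by the symmetrization — contribute, up to $\doteq$, the factor $\langle2\rangle_v$ when $m=n$ and $\langle2\rangle_v(v^{-4n+4m-2}-1)(v^{-4n+4m+6}-1)$ when $j\le m\le n-1$, with $\kappa$ increasing by $1$ resp.\ $3$. Multiplying these over $m=n,n-1,\dots,j$ and combining with $c_{[i,n]}=\langle2\rangle_v^{\,n-i}$, $\kappa_{[i,n]}=n-i$ reproduces \eqref{eq:c-factor-B} and \eqref{kappaB} exactly; this matches, upon applying $\phi$, the ``doubled-tail'' numerator factors $(v^{4}x_{\ell,1}-x_{\ell,2})(v^{4}x_{\ell,2}-x_{\ell,1})$ of Lemma~\ref{phirvns}.

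The main obstacle is precisely this last bookkeeping: checking that only one symmetrization term survives at each doubling step, that the spurious same-index denominators cancel so that $\phi_{\beta^{(m)}}$ acts on an honest numerator, and that the resulting powers of $v$ are exactly those of \eqref{eq:c-factor-B}. In line with the overall strategy of Section~\ref{type B}, I would first settle the case $n=2$ (where the only root of the second type is $[1,2,2]$ and \eqref{eq:c-factor-B} reduces to $c_{[1,2,2]}=\langle2\rangle_v^{2}$, read off from Lemma~\ref{phirvns}), and then use the embedding of type $B_{n-1}$ into type $B_n$ to localize each doubling step to a rank-one neighbourhood of the index being doubled, exactly as the analogous computations are handled in the $G_2$ analysis of Section~\ref{tG}.
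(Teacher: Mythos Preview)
Your proposal is correct and follows essentially the same route as the paper: induction on $n$ (reducing $\beta=[i,n,j]$ with $i>1$ to $B_{n-1}$), the recursion $E_{\beta,s}=[E_{\gamma,*},e_{k,*}]_\lambda$ combined with the vanishing $\phi_\beta(\Psi(E_\gamma)\star\Psi(e_k))=0$, and the observation that in $\Psi(e_m)\star\Psi(E_{\beta^{(m+1)}})$ only the symmetrization term with the new $e_m$-variable in the second slot survives. The paper carries out exactly the two representative ``doubling'' computations you describe --- one for $[1,n]\to[1,n,n]$ and one for $[1,n,j+1]\to[1,n,j]$ --- obtaining the same factor $\langle 2\rangle_v$ (resp.\ $\langle 2\rangle_v(v^{-4n+4j-2}-1)(v^{-4n+4j+6}-1)$) and $\kappa$-increment $1$ (resp.\ $3$) that you predict.
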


\begin{proof}
The proof is by induction on $n$, the base case $n=2$ being obvious. Let us now assume that the result holds for
types $B_{m}\ (m<n)$. In what follows, we shall use the following special case of Proposition {\rm \ref{vanishB}}:
for any positive roots $\alpha_{1}<\alpha_{2}$ such that $\alpha_{1}+\alpha_{2}$ is a root, we have
  \[\phi_{\alpha_{1}+\alpha_{2}}(\Psi(E_{\alpha_{1},s_{1}})\star \Psi(E_{\alpha_{2},s_{2}}))=0
    \qquad \forall\ s_{1},s_{2}\in\BZ. \]

If $\beta=[i,j]$ with $1\leq i\leq j\leq n$, then \eqref{eq:B-general} follows from the $A_{n}$ type, see
\cite[Lemma 3.14]{Tsy18}. If $\beta=[i,n,j]$ with $1<i<j\leq n$, then \eqref{eq:B-general} likewise follows from
the $B_{n-1}$ case. Therefore, it remains to treat the cases $\beta=[1,n,j]$ with $2\leq j\leq n$.

If $\beta=[1,n,n]$, then
  \[E_{\beta,s}=
    [[[\cdots[[e_{1,s_{1}},e_{2,s_{2}}]_{\lambda_{1}},e_{3,s_{3}}]_{\lambda_{2}},\cdots,
    e_{n-1,s_{n-1}}]_{\lambda_{n-2}},e_{n,s_{n}}]_{\lambda_{n-1}},e_{n,s_{n+1}}]_{\lambda_{n}}\]
with $s=s_{1}+\cdots+s_{n+1}$. Consider $\alpha=[1,n]$, $r=s_{1}+\cdots+s_{n-1}+s_{n}$, and
  \[E_{\alpha,r}=
    [[\cdots[[e_{1,s_{1}},e_{2,s_{2}}]_{\lambda_{1}},e_{3,s_{3}}]_{\lambda_{2}},\cdots,
    e_{n-1,s_{n-1}}]_{\lambda_{n-2}},e_{n,s_{n}}]_{\lambda_{n-1}}.\]
Then, we have:
\begin{equation}\label{eq:simplification-1}
\begin{aligned}
  \phi_{\beta}(\Psi(E_{\beta,s}))
  & = \phi_{\beta}(\Psi(E_{\alpha,r})\star\Psi(e_{n,s_{n+1}})-\lambda_{n}\Psi(e_{n,s_{n+1}})\star\Psi(E_{\alpha,r}))\\
  & \doteq \phi_{\beta}(\Psi(e_{n,s_{n+1}})\star\Psi(E_{\alpha,r}))\\
  & \doteq \phi_{\beta}\Big({\mathop{Sym}}_{\mathfrak{S}_2}
    \Big(\Psi(E_{\alpha,r})(x^{(\beta,1)}_{n,2})^{s_{n+1}}
    \zeta_{n,n-1}(x^{(\beta,1)}_{n,2}/x^{(\beta,1)}_{n-1,1})\zeta_{n,n}(x^{(\beta,1)}_{n,2}/x^{(\beta,1)}_{n,1})\Big)\Big)\\
  & \doteq \phi_{\beta}\left(\Psi(E_{\alpha,r})(x^{(\beta,1)}_{n,2})^{s_{n+1}}
    \frac{(x^{(\beta,1)}_{n,2}-v^{2}x^{(\beta,1)}_{n-1,1})(x^{(\beta,1)}_{n,2}-v^{-2}x^{(\beta,1)}_{n,1})}
         {(x^{(\beta,1)}_{n,2}-x^{(\beta,1)}_{n-1,1})(x^{(\beta,1)}_{n,1}-x^{(\beta,1)}_{n,2})}\right)\\
  & \doteq {\langle 2\rangle_{v}}^{|\beta|-1} \cdot w_{\beta,1}^{s+|\beta|-1},
\end{aligned}
\end{equation}
where we plug the variables $\{x^{(\beta,1)}_{i,1}\}_{1\leq i\leq n}$ into $\Psi(E_{\alpha,r})$,
the variable $x^{(\beta,1)}_{n,2}$ into $\Psi(e_{n,s_{n+1}})$, $\mathop{Sym}_{\mathfrak{S}_2}$ denotes the symmetrization
with respect to the variables $\{x^{(\beta,1)}_{n,1}, x^{(\beta,1)}_{n,2}\}$, and the last line follows by applying
the validity of~\eqref{eq:B-general} for $(\alpha,r)$ established above. We also note that the second equality
in~\eqref{eq:simplification-1} used the vanishing $\phi_{\beta}(\Psi(E_{\alpha,r})\star\Psi(e_{n,s_{n+1}}))=0$, due to
\begin{equation}\label{eq:simple-vanishing-1}
  \phi_{\beta}(\zeta_{n,n}(x^{(\beta,1)}_{n,1}/x^{(\beta,1)}_{n,2}))=0=\phi_{\beta}(\zeta_{n-1,n}(x^{(\beta,1)}_{n-1,1}/x^{(\beta,1)}_{n,1})) \,.
\end{equation}

If $\beta=[1,n,j]$ with $2\leq j\leq n-1$, then
  \[E_{\beta,s}=
    [[\cdots[[\cdots[e_{1,s_{1}},e_{2,s_{2}}]_{\lambda_{1}},\cdots, e_{n,s_{n}}]_{\lambda_{n-1}},
    e_{n,s_{n+1}}]_{\lambda_{n}},\cdots,e_{j+1,s_{2n-j}}]_{\lambda_{2n-j-1}},e_{j,s_{2n-j+1}}]_{\lambda_{2n-j}}\]
with $s=s_{1}+\cdots +s_{2n-j+1}$. Consider $\alpha=[1,n,j+1]$, $r=s_{1}+\cdots+s_{2n-j}$, and
  \[E_{\alpha,r}=[\cdots[[\cdots[e_{1,s_{1}},e_{2,s_{2}}]_{\lambda_{1}},\cdots,
    e_{n,s_{n}}]_{\lambda_{n-1}},e_{n,s_{n+1}}]_{\lambda_{n}},\cdots,e_{j+1,s_{2n-j}}]_{\lambda_{2n-j-1}}.\]
Note that
  $\phi_{\beta}(\zeta_{j+1,j}(x^{(\beta,1)}_{j+1,2}/x^{(\beta,1)}_{j,2}))=0=\phi_{\beta}(\zeta_{j,j+1}(x^{(\beta,1)}_{j,1}/x^{(\beta,1)}_{j+1,1}))$,
cf.~\eqref{eq:simple-vanishing-1}. Thus, we have:
\begin{equation}\label{eq:simplification-2}
\begin{aligned}
  \phi_{\beta}(\Psi(E_{\beta,s}))
  & = \phi_{\beta}(\Psi(E_{\alpha,r})\star \Psi(e_{j,s_{2n-j+1}})-\lambda_{2n-j}\Psi(e_{j,s_{2n-j+1}})\star\Psi(E_{\alpha,r}))\\
  & \doteq \phi_{\beta}(\Psi(e_{j,s_{2n-j+1}})\star\Psi(E_{\alpha,r}))\\
  & \doteq \phi_{\beta}\Big( {\mathop{Sym}}_{\mathfrak{S}_2}
    \Big(\Psi(E_{\alpha,r})(x^{(\beta,1)}_{j,2})^{s_{2n-j+1}}
    \zeta_{j,j}(x^{(\beta,1)}_{j,2}/x^{(\beta,1)}_{j,1})\times \\
  & \qquad \ \ \ \ \zeta_{j,j-1}(x^{(\beta,1)}_{j,2}/x^{(\beta,1)}_{j-1,1})
    \zeta_{j,j+1}(x^{(\beta,1)}_{j,2}/x^{(\beta,1)}_{j+1,1})\zeta_{j,j+1}
    (x^{(\beta,1)}_{j,2}/x^{(\beta,1)}_{j+1,2})\Big)\Big)\\
  & \doteq \phi_{\beta}\left(
    \frac{\Psi(E_{\alpha,r})(x^{(\beta,1)}_{j,2})^{s_{2n-j+1}}(x^{(\beta,1)}_{j,2}-v^{-4}x^{(\beta,1)}_{j,1})}
         {x^{(\beta,1)}_{j,1}-x^{(\beta,1)}_{j,2}}\times\right. \\
  & \qquad\ \ \ \
    \left.
    \frac{(x^{(\beta,1)}_{j,2}-v^{2}x^{(\beta,1)}_{j-1,1})(x^{(\beta,1)}_{j,2}-v^{2}x^{(\beta,1)}_{j+1,1})
          (x^{(\beta,1)}_{j,2}-v^{2}x^{(\beta,1)}_{j+1,2})}
         {(x^{(\beta,1)}_{j,2}-x^{(\beta,1)}_{j-1,1})(x^{(\beta,1)}_{j,2}-x^{(\beta,1)}_{j+1,1})
          (x^{(\beta,1)}_{j,2}-x^{(\beta,1)}_{j+1,2})}\right)\\
  & \doteq {\langle 2\rangle_{v}}^{|\beta|-1} \cdot
    \prod_{\ell=j}^{n-1}\big\{(v^{-4n+4\ell-2}-1)(v^{-4n+4\ell+6}-1)\big\} \cdot w_{\beta,1}^{s+|\beta|+2(n-j)-1},
\end{aligned}
\end{equation}
where we plug the variables $\{x^{(\beta,1)}_{i,t}, x^{(\beta,1)}_{j,1}\}^{1\leq t\leq \nu_{\beta,i}}_{1\leq i\leq n, i\neq j}$
into $\Psi(E_{\alpha,r})$, the variable $x^{(\beta,1)}_{j,2}$ into $\Psi(e_{j,s_{2n-j+1}})$, $\mathop{Sym}_{\mathfrak{S}_2}$
denotes the symmetrization with respect to the variables $\{x^{(\beta,1)}_{j,1}, x^{(\beta,1)}_{j,2}\}$, and the last line
follows by applying the induction hypothesis for $\phi_{\alpha}(\Psi(E_{\alpha,r}))$.
\end{proof}

Let us now generalize the above lemma by computing $\phi_{\unl{d}}(\Psi(E_{h}))$ for any $h\in H_{\unl{k},\unl{d}}$.
Similarly to type $G_{2}$, we choose a special splitting such that the variables in $\Psi(E_{\beta,r_{\beta}(h,s)})$ are
taken to be the group $\{x^{(\beta,s)}_{i,t}\}^{1\leq t\leq \nu_{\beta,i}}_{1\leq i\leq n}$, and under $\phi_{\unl{d}}$
they are specialized as in \eqref{spe}. For each $1\leq i\leq n$, we define the set $X_{i}$ as in \eqref{variablei},
and the total order on $X_{i}$ as in \eqref{ordervariable}.

For any $\unl{d}\in\text{KP}(\unl{k})$, we define the subset $\text{Sh}_{\unl{d}}\subset \mathfrak{S}_{\unl{k}}$
of ``$\unl{d}$-shuffle permutations'' as in \eqref{eq:d-perm-G}:
\begin{equation}\label{unldshuffle}
  \text{Sh}_{\unl{d}}=
  \Big\{\sigma\in \mathfrak{S}_{\unl{k}} \, \Big| \, \sigma(x^{(\beta,s)}_{i,1})<\sigma(x^{(\beta,s)}_{i,2})
        \quad \forall\ \beta\in\Delta^{+},1\leq s\leq d_{\beta},2\leq i\leq n\Big\}.
\end{equation}
Then from \eqref{shuffleproduct}, we have:
\begin{equation}
  \Psi(E_{h})\doteq
  \sum_{\sigma\in\text{Sh}_{\unl{d}}} \sigma\big(F_{h}(\{x^{(*,*)}_{*,*}\})\big)=
  \sum_{\sigma\in\text{Sh}_{\unl{d}}} F_{h}\big(\{\sigma(x^{(*,*)}_{*,*})\}\big),
\end{equation}
where
\begin{equation}
  F_{h}\coloneqq
  \prod_{\substack{\beta\in\Delta^{+}\\1\leq s\leq d_{\beta}}}\Psi(E_{\beta,r_{\beta}(h,s)})
  \prod^{(\alpha,p)<(\beta,q)}_{\substack{\alpha,\beta\in\Delta^{+}\\1\leq p\leq d_{\alpha},1\leq q\leq d_{\beta}}}
  \prod_{1\leq i,j\leq n}^{a_{ij\neq 0}}\prod_{1\leq \ell\leq \nu_{\alpha,i}}^{1\leq r\leq \nu_{\beta,j}}
  \frac{x^{(\alpha,p)}_{i,\ell}-v_{i}^{-a_{ij}}x^{(\beta,q)}_{j,r}}{x^{(\alpha,p)}_{i,\ell}-x^{(\beta,q)}_{j,r}}.
\end{equation}


Let us consider the elements of $\fS_{\unl{k}}$ satisfying \eqref{fsunld}, which form a subgroup isomorphic to
$\fS_{\unl{d}}$ (we shall denote this subgroup by $\fS_{\unl{d}}$). Then, similarly to Lemma \ref{Gs2}, we have:

\begin{Lem}\label{bstep3}
$\phi_{\unl{d}}(\sigma(F_{h}))=0$ for $\sigma\notin \fS_{\unl{d}}$.
\end{Lem}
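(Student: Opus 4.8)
The plan is to imitate the proof of Lemma~\ref{Gs2} for type $G_2$, but to take advantage of the linear structure of the $B_n$ Dynkin diagram to run a single clean induction, rather than a case-by-case analysis of each positive root. For $\sigma\in\fS_{\unl{k}}$, I want to show that $\phi_{\unl{d}}(\sigma(F_{h}))\neq 0$ forces the constraint \eqref{fsunld} to hold for every $\beta\in\Delta^{+}$; this gives $\sigma\in\fS_{\unl{d}}$. As in the $G_2$ argument, the mechanism is that whenever $\sigma$ moves a variable $x^{(\beta,s)}_{i,t}$ to a group $x^{(\gamma,r)}_{*,*}$ with $\gamma\neq\beta$, some $\zeta$-factor of $F_h$ becomes (after applying $\sigma$) one of the factors $\zeta_{i\pm 1,i}$ or $\zeta_{i,i}$ evaluated at a ratio of two variables from the \emph{same} group $(\gamma,r)$, and such a factor specializes to $0$ under $\phi_{\unl{d}}$ by~\eqref{spe} — this is exactly the source of the pole/wheel vanishing encoded in the definition of the specialization. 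So the whole proof is bookkeeping of which $\zeta$-factors appear in $F_h$ and tracking their images under $\sigma$.

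Concretely, I would proceed by induction on $\beta$ with respect to the convex order~\eqref{lynorderb}, exactly as in Lemma~\ref{Gs2}. For the induction base and for short roots $\beta=[i,j]$ (which only involve the variables $x_{*,1}$, since $\nu_{\beta,i}\in\{0,1\}$ there), the argument is essentially the type-$A_n$ one from~\cite{Tsy18}: one uses that $F_h$ contains $\zeta_{i,i+1}(x^{(\beta,s)}_{i,1}/Z^{>(\beta,s)}_{i+1})$-type factors (with $Z^{>(\beta,s)}_{i+1}$ fixed by $\sigma$) to force $\sigma(x^{(\beta,s)}_{i,1})$ to land in a group indexed by $(\beta,s')$, and then that $\sigma\in\mathrm{Sh}_{\unl{d}}$ pins down $s'$. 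The genuinely new case is $\beta=[i,n,j]$, where the variables $x_{\ell,2}$ for $j\leq \ell\leq n$ also appear (each with multiplicity $\nu_{\beta,\ell}=2$). Here I would mimic the nested induction in Lemma~\ref{Gs2}'s treatment of $\beta=[1,2,1,2,2]$: having established~\eqref{fsunld} for all smaller roots, I track $\sigma(x^{(\beta,s_0)}_{i,1})=x^{(\gamma,r)}_{i,t}$; the $\zeta$-factors in $F_h$ (both the $\zeta_{i,i}$ between the two copies at node $i$, and the $\zeta_{i,i\pm1}$ between adjacent nodes, together with the factors against $Z^{>(\beta,s_0)}_{*}$) force $\gamma=\beta$, the index $s$ to be constant along the chain, and the two copies $t=1,2$ to be mapped order-preservingly (using $\sigma\in\mathrm{Sh}_{\unl{d}}$). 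The linearity of the diagram means there is essentially one pattern to check — when two length-$2$ groups at the same node get interleaved — rather than the several subcases that the $G_2$ longest root required.

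The main obstacle I anticipate is the combinatorial case analysis at the ``turning point'' node $n$ and in the interval $[i,n,j]$ near $\ell=j$, where two variables $x^{(\beta,s_0)}_{n,1},x^{(\beta,s_0)}_{n,2}$ (and similarly at node $j$) can in principle be split between two different groups $(\gamma_1,r_1)$ and $(\gamma_2,r_2)$ in a way that does not immediately produce a same-group $\zeta$-factor; ruling this out requires carefully chaining the constraints along the path from node $1$ up to node $n$ and back down to node $j$, using the induction hypothesis for all $\alpha<\beta$ at each step, plus the $\mathrm{Sh}_{\unl{d}}$-condition to break ties between the two copies. The argument should, however, be strictly simpler than the $G_2$ one: at each node only one or two copies are involved and the adjacency is linear, so the ``if $\ell_1=\ell_1'$ / $\ell_1>\ell_1'$ / $\ell_1<\ell_1'$'' trichotomy that appears at the end of Lemma~\ref{Gs2}'s proof collapses. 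I would close by noting that once~\eqref{fsunld} holds for all $\beta$, $\sigma$ lies in the subgroup $\fS_{\unl{d}}$, which is the claim.
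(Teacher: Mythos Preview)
Your proposal is correct and follows essentially the same approach as the paper: both track the $\zeta$-factors in $F_h$ that specialize to zero under $\phi_{\unl{d}}$ when $\sigma$ sends two variables to the same group $(\gamma,r)$, and both run an induction along the convex order~\eqref{lynorderb} combined with the $\mathrm{Sh}_{\unl{d}}$ constraint. The only organizational difference is that the paper first does an induction on the rank $n$ (reducing, via the $B_{n-1}$ hypothesis, to the roots $\beta\in\Delta_1^+=\{[1,j]\}\cup\{[1,n,j]\}$) and then a secondary induction on $(\beta,s)$, whereas you propose a single induction on $\beta$ directly; the paper's version avoids rewriting the same computation for each starting index $i$, but the underlying mechanism is identical, and your observation that the $[i,n,j]$ case is simpler than the $G_2$ longest-root case (since $\nu_{\beta,\ell}\leq 2$) is exactly right. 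One small imprecision: your parenthetical ``$Z^{>(\beta,s)}_{i+1}$ fixed by $\sigma$'' is not literally true in general; what one actually uses (as in the paper's Case~1--3) is that the induction hypothesis forces $\sigma(x^{(\beta,s_0)}_{*,*})$ into a group indexed $\geq(\beta,s_0)$, and then the relevant neighboring variable $x^{(\gamma,r)}_{*,*}$ lies in $\sigma(Z^{>(\beta,s_0)}_{*})$.
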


\begin{proof}
The proof is by induction on $n$. Arguing alike in the proof of Lemma \ref{Gs2}, the result is clear for $B_{2}$-type.
Let $\Delta^{+}_{1}=\{[1,j]\}_{1\leq j\leq n}\cup\{[1,n,j]\}_{1<j\leq n}$. It suffices to show that
$\phi_{\unl{d}}(\sigma(F_{h}))\neq 0$ only if \eqref{fsunld} holds for every $\beta\in\Delta_{1}^{+}$, as for
$\beta>[1,n,2]$ we can apply the induction hypothesis for $B_{n-1}$-type. We shall now prove that \eqref{fsunld}
holds for any $x^{(\beta,s_{1})}_{*,*}$, assuming it holds for any $x^{(\beta',s')}_{*,*}$ with
$(\beta',s')<(\beta,s_{1})$. Similar to \eqref{Zbetas}, we define the following sets of $x^{(*,*)}_{*,*}$-variables:
\begin{equation}
  Z^{>(\beta,s)}_{i}=\big\{x^{(\alpha,r)}_{i,*} \, \big| \, (\alpha,r)>(\beta,s) \big\}
  \qquad \forall\ (\beta,s)\in\Delta^{+}\times\BN, 1\leq i\leq n.
\end{equation}
\begin{itemize}[leftmargin=0.7cm]

\item
Case 1: $\beta=[1,j]$ with $1\leq j\leq n-1$. Let $\sigma(x^{(\beta,s_{1})}_{1,1})=x^{(\gamma,r)}_{1,1}$ with
$\gamma\geq \beta$. Since $F_{h}$ contains the factor $\zeta_{1,2}(x^{(\beta,s_{1})}_{1,1}/Z^{>(\beta,s_{1})}_{2})$,
we have $\phi_{\unl{d}}(\sigma(F_{h}))=0$ unless $\sigma(x^{(\beta,s_{1})}_{2,1})=x^{(\gamma,r)}_{2,1}$. Proceeding
as this, we get  $\phi_{\unl{d}}(\sigma(F_{h}))=0$ unless $\sigma(x^{(\beta,s_{1})}_{\ell,1})=x^{(\gamma,r)}_{\ell,1}$
for any $\ell\leq j$. If $\gamma>\beta$, then we get $\phi_{\unl{d}}(\sigma(F_{h}))=0$, since $F_{h}$ contains
the factor $\zeta_{j,j+1}(x^{(\beta,s_{1})}_{j,1}/Z^{>(\beta,s_{1})}_{j+1})$ and
$x^{(\gamma,r)}_{j+1,1}\in \sigma(Z^{>(\beta,s_{1})}_{j+1})$, hence, a contradiction.
Thus $\gamma=\beta$, and \eqref{fsunld} holds for any $x^{(\beta,s_{1})}_{*,*}$.

\item
Case 2: $\beta=[1,n]$. Let $\sigma(x^{(\beta,s_{1})}_{1,1})=x^{(\gamma,r)}_{1,1}$ with $\gamma\geq \beta$.
Then, similar to above, we get $\phi_{\unl{d}}(\sigma(F_{h}))=0$ unless
$\sigma(x^{(\beta,s_{1})}_{\ell,1})=x^{(\gamma,r)}_{\ell,1}$ for any $\ell\leq n$. If $\gamma>\beta$, then
we get $\phi_{\unl{d}}(\sigma(F_{h}))=0$, since $F_{h}$ contains the factor
$\zeta_{n,n}(x^{(\beta,s_{1})}_{n,1}/Z^{>(\beta,s_{1})}_{n})$ and $x^{(\gamma,r)}_{n,2}\in \sigma(Z^{>(\beta,s_{1})}_{n})$,
a contradiction. Thus $\gamma=\beta$, and \eqref{fsunld} holds for any $x^{(\beta,s_{1})}_{*,*}$.

\item
Case 3: $\beta=[1,n,j]$ with $2\leq j\leq n$. Let $\sigma(x^{(\beta,s_{1})}_{1,1})=x^{(\gamma,r)}_{1,1}$
with $\gamma\geq \beta$. Then, $\phi_{\unl{d}}(\sigma(F_{h}))=0$ unless there exist
$\{t_{\ell,1},t_{\ell,2}\}_{\ell=j}^{n}$ such that
\begin{equation}
\begin{aligned}
  & \sigma(x^{(\beta,s_{1})}_{\ell,1})=x^{(\gamma,r)}_{\ell,1} \qquad \text{if}\  1\leq \ell\leq j-1, \\
  & \sigma(x^{(\beta,s_{1})}_{\ell,t_{\ell,1}})=x^{(\gamma,r)}_{\ell,1}, \quad
    \sigma(x^{(\beta,s_{1})}_{\ell,t_{\ell,2}})=x^{(\gamma,r)}_{\ell,2} \qquad \text{if}\ j\leq \ell\leq n.
\end{aligned}
\end{equation}
If $t_{\ell,2}<t_{\ell,1}$ for some $\ell\in \{j,j+1,\ldots,n\}$, then
$x^{(\beta,s_{1})}_{\ell,t_{\ell,2}}<x^{(\beta,s_{1})}_{\ell,t_{\ell,1}}$, and the condition
$\sigma\in\text{Sh}_{\unl{d}}$ implies
  \[x^{(\gamma,r)}_{\ell,1}=\sigma(x^{(\beta,s_{1})}_{\ell,t_{\ell,1}})
    <\sigma(x^{(\beta,s_{1})}_{\ell,t_{\ell,2}})=x^{(\gamma,r)}_{\ell,2},\]
a contradiction. Hence, $t_{\ell,1}<t_{\ell,2}$ for any $j\leq \ell\leq n$, so that $t_{\ell,1}=1, t_{\ell,2}=2$
for any $j\leq \ell\leq n$. If $\gamma>\beta$, then we get $\phi_{\unl{d}}(\sigma(F_{h}))=0$, since $F_{h}$
contains the factor $\zeta_{j,j-1}(x^{(\beta,s_{1})}_{j,2}/Z^{>(\beta,s_{1})}_{j-1})$ and
$x^{(\gamma,r)}_{j-1,2}\in \sigma(Z^{>(\beta,s_{1})}_{j-1})$, a contradiction.
Thus $\gamma=\beta$, and \eqref{fsunld} holds for any $x^{(\beta,s_{1})}_{*,*}$.

\end{itemize}
This completes our proof.
\end{proof}

Combining Lemmas \ref{phirv} and \ref{bstep3}, we obtain the following analogue of Lemma \ref{shuffleelement}
for type~$B_{n}$:

\begin{Prop}\label{spekpB}
For any $h\in H_{\unl{k},\unl{d}}$, we have
\begin{equation}
  \phi_{\unl{d}}(\Psi(E_{h}))\doteq
  \prod_{\beta,\beta'\in \Delta^+}^{\beta<\beta'}G_{\beta,\beta'}\cdot
  \prod_{\beta\in\Delta^{+}}(c_{\beta}^{d_{\beta}}\cdot G_{\beta})\cdot
  \prod_{\beta\in\Delta^{+}} P_{\lambda_{h,\beta}},\label{speehb}
\end{equation}
where the factors $\{P_{\lambda_{h,\beta}}\}_{\beta\in\Delta^{+}}$ are given by \eqref{hlp}, the constants
$\{c_{\beta}\}_{\beta\in\Delta^{+}}$ are as in Lemma~{\rm \ref{phirv}}, and the terms $G_{\beta,\beta'},G_{\beta}$
are products of linear factors $w_{\beta,s}$ and $w_{\beta,s}-v^{\BZ}w_{\beta',s'}$ which are independent of
$h\in H_{\unl{k},\unl{d}}$ and are $\mathfrak{S}_{\unl{d}}$-symmetric (the factors $G_{\beta}$ are explicitly
computed in~\eqref{eqformulagbeta}).
\end{Prop}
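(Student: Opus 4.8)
The plan is to mimic the proof of Proposition~\ref{spekpG} for type $G_2$, passing through the special splitting of variables and the vanishing result of Lemma~\ref{bstep3}. First I would observe that, by~\eqref{shuffleproduct} and the choice of special splitting, $\Psi(E_h)\doteq \sum_{\sigma\in\mathrm{Sh}_{\unl{d}}}\sigma(F_h)$, so that applying $\phi_{\unl{d}}$ term by term and invoking Lemma~\ref{bstep3} reduces the sum to those $\sigma\in\fS_{\unl{d}}$. For such $\sigma$, the product of the ``cross'' $\zeta$-factors between distinct groups $(\beta,p)$ and $(\beta',q)$ (the second product in $F_h$) is $\fS_{\unl{d}}$-invariant; its $\phi_{\unl{d}}$-specialization therefore factors as $\prod_{\beta<\beta'}G_{\beta,\beta'}$, where $G_{\beta,\beta'}$ depends only on the pair $(\beta,\beta')$ and is a product of linear factors $w_{\beta,s}-v^{\BZ}w_{\beta',s'}$, manifestly $\fS_{\unl{d}}$-symmetric and independent of $h\in H_{\unl{k},\unl{d}}$.

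Next I would reduce to a ``within one root'' computation: it remains to show, for each fixed $\beta\in\Delta^+$, that
\begin{equation*}
  \phi_{\unl{d}}\!\left(\Psi\big(E_{\beta,r_\beta(h,1)}\star\cdots\star E_{\beta,r_\beta(h,d_\beta)}\big)\right)
  \doteq c_\beta^{\,d_\beta}\cdot G_\beta\cdot P_{\lambda_{h,\beta}},
\end{equation*}
with $G_\beta$ independent of the $s$-permutation. Here I would use the ``rank $1$ reduction'' exactly as in type $A_n$ (Lemma~\ref{shuffleelement}) and $G_2$: the $d_\beta$-fold shuffle product of powers $x_{i,1}^{r_\beta(h,s)}$, after the substitution~\eqref{spe} collapsing each group to a single variable $w_{\beta,s}$ multiplied by fixed powers of $v$, produces the symmetrization $P_{\lambda_{h,\beta}}$ of~\eqref{hlp}, times a Laurent polynomial $G_\beta$ in the $w_{\beta,s}$ coming from the $\zeta$-factors within $\beta$, times $c_\beta^{d_\beta}$ from Lemma~\ref{phirv} (the per-copy constant). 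The $\fS_{d_\beta}$-symmetry of $G_\beta$ follows because each $P_{\lambda_{h,\beta}}$ and the whole left-hand side are symmetric in $\{w_{\beta,s}\}$, while $c_\beta^{d_\beta}$ is a scalar; alternatively it is visible directly from the explicit product of pairwise factors.

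The main obstacle is the explicit determination of $G_\beta$ for the long roots $\beta=[i,n,j]$, where $\nu_{\beta,\ell}=2$ for $j\le \ell\le n$, so each group $x^{(\beta,s)}_{*,*}$ carries two variables per node and the $\zeta$-factors between the two copies (within a group and across groups at the same $\beta$) proliferate. Concretely I expect
\begin{equation}\label{eqformulagbeta}
  G_\beta\doteq \prod_{\ell=1}^{d_\beta}w_{\beta,\ell}^{\,\kappa_\beta}\cdot
  \prod_{1\le s\ne r\le d_\beta}\Big(\text{product of factors }w_{\beta,s}-v^{\BZ}w_{\beta,r}\Big),
\end{equation}
where the powers of $v$ and the monomial exponent $\kappa_\beta$ are read off from~\eqref{kappaB}--\eqref{eq:c-factor-B} and the substitution~\eqref{spe}, and the computation is carried out by the same inductive argument on $n$ used in Lemma~\ref{phirv}: peel off the last $e_{j,*}$ (or $e_{n,*}$) generator, use the vanishing of the relevant $\zeta$-factors under $\phi_\beta$ as in~\eqref{eq:simplification-1}--\eqref{eq:simplification-2}, and track how the extra variable $x^{(\beta,s)}_{j,2}$ contributes a new batch of linear factors. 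The bookkeeping is tedious but routine; there is no conceptual difficulty beyond organizing the case analysis $\beta=[i,j]$ versus $\beta=[1,n,j]$ versus $\beta=[i,n,j]$ with $i>1$, the latter two being handled by the $B_{n-1}$ induction hypothesis exactly as in Lemma~\ref{phirv}. Once $G_\beta$ and $G_{\beta,\beta'}$ are pinned down, \eqref{speehb} follows, completing the proof.
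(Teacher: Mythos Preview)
Your proposal is correct and follows essentially the same route as the paper: reduce via Lemma~\ref{bstep3} to $\sigma\in\fS_{\unl{d}}$, pull out the $\sigma$-invariant cross-root $\zeta$-factors as $\prod_{\beta<\beta'}G_{\beta,\beta'}$, and handle the within-root part by the rank-$1$ reduction of Lemma~\ref{phirv} to obtain $c_\beta^{d_\beta}\cdot G_\beta\cdot P_{\lambda_{h,\beta}}$. The only divergence is that you propose to compute $G_\beta$ explicitly by the inductive peeling argument of Lemma~\ref{phirv}, whereas the paper defers the explicit formula~\eqref{eqformulagbeta} to Corollary~\ref{formulagbeta} (extracted from the recursion~\eqref{trivial}--\eqref{trivial1} in the proof of Proposition~\ref{spanB}); for the present proposition only the existence and $\fS_{\unl{d}}$-symmetry of such $G_\beta$ are needed, so this extra effort is not required here.
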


\begin{Prop}\label{vanishB}
Lemma {\rm \ref{vanish}} is valid for type $B_{n}$ and specialization maps $\phi_{\unl{d}}$ as in \eqref{spe}.
\end{Prop}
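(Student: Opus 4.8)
\textbf{Proof proposal for Proposition \ref{vanishB}.}

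The plan is to mimic the proof of Proposition~\ref{vanishG} (type $G_2$), adapting the argument to the $B_n$ root system and combining it with an induction on the rank $n$, exactly as was done for Lemmas~\ref{phirv} and~\ref{bstep3}. Fix $\unl{d},\unl{d}'\in\mathrm{KP}(\unl{k})$ with $\unl{d}'<\unl{d}$, and fix $h\in H_{\unl{k},\unl{d}}$. Using the symmetrization formula for $\Psi(E_h)$ together with the chosen special splitting $x^{(*,*)}_{*,*}$ adapted to $\phi_{\unl{d}}$ (see the display before Lemma~\ref{bstep3}), it suffices to show $\phi_{\unl{d}'}(\sigma(F_h))=0$ for every $\sigma\in\mathfrak{S}_{\unl{k}}$. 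Let $x'^{(*,*)}_{*,*}$ be any splitting of the variables used to compute $\phi_{\unl{d}'}$, and introduce the sets $Z^{>(\beta,s)}_{i}=\{x^{(\alpha,r)}_{i,*} \mid (\alpha,r)>(\beta,s)\}$ as in the proof of Proposition~\ref{vanishG}. Let $\beta$ be the smallest positive root with $d'_{\beta}<d_{\beta}$; by Lemma~\ref{bstep3} applied to the $\unl{d}'$-splitting we may assume $d'_{\alpha}=0$ for all $\alpha\le\beta$, so that some variable $x^{(\beta,1)}_{*,*}$ (in the $\phi_{\unl{d}}$-splitting) must be moved by $\sigma$ into a group $x'^{(\gamma,r)}_{*,*}$ with $\gamma>\beta$.

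The heart of the argument is then a case analysis on the shape of $\beta\in\Delta^+$, paralleling the five cases in the $G_2$ proof but now organized by the $B_n$ description~\eqref{eq:SL-words}. First, for $\beta=[1,j]$ with $1\le j\le n-1$ or $\beta=[1,n]$: exactly as in Lemma~\ref{bstep3} (Cases~1 and~2), the chain of $\zeta$-factors $\zeta_{1,2},\zeta_{2,3},\dots$ contained in $F_h$ forces $\sigma(x^{(\beta,1)}_{\ell,1})=x'^{(\gamma,r)}_{\ell,1}$ for all $\ell$ up the chain, and then the factor $\zeta_{j,j+1}(x^{(\beta,1)}_{j,1}/Z^{>(\beta,1)}_{j+1})$ (resp. $\zeta_{n,n}(x^{(\beta,1)}_{n,1}/Z^{>(\beta,1)}_{n})$) specializes to zero under $\phi_{\unl{d}'}$ since $x'^{(\gamma,r)}_{j+1,1}$ (resp. $x'^{(\gamma,r)}_{n,2}$) lies in $\sigma(Z^{>(\beta,1)}_{\cdot})$; this contradiction gives $\phi_{\unl{d}'}(\sigma(F_h))=0$. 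Second, for $\beta=[1,n,j]$ with $2\le j\le n$: again following the $t_{\ell,1}<t_{\ell,2}$ analysis of Case~3 of Lemma~\ref{bstep3}, one shows $\phi_{\unl{d}'}(\sigma(F_h))\ne 0$ would force $\sigma(x^{(\beta,1)}_{\ell,t})=x'^{(\gamma,r)}_{\ell,t}$ for all $\ell$ and $t$, and then the factor $\zeta_{j,j-1}(x^{(\beta,1)}_{j,2}/Z^{>(\beta,1)}_{j-1})$ specializes to zero, again a contradiction. Third, the cases $\beta=[i,j]$ or $[i,n,j]$ with $i>1$ are handled by invoking the induction hypothesis for $B_{n-1}$, exactly as in the proof of Lemma~\ref{bstep3}. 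Finally, as in the last bullet of Proposition~\ref{vanishG}'s proof, one must rule out by a lattice-theoretic argument those $\beta$ for which the assumptions $d'_{\le\beta}=0$ and $d_\beta\ne 0$ are incompatible with $\sum_\alpha d'_\alpha\alpha=\sum_\alpha d_\alpha\alpha$ (e.g. when $\beta$ is the unique root of maximal height divisible by some coefficient pattern): such cases simply do not occur.

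The main obstacle I expect is the bookkeeping in the $\beta=[1,n,j]$ case, where each of the ``folded'' nodes $\ell\in\{j,\dots,n\}$ carries two variables $x^{(\beta,1)}_{\ell,1},x^{(\beta,1)}_{\ell,2}$, and one must carefully track which $\zeta$-factors of types $\zeta_{\ell,\ell\pm1}$ and $\zeta_{\ell,\ell}$ appear in $F_h$ and which of them are forced to vanish under $\phi_{\unl{d}'}$ once a single variable is displaced into a larger group $\gamma>\beta$; the fact that $\nu_{\beta,\ell}=2$ for $j\le\ell\le n$ and $\nu_{\beta,\ell}=1$ for $\ell<j$ makes the pattern of vanishing specializations~\eqref{spe} slightly delicate, and one has to check it respects the total order~\eqref{ordervariable} on $X_\ell$. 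Apart from this, the proof is a routine transcription of the $G_2$ argument, so I would keep the write-up short and refer back to the proofs of Lemma~\ref{bstep3} and Proposition~\ref{vanishG} wherever the combinatorics is identical.
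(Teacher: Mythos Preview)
Your proposal is correct and follows essentially the same approach as the paper: induction on $n$, reduction to $\beta\in\Delta^+_1=\{[1,j]\}_j\cup\{[1,n,j]\}_j$, and the same case-by-case $\zeta$-factor chase borrowed from Lemma~\ref{bstep3}. One point to tighten: in your treatment of $\beta=[1,n,j]$ you include $j=2$, but the $\zeta_{j,j-1}(x^{(\beta,1)}_{j,2}/\cdot)$ argument requires the target group $\gamma>\beta$ to satisfy $\nu_{\gamma,j-1}\ge 2$, and more fundamentally the very first step $\sigma(x^{(\beta,1)}_{1,1})=x'^{(\gamma,r)}_{1,1}$ already needs $1\in\gamma$; since $[1,n,2]$ is the \emph{largest} element of $\Delta^+_1$, no such $\gamma>\beta$ exists. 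So the $[1,n,j]$ chain argument is valid only for $2<j\le n$ (the paper's Case~3), and $\beta=[1,n,2]$ is precisely the ``impossible'' case handled by your final lattice-theoretic remark (the paper's Case~4): with $d'_\alpha=d_\alpha=0$ for $\alpha<\beta$ and $d'_\beta<d_\beta$, comparing the $\alpha_1$-coefficient of $\unl{k}$ in both partitions gives $k_1=\sum_{\alpha\le\beta}d'_\alpha<\sum_{\alpha\le\beta}d_\alpha=k_1$, a contradiction. Once you make this split explicit, your write-up matches the paper's proof.
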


\begin{proof}
The proof is by induction on $n$ with the base case of $B_{2}$-type being clear, cf.~the proof of Lemma \ref{bstep3}.
Given $\unl{d},\unl{d}'\in \mathrm{KP}(\unl{k})$ with $\unl{d}'<\unl{d}$, let $\beta\in \Delta^+$ be the smallest root
such that $d'_{\beta}<d_{\beta}$. We can assume that $\beta\in\Delta^{+}_{1}$, as otherwise the induction hypothesis for
$B_{n-1}$ type will apply.
Similarly to the proof of Proposition \ref{vanishG}, without loss of generality, we can assume $d'_{\alpha}=0$ for all
$\alpha\leq \beta$. Let $x^{(*,*)}_{*,*}$ be the above special splitting of the variables for $\phi_{\unl{d}}$, and
$x'^{(*,*)}_{*,*}$ be any splitting of the variables for $\phi_{\unl{d}'}$. Then, we have the following case-by-case analysis:
\begin{itemize}[leftmargin=0.7cm]

\item
Case 1: $\beta=[1,j]$ with $1\leq j\leq n-1$. Since $0=d'_{\beta}<d_{\beta}$, we have
$\sigma(x^{(\beta,1)}_{1,1})=x'^{(\gamma,r)}_{1,1}$ for some $\gamma>\beta$. Since $F_{h}$ contains the $\zeta$-factor
$\zeta_{1,2}(x^{(\beta,1)}_{1,1}/x^{(\alpha,s)}_{2,t})$ for any $(\alpha,s)>(\beta,1)$ and $1\leq t\leq 2$, we have
$\phi_{\unl{d}'}(\sigma(F_{h}))=0$ unless $\sigma(x^{(\beta,1)}_{2,1})=x'^{(\gamma,r)}_{2,1}$. Proceeding as this, we get
$\phi_{\unl{d}'}(\sigma(F_{h}))=0$ unless $\sigma(x^{(\beta,1)}_{i,1})=x'^{(\gamma,r)}_{i,1}$ for any $1\leq i\leq j$.
In the latter case, we still obtain  $\phi_{\unl{d}'}(\sigma(F_{h}))= 0$, since $F_{h}$ contains the factor
$\zeta_{j,j+1}(x^{(\beta,1)}_{j,1}/x'^{(\gamma,r)}_{j+1,1})$.

\item
Case 2: $\beta=[1,n]$. Let $\sigma(x^{(\beta,1)}_{1,1})=x'^{(\gamma,r)}_{1,1}$ for some $\gamma>\beta$. Arguing as above,
we have $\phi_{\unl{d}'}(\sigma(F_{h}))=0$ unless $\sigma(x^{(\beta,1)}_{i,1})=x'^{(\gamma,r)}_{i,1}$ for any $1\leq i\leq n$.
If $(\beta',s')$ is such that $\sigma(x^{(\beta',s')}_{n,t})=x'^{(\gamma,r)}_{n,2}$, then we have $(\beta',s')>(\beta,1)$.
Therefore, we still get $\phi_{\unl{d}'}(\sigma(F_{h}))= 0$, since $F_{h}$ contains the factor
$\zeta_{n,n}(x^{(\beta,1)}_{n,1}/x^{(\beta',s')}_{n,t})$.

\item
Case 3: $\beta=[1,n,j]$ with $2<j\leq n$. Similar to above, $\phi_{\unl{d}'}(\sigma(F_{h}))= 0$ unless there is
$\gamma>\beta$ such that $\sigma(x^{(\beta,1)}_{i,1})=x^{(\gamma,r)}_{i,1}$ for $1\leq i\leq n$,
$\sigma(x^{(\beta,1)}_{i,2})=x^{(\gamma,r)}_{i,2}$ for $j\leq i\leq n$. In the latter case,
we again obtain $\phi_{\unl{d}'}(\sigma(F_{h}))=0$, since $F_{h}$ contains the factor
$\zeta_{j,j-1}(x^{(\beta,1)}_{j,2}/x'^{(\gamma,r)}_{j-1,2})$.

\item
Case 4: $\beta=[1,n,2]$. This case is impossible as $0=d'_\alpha=d_\alpha$ for $\alpha<\beta$ and $0=d'_\beta<d_\beta$
(indeed, $k_1=\sum_{\alpha\leq \beta} d'_\alpha<\sum_{\alpha\leq \beta}d_\alpha=k_1$ as
$\unl{d},\unl{d}'\in\mathrm{KP}(\unl{k})$, a contradiction).

\end{itemize}
This completes our proof.
\end{proof}

\begin{Prop}\label{spanB}
Lemma {\rm \ref{span}}  is valid for type $B_{n}$ and specialization maps $\phi_{\unl{d}}$ as in \eqref{spe}.
\end{Prop}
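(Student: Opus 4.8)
The plan is to mimic the proof of Proposition \ref{spanG}, carrying along the induction on $n$ used throughout this section. By Proposition \ref{spekpB}, for $h\in H_{\unl{k},\unl{d}}$ one has $\phi_{\unl{d}}(\Psi(E_{h}))\doteq \prod_{\beta<\beta'}G_{\beta,\beta'}\cdot\prod_{\beta}(c_{\beta}^{d_{\beta}}G_{\beta})\cdot\prod_{\beta}P_{\lambda_{h,\beta}}$, and the functions $\{\prod_{\beta}P_{\lambda_{h,\beta}}\}_{h\in H_{\unl{k},\unl{d}}}$ span the space of $\fS_{\unl{d}}$-symmetric Laurent polynomials in $\{w_{\beta,s}\}$ (a ``rank $1$'' fact, reducing to the type $A_{1}$ computation of \cite[\S3.2.1]{Tsy18}). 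Since $\phi_{\unl{d}}(F)$ is itself an $\fS_{\unl{d}}$-symmetric Laurent polynomial (being a specialization of the numerator $f$ from \eqref{polecondition}), arguing exactly as in \cite[\S3.2.3]{Tsy18} one reduces the proposition to the following claim: if $F\in S_{\unl{k}}$ satisfies $\phi_{\unl{d}'}(F)=0$ for all $\unl{d}'<\unl{d}$, then $\phi_{\unl{d}}(F)$ is divisible by $\prod_{\beta<\beta'}G_{\beta,\beta'}\cdot\prod_{\beta}G_{\beta}$ (the scalars $c_{\beta}$ being irrelevant), the quotient being then automatically $\fS_{\unl{d}}$-symmetric. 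Granting this, one picks the $\BQ(v)$-linear combination $F_{\unl{d}}$ of the $\Psi(E_{h})$, $h\in H_{\unl{k},\unl{d}}$, with $\phi_{\unl{d}}(F_{\unl{d}})=\phi_{\unl{d}}(F)$, and Proposition \ref{vanishB} gives $\phi_{\unl{d}'}(F_{\unl{d}})=0$ for all $\unl{d}'<\unl{d}$.

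Just as in the proof of Proposition \ref{spanG}, this divisibility is local: viewing $\phi_{\unl{d}}$ as a step-by-step specialization of the intervals $[\beta]$ in the decreasing order of \eqref{orderbetas}, it reduces to the two model degrees $\unl{d}_{1}=\{d_{\beta}=2,\,d_{\alpha\neq\beta}=0\}$ and $\unl{d}_{2}=\{d_{\beta}=d_{\beta'}=1,\,d_{\alpha\neq\beta,\beta'}=0\}$, for which one must show, respectively, that $G_{\beta}$ divides $\phi_{\unl{d}_{1}}(F_{1})$ whenever $\phi_{\unl{d}}(F_{1})=0$ for all $\unl{d}<\unl{d}_{1}$, and that $G_{\beta,\beta'}$ divides $\phi_{\unl{d}_{2}}(F_{2})$ whenever $\phi_{\unl{d}}(F_{2})=0$ for all $\unl{d}<\unl{d}_{2}$. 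I would run this on induction on $n$. For $B_{2}$ it is a finite case check, parallel to the base cases in the proofs of Lemmas \ref{bstep3} and \ref{vanishB}. For $n>2$: whenever every root occurring in the model degree lies in $\{[i,j],[i,n,j]\,:\,i\geq 2\}$, the claim follows from the $B_{n-1}$ case applied to the subsystem spanned by $\alpha_{2},\dots,\alpha_{n}$; and whenever every such root is of the form $[i,j]$, it follows from the type $A_{n}$ results of \cite{Tsy18}. The only remaining situations thus involve the root $[1,n,j]$ for some $2\leq j\leq n$.

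For those, I would proceed exactly as in the proof of Proposition \ref{spanG}. Specializing via \eqref{spe} the $B_{n}$ wheel conditions recalled at the beginning of this subsection, one reads off directly that $\phi_{\unl{d}_{1}}(F_{1})$ vanishes along $w_{\beta,1}=v^{\#}w_{\beta,2}$ for the powers $\#$ produced by a single wheel, and likewise that $\phi_{\unl{d}_{2}}(F_{2})$ vanishes along $w_{\beta,1}=v^{\#}w_{\beta',1}$; the remaining linear factors of $G_{\beta}$, resp.\ of $G_{\beta,\beta'}$, recorded in \eqref{eqformulagbeta} are produced by invoking the hypothesis that $\phi_{\unl{d}}(F_{1})=0$, resp.\ $\phi_{\unl{d}}(F_{2})=0$, for suitable auxiliary Kostant partitions $\unl{d}<\unl{d}_{1}$, resp.\ $\unl{d}<\unl{d}_{2}$, obtained by ``merging'' positive roots — this is the role played by $\unl{d}_{3},\unl{d}_{4}$ in the proof of Proposition \ref{spanG} — while the $\fS_{\unl{d}}$-symmetry of $\phi_{\unl{d}}(F)$ in the $w$-variables supplies the transposed factors. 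The degenerate case $\beta=[1,n,2]$ (under the normalization $d'_{\alpha}=0$ for all $\alpha\leq\beta$) is excluded by the same root-lattice degree comparison that rules out $\beta=[1,2,2,2]$ in the proof of Proposition \ref{vanishG} and $\beta=[1,n,2]$ in the proof of Proposition \ref{vanishB}.

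The main obstacle will be the bookkeeping for the roots $[1,n,j]$: each such $\beta$ carries the two variables $x_{i,1},x_{i,2}$ for $j\leq i\leq n$, all of which \eqref{spe} sends to multiples of a single $w_{\beta,s}$, so one must track carefully which wheel conditions remain nontrivial after specialization (and with what power of $v$) and match the outcome against the explicit — and somewhat involved — factors of $G_{\beta}$ and $G_{\beta,\beta'}$ listed in \eqref{eqformulagbeta}. Equally essential is to verify that no linear factors beyond those listed there are forced: otherwise the quotient of $\phi_{\unl{d}}(F)$ by $\prod_{\beta<\beta'}G_{\beta,\beta'}\cdot\prod_{\beta}G_{\beta}$ need not be a Laurent polynomial and the spanning argument breaks down. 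This verification, parallel to the $o(\cdot)$-ordering analysis in the proof of Proposition \ref{lintegralG}, is where the bulk of the computation sits.
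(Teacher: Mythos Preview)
Your approach is the same as the paper's: reduce to the local model degrees $\unl{d}_1$ and $\unl{d}_2$, induct on $n$ (using $B_{n-1}$ for roots with first index $\geq 2$ and type $A_n$ for roots $[i,j]$), and manufacture each linear factor of $G_\beta$, $G_{\beta,\beta'}$ either from a wheel condition specialized via \eqref{spe} or from the hypothesis $\phi_{\unl{d}_3}(F)=0$ for an auxiliary $\unl{d}_3<\unl{d}_i$. The paper executes exactly this case analysis.

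There is one slip: the case $\beta=[1,n,2]$ is \emph{not} excluded here. The normalization ``$d'_\alpha=0$ for all $\alpha\leq\beta$'' belongs to the proof of Proposition~\ref{vanishB}, where one is comparing two Kostant partitions $\unl{d}'<\unl{d}$ of the same degree; in the spanning argument there is no such normalization, and both $\unl{d}_1=\{d_{[1,n,2]}=2\}$ and the pairs $\unl{d}_2$ involving $[1,n,2]$ must be treated. The paper does this: for $G_{[1,n,2]}$ it uses the recursion \eqref{trivial1} relating it to $G_{[1,n,3]}$, with the three extra factors supplied by wheel conditions involving both copies $x_{2,1},x_{2,2}$; the pairs $([1,n,2],\beta')$ with $\beta'=[i,n,\ell]$ are handled at the end of the case list.

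A smaller point: the concern in your last paragraph is inverted. The factors $G_\beta,G_{\beta,\beta'}$ are \emph{defined} by \eqref{speehb} from the $\Psi(E_h)$ side, so the only thing to verify is that $\phi_{\unl{d}}(F)$ is divisible by \emph{at least} those products; the quotient is then automatically an $\fS_{\unl{d}}$-symmetric Laurent polynomial, and the rank-1 spanning of the $P_{\lambda_{h,\beta}}$ does the rest. There is no need to check that no further factors are forced.
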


\begin{proof}
Similarly to the proof of Proposition \ref{spanG}, we only need to prove the lemma to be true for any pair of roots
$(\beta\leq \beta')$. First let $\unl{d}_{1}=\{d_{\beta}=2,d_{\alpha}=0,\forall\, \alpha\neq\beta\}\in\text{KP}(\unl{k}_{1})$
and $F_{1}\in S_{\unl{k}_{1}}$. We can assume $\beta=[1,n,j]$ for some $2\leq j\leq n$. If $j>2$, let $\gamma=[2,n,j]$,
and $\unl{d}_{3}=\{d_{\gamma}=2,d_{[1]}=2,d_{\alpha}=0,\forall\, \alpha\neq\gamma,[1]\}$. By induction we know $\phi_{\unl{d}_{3}}(F_{1})$
has the factor $G_{\gamma}$ (here we change the variable $w_{\gamma,s}$ in $G_{\gamma}$ by $w_{\beta,s}$). Moreover, we have
\begin{equation}
  G_{\beta}=(w_{\beta,1}-v^{-4}w_{\beta,2})(w_{\beta,1}-v^{4}w_{\beta,2})\cdot G_{\gamma}. 
\label{trivial}
\end{equation}
The wheel condition $F_{1}=0$ once $x^{(\beta,1)}_{1,1}=v^{4}x^{(\beta,2)}_{1,1}=v^{2}x^{(\beta,1)}_{2,1}$ or
$x^{(\beta,2)}_{1,1}=v^{4}x^{(\beta,1)}_{1,1}=v^{2}x^{(\beta,2)}_{2,1}$ becomes $\phi_{\unl{d}_{1}}(F_{1})=0$ once
$w_{\beta,1}=v^{4}w_{\beta,2}$ or $w_{\beta,1}=v^{-4}w_{\beta,2}$, thus giving us the required vanishing factors. 
If $j=2$, let $\gamma=[1,n,3]$ (here we assume $n>3$), then
\begin{equation}
  G_{\beta}=
  (w_{\beta,1}-v^{\pm (4n-6)}w_{\beta,2})(w_{\beta,1}-v^{\pm (4n-14)}w_{\beta,2})(w_{\beta,1}-v^{\pm 4}w_{\beta,2})
  \cdot G_{\gamma}.
\label{trivial1}
\end{equation}
The wheel condition $F_{1}=0$ once $x^{(\beta,1)}_{2,2}=v^{4}x^{(\beta,2)}_{2,1}=v^{2}x^{(\beta,2)}_{1,1}$, or
$x^{(\beta,1)}_{2,1}=v^{4}x^{(\beta,2)}_{2,2}=v^{2}x^{(\beta,1)}_{3,1}$, or
$x^{(\beta,1)}_{2,2}=v^{4}x^{(\beta,2)}_{2,2}=v^{2}x^{(\beta,2)}_{3,2}$ give us the vanishing factors above.
Thus the lemma is true for pairs $(\beta,\beta)$.

Now for any pair $(\beta<\beta')$, let
  $\unl{d}_{2}=\{d_{\beta}=d_{\beta'}=1, d_{\alpha\neq \beta,\beta'}=0\}\in\text{KP}(\unl{k}_{2})$
and $F_{2}\in S_{\unl{k}_{2}}$,  we prove $\phi_{\unl{d}_{2}}(F_{2})$ has the vanishing factor $G_{\beta,\beta'}$
if $\phi_{\unl{d}}(F_{2})=0$ for any $\unl{d}<\unl{d}_{2}$ . By induction we know it is true for any $(\beta<\beta')$
with $[2]\leq \beta$. And by results on type $A_{n}$, we know it is true for any $(\beta<\beta')$ with
$\beta=[1,j],\beta'=[i',j']$ for some $1\leq i',j,j'\leq n$.

\begin{itemize}[leftmargin=0.7cm]

\item
$\beta=[1,j]$ for some $1\leq j\leq n$, $\beta'=[i,n,n]$ for some $1\leq i\leq n-1$.

Let $\gamma=[i,n]$. If $j<n-1$ then $G_{\beta,\beta'}=G_{\beta,\gamma}$ and $\phi_{\unl{d}_{2}}(F_{2})$
has the factor $G_{\beta,\beta'}$ for the same reason. If $j=n-1$, then
\begin{equation}
  G_{\beta,\beta'}=(w_{\beta,1}-v^{2}w_{\beta',1})\cdot G_{\beta,\gamma},
\end{equation}
and by induction we know $\phi_{\unl{d}_{2}}(F_{2})$ has the factor $G_{\beta,\gamma}$.
Set $\unl{d}_{3}=\{d_{[1,n]}=2, d_{\alpha\neq [1,n]}=0\}$ if $i=1$, or
$\unl{d}_{3}=\{d_{[1,n,n]}=1, d_{[i,n-1]}=1, d_{\alpha\neq [1,n,n],[i,n-1]}=0\}$ if $i\neq 1$.
Then for each case, we have
$\unl{d}_{3}<\unl{d}_{2}$ and $\phi_{\unl{d}_{3}}(F_{2})=0$ implies that $\phi_{\unl{d}_{2}}(F_{2})=0$ once
$w_{\beta,1}=v^{2}w_{\beta',1}$. If $j=n$ and $i=1$, then
\begin{equation}
  G_{\beta,\beta'}=(w_{\beta,1}-v^{-2}w_{\beta',1})\cdot G_{\beta,\beta},
\end{equation}
where $G_{\beta,\beta}$ is obtained by changing the variable $w_{\beta,2}$ in $G_{\beta}$ by $w_{\beta',1}$.
By induction we know $\phi_{\unl{d}_{2}}(F_{2})$ has the factor $G_{\beta,\beta}$. Also the wheel condition $F_{2}=0$
once $x^{(\beta',1)}_{n,2}=v^{2}x^{(\beta',1)}_{n,1}=v^{4}x^{(\beta,1)}_{n,1}=v^{2}x^{(\beta,1)}_{n-1,1}$ becomes
$\phi_{\unl{d}_{2}}(F_{2})=0$ once $w_{\beta,1}=v^{-2}w_{\beta',1}$. If $j=n$ and $i>1$, then
\begin{equation}
  G_{\beta,\beta'}=(w_{\beta,1}-w_{\beta',1})\cdot G_{\beta,\gamma}.
\end{equation}
Let $\unl{d}_{3}=\{d_{[1,n,n]}=1, d_{[i,n]}=1, d_{\alpha\neq [1,n,n],[i,n]}=0\}$, then $\unl{d}_{3}<\unl{d}_{2}$ and
$\phi_{\unl{d}_{3}}(F_{2})=0$ implies that $\phi_{\unl{d}_{2}}(F_{2})=0$ once $w_{\beta,1}=w_{\beta',1}$.

\item
$\beta=[1,j]$ for some $1\leq j\leq n$, $\beta'=[i,n,\ell]$ for some $1\leq i<\ell\leq n$.

We already prove $\ell=n$ case, so we can use induction. If $j\leq \ell-2$, then $G_{\beta,\beta'}=G_{\beta,[i,\ell-1]}$
and $\phi_{\unl{d}_{2}}(F_{2})$ has the factor $G_{\beta,\beta'}$ for the same reason. If $j\geq \ell+1$, then
\begin{equation}
  G_{\beta,\beta'}=(w_{\beta,1}-v^{(4\ell-4n-2)}w_{\beta',1})(w_{\beta,1}-v^{(4\ell-4n+6)}w_{\beta',1})\cdot G_{\beta,[i,n,\ell+1]}.
\label{trivial2}
\end{equation}
By induction we know $\phi_{\unl{d}_{2}}(F_{2})$ has the factor $G_{\beta,[i,n,\ell+1]}$, and the wheel conditions $F_{2}=0$
once $x^{(\beta',1)}_{\ell,2}=v^{4}x^{(\beta,1)}_{\ell,1}=v^{2}x^{(\beta,1)}_{\ell-1,1}$ or
$x^{(\beta,1)}_{\ell,1}=v^{4}x^{(\beta',1)}_{\ell,2}=v^{2}x^{(\beta,1)}_{\ell+1,1}$ give the additional vanishing factors.
If $j=\ell-1$, then
\begin{equation}
  G_{\beta,\beta'}=(w_{\beta,1}-v^{(4\ell-4n+2)}w_{\beta',1})\cdot G_{\beta,[i,n,\ell+1]}.
\end{equation}
By induction we know $\phi_{\unl{d}_{2}}(F_{2})$ has the factor $G_{\beta,[i,n,\ell+1]}$.
Let $\unl{d}_{3}=\{d_{[1,\ell]}=d_{[i,n,\ell+1]}=1,d_{\alpha\neq [1,\ell],[i,n,\ell+1]}=0\}$, then $\unl{d}_{3}<\unl{d}_{2}$
and $\phi_{\unl{d}_{3}}(F_{2})=0$ implies that $\phi_{\unl{d}_{2}}(F_{2})=0$ once $w_{\beta,1}=v^{(4\ell-4n+2)}w_{\beta',1}$.
If $j=\ell$, then
\begin{equation}
  G_{\beta,\beta'}=(w_{\beta,1}-v^{(4\ell-4n-2)}w_{\beta',1})\cdot G_{\beta,[i,n,\ell+1]},
\end{equation}
and the wheel condition $F_{2}=0$ once $x^{(\beta',1)}_{\ell,2}=v^{4}x^{(\beta,1)}_{\ell,1}=v^{2}x^{(\beta,1)}_{\ell-1,1}$ gives
the additional vanishing factor.

\item
$\beta=[1,n,j]$ for some $2\leq j\leq n$ and $\beta'=[i,\ell]$ for some $2\leq i\leq \ell\leq n$.

We only need to prove $\beta=[1,n,n]$ case, other cases can be proved by induction as above. If $\ell\leq n-2$,
then $G_{\beta,\beta'}=G_{[1,n-1],\beta'}$ and the vanishing factors appear for the same wheel conditions.
If $\ell=n-1$, then
\begin{equation}
  G_{\beta,\beta'}=(w_{\beta,1}-v^{2}w_{\beta',1})\cdot G_{[1,n],\beta'}.
\end{equation}
Let $\unl{d}_{3}=\{d_{[1,n,n-1]}=d_{[i,n-2]}=1,d_{\alpha\neq [1,n,n-1],[i,n-2]}=0\}$, then $\unl{d}_{3}<\unl{d}_{2}$
and  $\phi_{\unl{d}_{3}}(F_{2})=0$ implies that $\phi_{\unl{d}_{2}}(F_{2})=0$ once $w_{\beta,1}=v^{2}w_{\beta',1}$.
If $\ell=n$, then
\begin{equation}
  G_{\beta,\beta'}=(w_{\beta,1}-v^{-4}w_{\beta',1})\cdot G_{\beta,[i,n-1]},
\end{equation}
and the wheel condition $F_{2}=0$ once
$x^{(\beta',1)}_{n,1}=v^2x^{(\beta,1)}_{n,2}=v^{4}x^{(\beta,1)}_{n,1}=v^2x^{(\beta,1)}_{n-1,1}$
becomes $\phi_{\unl{d}_{2}}(F_{2})=0$ once $w_{\beta,1}=v^{-4}w_{\beta',1}$.

\item
$\beta=[1,n,j]$ for some $2\leq j\leq n$, $\beta'=[i,n,\ell]$ for some $1\leq i<\ell\leq n$.

If $j\geq 3$ and $i>2$, then $G_{\beta,\beta'}=G_{[2,n,j],\beta'}$ and they appear for the same reason.

Let us consider now $j\geq 3$ and $i=2$. Then if $\ell< j$, we have
\begin{equation}
  G_{\beta,\beta'}=(w_{\beta,1}-w_{\beta',1})\cdot G_{[2,n,j],\beta'},
\end{equation}
Let $\unl{d}_{3}=\{d_{[1,n,\ell]}=d_{[2,n,j]}=1,d_{\alpha\neq [1,n,\ell],[2,n,j]}=0\}$, then $\unl{d}_{3}<\unl{d}_{2}$ and
$\phi_{\unl{d}_{3}}(F_{2})=0$ implies that $\phi_{\unl{d}_{2}}(F_{2})=0$ once $w_{\beta,1}=w_{\beta',1}$. If $\ell\geq j+2$,
then
\begin{equation}
  G_{\beta,\beta'}=(w_{\beta,1}-v^{4n-4j+2}w_{\beta',1})(w_{\beta,1}-v^{4n-4j-6}w_{\beta',1})\cdot G_{[1,n,j+1],\beta'},
\label{fff}
\end{equation}
and the wheel condition $x^{(\beta,1)}_{j,2}=v^{4}x^{(\beta',1)}_{j,1}=v^{2}x^{(\beta',1)}_{j-1,1}$ or
$x^{(\beta',1)}_{j,1}=v^{4}x^{(\beta,1)}_{j,2}=v^{2}x^{(\beta',1)}_{j+1,1}$ gives the vanishing factors.
If $\ell=j+1$, then
\begin{equation}
  G_{\beta,\beta'}=(w_{\beta,1}-v^{4n-4j+2}w_{\beta',1})(w_{\beta,1}-v^{4n-4j-6}w_{\beta',1})(w_{\beta,1}-v^{4}w_{\beta',1})\cdot
  G_{[1,n,j+1],\beta'},
\end{equation}
then the wheel condition for \eqref{fff} and $x^{(\beta,1)}_{j+1,2}=v^{4}x^{(\beta',1)}_{j+1,2}=v^{2}x^{(\beta,1)}_{j,2}$
give the vanishing factors. If $\ell=j$, then
\begin{equation}
  G_{\beta,\beta'}=(w_{\beta,1}-v^{-4}w_{\beta',1})\cdot G_{\beta',\beta'},
\end{equation}
and the wheel condition $x^{(\beta',1)}_{2,1}=v^{4}x^{(\beta,1)}_{2,1}=v^{2}x^{(\beta,1)}_{1,1}$ give the vanishing factor.

If $j\geq 3$ and $i=1$, then $\ell<j$. If $\ell\geq 3$, we have
\begin{equation}
  G_{\beta,\beta'}=(w_{\beta,1}-v^{-4}w_{\beta',1})(w_{\beta,1}-v^{4}w_{\beta',1})\cdot G_{[2,n,j],[2,n,\ell]},
\end{equation}
and the wheel conditions for \eqref{trivial} give the additional factors. If $\ell=2$, then for $j>3$, we can use induction.
If $\ell=2, j=3$, then
\begin{equation}
  G_{\beta,\beta'}=
  (w_{\beta,1}-v^{-4}w_{\beta',1})(w_{\beta,1}-v^{-4n+6}w_{\beta',1})(w_{\beta,1}-v^{-4n+14}w_{\beta',1})\cdot G_{\beta,\beta}.
\label{trivial3}
\end{equation}
The wheel conditions for \eqref{trivial2} and the wheel condition
  $x^{(\beta',1)}_{3,2}=v^{4}x^{(\beta,1)}_{3,2}=v^{2}x^{(\beta',1)}_{2,2}$
give the additional factors. Now let $\beta=[1,n,2]$, then $i\geq 2$. If $\ell>3$  then we can use induction.
If $\ell=3$, then $\beta'=[2,n,3]$, and similar wheel conditions for \eqref{trivial3} apply.

\end{itemize}
This completes our proof.
\end{proof}

Using formulas \eqref{trivial}--\eqref{trivial1}, we obtain the following explicit formulas for the factors $G_{\beta}$
(which shall be used in Subsection \ref{rttb}):

\begin{Cor}\label{formulagbeta}
The factors $\{G_{\beta}\}_{\beta\in\Delta^{+}}$ featuring in \eqref{speehb} are explicitly given by:
\begin{equation}\label{eqformulagbeta}
\begin{aligned}
  & G_{\beta} \ =
    \prod_{s=1}^{d_{\beta}}w_{\beta,s}^{j-i}\prod_{1\leq s\neq s'\leq d_{\beta}}(w_{\beta,s}-v^{4}w_{\beta,s'})^{j-i} \qquad
    \text{if}\ \beta=[i,j],\\
  & G_{\beta} \ =
    \prod_{s=1}^{d_{\beta}}w_{\beta,s}^{4n-i-3j+1}
    \prod_{1\leq s\neq s'\leq d_{\beta}} \big\{(w_{\beta,s}-v^{4}w_{\beta,s'})^{2n-i-j}(w_{\beta,s}-v^{2}w_{\beta,s'})\big\}\times &\\
  & \quad \ \prod_{1\leq s\neq s'\leq d_{\beta}} \prod_{\ell=j}^{n-1}
    \big\{(w_{\beta,s}-v^{4n-4\ell+2}w_{\beta,s'})(w_{\beta,s}-v^{4n-4\ell-6}w_{\beta,s'})\big\} \qquad
    \text{if}\ \beta=[i,n,j].
\end{aligned}
\end{equation}
\end{Cor}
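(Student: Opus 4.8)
The plan is to read the closed forms \eqref{eqformulagbeta} off the recursions \eqref{trivial}--\eqref{trivial1} established in the proof of Proposition~\ref{spanB}, combined with an induction on $n$ and the homogeneity constraint built into Proposition~\ref{spekpB}. Recall that $G_\beta$ is, by its construction in Proposition~\ref{spekpB} (modeled on Proposition~\ref{spekpG}), the $\phi_{\unl{d}}$-specialization --- for $\unl{d}$ supported only at $\beta$ with value $d_\beta$ --- of the $\mathfrak{S}_{\unl{d}}$-invariant ``cross-copy'' part of $F_h$, i.e.\ the product of $\zeta$-factors pairing the $d_\beta$ copies of $\beta$, once the rank-$1$ factor $P_{\lambda_{h,\beta}}$ of \eqref{hlp} and the scalar $c_\beta^{d_\beta}$ of Lemma~\ref{phirv} have been divided out. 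In particular $G_\beta$ is a product of linear forms $w_{\beta,s}$ and $w_{\beta,s}-v^{\BZ}w_{\beta,s'}$, it is $\mathfrak{S}_{d_\beta}$-symmetric, and it is a product over \emph{all} ordered pairs $s\ne s'$ of copies (so it suffices to determine the $d_\beta=2$ case); moreover its $w$-degree is pinned down by comparing total degrees on the two sides of \eqref{speehb}, using the value of $\kappa_\beta$ from \eqref{kappaB}.

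For the roots $\beta=[i,j]$ with $1\le i\le j\le n$ only the single-copy variables $x^{(\beta,s)}_{\ell,1}$ with $i\le\ell\le j$ occur, so the within-block specialization is literally the type $A_n$ computation of \cite{Tsy18}, carried out with $v$ replaced by $v^2$: all simple roots in the support are long when $j<n$, while for $j=n$ the short root $\alpha_n$ appears with multiplicity one and hence only affects the scalar $c_\beta$ and the rank-$1$ factor, not $G_\beta$. This yields at once $G_{[i,j]}=\prod_s w_{\beta,s}^{\,j-i}\prod_{s\ne s'}(w_{\beta,s}-v^4 w_{\beta,s'})^{\,j-i}$, the first line of \eqref{eqformulagbeta}.

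For $\beta=[i,n,j]$ I would argue by induction on $n$, with the base case $n=2$ (only the root $[1,2,2]$) checked directly. In the inductive step, the roots $[i,n,j]$ with $i\ge2$ belong to the sub-diagram of type $B_{n-1}$ on $\{\alpha_2,\dots,\alpha_n\}$, so the formula holds for them by the induction hypothesis after relabeling indices. For $\beta=[1,n,j]$ with $j\ge3$ the relation \eqref{trivial} expresses $G_{[1,n,j]}$ as $G_{[2,n,j]}$ (known by induction) times the cross-copy factors $\prod_{s\ne s'}(w_{\beta,s}-v^{4}w_{\beta,s'})$ together with the diagonal factor $\prod_s w_{\beta,s}$ forced by homogeneity; taking the quotient of the two $\prod_{\ell=j}^{n-1}\{\cdots\}$ products then reproduces \eqref{eqformulagbeta}. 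Finally, for $\beta=[1,n,2]$ the relation \eqref{trivial1} expresses $G_{[1,n,2]}$ in terms of the already-known $G_{[1,n,3]}$ multiplied by the three families $w_{\beta,s}-v^{\pm(4n-6)}w_{\beta,s'}$, $w_{\beta,s}-v^{\pm(4n-14)}w_{\beta,s'}$, $w_{\beta,s}-v^{\pm4}w_{\beta,s'}$, which are precisely the $\ell=2$ term of $\prod_{\ell=2}^{n-1}\{\cdots\}$ together with the remaining factor $(w_{\beta,s}-v^2w_{\beta,s'})$; the finitely many small-$n$ cases not covered by \eqref{trivial}--\eqref{trivial1} are verified by hand.

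The one delicate point is the $v$-power bookkeeping: one must check that, as the recursion is unwound, the accumulated cross-copy exponents organize exactly into the two arithmetic progressions $4n-4\ell+2$ and $4n-4\ell-6$ for $\ell$ from $j$ to $n-1$, and that the diagonal exponent comes out as $4n-i-3j+1$ (equivalently, that the degree of $G_\beta$ so obtained agrees with the one forced by \eqref{speehb}). This is routine but error-prone; all the conceptual input --- which linear factors occur and why --- is already contained in \eqref{trivial}, \eqref{trivial1}, and in the wheel conditions used to derive them.
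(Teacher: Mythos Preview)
Your approach is the paper's own: the corollary is stated as a direct consequence of the recursions \eqref{trivial}--\eqref{trivial1}, and you correctly unwind them by induction on $n$, handling the diagonal powers $w_{\beta,s}^{4n-i-3j+1}$ separately via the degree count (which is necessary, since \eqref{trivial}--\eqref{trivial1} as written only record the cross-copy linear factors). One small slip in your $\beta=[1,n,2]$ bookkeeping: the third family $w_{\beta,s}-v^{\pm 4}w_{\beta,s'}$ from \eqref{trivial1} supplies the increment in the exponent of $(w_{\beta,s}-v^{4}w_{\beta,s'})$ from $2n-4$ to $2n-3$, not a factor $(w_{\beta,s}-v^{2}w_{\beta,s'})$---the latter already appears with exponent $1$ in both $G_{[1,n,3]}$ and $G_{[1,n,2]}$.
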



Combining Propositions \ref{spekpB}--\ref{spanB}, we immediately obtain the shuffle algebra realization and
the PBWD theorem for $\qfb$:

\begin{Thm}\label{shufflePBWDB}
(a) $\Psi\colon \qfb \,\iso\, S$ of~\eqref{eq:Psi-homom} is a $\BQ(v)$-algebra isomorphism.

\medskip
\noindent
(b) For any choices of $s_k$ and $\lambda_k$ in the definition~\eqref{rootvector1} of quantum root vectors $E_{\beta,s}$,
the ordered PBWD monomials $\{E_{h}\}_{h\in H}$ from \eqref{PBWDbases} form a $\BQ(v)$-basis of $\qfb$.
\end{Thm}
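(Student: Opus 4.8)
The plan is to deduce Theorem~\ref{shufflePBWDB} from Propositions~\ref{spekpB}, \ref{vanishB}, and \ref{spanB} by running exactly the argument sketched after Lemma~\ref{span} for type $A_n$, now with the type $B_n$ specialization maps $\phi_{\unl{d}}$ of~\eqref{spe}. The two statements (a) and (b) will be proved simultaneously, since both reduce to the same three inputs about $\phi_{\unl{d}}$. First I would fix $\unl{k}\in\BN^n$ and work inside the graded piece $S_{\unl{k}}$, establishing the following two claims, from which everything follows: (i) the ordered PBWD monomials $\{\Psi(E_h)\}_{h\in H_{\unl{k}}}$ are $\BQ(v)$-linearly independent in $S_{\unl{k}}$; (ii) they span $S_{\unl{k}}$.

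For linear independence, suppose $\sum_{h\in H_{\unl{k}}} c_h \Psi(E_h)=0$ is a nontrivial relation. Among the $h$ with $c_h\neq 0$, pick the minimal Kostant partition $\unl{d}$ (with respect to~\eqref{eq:KP-order}) occurring as $\mathrm{deg}(h)$, and apply $\phi_{\unl{d}}$. By Proposition~\ref{vanishB} every term with $\mathrm{deg}(h)=\unl{d}'<\unl{d}$ dies, so we are left with $\phi_{\unl{d}}\big(\sum_{\mathrm{deg}(h)=\unl{d}} c_h \Psi(E_h)\big)=0$. By Proposition~\ref{spekpB}, each such $\phi_{\unl{d}}(\Psi(E_h))$ equals a common (nonzero, $h$-independent) factor $\prod_{\beta<\beta'}G_{\beta,\beta'}\cdot\prod_\beta(c_\beta^{d_\beta}G_\beta)$ times $\prod_\beta P_{\lambda_{h,\beta}}$, where $c_\beta\neq 0$ by~\eqref{eq:c-factor-B}. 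Cancelling the common factor reduces the relation to $\sum_h c_h \prod_{\beta\in\Delta^+} P_{\lambda_{h,\beta}}=0$ in the polynomial ring $\BQ(v)[\{w_{\beta,s}^{\pm1}\}]^{\fS_{\unl{d}}}=\bigotimes_\beta \BQ(v)[\{w_{\beta,s}^{\pm1}\}_{s=1}^{d_\beta}]^{\fS_{d_\beta}}$. This is precisely the ``rank $1$'' statement: the symmetrized monomials $P_{\lambda}$ of~\eqref{hlp}, as $\lambda$ ranges over nondecreasing $d_\beta$-tuples of integers, form a $\BQ(v)$-basis of $\BQ(v)[\{w_{\beta,s}^{\pm1}\}_{s=1}^{d_\beta}]^{\fS_{d_\beta}}$ (they are the images under the $A_1$ shuffle embedding, cf.~\cite[\S3.2.1]{Tsy18}). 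Hence all $c_h$ with $\mathrm{deg}(h)=\unl{d}$ vanish, contradicting minimality of $\unl{d}$; so the relation was trivial.

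For spanning, I would argue as in~\cite[\S3.2.3]{Tsy18}. Let $S'_{\unl{k}}\subseteq S_{\unl{k}}$ be the span of $\{\Psi(E_h)\}_{h\in H_{\unl{k}}}$; I claim $S'_{\unl{k}}=S_{\unl{k}}$. Take $F\in S_{\unl{k}}$. Induct on $\unl{d}\in\mathrm{KP}(\unl{k})$ with respect to~\eqref{eq:KP-order} on the statement: there exists $F'\in S'_{\unl{k}}$ with $\phi_{\unl{d}'}(F-F')=0$ for all $\unl{d}'\leq\unl{d}$. The base case is the minimal $\unl{d}$, where Proposition~\ref{spanB} (applied with vacuous hypothesis) supplies $F_{\unl{d}}\in S'_{\unl{k}}$ matching $F$ under $\phi_{\unl{d}}$; set $F'=F_{\unl{d}}$. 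For the inductive step, having found $F'$ good up to the predecessor of $\unl{d}$, replace $F$ by $F-F'$, note it now satisfies the hypothesis of Proposition~\ref{spanB}, obtain $F_{\unl{d}}\in S'_{\unl{k}}$ with $\phi_{\unl{d}}(F-F')=\phi_{\unl{d}}(F_{\unl{d}})$ and $\phi_{\unl{d}'}(F_{\unl{d}})=0$ for $\unl{d}'<\unl{d}$, and replace $F'$ by $F'+F_{\unl{d}}$. Running through all of $\mathrm{KP}(\unl{k})$ produces $F'\in S'_{\unl{k}}$ with $\phi_{\unl{d}}(F-F')=0$ for every $\unl{d}\in\mathrm{KP}(\unl{k})$. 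Finally, invoking the observation that $\phi_{\unl{d}}(G)=0$ for all $\unl{d}\in\mathrm{KP}(\unl{k})$ forces $G=0$ — taking $\unl{d}$ to be the maximal Kostant partition, namely $\unl{k}=\sum_i k_i\alpha_i$ decomposed into simple roots, recovers the numerator of $G$ up to a nonzero scalar, as in~\cite[Proposition 1.6]{Tsy22} — we conclude $F=F'\in S'_{\unl{k}}$.

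Combining (i) and (ii): $\{\Psi(E_h)\}_{h\in H}$ is a $\BQ(v)$-basis of $S$, so $\Psi$ is surjective; since $\Psi$ is injective by Proposition~\ref{inj}, $\Psi\colon\qfb\to S$ is an isomorphism, proving~(a). Pulling the basis back through this isomorphism shows $\{E_h\}_{h\in H}$ is a $\BQ(v)$-basis of $\qfb$, proving~(b); the independence of the choices of $s_k,\lambda_k$ is automatic because the argument only used the output of Propositions~\ref{spekpB}--\ref{spanB}, which hold for all such choices. The one point requiring care — the place where the $B_n$ case is genuinely more delicate than $A_n$ — is already isolated inside Propositions~\ref{vanishB} and~\ref{spanB}: those are proved by induction on $n$, peeling off the root family $\Delta^+_1=\{[1,j]\}\cup\{[1,n,j]\}$ and reducing the rest to $B_{n-1}$. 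At the level of the present theorem, though, no new obstacle arises; the deduction is the formal ``triangularity + rank $1$ reduction'' scheme, identical in structure to type $A_n$.
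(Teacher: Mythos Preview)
Your proposal is correct and follows precisely the paper's approach — the paper's own proof is a one-line reference to Propositions~\ref{spekpB}--\ref{spanB} combined with the type-$A_n$ framework recalled after Lemma~\ref{span}, which is exactly what you have spelled out. One small slip: in the linear-independence paragraph you write that terms with $\deg(h)=\unl{d}'<\unl{d}$ die under $\phi_{\unl{d}}$, but there are no such terms by minimality of $\unl{d}$; what Proposition~\ref{vanishB} actually kills are the terms with $\deg(h)>\unl{d}$ (apply it with $\unl{d}$ in the role of $\unl{d}'$), leaving only the $\deg(h)=\unl{d}$ summands as you then correctly use.
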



\subsection{RTT integral form $\integralb$ and its shuffle algebra realization}\label{rttb}

For $\epsilon\in\{\pm \}$, we define
\begin{equation}\label{eq:rtt-vectors}
  \tilde{\mathcal{E}}^{\epsilon}_{\beta,s}\coloneqq \langle 2\rangle_{v}\cdot \tilde{E}^{\epsilon}_{\beta,s}
  \qquad \forall\ (\beta,s)\in\Delta^{+}\times \BZ,
\end{equation}
cf.~\eqref{rvb1}--\eqref{rvb2}. Similarly to \eqref{PBWDbases}, we also consider the ordered monomials
\begin{equation}
  \tilde{\mathcal{E}}^{\epsilon}_{h} \ =
  \prod_{(\beta,s)\in\Delta^{+}\times\mathbb{Z}}\limits^{\rightarrow}
  (\tilde{\mathcal{E}}^{\epsilon}_{\beta,s})^{h(\beta,s)}\qquad \forall\ h\in H,
\label{pbwdbasesrtt}
\end{equation}
with the arrow $\rightarrow$ over the product sign indicating the order \eqref{orderbetas} on $\Delta^{+}\times\BZ$.

\begin{Rk}
The reason for the extra factor $\langle 2\rangle_{v}$ in the definition~\eqref{eq:rtt-vectors} is explained in
the Appendix~\ref{sec:app}, see Corollary~\ref{cor:Dr-via-rtt-roots} and Proposition~\ref{prop:RTT-vs-Drinfeld integral forms}.
\end{Rk}

We define the RTT integral form $\integralb$ as the $\BZ[v,v^{-1}]$-subalgebra of $\qfb$ generated by
$\{\tilde{\mathcal{E}}^{\epsilon}_{\beta,s}\}_{\beta\in\Delta^{+}}^{s\in\BZ}$. We note that the above definition depends on
the choices of quantum root vectors in \eqref{rvb1}--\eqref{rvb2} as well as of $\epsilon\in\{\pm\}$. The main goal of the present
subsection is to prove the following theorem by simultaneously establishing the shuffle algebra realization of $\integralb$,
which is of independent interest:

\begin{Thm}\label{PBWDintegralb}
(a) $\integralb$ is independent of the choice of quantum root vectors
$\{\tilde{\mathcal{E}}^{\epsilon}_{\beta,s}\}_{\beta\in\Delta^{+}}^{s\in \BZ}$.

\medskip
\noindent
(b) For any choices of $s_k$ in~\eqref{rvb1}--\eqref{rvb2} and $\epsilon\in \{\pm\}$, the ordered monomials
$\{\tilde{\mathcal{E}}^{\epsilon}_{h}\}_{h\in H}$ of~\eqref{pbwdbasesrtt} form a basis of
the free $\BZ[v,v^{-1}]$-module $\integralb$.
\end{Thm}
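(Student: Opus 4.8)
The plan is to mimic the strategy already used for the Lusztig form $\integralgl$ in Section~\ref{tG} (Theorem~\ref{srig}), but with the normalization constants adjusted by the extra factors $\langle 2\rangle_v$ introduced in~\eqref{eq:rtt-vectors}. Concretely, I would introduce the $\BZ[v,v^{-1}]$-submodule $\bM_{\unl{k}}\subseteq S_{\unl{k}}$ of functions $F$ whose numerator $f$ from~\eqref{polecondition} lies in $\BZ[v,v^{-1}][\{x_{i,r}^{\pm1}\}]^{\fS_{\unl{k}}}$ and such that $\phi_{\unl{d}}(F)$ is divisible by $\prod_{\beta\in\Delta^+}\tilde{c}_\beta^{\,d_\beta}$, where now $\tilde{c}_\beta\coloneqq c_\beta/\langle 2\rangle_v^{\,|\beta|-1}$ with $c_\beta$ as in~\eqref{eq:c-factor-B}; set $\bM=\bigoplus_{\unl{k}}\bM_{\unl{k}}$. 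The two main things to prove are then: (1) $\Psi(\integralb)\subseteq \bM$, and (2) the ordered monomials $\{\Psi(\tilde{\mathcal{E}}^\epsilon_h)\}_{h\in H}$ span $\bM$ over $\BZ[v,v^{-1}]$ and coincide (up to the shuffle isomorphism $\Psi$ of Theorem~\ref{shufflePBWDB}(a)) with a basis of $\integralb$. Part (a) of the theorem follows once we know each choice of root vectors yields the same submodule $\bM$, since $\Psi$ is injective.

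For step (1), I would argue exactly as in the proof of Proposition~\ref{lintegralG}: it suffices to check that for each generator $\tilde{\mathcal{E}}^\epsilon_{\beta,s}$, and more generally for a product of such generators, the numerator is a Laurent polynomial over $\BZ[v,v^{-1}]$ (immediate, since each $\tilde{\mathcal{E}}^\epsilon_{\beta,s}$ is built from the $e_{i,r}$ by iterated $v^{\pm2}$-commutators and multiplication by $\langle 2\rangle_v$), and that the $\phi_{\unl{d}}$-specialization is divisible by the prescribed product. The key local computation is that the product of $\zeta$-factors tying together the variables $\{x^{(\beta,s)}_{i,t}\}_{i\in\beta}^{1\le t\le \nu_{\beta,i}}$ of a single root block, after specialization~\eqref{spe}, contributes a multiple of $\tilde{c}_\beta$. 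For $\beta=[i,j]$ this contributes essentially $\langle 2\rangle_v^{|\beta|-1}$ against a required $\tilde{c}_{[i,j]}=1$, so there is automatic divisibility (in fact one gains a factor); for $\beta=[i,n,j]$ one must track the additional $\zeta_{\ell,\ell}$ and $\zeta_{\ell,\ell\pm1}$ contributions as in~\eqref{eq:simplification-1}--\eqref{eq:simplification-2} and check the factor $\prod_{\ell=j}^{n-1}\{(v^{-4n+4\ell-2}-1)(v^{-4n+4\ell+6}-1)\}$ divided by $\langle 2\rangle_v^{|\beta|-1}$ comes out with $\BZ[v,v^{-1}]$-coefficients — this is the place where the extra $\langle 2\rangle_v$ in~\eqref{eq:rtt-vectors} is essential, and it is what forces the particular normalization.

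For step (2), I would follow the pattern of Proposition~\ref{spanig}. Using Proposition~\ref{spekpB} together with~\eqref{eq:rtt-vectors} one computes
\[
  \phi_{\unl{d}}(\Psi(\tilde{\mathcal{E}}^\epsilon_h))\doteq
  \prod_{\beta<\beta'}G_{\beta,\beta'}\cdot\prod_{\beta\in\Delta^+}\big(\tilde{c}_\beta^{\,d_\beta}\cdot G_\beta\big)\cdot
  \prod_{\beta\in\Delta^+}P_{\lambda_{h,\beta}},
\]
with $G_\beta$, $G_{\beta,\beta'}$ the explicit factors from Corollary~\ref{formulagbeta} and Proposition~\ref{spekpB}, and $P_{\lambda_{h,\beta}}$ as in~\eqref{hlp}. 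Since for the RTT normalization no further division by $v$-integers is needed (the $\tilde{c}_\beta$ absorb everything), the functions $\{P_{\lambda_{h,\beta}}\}_{h\in H_{\unl{k},\unl{d}}}$ span the relevant ``rank one'' integral ring $\BZ[v,v^{-1}][\{w_{\beta,s}^{\pm1}\}]^{\fS_{d_\beta}}$ — here one uses the rank-one statement of~\cite[Proposition 1.4]{Tsy22}. Then an inductive argument on the total order~\eqref{eq:KP-order} on $\mathrm{KP}(\unl{k})$, exactly as in the proofs of Lemma~\ref{span} and Proposition~\ref{spanig}, combined with Proposition~\ref{vanishB} (vanishing of $\phi_{\unl{d}'}$ on $E_h$ for $\unl{d}'<\unl{d}$) and the fact that $\phi_{\unl d}(F)=0$ for all $\unl d$ forces $F=0$, shows that $\{\Psi(\tilde{\mathcal{E}}^\epsilon_h)\}$ both span $\bM$ over $\BZ[v,v^{-1}]$ and are linearly independent; transporting back through $\Psi$ gives the basis of $\integralb$ and shows $\integralb\simeq\bM$ canonically, whence independence of all choices, proving part~(a).

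The main obstacle, as in type $A_n$, is the bookkeeping in step (1): verifying the precise orders of vanishing and powers of $v$ produced by the $\phi_{\unl{d}}$-specialization of the tangle of $\zeta$-factors around an $[i,n,j]$-block (and around pairs of blocks, for the $G_{\beta,\beta'}$-divisibility needed in step (2)), and confirming that after dividing by $\langle 2\rangle_v^{|\beta|-1}$ these remain genuine $\BZ[v,v^{-1}]$-elements rather than merely $\BQ(v)$-elements. This is where the specific shape of the $B_n$ specialization map~\eqref{spe} and the role of the extra $\langle 2\rangle_v$ factor both get used, and it is the step that cannot be deferred to a black-box citation; the rest of the argument is a formal repetition of the $G_2$ case.
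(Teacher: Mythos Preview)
Your overall strategy is right, but the ``rank one'' spanning step in (2) is where it breaks down. You assert that the unnormalized symmetrizations $\{P_{\lambda_{h,\beta}}\}_{h\in H_{\unl{k},\unl{d}}}$ of~\eqref{hlp} span $\BZ[v,v^{-1}][\{w_{\beta,s}^{\pm1}\}]^{\fS_{d_\beta}}$, citing \cite[Proposition~1.4]{Tsy22}. That proposition is about the \emph{normalized} versions $\mathbf{P}_{\lambda_{h,\beta}}$ (divided by $\prod_r[h(\beta,r)]_{v_\beta}!$), which do form a $\BZ[v,v^{-1}]$-basis; the unnormalized $P_{\lambda_{h,\beta}}$ span only a proper $\BZ[v,v^{-1}]$-sublattice. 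Already for $d_\beta=2$ and $r_\beta(h,1)=r_\beta(h,2)=r$ one has $P_{\lambda_{h,\beta}}=v_\beta^{-1}[2]_{v_\beta}\cdot(w_{\beta,1}w_{\beta,2})^r$ by~\eqref{eq:rank1-power-trig}, so the symmetric polynomial $(w_{\beta,1}w_{\beta,2})^r$ itself is \emph{not} in the span. Consequently your module $\bM$ is strictly larger than $\Psi(\integralb)$, and the induction on $\mathrm{KP}(\unl{k})$ cannot close.

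The paper repairs this by refining $\tilde{\mathcal{S}}$ (which already requires $f\in\langle 2\rangle_v^{|\unl{k}|}\BZ[v,v^{-1}][\cdots]$, a condition also missing from your $\bM$) to the submodule $\mathcal{S}$ of \emph{integral} elements via an additional family of \emph{cross specialization maps} $\Upsilon_{\unl{d},\unl{t}}$ (Definition~\ref{def:rtt-integral-shuffle}): one further specializes the $w_{\beta,s}$ in vertical strings~\eqref{verticalspemap} and demands divisibility by $\prod_{\beta,r}[t_{\beta,r}]_{v_\beta}!$. This extra condition is exactly what cuts the rank-one piece down from all of $\BZ[v,v^{-1}][\{w_{\beta,s}^{\pm1}\}]^{\fS_{d_\beta}}$ to the sublattice spanned by the $P_{\lambda_{h,\beta}}$, and the bulk of the work (Proposition~\ref{integralbsubset}) is the detailed $\zeta$-factor bookkeeping showing that $\Psi(\integralb)$ satisfies these cross-specialization divisibilities. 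Once that is in place, the spanning/vanishing argument on $\mathrm{KP}(\unl{k})$ goes through and yields Theorem~\ref{rttthmb}, from which both parts of Theorem~\ref{PBWDintegralb} follow.
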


For any $\unl{k}\in\BN^{n}$, consider the $\BZ[v,v^{-1}]$-submodule $\tilde{\mathcal{S}}_{\unl{k}}$ of $S_{\unl{k}}$ consisting
of rational functions $F$ satisfying the following two conditions:
\begin{enumerate}[leftmargin=1cm]

\item
If $f$ denotes the numerator of $F$ from \eqref{polecondition}, then
\begin{equation}
  f\in {\langle 2\rangle_{v}}^{|\unl{k}|}\cdot
  \BZ[v,v^{-1}][\{x_{i,r}^{\pm 1}\}_{1\leq i\leq n}^{1\leq r\leq k_{i}}]^{\mathfrak{S}_{\underline{k}}},
\label{rttconstant1}
\end{equation}
where $|\unl{k}|=|(k_1,\ldots,k_n)|:=k_1+\dots+k_n$.

\item
For any $\unl{d}\in\text{KP}(\underline{k})$, the specialization $\phi_{\unl{d}}(f\cdot {\langle 2\rangle_{v}}^{-|\unl{k}|})$
is divisible by
\begin{equation}
  \prod_{\beta=[i,n,j]\in \Delta^{+}}\prod_{\ell=j}^{n-1}
  \big\{(v^{-4n+4\ell-2}-1)^{d_{\beta}}(v^{-4n+4\ell+6}-1)^{d_{\beta}}\big\}.
\label{rttconstant2}
\end{equation}

\end{enumerate}
 We define $\tilde{\mathcal{S}}:=\bigoplus_{\unl{k}\in\BN^{n}}\tilde{\mathcal{S}}_{\unl{k}}$. Then, we have:

\begin{Prop}\label{goodB}
$\Psi(\integralb) \subset \tilde{\mathcal{S}}$.
\end{Prop}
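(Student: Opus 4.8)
The plan is to reduce the statement $\Psi(\integralb)\subset \tilde{\mathcal S}$ to a ``rank $1$'' computation together with bookkeeping of $\zeta$-factors, exactly as in the proof of Proposition~\ref{lintegralG} for type $G_2$. Since $\integralb$ is generated over $\BZ[v,v^{-1}]$ by the elements $\tilde{\mathcal E}^\epsilon_{\beta,s}=\langle 2\rangle_v\cdot \tilde E^\epsilon_{\beta,s}$ with the root vectors given by the explicit iterated $v$-commutators \eqref{rvb1}--\eqref{rvb2}, it suffices to check that $\Psi$ of an arbitrary product of such generators lies in $\tilde{\mathcal S}$. First I would rewrite such a product, using the defining relations and Lemma~\ref{rank2}, as an ordered product of generators grouped by root $\beta$; the only subtlety compared with $G_2$ is that each $\tilde E^\epsilon_{\beta,s}$ is itself a $v$-commutator of the $e_{i,r}$'s rather than a single $e_{i,r}$, so I would first record (this is the content of Lemma~\ref{phirvns}) that $\Psi(\tilde{\mathcal E}^\epsilon_{\beta,s})=\langle 2\rangle_v\cdot \Psi(\tilde E^\epsilon_{\beta,s})$ has numerator divisible by $\langle 2\rangle_v^{|\beta|}$ and, after clearing the pole denominators, lies in $\langle 2\rangle_v^{|\beta|}\cdot\BZ[v,v^{-1}][x^{\pm1}]$.

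For condition~\eqref{rttconstant1}, the point is that $\Psi$ applied to a product of $\tilde{\mathcal E}^\epsilon_{\beta,s}$'s is, by \eqref{shuffleproduct}, a symmetrization of a product of the individual $\Psi(\tilde{\mathcal E}^\epsilon_{\beta,s})$'s times $\zeta$-factors $\zeta_{i,j}(x/y)=\tfrac{x-v^{-(\alpha_i,\alpha_j)}y}{x-y}$. The numerators of the $\zeta$-factors are monic linear forms with coefficients in $v^{\BZ}$, hence in $\BZ[v,v^{-1}]$, so they contribute nothing to $\langle 2\rangle_v$-powers but keep everything integral; the denominators of the $\zeta$-factors precisely cancel the pole-condition denominator \eqref{polecondition} of the shuffle product. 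Thus the total power of $\langle 2\rangle_v$ in the numerator $f$ is at least $\sum_\beta h(\beta,s)\cdot|\beta| \geq \sum_\beta h(\beta,s)\cdot(\text{contribution to }|\unl k|)=|\unl k|$, using that each $\tilde{\mathcal E}^\epsilon_{\beta,s}$ carries a factor $\langle 2\rangle_v$ on top of the $\langle 2\rangle_v^{|\beta|-1}$ already present in $\Psi(\tilde E^\epsilon_{\beta,s})$ by Lemma~\ref{phirvns}; this yields exactly \eqref{rttconstant1}.

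For condition~\eqref{rttconstant2}, I would argue exactly as in the proofprop for Proposition~\ref{lintegralG}: fix $\beta=[i,n,j]$ and $1\leq s\leq d_\beta$, and analyze the $\phi_{\unl d}$-specialization of the $\zeta$-factors internal to the group of variables $\{x^{(\beta,s)}_{\ell,t}\}_{\ell\in\beta}^{1\leq t\leq\nu_{\beta,\ell}}$. Using the notation $o(x^{(*,*)}_{*,*})$ from \eqref{eq:o-spot} for the generator slot into which a variable is plugged, the $\phi_{\unl d}$-specialization of this product of $\zeta$-factors vanishes unless the $o$-values are suitably monotone; in the surviving orderings, the internal $\zeta$-factors involving the two variables $x^{(\beta,s)}_{\ell,1},x^{(\beta,s)}_{\ell,2}$ for each $j\leq\ell\leq n-1$ have numerators whose $\phi_{\unl d}$-specializations produce precisely the factors $(v^{-4n+4\ell-2}-1)(v^{-4n+4\ell+6}-1)$, using the specialization rule \eqref{spe} $x^{(\beta,s)}_{\ell,1}\mapsto v^{-2\ell}w_{\beta,s}$, $x^{(\beta,s)}_{\ell,2}\mapsto v^{-4n+2\ell+2}w_{\beta,s}$. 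Taking the product over $j\leq\ell\leq n-1$ and over $1\leq s\leq d_\beta$ gives the divisibility by \eqref{rttconstant2}, since the factor $\langle 2\rangle_v^{|\unl k|}$ has already been removed. The main obstacle is the combinatorial case analysis of which orderings of $o$-values survive the $\phi_{\unl d}$-specialization: because the roots $[i,n,j]$ double the simple roots $\alpha_j,\dots,\alpha_n$, there are more internal $\zeta$-factors (including $\zeta_{\ell,\ell}$ and $\zeta_{\ell,\ell\pm1}$ factors among doubled indices) than in type $G_2$, and one must verify that in every non-vanishing ordering the claimed linear factors genuinely appear and that no cancellation destroys integrality; this is where Lemma~\ref{bstep3} and the explicit form of $\{c_\beta\}$ from \eqref{eq:c-factor-B} are the crucial inputs, and I would organize the argument by induction on $n$ mirroring the inductive structure of Lemma~\ref{phirv}, reducing the case $\beta=[i,n,j]$ to $\beta=[i,n,j+1]$ by peeling off the last $v$-commutator with $e_{j,*}$ exactly as in \eqref{eq:simplification-2}.
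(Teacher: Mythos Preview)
Your argument for condition~\eqref{rttconstant1} is essentially the paper's: Lemma~\ref{phirvns} shows each $\Psi(\tilde{\mathcal E}^\epsilon_{\beta_q,r_q})$ contributes $\langle 2\rangle_v^{|\beta_q|}$ to the numerator, and the inter-slot $\zeta$-factors are integral, so the product contributes $\langle 2\rangle_v^{|\unl k|}$. (Your opening suggestion to first rewrite an arbitrary product as an ordered one should be dropped: that would presuppose Theorem~\ref{PBWDintegralb}, which is proved \emph{using} this proposition.)

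For condition~\eqref{rttconstant2} there is a genuine gap. Your plan ``argue exactly as in the proof of Proposition~\ref{lintegralG}'' would work if the generators were divided powers of single $e_{i,r}$'s, because then variables of distinct types never share a slot and every required factor comes from an inter-slot $\zeta$-numerator. But here the generators are the root vectors $\tilde{\mathcal E}^\epsilon_{\beta_q,r_q}$, so several of the variables $x^{(\beta,s)}_{\ell,t}$ belonging to one specialization group $(\beta,s)$ may be plugged into the \emph{same} slot~$q$ (i.e.\ share an $o$-value). In that case there are \emph{no} $\zeta$-factors among them, and the factor $(v^{-4n+4\ell-2}-1)(v^{-4n+4\ell+6}-1)$ must instead be extracted from the numerator of $\Psi(\tilde E^\epsilon_{\beta_q,r_q})$ itself, namely from the factors $(v^4 x_{\ell,1}-x_{\ell,2})(v^4 x_{\ell,2}-x_{\ell,1})$ in Lemma~\ref{phirvns}. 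The paper therefore runs an induction on $\ell$ (from $n$ down to $j$, within a fixed $(\beta,s)$) with a three-way case split: if $o(x^{(\beta,s)}_{\ell,1})=o(x^{(\beta,s)}_{\ell,2})$ the whole string sits in one slot and Lemma~\ref{phirvns} supplies all the factors at once; if $o(x^{(\beta,s)}_{\ell,1})>o(x^{(\beta,s)}_{\ell,2})$ one further splits on whether $o(x^{(\beta,s)}_{\ell+1,1})=o(x^{(\beta,s)}_{\ell,2})$ or not, in each case identifying explicit $\zeta$-factors (such as $\zeta_{\ell,\ell-1}(x^{(\beta,s)}_{\ell,2}/x^{(\beta,s)}_{\ell-1,1})$, $\zeta_{\ell,\ell}(x^{(\beta,s)}_{\ell,2}/x^{(\beta,s)}_{\ell,1})$, and $\zeta_{\ell+1,\ell}$ or $\zeta_{\ell,\ell+1}$) whose specialization produces the outermost factor, with the inner ones coming from Lemma~\ref{phirvns} or from the induction hypothesis.

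Your cited inputs are misplaced: Lemma~\ref{bstep3} concerns $\phi_{\unl d}(\sigma(F_h))$ for the \emph{ordered} monomial $E_h$ of degree exactly~$\unl d$, and \eqref{eq:simplification-2} computes $\phi_\beta$ of a \emph{single} root vector; neither handles an arbitrary product in which the $(\beta,s)$-variables are scattered across several root-vector factors. The correct key input beyond the $\zeta$-bookkeeping is Lemma~\ref{phirvns}, used as just described.
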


\begin{proof}
For any $\epsilon\in\{\pm\}$, $m\in\BN$, $\beta_{1},\dots,\beta_{m}\in\Delta^{+}$, $r_{1},\dots,r_{m}\in\BZ$, let
  \[F\coloneqq
    \Psi\big(\tilde{\mathcal{E}}^{\epsilon}_{\beta_{1},r_{1}}\cdots \tilde{\mathcal{E}}^{\epsilon}_{\beta_{m},r_{m}}\big),\]
and $f$ be the numerator of $F$ from \eqref{polecondition}. We set $\unl{k}=\sum_{q=1}^{m} \beta_{q}$. Similarly
to~\eqref{eq:o-spot}, if a variable $x^{(*,*)}_{*,*}$ is plugged into $\Psi(\tilde{\mathcal{E}}^{\epsilon}_{\beta_{q},r_{q}})$
for some $1\leq q\leq m$, then we shall use the notation
  $$o(x^{(*,*)}_{*,*})=q.$$
Due to Lemma \ref{phirvns}, $f$ is divisible by ${\langle 2\rangle_{v}}^{|\unl{k}|}$, hence, the condition \eqref{rttconstant1}
holds. Now for any $\unl{d}\in\text{KP}(\unl{k})$, consider each summand from the symmetrization featuring in $f$.
Pick any $\beta=[i,n,j]$ with $1\leq i<j\leq n$ such that $d_{\beta}\neq 0$. For any $1\leq s\leq d_{\beta}$ and
$j\leq \ell\leq n$, it suffices to show that the contribution of the $\phi_{\unl{d}}$-specializations of $\zeta$-factors
between the variables
  \[\big\{x^{(\beta,s)}_{\ell-1,1} \,,\, x^{(\beta,s)}_{\ell,1} \,,\, \dots \,,\, x^{(\beta,s)}_{n,1},x^{(\beta,s)}_{n,2}
    \,,\, \dots \,,\, x^{(\beta,s)}_{\ell+1,2} \,,\, x^{(\beta,s)}_{\ell,2}\big\}\]
is divisible by $\prod_{t=\ell}^{n-1} \{(v^{-4n+4t-2}-1)(v^{-4n+4t+6}-1)\}$.
The proof is by induction on $\ell$, where the base step $\ell=n$ is vacuous.

We first note that this $\phi_{\unl{d}}$-specialization vanishes unless
  \[o(x^{(\beta,s)}_{\ell-1,1})\geq o(x^{(\beta,s)}_{\ell,1})\geq \cdots\geq o(x^{(\beta,s)}_{n,1})\geq
    o(x^{(\beta,s)}_{n,2})\geq\cdots\geq o(x^{(\beta,s)}_{\ell+1,2})\geq o(x^{(\beta,s)}_{\ell,2}).\]
If $o(x^{(\beta,s)}_{\ell,1})=o(x^{(\beta,s)}_{\ell,2})$, then due to Lemma~\ref{phirvns},
this $\phi_{\unl{d}}$-specialization contains the required factor
  \[ \prod_{t=\ell}^{n-1} \{(v^{-4n+4t-2}-1)(v^{-4n+4t+6}-1)\}. \]
If $o(x^{(\beta,s)}_{\ell,1})>o(x^{(\beta,s)}_{\ell,2})$, then we have the following two cases to consider:
\begin{itemize}[leftmargin=0.7cm]

\item
Case 1: $o(x^{(\beta,s)}_{\ell+1,1})=o(x^{(\beta,s)}_{\ell,2})$. Then we have
 \[o(x^{(\beta,s)}_{\ell+1,1})=o(x^{(\beta,s)}_{\ell+2,1})=\cdots=o(x^{(\beta,s)}_{\ell+1,2})= o(x^{(\beta,s)}_{\ell,2}).\]
According  to Lemma \ref{phirvns}, the corresponding $\phi_{\unl{d}}$-specialization is divisible by
  \[\prod_{t=\ell+1}^{n-1} \{(v^{-4n+4t-2}-1)(v^{-4n+4t+6}-1)\}.\]
On the other hand, the $\phi_{\unl{d}}$-specialization of the product of $\zeta$-factors
  \[\zeta_{\ell,\ell-1}(x^{(\beta,s)}_{\ell,2}/x^{(\beta,s)}_{\ell-1,1})
    \zeta_{\ell,\ell}(x^{(\beta,s)}_{\ell,2}/x^{(\beta,s)}_{\ell,1})
    \zeta_{\ell+1,\ell}(x^{(\beta,s)}_{\ell+1,2}/x^{(\beta,s)}_{\ell,1})\]
contributes the remaining required factor $(v^{-4n+4\ell-2}-1)(v^{-4n+4\ell+6}-1)$.

\item
Case 2: $o(x^{(\beta,s)}_{\ell+1,1})>o(x^{(\beta,s)}_{\ell,2})$.
Then the $\phi_{\unl{d}}$-specialization of the product of $\zeta$-factors
  \[\zeta_{\ell,\ell-1}(x^{(\beta,s)}_{\ell,2}/x^{(\beta,s)}_{\ell-1,1})
    \zeta_{\ell,\ell}(x^{(\beta,s)}_{\ell,2}/x^{(\beta,s)}_{\ell,1})
    \zeta_{\ell,\ell+1}(x^{(\beta,s)}_{\ell,2}/x^{(\beta,s)}_{\ell+1,1})\]
contributes the factor $(v^{-4n+4\ell-2}-1)(v^{-4n+4\ell+6}-1)$.
Considering the contribution of the $\phi_{\unl{d}}$-specializations of $\zeta$-factors between the variables
  \[\big\{x^{(\beta,s)}_{\ell,1} \,,\, x^{(\beta,s)}_{\ell+1,1} \,,\, \dots \,,\, x^{(\beta,s)}_{n,1},x^{(\beta,s)}_{n,2}
    \,,\, \dots \,,\, x^{(\beta,s)}_{\ell+2,2} \,,\, x^{(\beta,s)}_{\ell+1,2}\big\}\]
and using the induction hypothesis, we get the remaining required factors
  \[\prod_{t=\ell+1}^{n-1} \{(v^{-4n+4t-2}-1)(v^{-4n+4t+6}-1)\} .\]

\end{itemize}
This completes our proof.
\end{proof}

We shall now introduce a certain refinement of $\tilde{\mathcal{S}}$ in order to describe the image $\Psi(\integralb)$.
Pick any $F\in\tilde{\mathcal{S}}_{\unl{k}}$ and $\unl{d}\in\mathrm{KP}(\unl{k})$. First, according to
\eqref{rttconstant1} and \eqref{rttconstant2}, $\phi_{\unl{d}}(F)$ is divisible by
\begin{equation}
  A_{\unl{d}}\coloneqq {\langle 2\rangle_{v}}^{|\unl{k}|} \cdot
  \prod_{\beta=[i,n,j]\in \Delta^{+}}\prod_{\ell=j}^{n-1}
  \big\{(v^{-4n+4\ell-2}-1)^{d_{\beta}}(v^{-4n+4\ell+6}-1)^{d_{\beta}}\big\}.
\end{equation}
Second, following Corollary \ref{formulagbeta} (based solely on the wheel conditions), the specialization
$\phi_{\unl{d}}(F)$ is also divisible by the product
  \[B_{\unl{d}}\coloneqq \prod_{\beta\in\Delta^{+}}G_{\beta},\]
with $G_{\beta}$ computed explicitly in \eqref{eqformulagbeta}.
Combining these two observations, we can now define the following {\em reduced specialization map}
\begin{equation}
  \xi_{\unl{d}}\colon \tilde{\mathcal{S}} \longrightarrow
  \BZ[v,v^{-1}][\{w_{\beta,s}^{\pm 1}\}_{\beta\in\Delta^{+}}^{1\leq s\leq d_{\beta}}]^{\mathfrak{S}_{\unl{d}}}
  \qquad \text{via} \qquad \xi_{\unl{d}}(F)\coloneqq \frac{\phi_{\unl{d}}(F)}{A_{\unl{d}}B_{\unl{d}}}.
\label{reducedspe}
\end{equation}

Let us introduce another type of specialization maps. Pick any collection of positive integers
$\unl{t}=\{t_{\beta,r}\}_{\beta\in\Delta^{+}}^{1\leq r\leq \ell_{\beta}}\ (\ell_{\beta}\in\BN)$ satisfying
\begin{equation}
  d_{\beta}=\sum_{r=1}^{\ell_{\beta}}t_{\beta,r} \qquad \forall\ \beta\in\Delta^{+}.
\label{verticalpartition}
\end{equation}
For any $\beta\in\Delta^{+}$, we split the variables $\{w_{\beta,s}\}_{s=1}^{d_{\beta}}$ into $\ell_{\beta}$ groups
of size $t_{\beta,r}$ each $(1\leq r\leq \ell_{\beta})$ and specialize the variables in the $r$-th group to
\begin{equation}
  v_{\beta}^{-2}z_{\beta,r},\ v_{\beta}^{-4}z_{\beta,r},\ \dots\ ,\ v_{\beta}^{-2t_{\beta,r}}z_{\beta,r}.
\label{verticalspemap}
\end{equation}
For any
  $g\in \BZ[v,v^{-1}][\{w_{\beta,s}^{\pm 1}\}_{\beta\in\Delta^{+}}^{1\leq s\leq d_{\beta}}]^{\mathfrak{S}_{\unl{d}}}$,
we define $\varpi_{\unl{t}}(g)$ as the corresponding specialization of~$g$. This gives rise to the {\em vertical specialization map}
\begin{equation}
  \varpi_{\unl{t}}\colon
  \BZ[v,v^{-1}][\{w_{\beta,s}^{\pm 1}\}_{\beta\in\Delta^{+}}^{1\leq s\leq d_{\beta}}]^{\mathfrak{S}_{\unl{d}}} \longrightarrow
  \BZ[v,v^{-1}][\{z_{\beta,r}^{\pm 1}\}_{\beta\in\Delta^{+}}^{1\leq r\leq \ell_{\beta}}].
\label{verticalspe}
\end{equation}

Finally, given any $\unl{d}\in\mathrm{KP}(\unl{k})$ and a collection of positive integers
$\unl{t}=\{t_{\beta,r}\}_{\beta\in\Delta^{+}}^{1\leq r\leq \ell_{\beta}}$ satisfying \eqref{verticalpartition},
we combine \eqref{reducedspe} and \eqref{verticalspe} to define the \textbf{cross specialization map}
\begin{equation}
  \Upsilon_{\unl{d},\unl{t}}\colon \tilde{\mathcal{S}} \longrightarrow
  \BZ[v,v^{-1}][\{z_{\beta,r}^{\pm 1}\}_{\beta\in\Delta^{+}}^{1\leq r\leq \ell_{\beta}}]
  \quad \text{via} \quad
  \Upsilon_{\unl{d},\unl{t}}(F)\coloneqq\varpi_{\unl{t}}(\xi_{\unl{d}}(F)).
\label{crossspe}
\end{equation}
Similarly to~\cite[Definition 3.37]{Tsy19}, we introduce:

\begin{Def}\label{def:rtt-integral-shuffle}
$F\in\tilde{\mathcal{S}}_{\unl{k}}$ is \textbf{integral} if $\Upsilon_{\unl{d},\unl{t}}(F)$ is divisible by
$\prod_{\beta\in\Delta^{+}}^{1\leq r\leq \ell_{\beta}}[t_{\beta,r}]_{v_{\beta}}!$ for any $\unl{d}\in\mathrm{KP}(\unl{k})$
and $\unl{t}=\{t_{\beta,r}\}_{\beta\in\Delta^{+}}^{1\leq r\leq \ell_{\beta}}$ satisfying \eqref{verticalpartition}.
\end{Def}

Let $\mathcal{S}\subset \tilde{\mathcal{S}}$ denote the $\BZ[v,v^{-1}]$-submodule of all integral elements. Then, we have:

\begin{Prop}\label{integralbsubset}
$\Psi(\integralb)\subset \mathcal{S}$.
\end{Prop}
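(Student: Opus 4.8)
\textbf{Proof plan for Proposition \ref{integralbsubset}.}

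The plan is to reduce the statement, exactly as in the proof of Proposition~\ref{goodB}, to a ``rank $1$ per root'' verification by tracking where variables land. Pick $\epsilon\in\{\pm\}$, integers $m,r_1,\dots,r_m$, roots $\beta_1,\dots,\beta_m\in\Delta^+$, and set $F\coloneqq \Psi(\tilde{\mathcal E}^{\epsilon}_{\beta_1,r_1}\cdots \tilde{\mathcal E}^{\epsilon}_{\beta_m,r_m})$, $\unl{k}=\sum_q\beta_q$. By Proposition~\ref{goodB} we already know $F\in\tilde{\mathcal S}_{\unl{k}}$, so only Definition~\ref{def:rtt-integral-shuffle} must be checked: for every $\unl{d}\in\mathrm{KP}(\unl{k})$ and every vertical partition $\unl{t}=\{t_{\beta,r}\}$ satisfying~\eqref{verticalpartition}, the element $\Upsilon_{\unl{d},\unl{t}}(F)=\varpi_{\unl{t}}(\xi_{\unl{d}}(F))$ is divisible by $\prod_{\beta}^{1\leq r\leq \ell_\beta}[t_{\beta,r}]_{v_\beta}!$. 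Since $\xi_{\unl{d}}(F)=\phi_{\unl{d}}(F)/(A_{\unl{d}}B_{\unl{d}})$, and $\phi_{\unl{d}}(F)$ is a symmetrization over shuffle permutations, the first reduction is to discard all terms that vanish: exactly as in Lemmas~\ref{Gs2} and~\ref{bstep3}, the $\phi_{\unl{d}}$-specialization of $\sigma(F_h)$ is nonzero only when $\sigma$ respects the grouping of variables by root, so up to a nonzero constant $\phi_{\unl{d}}(F)$ is a sum over $\sigma\in\mathfrak S_{\unl{d}}$ of specialized products $\sigma(F)$, and the factors $A_{\unl{d}}B_{\unl{d}}$ account precisely for the $\zeta$-factor contributions within each fixed group $\{x^{(\beta,s)}_{*,*}\}$ (via Lemma~\ref{phirvns} for the $A_{\unl{d}}$-part and Corollary~\ref{formulagbeta}/the wheel conditions for the $B_{\unl{d}}$-part). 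This is the content already assembled in Propositions~\ref{spekpB} and~\ref{goodB}.

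The core of the argument is then the following local statement, to be verified root by root: after dividing out $A_{\unl{d}}B_{\unl{d}}$ and applying the vertical specialization~\eqref{verticalspemap} which merges $t_{\beta,r}$ copies of the variables $w_{\beta,*}$ into a single $z_{\beta,r}$ via $v_\beta^{-2},v_\beta^{-4},\dots,v_\beta^{-2t_{\beta,r}}$, the contribution coming from the $t_{\beta,r}$ root vectors $\tilde{\mathcal E}^\epsilon_{\beta,*}$ assigned to the $r$-th group produces a factor divisible by $[t_{\beta,r}]_{v_\beta}!$. This is genuinely a rank $1$ computation: I would reduce $\Psi(\tilde{\mathcal E}^{\epsilon}_{\beta,r_1}\star\cdots\star\tilde{\mathcal E}^{\epsilon}_{\beta,r_{t}})$ under $\phi_{\unl{d}}$ to a shuffle product of monomials $z^{r_1},\dots,z^{r_t}$ in the rank $1$ shuffle algebra (as in the ``rank $1$ reduction'' underlying Lemma~\ref{shuffleelement} and used in Propositions~\ref{spekpG},~\ref{spanig}), and then invoke the rank $1$ identity $\underbrace{z^{r}\star\cdots\star z^{r}}_{t}=v_\beta^{-t(t-1)/2}[t]_{v_\beta}!\,(z_1\cdots z_t)^{r}$ of Lemma~\ref{rank2} (more precisely its consequence that arbitrary rank $1$ shuffle products of $t$ monomials, specialized along a geometric progression, are divisible by $[t]_{v_\beta}!$; cf.~\cite[Lemma 1.3]{Tsy22} and the computation in the proof of Proposition~\ref{spanig}). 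The passage between different groups is controlled because, as established in Lemmas~\ref{bstep3} and~\ref{phirv}, the $\phi_{\unl{d}}$-specializations of the $\zeta$-factors connecting distinct groups are the $h$-independent factors $G_{\beta,\beta'}$, $G_\beta$, $A_{\unl{d}}$, which have been divided off, so the inter-group $\zeta$-factors contribute no obstruction to divisibility by $\prod_\beta^r[t_{\beta,r}]_{v_\beta}!$; the only remaining verification is that the extra scalar $\langle 2\rangle_v$ in~\eqref{eq:rtt-vectors} is already absorbed into the ${\langle 2\rangle_v}^{|\unl{k}|}$ pulled out in~\eqref{rttconstant1}, which it is by the bookkeeping $|\unl{k}|=\sum_q|\beta_q|$.

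The main obstacle I anticipate is the bookkeeping for the long roots $\beta=[i,n,j]$ of type $B_n$, where each root vector involves the doubled variables $x^{(\beta,s)}_{\ell,1},x^{(\beta,s)}_{\ell,2}$ for $j\le \ell\le n$: here the rank $1$ reduction is not literally $U_v(L\mathfrak{sl}_2)$ but rather carries the extra linear factors $(v^{4}x_{\ell,1}-x_{\ell,2})(v^{4}x_{\ell,2}-x_{\ell,1})$ from Lemma~\ref{phirvns}, and I must check that under the geometric-progression specialization~\eqref{verticalspemap} these factors neither create spurious poles nor cancel the $[t_{\beta,r}]_{v_\beta}!$ already produced. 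Concretely, I would track, for a fixed group of $t=t_{\beta,r}$ copies, the positions $o(x^{(\beta,s)}_{*,*})$ of the $2(n-j)+1$ internal variables of each copy (as in the $o$-spot analysis of the proof of Proposition~\ref{goodB}), show that the nonvanishing configurations are exactly the ``ordered'' ones, and verify that after the vertical specialization the resulting rational function in $z_{\beta,1},\dots,z_{\beta,\ell_\beta}$ is, up to the already-removed $B_{\unl{d}}$-factors, the rank $1$ shuffle product of $z_{\beta,r}^{*}$'s — hence divisible by $[t_{\beta,r}]_{v_\beta}!$ by Lemma~\ref{rank2}. Once this local claim is in place, multiplying over all $\beta$ and $r$ gives divisibility of $\Upsilon_{\unl{d},\unl{t}}(F)$ by $\prod_{\beta}^{1\le r\le\ell_\beta}[t_{\beta,r}]_{v_\beta}!$, i.e.\ $F\in\mathcal S$, completing the proof.
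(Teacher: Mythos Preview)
Your plan has a genuine gap arising from a conflation of two different settings. Lemmas~\ref{Gs2} and~\ref{bstep3} apply only to an \emph{ordered} PBWD monomial $E_h$ with $\deg(h)=\unl{d}$: in that situation the variables entering $\Psi(E_{\beta,r_\beta(h,s)})$ are exactly the $\{x^{(\beta,s)}_{*,*}\}$, and then only $\sigma\in\mathfrak{S}_{\unl{d}}$ survive. Here, however, $F=\Psi(\tilde{\mathcal E}^{\epsilon}_{\beta_1,r_1}\cdots\tilde{\mathcal E}^{\epsilon}_{\beta_m,r_m})$ is an \emph{arbitrary unordered} product and $\unl{d}$ is an \emph{arbitrary} Kostant partition, unrelated to the tuple $(\beta_1,\dots,\beta_m)$. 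There is no assignment of ``the $t_{\beta,r}$ root vectors $\tilde{\mathcal{E}}^{\epsilon}_{\beta,*}$ to the $r$-th group'' as you write in the second paragraph: the $x$-variables from a single factor $\Psi(\tilde{\mathcal{E}}^{\epsilon}_{\beta_q,r_q})$ may be scattered across many $(\beta,s)$-groups under $\phi_{\unl{d}}$, and conversely a single $(\beta,s)$-group may receive variables from many different factors. Consequently you can neither reduce to $\sigma\in\mathfrak{S}_{\unl{d}}$ nor invoke the factorization of Proposition~\ref{spekpB}; and an attempt to first expand $F$ in ordered monomials with $\BZ[v,v^{-1}]$-coefficients would be circular, since that is Theorem~\ref{PBWDintegralb}, proved only after the present proposition.

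What the paper actually does is work summand by summand with no such reduction. For each fixed pair $s\neq s'$ in the same vertical block it performs an exhaustive case analysis on the relative $o$-values of the variables $\{x^{(\beta,s)}_{*,*},x^{(\beta,s')}_{*,*}\}$ and shows that \emph{in every nonvanishing configuration} the inter-copy $\zeta$-factors already contribute precisely the linear factors that make up $G_\beta$ (Corollary~\ref{formulagbeta}). Only after this ``universal $G_\beta$-contribution'' is established may one legitimately divide by $B_{\unl{d}}$ termwise; the still-unused factors $\zeta_{i,i}(x^{(\beta,s)}_{i,1}/x^{(\beta,s')}_{i,1})$ then feed into the rank~$1$ computation of~\cite[Lemma~3.46]{Tsy18} to produce $[t_{\beta,r}]_{v_\beta}!$. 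Your third paragraph gestures at an $o$-spot analysis, but with the wrong target: the claim that ``the nonvanishing configurations are exactly the ordered ones'' is false --- many $o$-configurations survive, and the point is that \emph{all} of them yield the full $G_\beta$, which is the main technical work you are missing.
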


\begin{proof}
For any $\epsilon\in\{\pm\}$, $m\in\BN$, $\beta_{1},\dots,\beta_{m}\in\Delta^{+}$, $r_{1},\dots,r_{m}\in\BZ$, let
  \[F\coloneqq
    \Psi\big(\tilde{\mathcal{E}}^{\epsilon}_{\beta_{1},r_{1}}\cdots \tilde{\mathcal{E}}^{\epsilon}_{\beta_{m},r_{m}}\big).\]
For any $\beta\in\Delta^{+}$ and $1\leq r\leq \ell_{\beta}$, we need to show that under $\Upsilon_{\unl{d},\unl{t}}$,
the contribution of the $\zeta$-factors between the variables $x^{(*,*)}_{*,*}$ that got specialized to $v^{?}z_{\beta,r}$
is divisible by $[t_{\beta,r}]_{v_{\beta}}!$. For $\beta=[i,j]$ with $1\leq i\leq j\leq n$, this follows from
\cite[Lemma 3.51]{Tsy18} (note that $v_{[i,j]}=v_{j}$).

It remains to treat the case $\beta=[i,n,j]$ with $1\leq i<j\leq n$. We note that $v_{\beta}=v^{2}$ for this~$\beta$.
Without loss of generality, we may assume that under the $\phi_{\unl{d}}$-specialization:
  \[x^{(\beta,s)}_{i,1}\mapsto v^{-2i}w_{\beta,s} \,,\, \dots \,,\, x^{(\beta,s)}_{n,1}\mapsto v^{-2n}w_{\beta,s} \,,\,
    x^{(\beta,s)}_{n,2}\mapsto v^{-2n+2}w_{\beta,s} \,,\, \dots \,,\, x^{(\beta,s)}_{j,2}\mapsto  v^{-4n+2j+2}w_{\beta,s}\]
for $1\leq s\leq t_{\beta,r}$, while under the $\varpi_{\unl{t}}$-specialization:
  \[w_{\beta,1}\mapsto v^{-4}z_{\beta,r} \,,\, \dots \,,\, w_{\beta,t_{\beta,r}}\mapsto v^{-4t_{\beta,r}}z_{\beta,r}.\]

Fix any $1\leq s\neq s'\leq t_{\beta,r}$. First, let us consider the relative position of the variables
\begin{equation}\label{jcase}
  \big\{x^{(\beta,s)}_{j+1,2} \,,\, x^{(\beta,s)}_{j,2} \,,\, x^{(\beta,s')}_{j+1,2} \,,\, x^{(\beta,s')}_{j,2}\big\},
\end{equation}
that is, compare their $o$-values with $o(x^{(*,*)}_{*,*})$ defined now as in the proof of Proposition \ref{goodB}.
We can assume that
  \[o(x^{(\beta,s)}_{j+1,2})\geq o(x^{(\beta,s)}_{j,2}) \quad \mathrm{and} \quad
    o(x^{(\beta,s')}_{j+1,2})\geq o(x^{(\beta,s')}_{j,2}),\]
as otherwise the corresponding term is specialized to zero under $\phi_{\unl{d}}$. If
$o(x^{(\beta,s)}_{j,2})=o(x^{(\beta,s')}_{j,2})$ then, according to Lemma \ref{phirvns}, the corresponding term is
divisible by
  \[(v^{4}x^{(\beta,s)}_{j,2}-x^{(\beta,s')}_{j,2})(v^{4}x^{(\beta,s')}_{j,2}-x^{(\beta,s)}_{j,2}),\]
whose $\phi_{\unl{d}}$-specialization contributes the factor
$(w_{\beta,s}-v^{4}w_{\beta,s'})(w_{\beta,s'}-v^{4}w_{\beta,s})$. If $o(x^{(\beta,s)}_{j,2})\neq o(x^{(\beta,s')}_{j,2})$,
then without loss of generality, we can assume $o(x^{(\beta,s)}_{j,2})> o(x^{(\beta,s')}_{j,2})$. Then:
\begin{itemize}[leftmargin=0.7cm]

\item
If $o(x^{(\beta,s')}_{j+1,2})> o(x^{(\beta,s)}_{j,2})$, then the $\phi_{\unl{d}}$-specialization of the product
of $\zeta$-factors
  \[\zeta_{j,j+1}(x^{(\beta,s)}_{j,2}/x^{(\beta,s')}_{j+1,2}) \cdot
    \zeta_{j,j+1}(x^{(\beta,s')}_{j,2}/x^{(\beta,s)}_{j+1,2})\]
contributes the factor $(w_{\beta,s}-v^{4}w_{\beta,s'})(w_{\beta,s'}-v^{4}w_{\beta,s})$.

\item
If $o(x^{(\beta,s)}_{j,2})\geq o(x^{(\beta,s')}_{j+1,2})$, then the $\phi_{\unl{d}}$-specialization of the product
of $\zeta$-factors
  \[\zeta_{j,j}(x^{(\beta,s')}_{j,2}/x^{(\beta,s)}_{j,2})\cdot
    \zeta_{j,j+1}(x^{(\beta,s')}_{j,2}/x^{(\beta,s)}_{j+1,2})\]
contributes the factor $(w_{\beta,s}-v^{4}w_{\beta,s'})(w_{\beta,s'}-v^{4}w_{\beta,s})$.

\end{itemize}
Similarly, considering the $\phi_{\unl{d}}$-specialization of the $\zeta$-factors arising from the following quadruples
\begin{align*}
  \big\{x^{(\beta,s)}_{j+2,2} \,,\, x^{(\beta,s)}_{j+1,2} \,,\, x^{(\beta,s')}_{j+2,2} \,,\, x^{(\beta,s')}_{j+1,2}\big\}
    \,,\, \dots \,,\,
  \big\{x^{(\beta,s)}_{n,2} \,,\, x^{(\beta,s)}_{n-1,2} \,,\, x^{(\beta,s')}_{n,2},x^{(\beta,s')}_{n-1,2}\big\}, \\
  \big\{x^{(\beta,s)}_{n-2,1} \,,\, x^{(\beta,s)}_{n-1,1},x^{(\beta,s')}_{n-2,1} \,,\, x^{(\beta,s')}_{n-1,1}\big\}
    \,,\, \dots \,,\,
  \big\{x^{(\beta,s)}_{i,1} \,,\, x^{(\beta,s)}_{i+1,1} \,,\, x^{(\beta,s')}_{i,1},x^{(\beta,s')}_{i+1,1}\big\},
\end{align*}
along with the contribution of the tuple \eqref{jcase} considered above, they produce a total contribution of the factor
  \[(w_{\beta,s}-v^{4}w_{\beta,s'})^{2n-i-j-1}(w_{\beta,s'}-v^{4}w_{\beta,s})^{2n-i-j-1}.\]

Second, let us consider the relative position of the variables
\begin{equation}\label{ncase}
  \big\{x^{(\beta,s)}_{n-1,1} \,,\, x^{(\beta,s)}_{n,1} \,,\, x^{(\beta,s)}_{n,2} \,,\,
    x^{(\beta,s')}_{n-1,1} \,,\, x^{(\beta,s')}_{n,1} \,,\, x^{(\beta,s')}_{n,2}\big\}.
\end{equation}
We can assume that
\begin{equation}\label{assumptionintegral}
  o(x^{(\beta,s)}_{n-1,1})\geq o(x^{(\beta,s)}_{n,1})\geq o(x^{(\beta,s)}_{n,2})
  \quad \mathrm{and} \quad
  o(x^{(\beta,s')}_{n-1,1})\geq o(x^{(\beta,s')}_{n,1})\geq o(x^{(\beta,s')}_{n,2}),
\end{equation}
as otherwise the corresponding term is specialized to zero under $\phi_{\unl{d}}$. Then:
\begin{itemize}[leftmargin=0.7cm]

\item
If $o(x^{(\beta,s)}_{n,2})<o(x^{(\beta,s')}_{n,1})$ or $o(x^{(\beta,s')}_{n,1})<o(x^{(\beta,s)}_{n-1,1})$, then
the $\phi_{\unl{d}}$-specialization of $\zeta$-factor
  \[\zeta_{n,n}(x^{(\beta,s)}_{n,2}/x^{(\beta,s')}_{n,1}) \quad {\rm or} \quad
    \zeta_{n,n-1}(x^{(\beta,s')}_{n,1}/x^{(\beta,s)}_{n-1,1})\]
respectively, contributes the factor $w_{\beta,s}-v^{-4}w_{\beta,s'}$. Otherwise,
$o(x^{(\beta,s)}_{n,2})\geq o(x^{(\beta,s')}_{n,1})\geq o(x^{(\beta,s)}_{n-1,1})$,
which together with~\eqref{assumptionintegral} implies:
  \[o(x^{(\beta,s')}_{n,1})=o(x^{(\beta,s)}_{n-1,1})=o(x^{(\beta,s)}_{n,1})=o(x^{(\beta,s)}_{n,2}).\]
But there are at most two variables $x^{(*,*)}_{n,*}$ plugged into each
$\Psi(\tilde{\mathcal{E}}^{\epsilon}_{\beta_{q},r_{q}})$, a contradiction.

\item
If $o(x^{(\beta,s)}_{n,2})<o(x^{(\beta,s')}_{n,2})$, or $o(x^{(\beta,s)}_{n,1})<o(x^{(\beta,s')}_{n,1})$,
or $o(x^{(\beta,s')}_{n,2})<o(x^{(\beta,s)}_{n-1,1})$, then the $\phi_{\unl{d}}$-specialization of $\zeta$-factor
  \[\zeta_{n,n}(x^{(\beta,s)}_{n,2}/x^{(\beta,s')}_{n,2}) \quad {\rm or} \quad
    \zeta_{n,n}(x^{(\beta,s)}_{n,1}/x^{(\beta,s')}_{n,1}) \quad {\rm or} \quad
    \zeta_{n,n-1}(x^{(\beta,s')}_{n,2}/x^{(\beta,s)}_{n-1,1})\]
respectively, contributes the factor $w_{\beta,s}-v^{-2}w_{\beta,s'}$. Otherwise, if $o(x^{(\beta,s)}_{n,2})\geq o(x^{(\beta,s')}_{n,2})$,
$o(x^{(\beta,s)}_{n,1})\geq o(x^{(\beta,s')}_{n,1})$, and $o(x^{(\beta,s')}_{n,2})\geq o(x^{(\beta,s)}_{n-1,1})$, then
according to \eqref{assumptionintegral} we have
  \[o(x^{(\beta,s)}_{n-1,1})= o(x^{(\beta,s)}_{n,1})=o(x^{(\beta,s)}_{n,2})= o(x^{(\beta,s')}_{n,2}).\]
The latter can not occur for the same reason as above, hence, a contradiction.

\end{itemize}
Swapping the roles of $s$ and $s'$ in the above two bullets, we thus conclude that
the $\phi_{\unl{d}}$-specialization of the $\zeta$-factors arising from \eqref{ncase} contributes the factor
  \[(w_{\beta,s}-v^{4}w_{\beta,s'})(w_{\beta,s'}-v^{4}w_{\beta,s})
    (w_{\beta,s}-v^{2}w_{\beta,s'})(w_{\beta,s'}-v^{2}w_{\beta,s}).\]

Finally, for any $j\leq \ell<n$, let us consider the relative position of the variables
  \[\big\{x^{(\beta,s)}_{\ell,2}  \,,\, x^{(\beta,s')}_{\ell-1,1} \,,\,
    x^{(\beta,s')}_{\ell,1} \,,\, x^{(\beta,s')}_{\ell+1,1}\big\}.\]
We can assume that $o(x^{(\beta,s')}_{\ell-1,1})\geq o(x^{(\beta,s')}_{\ell,1})\geq o(x^{(\beta,s')}_{\ell+1,1})$,
as otherwise the corresponding term is specialized to zero under $\phi_{\unl{d}}$. If
$o(x^{(\beta,s)}_{\ell,2})=o(x^{(\beta,s')}_{\ell,1})$, then the corresponding term has the factor
  \[(x^{(\beta,s)}_{\ell,2}-v^{4}x^{(\beta,s')}_{\ell,1})(x^{(\beta,s)}_{\ell,2}-v^{-4}x^{(\beta,s')}_{\ell,1})\]
as in Lemma \ref{phirvns}, and so its $\phi_{\unl{d}}$-specialization produces the factor
  \[(w_{\beta,s}-v^{4n-4\ell+2}w_{\beta,s'})(w_{\beta,s}-v^{4n-4\ell-6}w_{\beta,s'}).\]
If $o(x^{(\beta,s)}_{\ell,2})<o(x^{(\beta,s')}_{\ell,1})$ or $o(x^{(\beta,s)}_{\ell,2})>o(x^{(\beta,s')}_{\ell,1})$,
then the $\phi_{\unl{d}}$-specialization of the products
  \[\zeta_{\ell,\ell}(x^{(\beta,s)}_{\ell,2}/x^{(\beta,s')}_{\ell,1})\cdot
    \zeta_{\ell,\ell-1}(x^{(\beta,s)}_{\ell,2}/x^{(\beta,s')}_{\ell-1,1}) \quad\text{or}\quad
    \zeta_{\ell+1,\ell}(x^{(\beta,s')}_{\ell+1,1}/x^{(\beta,s)}_{\ell,2}) \cdot
    \zeta_{\ell,\ell}(x^{(\beta,s')}_{\ell,1}/x^{(\beta,s)}_{\ell,2})\]
respectively, contributes the required factor
  \[(w_{\beta,s}-v^{4n-4\ell+2}w_{\beta,s'})(w_{\beta,s}-v^{4n-4\ell-6}w_{\beta,s'}).\]

All these contributions overall produce exactly the factor $G_{\beta}$ from Corollary \ref{formulagbeta}.
However, we have not used yet the factors $\zeta_{ii}(x^{(\beta,s)}_{i,1}/x^{(\beta,s')}_{i,1})$.
We can now appeal to the ``rank $1$'' computation of~\cite[Lemma 3.46]{Tsy18} to deduce
the required divisibility by $[t_{\beta,r}]_{v^{2}}!$.
\end{proof}

Finally, combining Propositions \ref{vanishB}, \ref{spanB}, and \ref{integralbsubset}, we obtain
the following upgrade of Theorem~\ref{shufflePBWDB} (we note that divisibility~(\ref{rttconstant1},~\ref{rttconstant2})
is precisely matching the constants of~\eqref{eq:c-factor-B}, while the divisibility condition from
Definition~\ref{def:rtt-integral-shuffle} is precisely matching the ``rank 1'' formula~\eqref{eq:rank1-power-trig}):

\begin{Thm}\label{rttthmb}
(a) The $\BQ(v)$-algebra isomorphism $\Psi\colon \qfb \,\iso\, S$ of Theorem {\rm \ref{shufflePBWDB}(a)}
gives rise to a $\BZ[v,v^{-1}]$-algebra isomorphism $\Psi\colon \integralb \,\iso\, \mathcal{S}$.

\medskip
\noindent
(b) Theorem {\rm \ref{PBWDintegralb}} holds.
\end{Thm}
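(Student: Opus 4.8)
The plan is to upgrade the $\BQ(v)$-statements of Theorem~\ref{shufflePBWDB} to the integral level, following the template of Theorem~\ref{srig} and of the $A_{n}$-type RTT form in~\cite{Tsy19}. Since $\Psi$ is injective (Proposition~\ref{inj}), restricts to an isomorphism $\qfb\iso S$ over $\BQ(v)$ (Theorem~\ref{shufflePBWDB}(a)), and satisfies $\Psi(\integralb)\subseteq\mathcal{S}$ (Proposition~\ref{integralbsubset}), all assertions reduce to proving, for each $\unl{k}\in\BN^{n}$, that $\{\Psi(\tilde{\mathcal{E}}^{\epsilon}_{h})\}_{h\in H_{\unl{k}}}$ is $\BZ[v,v^{-1}]$-linearly independent and spans $\mathcal{S}_{\unl{k}}$ over $\BZ[v,v^{-1}]$. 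Granting both, $\mathcal{S}^{\epsilon}_{\unl{k}}=\mathcal{S}_{\unl{k}}$, so $\mathcal{S}=\mathcal{S}^{\epsilon}\subseteq\Psi(\integralb)\subseteq\mathcal{S}$ forces $\Psi(\integralb)=\mathcal{S}$; thus $\Psi$ restricts to an isomorphism $\integralb\iso\mathcal{S}$ and $\{\tilde{\mathcal{E}}^{\epsilon}_{h}\}_{h\in H}$ is a $\BZ[v,v^{-1}]$-basis of the (hence free) module $\integralb$; and since $\mathcal{S}$ is defined purely in shuffle-algebraic terms, independently of the choices in~\eqref{rvb1}--\eqref{rvb2} and of $\epsilon\in\{\pm\}$, this gives $\integralb=\Psi^{-1}(\mathcal{S})$, proving Theorem~\ref{PBWDintegralb}.

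Linear independence is immediate from~\eqref{eq:rtt-vectors}: the ordered monomial $\tilde{\mathcal{E}}^{\epsilon}_{h}$ equals $\langle 2\rangle_{v}$ to the power $\sum_{(\beta,s)}h(\beta,s)$ times the PBWD monomial $E_{h}$ formed from the particular quantum root vectors $\tilde{E}^{\epsilon}_{\beta,s}$ of~\eqref{rvb1}--\eqref{rvb2}, so the $\BQ(v)$-linear independence of $\{E_{h}\}_{h\in H}$ from Theorem~\ref{shufflePBWDB}(b) yields $\BZ[v,v^{-1}]$-linear independence of $\{\tilde{\mathcal{E}}^{\epsilon}_{h}\}$, which is preserved by the injective $\Psi$.

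For spanning I would first record the shape of $\phi_{\unl{d}}(\Psi(\tilde{\mathcal{E}}^{\epsilon}_{h}))$ for $h\in H_{\unl{k},\unl{d}}$: combining~\eqref{eq:rtt-vectors}, Proposition~\ref{spekpB}, and Corollary~\ref{formulagbeta}, and using $\sum_{\beta}d_{\beta}|\beta|=|\unl{k}|$ together with the explicit form~\eqref{eq:c-factor-B} of $c_{\beta}$, one checks that the overall constant-and-$G_{\beta}$ factor equals exactly $A_{\unl{d}}B_{\unl{d}}$, so that $\xi_{\unl{d}}(\Psi(\tilde{\mathcal{E}}^{\epsilon}_{h}))\doteq\prod_{\beta<\beta'}G_{\beta,\beta'}\cdot\prod_{\beta\in\Delta^{+}}P_{\lambda_{h,\beta}}$, while $\phi_{\unl{d}'}(\Psi(\tilde{\mathcal{E}}^{\epsilon}_{h}))=0$ for $\unl{d}'<\unl{d}$ by Proposition~\ref{vanishB}. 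Given $F\in\mathcal{S}_{\unl{k}}$, I would then run the descending induction over the totally ordered set $\mathrm{KP}(\unl{k})$ exactly as in the proof of Lemma~\ref{span}: it suffices to produce, whenever $\phi_{\unl{d}'}(F)=0$ for all $\unl{d}'<\unl{d}$, some $F_{\unl{d}}$ in the $\BZ[v,v^{-1}]$-span of $\{\Psi(\tilde{\mathcal{E}}^{\epsilon}_{h})\}_{h\in H_{\unl{k},\unl{d}}}$ with $\phi_{\unl{d}}(F)=\phi_{\unl{d}}(F_{\unl{d}})$ and $\phi_{\unl{d}'}(F_{\unl{d}})=0$ for $\unl{d}'<\unl{d}$; iterating from the minimum of $\mathrm{KP}(\unl{k})$ upward and invoking that $\phi_{\unl{d}}(F)=0$ for all $\unl{d}\in\mathrm{KP}(\unl{k})$ implies $F=0$ (the monomial-substitution argument of~\cite[Proposition~1.6]{Tsy22}, applied to the maximal Kostant partition) then gives $F\in\mathcal{S}^{\epsilon}_{\unl{k}}$. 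To construct $F_{\unl{d}}$: Proposition~\ref{spanB} produces a $\BQ(v)$-combination of the $\Psi(E_{h})$'s matching $\phi_{\unl{d}}(F)$ and vanishing below $\unl{d}$, and its proof also shows that the hypothesis $\phi_{\unl{d}'}(F)=0$ ($\unl{d}'<\unl{d}$) together with the wheel conditions forces $\phi_{\unl{d}}(F)$ to be divisible by $A_{\unl{d}}B_{\unl{d}}\prod_{\beta<\beta'}G_{\beta,\beta'}$; writing $g$ for the quotient, which by~\eqref{rttconstant1}--\eqref{rttconstant2} lies in $\BZ[v,v^{-1}][\{w_{\beta,s}^{\pm1}\}]^{\mathfrak{S}_{\unl{d}}}=\bigotimes_{\beta\in\Delta^{+}}\BZ[v,v^{-1}][\{w_{\beta,s}^{\pm1}\}]^{\mathfrak{S}_{d_{\beta}}}$, the integrality condition of Definition~\ref{def:rtt-integral-shuffle} — re-examined through the vertical and cross specialization maps~\eqref{verticalspe}--\eqref{crossspe} by running the $o$-value bookkeeping of the proof of Proposition~\ref{integralbsubset} backwards — shows that each $\beta$-block of $g$ is rank-one integral, and then the rank-one computations of~\cite[Lemmas~3.46,~3.51]{Tsy18} (cf.~\cite[\S3.4]{Tsy19}) identify $\{P_{\lambda}\}_{|\lambda|=d_{\beta}}$ as a $\BZ[v,v^{-1}]$-basis of that block; expanding $g$ in this basis and un-specializing yields $F_{\unl{d}}\in\mathcal{S}^{\epsilon}_{\unl{k}}$.

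The hard part will be the constant bookkeeping in this last step: verifying that the product of the $\langle 2\rangle_{v}$-powers, the $[i,n,j]$-specific constants of~\eqref{eq:c-factor-B}, and the factors $G_{\beta}$ of~\eqref{eqformulagbeta} appearing in $\phi_{\unl{d}}(\Psi(\tilde{\mathcal{E}}^{\epsilon}_{h}))$ cancels on the nose against the normalization $A_{\unl{d}}B_{\unl{d}}$ in~\eqref{reducedspe} (with residual ambiguities lying in $\pm v^{\BZ}$, not merely in $\BQ^{\times}v^{\BZ}$), and that the single integrality condition of Definition~\ref{def:rtt-integral-shuffle} decouples over the $\beta$-blocks once one passes to $\xi_{\unl{d}}$ — both of which rest on the same relative-$o$-value analysis already performed for Proposition~\ref{integralbsubset}, now used in the opposite direction. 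Everything else is a formal transcription of the $G_{2}$-type arguments behind Theorem~\ref{srig} and of the $A_{n}$-type RTT arguments of~\cite{Tsy19}.
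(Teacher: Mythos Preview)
Your proposal is correct and follows essentially the same approach as the paper. The paper's own proof is a one-sentence remark that combining Propositions~\ref{vanishB}, \ref{spanB}, \ref{integralbsubset} yields the result, with the parenthetical note that the divisibility conditions~(\ref{rttconstant1},~\ref{rttconstant2}) match the constants $c_\beta$ of~\eqref{eq:c-factor-B} and that the integrality condition of Definition~\ref{def:rtt-integral-shuffle} matches the rank-$1$ formula~\eqref{eq:rank1-power-trig}; you have correctly unpacked this into the descending induction over $\mathrm{KP}(\unl{k})$, the constant bookkeeping $\langle 2\rangle_v^{|\unl{k}|}\prod c_\beta^{d_\beta}\prod G_\beta \doteq A_{\unl{d}}B_{\unl{d}}$, and the rank-$1$ basis argument via~\cite[Lemmas~3.46,~3.51]{Tsy18}, and you have correctly flagged where the genuine work lies.
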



\subsection{Integral form $\integralbl$ and its shuffle algebra realization}\label{lusb}

Similarly to \eqref{dividedpowers}, we consider the {\em divided powers}
\begin{equation}
  \mathbf{E}_{i,r}^{(k)}\coloneqq \frac{e_{i,r}^{k}}{[k]_{v_{i}}!} \quad \forall\ 1\leq i\leq n,\  r\in\mathbb{Z},\ k\in\BN.
\end{equation}
Likewise, we define the integral form $\integralbl$ as the $\BZ[v,v^{-1}]$-subalgebra of $\qfb$ generated by
$\{\mathbf{E}_{i,r}^{(k)}\}_{1\leq i\leq n, r\in\BZ}^{k\in\BN}$. For any $(\beta,s)\in\Delta^{+}\times \BZ$,
we define the {\em normalized divided powers} of the quantum root vectors from \eqref{rvb1}--\eqref{rvb2} via:
\begin{equation}
  \tilde{\mathbf{E}}_{\beta,s}^{\pm,(k)}\coloneqq
  \begin{cases}
    \frac{(\tilde{E}^{\pm}_{\beta,s})^{k}}{[k]_{v_{\beta}}!} & \text{if}\ \beta=[i,j] \ \mathrm{with} \ 1\leq j\leq n \\
    \frac{(\tilde{E}^{\pm}_{\beta,s})^{k}}{([2]_{v}!)^{k}[k]_{v_{\beta}}!} & \text{if} \ \beta=[i,n,j] \ \mathrm{with} \ 1\leq i<j\leq n
  \end{cases}.
\end{equation}
Similarly to Proposition \ref{integralrvg}, we obtain:

\begin{Prop}\label{integralrvb}
For any $\beta\in\Delta^{+}$, $s\in \BZ$, $k\in\BN$, we have $\tilde{\mathbf{E}}_{\beta,s}^{\pm,(k)}\in \integralbl$.
\end{Prop}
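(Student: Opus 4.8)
The plan is to mimic the proof of Proposition~\ref{integralrvg} almost verbatim, replacing type $G_2$ by type $B_n$ throughout. First I would introduce the ``positive subalgebra'' $\qgb$ of the Drinfeld-Jimbo quantum group of $\fo_{2n+1}$, generated by $\{E_1,\ldots,E_n\}$ subject to the $v$-Serre relations, together with its Lusztig integral form $\qgbi$ generated by the divided powers $E_i^{(k)}=E_i^k/[k]_{v_i}!$. For the specific convex order~\eqref{lynorderb} on $\Delta^+$, I would consider Lusztig's quantum root vectors $\{\hat{E}_\beta^-\}_{\beta\in\Delta^+}$ defined via the braid group action; by~\cite[Theorem 6.6]{Lus90} their divided powers $\hat{\mathbf{E}}_\beta^{-,(k)}=(\hat{E}_\beta^-)^k/[k]_{v_\beta}!$ lie in $\qgbi$. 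Then I would compare $\hat{E}_\beta^-$ with the $v$-commutator root vectors $\tilde{E}_\beta^-$ defined by the finite-type analogues of~\eqref{rvb1}--\eqref{rvb2} (the same iterated bracket patterns with $s_i$ suppressed): by~\cite[Proposition 5.5.2]{LS91} these agree up to a scalar, and by~\cite[Theorem 4.2]{BKM14} the scalar is $1$ for $\beta=[i,j]$ and $[2]_v!$ for $\beta=[i,n,j]$, consistent with the normalization chosen in the definition of $\tilde{\mathbf{E}}_{\beta,s}^{\pm,(k)}$ just above.

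Next I would pass from the finite to the loop setting exactly as in the $G_2$ proof. Comparing coefficients in the loop Serre relations~\eqref{serreloop} shows that for each tuple $\unl{s}=(s_1,\ldots,s_n)\in\BZ^n$ the assignment $E_i\mapsto e_{i,s_i}$ extends to an algebra homomorphism $\eta_{\unl{s}}\colon\qgb\to\qfb$ sending $\qgbi$ into $\integralbl$ and sending $\hat{\mathbf{E}}_\beta^{-,(k)}$ to a scalar multiple (lying in $\BQ^\times\cdot v^{\BZ}$) of $\tilde{\mathbf{E}}_{\beta,s}^{-,(k)}$, where $s$ is the appropriate sum of the $s_i$'s weighted as in~\eqref{rvb1}--\eqref{rvb2}. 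Since $\integralbl$ is a $\BZ[v,v^{-1}]$-algebra this gives $\tilde{\mathbf{E}}_{\beta,s}^{-,(k)}\in\integralbl$ for all $\beta,s,k$. For the $+$ case I would repeat the argument with the opposite convex order to~\eqref{lynorderb}, using the corresponding Lusztig root vectors $\hat{E}_\beta^+$ and the $v$-commutator vectors $\tilde{E}_\beta^+$; the analogues of~\cite[Theorem 6.6]{Lus90} and~\cite[Theorem 4.2]{BKM14} then yield $\tilde{\mathbf{E}}_{\beta,s}^{+,(k)}\in\integralbl$ likewise.

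The only genuinely new bookkeeping, and the main obstacle, is the explicit identification of the Levendorskii-Soibelman and Billig-Kleshchev-Morita scaling constants for the root vectors $[i,n,j]$ in type $B_n$ — one must check that the bracket pattern~\eqref{rvb2} (with the doubled node $e_n$ appearing twice and the $v^{\pm 2}$-commutators) matches, up to precisely the factor $[2]_v!$, the normalization of Lusztig's PBW generator for the corresponding root in the convex order~\eqref{lynorderb}. For the short roots $[i,j]$ this is the same computation as in type $A_n$ (the constant is $1$, cf.~\cite[Lemma 3.14]{Tsy18}), so only the $[i,n,j]$ family needs care; here I would invoke~\cite[Theorem 4.2]{BKM14} directly, noting as in the $G_2$ case that the parameter $q$ of~\cite{BKM14} corresponds to our $v^{-1}$, and that the extra node multiplicity contributes exactly one factor of $[2]_v$ per bracket involving the repeated $e_n$, which is why the denominator in the definition of $\tilde{\mathbf{E}}_{[i,n,j],s}^{\pm,(k)}$ is $([2]_v!)^k[k]_{v_\beta}!$ with $v_\beta=v^2$. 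Once these constants are pinned down, the proof is complete by the same argument as Proposition~\ref{integralrvg}.
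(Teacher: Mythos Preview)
Your proposal is correct and follows essentially the same approach as the paper's own proof: introduce the finite-type positive subalgebra $\qgb$ and its Lusztig integral form, compare the iterated $v$-commutator root vectors $\tilde{E}_\beta^\pm$ with Lusztig's braid-group root vectors $\hat{E}_\beta^\pm$ via \cite{LS91} and \cite{BKM14} (obtaining the scalars $1$ for $\beta=[i,j]$ and $[2]_v!$ for $\beta=[i,n,j]$), and then transport everything to the loop algebra via the homomorphisms $\eta_{\unl{s}}$; the $+$ case uses the opposite convex order. Two small corrections: the reference \cite{BKM14} is Brundan--Kleshchev--McNamara (not ``Billig--Kleshchev--Morita''), and in fact $\eta_{\unl{s}}(\hat{\mathbf{E}}_\beta^{-,(k)})=\tilde{\mathbf{E}}_{\beta,s}^{-,(k)}$ on the nose (the scalar is exactly $1$ after your normalization), so you need not invoke closure under $\BQ^\times\cdot v^{\BZ}$-multiples, which would be problematic anyway since $\integralbl$ is only a $\BZ[v,v^{-1}]$-module.
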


\begin{proof}
The proof is similar to that of Proposition \ref{integralrvg}. Let $\qgb$ be the ``positive subalgebra'' of the Drinfeld-Jimbo
quantum group of type $B_{n}$. Thus, $\qgb$ is the $\BQ(v)$-algebra generated by $\{E_{i}\}_{i=1}^{n}$ subject to the $v$-Serre
relations. Let $\qgbi$ be its \emph{Lusztig integral form}, defined as the $\BZ[v,v^{-1}]$-subalgebra of $\qgb$ generated by
the divided powers
  \[E_{i}^{(k)}\coloneqq \frac{E_{i}^{k}}{[k]_{v_{i}}!} \qquad  \forall\ 1\leq i\leq n, k\in\BN.\]

Recall our specific convex order \eqref{lynorderb} on $\Delta^{+}$. Let $\{\hat{E}_{\beta}^{-}\}_{\beta\in\Delta^{+}}$
denote Lusztig's quantum root vectors of $\qgb$ associated to this convex order. We also define
$\{\tilde{E}^{-}_{\beta}\}_{\beta\in\Delta^{+}}$ as the following iterated $v$-commutators similar to~\eqref{rvb1}--\eqref{rvb2}:
\begin{equation}
\begin{aligned}
  & \tilde{E}^{-}_{[i,j]}\coloneqq [\cdots[[E_{i},E_{i+1}]_{v^{-2}},E_{i+2}]_{v^{-2}},\cdots,E_{j}]_{v^{-2}},\\
  & \tilde{E}^{-}_{[i,n,j]}\coloneqq
    [\cdots[[[\cdots[E_{i},E_{i+1}]_{v^{-2}},\cdots, E_{n}]_{v^{-2}},E_{n}],E_{n-1}]_{v^{-2}},\cdots,E_{j}]_{v^{-2}}.
\end{aligned}
\end{equation}
Then, according to \cite[Proposition 5.5.2]{LS91} and \cite[Theorem 4.2]{BKM14}, we have:
\begin{equation}
  \tilde{E}^{-}_{\beta}=
  \begin{cases}
    \hat{E}^{-}_{\beta} & \text{if}\  \beta=[i,j] \ \mathrm{with}\ 1\leq i\leq j\leq n \\
    [2]_{v}!\hat{E}^{-}_{\beta} & \text{if}\  \beta=[i,n,j] \ \mathrm{with}\ 1\leq i<j\leq n
  \end{cases}.
\end{equation}

To pass from the finite to the loop setup, we note that for any $\unl{s}=(s_{1},\dots,s_{n})\in\BZ^{n}$, the assignment
$E_{1}\mapsto e_{1,s_{1}},\dots,E_{n}\mapsto e_{n,s_{n}}$ gives rise to an algebra homomorphism
  \[\eta_{\unl{s}}\colon \qgb\longrightarrow \qfb,\]
such that $\eta_{\unl{s}}(\qgbi)\subset \integralbl$.
As $\tilde{\mathbf{E}}^{-,(k)}_{\beta,s}=\eta_{\unl{s}}(\hat{\mathbf{E}}_{\beta}^{-,(k)})$ and
$\hat{\mathbf{E}}_{\beta}^{-,(k)}\in \qgbi$ by \cite[Theorem 6.6]{Lus90}, we get
$\tilde{\mathbf{E}}^{-,(k)}_{\beta,s}\in\integralbl$. Using similar arguments and the convex order on $\Delta^+$
opposite to~\eqref{lynorderb}, we also obtain $\tilde{\mathbf{E}}^{+,(k)}_{\beta,s}\in\integralbl$.
This completes our proof.
\end{proof}

For any $\unl{k}\in \BN^n$, consider the $\BZ[v,v^{-1}]$-submodule $\mathbf{S}_{\unl{k}}$ of $S_{\unl{k}}$ consisting of
rational functions $F$ satisfying the following two conditions:
\begin{enumerate}[leftmargin=1cm]

\item
If $f$ denotes the numerator of $F$ from \eqref{polecondition}, then
\begin{equation}
  f\in \BZ[v,v^{-1}][\{x_{i,r}^{\pm 1}\}_{1\leq i\leq n}^{1\leq r\leq k_{i}}]^{\mathfrak{S}_{\underline{k}}}.
\label{li1}
\end{equation}

\item
For any $\unl{d}\in\text{KP}(\underline{k})$, the specialization $\phi_{\unl{d}}(F)$ is divisible by the product
\begin{equation}
\begin{aligned}
  \prod_{\beta=[i,j]\in\Delta^{+}} {\langle 2\rangle_{v}}^{d_{\beta}(|\beta|-1)}\ \cdot
  & \prod_{\beta=[i,n,j]\in \Delta^{+}} \big\{{\langle 2\rangle_{v}}^{d_{\beta}(|\beta|-2)}\langle 1\rangle_{v}\big\} \times\\
  & \prod_{\beta=[i,n,j]\in \Delta^{+}}\prod_{\ell=j}^{n-1}
    \big\{(v^{-4(n-\ell)-2}-1)^{d_{\beta}}(v^{-4(n-\ell)+6}-1)^{d_{\beta}}\big\}.
\label{li2}
\end{aligned}
\end{equation}

\end{enumerate}

\begin{Rk}
We note that this definition is much simpler than that of $\mathcal{S}_{\unl{k}}$ in
Definition~\ref{def:rtt-integral-shuffle}.
\end{Rk}

We define $\mathbf{S}\coloneqq \bigoplus_{\unl{k}\in\BN^{n}}\mathbf{S}_{\unl{k}}$.
Then, similarly to Proposition \ref{lintegralG}, we have:

\begin{Prop}\label{lintegralB}
$\Psi(\integralbl) \subset \mathbf{S}$.
\end{Prop}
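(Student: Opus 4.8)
\textbf{Proof proposal for Proposition~\ref{lintegralB}.}

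The plan is to follow the exact strategy used for the $G_2$ counterpart in Proposition~\ref{lintegralG}, adapted to the $B_n$-type specialization map~\eqref{spe}. Since $\integralbl$ is generated over $\BZ[v,v^{-1}]$ by the divided powers $\mathbf{E}^{(k)}_{i,r}$, and $\Psi$ is an algebra homomorphism while $\mathbf{S}$ is a subalgebra, it suffices to check that $F\coloneqq\Psi\big(\mathbf{E}^{(\ell_1)}_{i_1,r_1}\cdots\mathbf{E}^{(\ell_m)}_{i_m,r_m}\big)$ lies in $\mathbf{S}$ for arbitrary $1\le i_1,\dots,i_m\le n$, $r_1,\dots,r_m\in\BZ$, $\ell_1,\dots,\ell_m\in\BN$. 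First I would invoke the rank~1 computation of Lemma~\ref{rank2}, which gives $\Psi(\mathbf{E}^{(\ell_q)}_{i_q,r_q})\doteq (x_{i_q,1}\cdots x_{i_q,\ell_q})^{r_q}$; by definition of the shuffle product~\eqref{shuffleproduct} and the fact that the $\zeta$-factors~\eqref{eq:zeta} have integral numerators and denominators dividing the pole factors of~\eqref{polecondition}, the numerator $f$ of $F$ then lies in $\BZ[v,v^{-1}][\{x_{i,r}^{\pm1}\}]^{\fS_{\unl{k}}}$, establishing condition~\eqref{li1}.

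The substantive part is the divisibility~\eqref{li2}. As in the proof of Proposition~\ref{lintegralG}, the key observation is that this divisibility is ``local'': for each summand in the symmetrization defining $f$, and each fixed $\beta\in\Delta^+$ and $1\le s\le d_\beta$, it suffices to show that the $\phi_{\unl{d}}$-specialization of the product of $\zeta$-factors connecting the variables $\{x^{(\beta,s)}_{i,t}\}_{i\in\beta}^{1\le t\le\nu_{\beta,i}}$ is divisible by the corresponding per-root factor of~\eqref{li2}, namely $\langle 2\rangle_v^{|\beta|-1}$ when $\beta=[i,j]$, and $\langle 2\rangle_v^{|\beta|-2}\langle 1\rangle_v\cdot\prod_{\ell=j}^{n-1}\{(v^{-4(n-\ell)-2}-1)(v^{-4(n-\ell)+6}-1)\}$ when $\beta=[i,n,j]$. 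Using the $o(\cdot)$-spot notation from~\eqref{eq:o-spot}, I would argue that for a nonzero contribution the variables within a fixed group must be ordered (by $o$-value) along the path $i,i+1,\dots$ defining the Lyndon word $\beta$, with $o$ strictly decreasing whenever the Cartan entry forces a $\zeta$-pole to survive specialization. This is exactly the combinatorial bookkeeping performed for $G_2$ in the three cases $\beta=[1,2,2],[1,2,2,2],[1,2,1,2,2]$; here it splits into the two families above, with the $[i,j]$ case already essentially handled by the type $A_n$ argument of~\cite[Lemma~3.14 and related]{Tsy18}.

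For $\beta=[i,j]$ the chain $x^{(\beta,s)}_{i,1},x^{(\beta,s)}_{i+1,1},\dots,x^{(\beta,s)}_{j,1}$ produces, on the relevant ordering, a product of $|\beta|-1$ linear $\zeta$-numerators of the form $x^{(\beta,s)}_{\ell,1}-v^{\pm2}x^{(\beta,s)}_{\ell+1,1}$, each of which $\phi_{\unl{d}}$-specializes (using~\eqref{spe}) to a scalar multiple of $\langle 2\rangle_v$, giving the factor $\langle 2\rangle_v^{|\beta|-1}$. For $\beta=[i,n,j]$ the chain passes through the double node at $n$, so the relevant ordering is $x^{(\beta,s)}_{i,1},\dots,x^{(\beta,s)}_{n,1},x^{(\beta,s)}_{n,2},\dots,x^{(\beta,s)}_{j,2}$; one $\zeta$-numerator per step gives $|\beta|-2$ factors of $\langle 2\rangle_v$ from the short-root steps, the transition $x^{(\beta,s)}_{n,1}\to x^{(\beta,s)}_{n,2}$ (the $\zeta_{n,n}$-factor) contributes $\langle 1\rangle_v$ since $(\alpha_n,\alpha_n)=2$, and the ``doubled-back'' steps $x^{(\beta,s)}_{\ell+1,2}\to x^{(\beta,s)}_{\ell,2}$ for $j\le\ell\le n-1$ interact with the already-specialized $x^{(\beta,s)}_{\ell,1},x^{(\beta,s)}_{\ell-1,1}$ variables to contribute precisely $(v^{-4(n-\ell)-2}-1)(v^{-4(n-\ell)+6}-1)$ — exactly the factor $c_\beta$ of~\eqref{eq:c-factor-B} as computed in the proof of Lemma~\ref{phirv}, cf.~\eqref{eq:simplification-2}. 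Multiplying over all $\beta$ and $s$ yields~\eqref{li2}. The main obstacle, as in the $G_2$ case, is organizing the case analysis on the $o$-values so that every branch either forces a vanishing (and hence is discarded) or exhibits the claimed scalar divisor; but since the relevant $\zeta$-factor computations have already been done in Lemma~\ref{phirv} and its proof via~\eqref{eq:simplification-1}--\eqref{eq:simplification-2}, this amounts to a bounded, mechanical verification rather than a new idea. I would close by noting that the induction on $n$ implicit in Lemma~\ref{phirv} propagates the $[i,n,j]$ divisibility from $B_{n-1}$ to $B_n$ in the same way.
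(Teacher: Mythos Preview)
Your overall strategy is correct and matches the paper's proof: reduce to a single product of divided powers, verify~\eqref{li1} via Lemma~\ref{rank2}, and establish~\eqref{li2} locally by checking that for each $(\beta,s)$ the $\phi_{\unl{d}}$-specialization of the $\zeta$-factors among $\{x^{(\beta,s)}_{i,t}\}$ is divisible by the per-root constant. The $\beta=[i,j]$ case is indeed the type-$A$ computation.

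However, your analysis of $\beta=[i,n,j]$ has a genuine gap. You claim that the $\zeta_{n,n}$-factor at the transition $x^{(\beta,s)}_{n,1}\to x^{(\beta,s)}_{n,2}$ contributes $\langle 1\rangle_v$; this is not correct. Under~\eqref{spe}, when $o(x^{(\beta,s)}_{n,1})>o(x^{(\beta,s)}_{n,2})$ the factor $\zeta_{n,n}(x^{(\beta,s)}_{n,2}/x^{(\beta,s)}_{n,1})$ specializes to a unit multiple of $[2]_v$, and together with the numerator of $\zeta_{n,n-1}(x^{(\beta,s)}_{n,2}/x^{(\beta,s)}_{n-1,1})$ (which gives $\langle 1\rangle_v$) this branch actually produces $\langle 2\rangle_v^{|\beta|-1}$ times the extra $(v^{\cdots}-1)$ factors. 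The branch you are missing is $o(x^{(\beta,s)}_{n,1})=o(x^{(\beta,s)}_{n,2})$, which can occur precisely because you are working with divided powers $\mathbf{E}^{(\ell)}_{n,r}$ with $\ell\ge 2$: both $n$-variables may land in the same divided-power block, and then there is \emph{no} $\zeta_{n,n}$-factor between them at all. In that branch only $\zeta_{n,n-1}(x^{(\beta,s)}_{n,2}/x^{(\beta,s)}_{n-1,1})$ survives at the middle step, contributing $\langle 1\rangle_v$, so the total drops to $\langle 2\rangle_v^{|\beta|-2}\langle 1\rangle_v$ times the extra factors. This ``worst'' branch is exactly what pins down~\eqref{li2}, and without it you have neither explained the $\langle 1\rangle_v$ correctly nor ruled out the stronger (false) divisibility by $\langle 2\rangle_v^{|\beta|-1}$. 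Your appeal to Lemma~\ref{phirv} and~\eqref{eq:simplification-1}--\eqref{eq:simplification-2} does not cover this: those computations concern specific root vectors $E_{\beta,s}$, where the $o$-values along the chain are automatically distinct, not arbitrary products of divided powers.
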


\begin{proof}
For any $m\in\BN$, $1\leq i_{1},\dots,i_{m}\leq n$, $r_{1},\dots,r_{m}\in\BZ$, $\ell_{1},\dots,\ell_{m}\in\BN$, let
  \[F\coloneqq
    \Psi\big(\mathbf{E}^{(\ell_{1})}_{i_{1},r_{1}}\cdots \mathbf{E}^{(\ell_{m})}_{i_{m},r_{m}}\big),\]
and $f$ be the numerator of $F$ from \eqref{polecondition}. According to Lemma \ref{rank2}:
  \[\Psi(\mathbf{E}^{(\ell_{q})}_{i_{q},r_{q}})=v_{i_{q}}^{-\frac{\ell_{q}(\ell_{q}-1)}{2}}(x_{i_{q},1}\cdots
    x_{i_{q},\ell_{q}})^{r_{q}}  \qquad \forall\ 1\leq q\leq m,\]
hence, the condition \eqref{li1} holds. To verify the validity of the divisibility \eqref{li2}, it suffices to show
(see Lemma~\ref{Gs1} for $\beta=[i,j]$) that for any $\beta=[i,n,j]$ ($1\leq i<j\leq n$), and $1\leq s\leq d_{\beta}$, the total contribution of
$\phi_{\unl{d}}$-specializations of the $\zeta$-factors between the variables
$\{x^{(\beta,s)}_{i,t}\}_{i\in\beta}^{1\leq t\leq \nu_{\beta,i}}$ is divisible by
\begin{equation}
  {\langle 2\rangle_{v}}^{|\beta|-2}\langle 1\rangle_{v} \cdot
   \prod_{\ell=j}^{n-1} \big\{(v^{-4(n-\ell)-2}-1)(v^{-4(n-\ell)+6}-1)\big\}.
\label{neqcase}
\end{equation}

We shall now use the notation $o(x^{(*,*)}_{*,*})$ defined as in \eqref{eq:o-spot}.
The $\phi_{\unl{d}}$-specialization of the corresponding product of $\zeta$-factors vanishes unless
  \[o(x^{(\beta,s)}_{i,1})\geq o(x^{(\beta,s)}_{i+1,1})\geq \cdots \geq o(x^{(\beta,s)}_{n,1})\geq
    o(x^{(\beta,s)}_{n,2})\geq\cdots\geq o(x^{(\beta,s)}_{j+1,2})\geq o(x^{(\beta,s)}_{j,2}),\]
Since the equality above can occur only in a single spot, due to $o(x^{(\beta,s)}_{i,t})\ne o(x^{(\beta,s)}_{i',t'})$
for $i\ne i'$, we need to treat the following two cases:
\begin{enumerate}

\item
  $o(x^{(\beta,s)}_{i,1})> o(x^{(\beta,s)}_{i+1,1})> \cdots > o(x^{(\beta,s)}_{n,1})>
   o(x^{(\beta,s)}_{n,2})>\cdots>o(x^{(\beta,s)}_{j+1,2})> o(x^{(\beta,s)}_{j,2})$.

\item
  $o(x^{(\beta,s)}_{i,1})> o(x^{(\beta,s)}_{i+1,1})> \cdots > o(x^{(\beta,s)}_{n,1})=
   o(x^{(\beta,s)}_{n,2})>\cdots>o(x^{(\beta,s)}_{j+1,2})> o(x^{(\beta,s)}_{j,2})$.

\end{enumerate}

In the first case, the $\phi_{\unl{d}}$-specialization of each $\zeta$-factor
  \[\zeta_{i+1,i}(x^{(\beta,s)}_{i+1,1}/x^{(\beta,s)}_{i,1}),\dots,
    \zeta_{n,n-1}(x^{(\beta,s)}_{n,1}/x^{(\beta,s)}_{n-1,1}), \zeta_{n-1,n}(x^{(\beta,s)}_{n-1,2}/x^{(\beta,s)}_{n,2}),
    \dots,\zeta_{j,j+1}(x^{(\beta,s)}_{j,2}/x^{(\beta,s)}_{j+1,2})\]
as well as of the product
  \[\zeta_{n,n}(x^{(\beta,s)}_{n,2}/x^{(\beta,s)}_{n,1})\cdot \zeta_{n,n-1}(x^{(\beta,s)}_{n,2}/x^{(\beta,s)}_{n-1,1})\]
contributes a multiple of $\langle 2\rangle_{v}$, thus totalling ${\langle 2\rangle_{v}}^{|\beta|-1}$.
On the other hand, for any $j\leq \ell\leq n-1$, the $\phi_{\unl{d}}$-specialization of the product of $\zeta$-factors
  \[\zeta_{\ell,\ell-1}(x^{(\beta,s)}_{\ell,2}/x^{(\beta,s)}_{\ell-1,1})\cdot
    \zeta_{\ell,\ell}(x^{(\beta,s)}_{\ell,2}/x^{(\beta,s)}_{\ell,1})\cdot
    \zeta_{\ell,\ell+1}(x^{(\beta,s)}_{\ell,2}/x^{(\beta,s)}_{\ell+1,1})\]
contributes precisely the required factor $(v^{-4(n-\ell)-2}-1)(v^{-4(n-\ell)+6}-1)$.

For the second case, the only difference is that we replace the product of two $\zeta$-factors
  $\zeta_{n,n}(x^{(\beta,s)}_{n,2}/x^{(\beta,s)}_{n,1})\cdot \zeta_{n,n-1}(x^{(\beta,s)}_{n,2}/x^{(\beta,s)}_{n-1,1})$
with a single $\zeta_{n,n-1}(x^{(\beta,s)}_{n,2}/x^{(\beta,s)}_{n-1,1})$, so that the first contribution of
${\langle 2\rangle_{v}}^{|\beta|-1}$ is now getting replaced with
${\langle 2\rangle_{v}}^{|\beta|-2} \cdot \langle 1\rangle_{v}$.

This completes our verification of the divisibility \eqref{li2}, thus concluding the proof.
\end{proof}

For any $h\in H$, define the ordered monomials (cf.~\eqref{PBWDbases})
\begin{equation}\label{eq:B-Lus-pbwd}
  \tilde{\mathbf{E}}^{+}_{h}\ =\prod_{(\beta,s)\in\Delta^{+}\times\mathbb{Z}}\limits^{\rightarrow}
  \tilde{\mathbf{E}}_{\beta,s}^{+,(h(\beta,s))},\qquad
  \tilde{\mathbf{E}}^{-}_{h}\ =\prod_{(\beta,s)\in\Delta^{+}\times\mathbb{Z}}\limits^{\rightarrow}
  \tilde{\mathbf{E}}_{\beta,s}^{-,(h(\beta,s))}.
\end{equation}
For any $\epsilon\in\{\pm\}$, let $\mathbf{S}^{\epsilon}_{\unl{k}}$ be the $\BZ[v,v^{-1}]$-submodule
of $\mathbf{S}_{\unl{k}}$ spanned by $\{\Psi(\tilde{\mathbf{E}}^{\epsilon}_{h})\}_{h\in H_{\unl{k}}}$.
Then, the following analogue of Lemma \ref{span} holds:

\begin{Prop}\label{spanib}
For any $F\in \mathbf{S}_{\unl{k}}$ and $\unl{d}\in\mathrm{KP}(\unl{k})$, if $\phi_{\unl{d}'}(F)=0$ for all
$\unl{d}'\in \mathrm{KP}(\unl{k})$ such that $\unl{d}'<\unl{d}$, then there exists
$F_{\unl{d}}\in \mathbf{S}^{\epsilon}_{\unl{k}}$ such that $\phi_{\unl{d}}(F)=\phi_{\unl{d}}(F_{\unl{d}})$
and $\phi_{\unl{d}'}(F_{\unl{d}})=0$ for all $\unl{d}'<\unl{d}$.
\end{Prop}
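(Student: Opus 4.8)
The plan is to mimic the proof of Proposition~\ref{spanG}, reducing everything to ``rank~$1$'' and ``rank~$2$'' computations along a single interval $[\beta]$ or a single pair of intervals $([\beta],[\beta'])$. As in the type $G_2$ case, the key point is that the specialization $\phi_{\unl{d}}(F)$ of any $F\in \mathbf{S}_{\unl{k}}$ is a Laurent polynomial in the $w$-variables, and the vanishing conditions~\eqref{li2} together with the wheel conditions force it to be divisible by $\prod_{\beta}(c_\beta^{d_\beta} G_\beta)\cdot\prod_{\beta<\beta'}G_{\beta,\beta'}$, which is exactly the shape appearing in Proposition~\ref{spekpB}. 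Thus, taking the maximal element of $\mathrm{KP}(\unl{k})$ not exceeding $\unl{d}$ and using the linear independence of the ``rank $1$'' symmetric functions $\{\mathbf{P}_{\lambda_{h,\beta}}\}$, one matches $\phi_{\unl{d}}(F)$ with $\phi_{\unl{d}}(F_{\unl{d}})$ for an appropriate $\BZ[v,v^{-1}]$-combination $F_{\unl{d}}$ of the $\Psi(\tilde{\mathbf{E}}^{\epsilon}_{h})$, $h\in H_{\unl{k},\unl{d}}$. The hypothesis $\phi_{\unl{d}'}(F)=0$ for $\unl{d}'<\unl{d}$ then guarantees, via Proposition~\ref{vanishB} applied to $F-F_{\unl{d}}$, that $F_{\unl{d}}$ can be corrected (subtracting lower-order terms) so that $\phi_{\unl{d}'}(F_{\unl{d}})=0$ for all $\unl{d}'<\unl{d}$, exactly as in~\cite[\S3.2.3]{Tsy18} and in the proof of Proposition~\ref{spanig}.

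First I would establish the ``rank $1$'' statement: for fixed $\beta\in\Delta^{+}$ with $d_\beta$ copies of the interval $[\beta]$, the normalized images $\phi_{\unl{d}}(\Psi(\tilde{\mathbf{E}}^{\epsilon}_h))/(\text{$h$-independent factors})$ equal $\mathbf{P}_{\lambda_{h,\beta}}$ up to a unit, where $\mathbf{P}_{\lambda_{h,\beta}}$ is obtained from $P_{\lambda_{h,\beta}}$ of~\eqref{hlp} by dividing by $\prod_{r\in\BZ}[h(\beta,r)]_{v_\beta}!$; and the family $\{\mathbf{P}_{\lambda_{h,\beta}}\}_{h}$ forms a $\BZ[v,v^{-1}]$-basis of $\BZ[v,v^{-1}][\{w_{\beta,s}^{\pm1}\}_{s=1}^{d_\beta}]^{\mathfrak{S}_{d_\beta}}$. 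This is precisely the content of~\cite[Proposition 1.4, Lemma 1.14]{Tsy22} (with $v$ replaced by $v_\beta$), combined with Lemma~\ref{rank2} to handle the divided powers, and with Proposition~\ref{spekpB} to produce the $h$-independent prefactors $c_\beta^{d_\beta} G_\beta$ and $G_{\beta,\beta'}$. Next I would verify, using~\eqref{li2} and the explicit form of $c_\beta$ from~\eqref{eq:c-factor-B}, that any $F\in\mathbf{S}_{\unl{k}}$ satisfying $\phi_{\unl{d}'}(F)=0$ for $\unl{d}'<\unl{d}$ has $\phi_{\unl{d}}(F)$ lying in the $\BZ[v,v^{-1}]$-span of $\{\phi_{\unl{d}}(\Psi(\tilde{\mathbf{E}}^{\epsilon}_h))\}_{h\in H_{\unl{k},\unl{d}}}$ — this follows by dividing out $A_{\unl{d}}B_{\unl{d}}$ (as in the RTT case, Proposition~\ref{integralbsubset}) and expanding the resulting $\mathfrak{S}_{\unl{d}}$-symmetric Laurent polynomial in the basis $\{\prod_\beta \mathbf{P}_{\lambda_{h,\beta}}\}$. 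Finally, I would run the standard descending induction on $\unl{d}\in\mathrm{KP}(\unl{k})$ with respect to~\eqref{eq:KP-order}: having found $F_{\unl{d}}\in\mathbf{S}^{\epsilon}_{\unl{k}}$ with $\phi_{\unl{d}}(F)=\phi_{\unl{d}}(F_{\unl{d}})$, apply Proposition~\ref{vanishB} to see that $\phi_{\unl{d}'}(F_{\unl{d}})$ for $\unl{d}'<\unl{d}$ depends only on the combinatorics of $H_{\unl{k},\unl{d}'}$, and iteratively subtract correction terms from $\mathbf{S}^{\epsilon}_{\unl{k}}$ to kill all $\phi_{\unl{d}'}(F_{\unl{d}})$, $\unl{d}'<\unl{d}$.

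The main obstacle, just as with the $\BQ(v)$-version in Proposition~\ref{spanB}, is the bookkeeping of the divisibility factor $G_{\beta,\beta'}$: unlike type $A_n$ (where $G_{\beta,\beta'}$ is a single linear factor) or type $G_2$ (where there are only six roots), in type $B_n$ one must verify divisibility of $\phi_{\unl{d}}(F)$ by the rather intricate products in~\eqref{eqformulagbeta} for all pairs of long/short/mixed roots $[i,j], [i,n,j]$. The point is that not every linear factor in $G_{\beta,\beta'}$ arises directly from a wheel condition~\eqref{polecondition}–\eqref{eq:wheel-G2}-type relation on $F$; many of them must be extracted by considering specializations $\phi_{\unl{d}_3}(F)$ with $\unl{d}_3<\unl{d}_2$ of strictly smaller type (as in the ``$\unl{d}_3$'' arguments throughout the proof of Proposition~\ref{spanB}), which is exactly where the inductive hypothesis on $n$ enters. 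However, since the ranks $1$ and $2$ analyses and all the auxiliary vanishing statements $\phi_{\unl{d}_3}(F)=0$ have already been carried out in Proposition~\ref{spanB} for the full $\BQ(v)$-shuffle algebra, and since the extra integrality over $\BZ[v,v^{-1}]$ is localized entirely in the ``rank $1$'' factor $\mathbf{P}_{\lambda_{h,\beta}}$ (handled by~\cite{Tsy22}), the integral statement follows from Proposition~\ref{spanB} together with conditions~\eqref{li1},~\eqref{li2} by the same argument that deduces Proposition~\ref{spanig} from Proposition~\ref{spanG}, with no genuinely new computation required beyond matching the constants $\tilde{c}_\beta$ implicit in~\eqref{li2}.
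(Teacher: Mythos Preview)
Your proposal is correct and takes essentially the same approach as the paper, which simply says the proof is ``completely analogous to that of Proposition~\ref{spanig}'': one combines Proposition~\ref{spekpB} with the rank~$1$ basis result of~\cite[Proposition~1.4, Lemma~1.14]{Tsy22}, then invokes Proposition~\ref{spanB} and conditions~\eqref{li1},~\eqref{li2}. Your discussion of ``iteratively subtracting correction terms'' to kill $\phi_{\unl{d}'}(F_{\unl{d}})$ is unnecessary, though, since once $F_{\unl{d}}$ is taken to be a $\BZ[v,v^{-1}]$-combination of $\{\Psi(\tilde{\mathbf{E}}^{\epsilon}_{h})\}_{h\in H_{\unl{k},\unl{d}}}$, Proposition~\ref{vanishB} already gives $\phi_{\unl{d}'}(F_{\unl{d}})=0$ for all $\unl{d}'<\unl{d}$ directly.
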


The proof of this result is completely analogous to that of Proposition~\ref{spanig}. Combining
Propositions \ref{lintegralB} and~\ref{spanib}, we obtain the following upgrade of Theorem \ref{shufflePBWDB}:

\begin{Thm}\label{lusthmb}
(a) The $\BQ(v)$-algebra isomorphism $\Psi\colon \qfb \,\iso\, S$ of Theorem~\ref{shufflePBWDB}(a) gives rise to a $\BZ[v,v^{-1}]$-algebra
isomorphism $\Psi\colon \integralbl \,\iso\, \mathbf{S}$.

\medskip
\noindent
(b) For any choices of $s_k$ in \eqref{rvb1}--\eqref{rvb2} and $\epsilon\in \{\pm\}$, the ordered monomials
$\{\tilde{\mathbf{E}}^{\epsilon}_{h}\}_{h\in H}$ of~\eqref{eq:B-Lus-pbwd} form a basis of
the free $\BZ[v,v^{-1}]$-module $\integralbl$.
\end{Thm}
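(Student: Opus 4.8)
\noindent\emph{Proof of Theorem~\ref{lusthmb}.}
The plan is to obtain this as a formal consequence of the three preceding results, exactly as Theorem~\ref{srig} was deduced in type $G_{2}$ by combining Propositions~\ref{lintegralG} and~\ref{spanig}. For part~(a): since $\Psi$ is injective (Proposition~\ref{inj}), it suffices to prove that $\Psi$ carries $\integralbl$ onto $\mathbf{S}$. One inclusion, $\Psi(\integralbl)\subseteq\mathbf{S}$, is exactly Proposition~\ref{lintegralB}. For the reverse inclusion, first observe that by Proposition~\ref{integralrvb} every normalized divided power $\tilde{\mathbf{E}}_{\beta,s}^{\epsilon,(k)}$ lies in $\integralbl$, hence so does each ordered monomial $\tilde{\mathbf{E}}^{\epsilon}_{h}$ from~\eqref{eq:B-Lus-pbwd}; therefore $\mathbf{S}^{\epsilon}_{\unl{k}}\subseteq\Psi(\integralbl)$, and it remains only to show $\mathbf{S}_{\unl{k}}=\mathbf{S}^{\epsilon}_{\unl{k}}$ for every $\unl{k}\in\BN^{n}$.

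To establish this equality I would run the usual descending induction governed by the total order~\eqref{eq:KP-order} on $\mathrm{KP}(\unl{k})$. Given $F\in\mathbf{S}_{\unl{k}}$, enumerate $\mathrm{KP}(\unl{k})$ in increasing order; at the minimal element the hypothesis of Proposition~\ref{spanib} holds vacuously and yields $F_{1}\in\mathbf{S}^{\epsilon}_{\unl{k}}$ whose difference from $F$ has vanishing $\phi$-specialization there and at everything below it. Replacing $F$ by $F-F_{1}$ and iterating over the finite set $\mathrm{KP}(\unl{k})$, one produces $F-\sum_{\unl{d}}F_{\unl{d}}$ whose $\phi_{\unl{d}}$-specialization vanishes for all $\unl{d}\in\mathrm{KP}(\unl{k})$. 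As recalled after Lemma~\ref{span} — take the maximal Kostant partition of $\unl{k}$ into a sum of simple roots and invoke~\cite[Proposition~1.6]{Tsy22} — this forces $F=\sum_{\unl{d}}F_{\unl{d}}\in\mathbf{S}^{\epsilon}_{\unl{k}}$. Since conversely $\mathbf{S}^{\epsilon}_{\unl{k}}\subseteq\mathbf{S}_{\unl{k}}$ by construction, we obtain $\mathbf{S}_{\unl{k}}=\mathbf{S}^{\epsilon}_{\unl{k}}\subseteq\Psi(\integralbl)$, which completes part~(a): $\Psi\colon\integralbl\to\mathbf{S}$ is a $\BZ[v,v^{-1}]$-algebra isomorphism.

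For part~(b): for any fixed choice of the $s_{k}$ in~\eqref{rvb1}--\eqref{rvb2} and any $\epsilon\in\{\pm\}$, the monomial $\tilde{\mathbf{E}}^{\epsilon}_{h}$ equals the ordered PBWD monomial $E_{h}$ built from the quantum root vectors $\tilde{E}^{\epsilon}_{\beta,s}$ times a nonzero element of $\BQ(v)$, namely the reciprocal of the product of $[k]_{v}!$-type denominators prescribed in~\eqref{eq:B-Lus-pbwd}. By Theorem~\ref{shufflePBWDB}(b) the $\{E_{h}\}_{h\in H}$ are a $\BQ(v)$-basis of $\qfb$ for this choice, hence so are the $\{\tilde{\mathbf{E}}^{\epsilon}_{h}\}_{h\in H}$; in particular they are $\BZ[v,v^{-1}]$-linearly independent. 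Combined with the spanning $\mathbf{S}=\bigoplus_{\unl{k}}\mathbf{S}^{\epsilon}_{\unl{k}}$ obtained in part~(a), this makes $\{\Psi(\tilde{\mathbf{E}}^{\epsilon}_{h})\}_{h\in H}$ a $\BZ[v,v^{-1}]$-basis of $\mathbf{S}$; transporting it through the isomorphism of part~(a) shows that $\{\tilde{\mathbf{E}}^{\epsilon}_{h}\}_{h\in H}$ is a basis of the free $\BZ[v,v^{-1}]$-module $\integralbl$.

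Since Propositions~\ref{lintegralB},~\ref{spanib}, and~\ref{integralrvb} are already available, the proof of Theorem~\ref{lusthmb} itself carries no genuine obstacle: the real work has been front-loaded into Proposition~\ref{spanib} (the $B_{n}$ analogue of Proposition~\ref{spanig}) and Proposition~\ref{lintegralB}. The only points demanding care are bookkeeping ones — one must check that the scalars absorbed into the normalized divided powers of~\eqref{eq:B-Lus-pbwd} are exactly cancelled by the divisibility~\eqref{li2} imposed on $\mathbf{S}_{\unl{k}}$, so that the rank-$1$ factors $\mathbf{P}_{\lambda_{h,\beta}}$ occurring in $\phi_{\unl{d}}(\Psi(\tilde{\mathbf{E}}^{\epsilon}_{h}))$ (compare Proposition~\ref{spekpB} and the rank-$1$ identity~\eqref{eq:rank1-power-trig}) form an honest $\BZ[v,v^{-1}]$-basis of the relevant ring of $\mathfrak{S}_{\unl{d}}$-symmetric Laurent polynomials, and that the two inclusions in part~(a) indeed close up to an equality. $\square$
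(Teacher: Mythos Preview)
Your proof is correct and follows exactly the approach the paper intends: the paper's own argument is just the one-line ``Combining Propositions~\ref{lintegralB} and~\ref{spanib}, we obtain the following upgrade of Theorem~\ref{shufflePBWDB},'' and you have accurately unpacked that combination (injectivity, the inclusion $\Psi(\integralbl)\subseteq\mathbf{S}$, the iterated application of Proposition~\ref{spanib} over $\mathrm{KP}(\unl{k})$, and the rank-$1$ basis fact). The only minor remark is that your invocation of Proposition~\ref{inj} is redundant, since injectivity is already contained in the $\BQ(v)$-isomorphism of Theorem~\ref{shufflePBWDB}(a).
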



\section{Yangian counterpart}\label{yangian}

In this section, we generalize the results of Sections \ref{tG}--\ref{type B} to the Yangian case, thus establishing
shuffle algebra realizations of Yangians  and their Drinfeld-Gavarini duals in types $B_{n}$ and $G_{2}$. This should be
viewed as the ``rational vs trigonometric'' counterpart, where we replace factors $\frac{z}{w}-v^k$ by $z-w-\frac{k}{2}\hbar$.
In particular, $\zeta_{i,j}(z)$ of~\eqref{eq:zeta} will be replaced by $\hzeta_{i,j}(z)=1+\frac{(\alpha_{i},\alpha_{j})\cdot \hbar}{2z}$.


\subsection{The Yangian $Y^{>}_{\hbar}(\mathfrak{g})$ and its shuffle algebra realization}

We still use the notations from Section \ref{pre}. Let $\mathfrak{g}$ be a finite dimensional simple Lie algebra of type
$B_{n}$ or $G_{2}$. Following \cite{Dri88}, the \textbf{``positive subalgebra'' of the Yangian of $\fg$} in the new Drinfeld
realization, denoted by $\Yangian$, is the $\BQ[\hbar]$-algebra generated by $\{\mathsf{x}_{i,r}\}_{i\in I}^{r\in\mathbb{N}}$
subject to the following defining relations:
\begin{equation}
  [\sx_{i,r+1},\sx_{j,s}]-[\sx_{i,r},\sx_{j,s+1}]=\frac{d_{i}a_{ij}\hbar}{2}(\sx_{i,r}\sx_{j,s}+\sx_{j,s}\sx_{i,r})
  \qquad \forall\ i,j\in I, r,s\in\BN,
\end{equation}
\begin{equation}
  \mathop{Sym}_{s_{1},\dots,s_{1-a_{ij}}}[\sx_{i,s_{1}},[\sx_{i,s_{2}},\cdots,[\sx_{i,s_{1-a_{ij}}},\sx_{j,r}]\cdots]]=0
  \qquad \forall\ i\neq j, s_{1},\dots,s_{1-a_{ij}},r\in \BN.
\end{equation}
Analogously to \eqref{rootvector1}--\eqref{rootvector2}, let us now define the root vectors
$\{\sX_{\beta,s}\}_{\beta\in\Delta^{+}}^{s\in\BN}$ of $\Yangian$:
\begin{itemize}[leftmargin=0.7cm]

\item
$B_{n}$-type.

\noindent
For any $\beta=[i_{1},\dots,i_{\ell}]\in\Delta^{+}$ from~\eqref{eq:SL-words} and $s\in\BN$, choose a decomposition
$s=s_{1}+\cdots+s_{\ell}$ with $s_{1}, \dots,s_{\ell}\in\BN$. Then, we define
\begin{equation}
  \sX_{\beta,s}\coloneqq [\cdots[[\sx_{i_{1},s_{1}},\sx_{i_{2},s_{2}}],\sx_{i_{3},s_{3}}],\cdots,\sx_{i_{\ell},s_{\ell}}].
\label{ygrv1}
\end{equation}

\medskip

\item
$G_{2}$-type.

\noindent
For $\beta=[i_{1},\dots,i_{\ell}]\neq [1,2,1,2,2]$, $s\in \BN$, the elements $\sX_{\beta,s}$ are defined exactly
as in~\eqref{ygrv1}.

\noindent
For $\beta=[1,2,1,2,2]$ and $s\in \BN$, we choose a decomposition $s=s_{1}+\cdots+s_{5}$ with $s_{1},\dots,s_{5}\in\BN$,
and define
\begin{equation}
  \sX_{\beta,s}\coloneqq [[\sx_{1,s_{1}},\sx_{2,s_{2}}],[[\sx_{1,s_{3}},\sx_{2,s_{4}}],\sx_{2,s_{5}}]].
\label{ygrv2}
\end{equation}

\end{itemize}
We will also need the following specific choices $\{\tilde{\sX}_{\beta,s}\}_{\beta\in\Delta^{+}}^{s\in\BN}$
of~\eqref{ygrv1}--\eqref{ygrv2}:
\begin{itemize}[leftmargin=0.7cm]

\item
$B_{n}$-type.
\begin{align}
  & \tilde{\sX}_{[i,j],s}\coloneqq [\cdots[[\sx_{i,s},\sx_{i+1,0}],\sx_{i+2,0}],\cdots,\sx_{j,0}],\quad 1\leq i\leq j\leq n,\\
  & \tilde{\sX}_{[i,n,j],s}\coloneqq
    [\cdots[[[\cdots[\sx_{i,s},\sx_{i+1,0}],\cdots, \sx_{n,0}],\sx_{n,0}],\sx_{n-1,0}],\cdots,\sx_{j,0}],\quad 1\leq i<j\leq n.
\end{align}

\medskip

\item
$G_{2}$-type.
\begin{align}
  & \tilde{\sX}_{[i],s}\coloneqq \sx_{i,s},\quad 1\leq i\leq 2,\\
  & \tilde{\sX}_{[1,2],s}\coloneqq [\sx_{1,s},\sx_{2,0}],\\
  & \tilde{\sX}_{[1,2,2],s}\coloneqq [[\sx_{1,s},\sx_{2,0}],\sx_{2,0}],\\
  & \tilde{\sX}_{[1,2,2,2],s}\coloneqq [[[\sx_{1,s},\sx_{2,0}],\sx_{2,0}],\sx_{2,0}],\\
  & \tilde{\sX}_{[1,2,1,2,2],s}\coloneqq [[\sx_{1,s},\sx_{2,0}],[[\sx_{1,0},\sx_{2,0}],\sx_{2,0}]].
\end{align}

\end{itemize}

Let $\sH$ denote the set of all functions $h\colon \Delta^{+}\times\BN\rightarrow \BN$ with finite support.
For any $h\in\sH$, we consider the ordered monomials
\begin{equation}\label{eq:pbwd-yangian}
  \sX_{h}\, =\prod_{(\beta,s)\in\Delta^{+}\times\mathbb{N}}\limits^{\rightarrow}\sX_{\beta,s}^{h(\beta,s)}
  \qquad \text{and} \qquad
  \tilde{\sX}_{h}\, =\prod_{(\beta,s)\in\Delta^{+}\times\mathbb{N}}\limits^{\rightarrow}\tilde{\sX}_{\beta,s}^{h(\beta,s)}.
\end{equation}

\noindent
Then, similarly to \cite{Lev93} (cf. \cite[Theorem B.3]{FT19}), we have:

\begin{Thm}\label{yangianbasis}
The elements $\{\sX_{h}\}_{h\in \sH}$ form a basis of the free $\BQ[\hbar]$-module $\Yangian$.
\end{Thm}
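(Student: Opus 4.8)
The plan is to transport the specialization--map machinery of Sections~\ref{tG}--\ref{type B} to the rational setting, following the ``rational vs.\ trigonometric'' dictionary $\tfrac{z}{w}-v^{k}\rightsquigarrow z-w-\tfrac{k}{2}\hbar$. First I would introduce the rational shuffle algebra $S^{\rat}=\bigoplus_{\unl{k}\in\BN^{I}}S^{\rat}_{\unl{k}}$, whose graded pieces consist of $\fS_{\unl{k}}$-symmetric rational functions in $\{x_{i,r}\}$ with the pole condition~\eqref{polecondition} and with wheel conditions obtained from~\eqref{wheelcon} under the above substitution, the shuffle product being built from $\hzeta_{i,j}(z)=1+\tfrac{(\alpha_{i},\alpha_{j})\hbar}{2z}$ in place of~\eqref{eq:zeta}. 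A direct check that the defining relations of $\Yangian$ are killed then shows that $\sx_{i,r}\mapsto x_{i,1}^{r}$ extends to a $\BQ[\hbar]$-algebra homomorphism $\Psi_{\hbar}\colon\Yangian\to S^{\rat}$, the rational counterpart of $\Psi$, whose injectivity follows either from a rational version of the argument of~\cite{NT21} or, more economically, by a degeneration argument from the injectivity of the trigonometric $\Psi$ (Proposition~\ref{inj}).

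Next I would establish the rational counterparts of Lemmas~\ref{shuffleelement} and~\ref{vanish}. For $\unl{d}\in\mathrm{KP}(\unl{k})$, define the rational specialization map $\phi_{\unl{d}}\colon S^{\rat}_{\unl{k}}\to\BQ[\hbar][\{w_{\beta,s}\}_{\beta\in\Delta^{+}}^{1\le s\le d_{\beta}}]^{\fS_{\unl{d}}}$ by the additive analogues of~\eqref{speG} and~\eqref{spe}, i.e.\ $x^{(\beta,s)}_{i,t}\mapsto w_{\beta,s}+(\text{an explicit }\hbar\text{-linear shift in }i,t)$, extended by zero on the remaining graded pieces. The combinatorial vanishing statements (Lemmas~\ref{Gs2},~\ref{bstep3}: $\phi_{\unl{d}}(\sigma(F_{h}))=0$ unless $\sigma\in\fS_{\unl{d}}$) go through verbatim, since they only use the pole structure of the $\hzeta$-factors; and the inductive computations of Lemmas~\ref{Gs1},~\ref{phirv} degenerate to
\[
  \phi_{\beta}(\Psi_{\hbar}(\sX_{\beta,s}))\doteq c_{\beta}\cdot w_{\beta,1}^{\,s+\kappa_{\beta}},
\]
with the same exponents $\kappa_{\beta}$ as in~\eqref{kappaG},~\eqref{kappaB} and $c_{\beta}$ a nonzero element of $\BQ[\hbar]$ (the rational shadow of the constants of Lemma~\ref{Gs1} / Lemma~\ref{phirv}, under $\langle m\rangle_{v}\rightsquigarrow m\hbar$ up to units). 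Hence, for $h\in\sH_{\unl{k},\unl{d}}$ one obtains the rational form of Propositions~\ref{spekpG},~\ref{spekpB}, namely $\phi_{\unl{d}}(\Psi_{\hbar}(\sX_{h}))\doteq\prod_{\beta<\beta'}G_{\beta,\beta'}\cdot\prod_{\beta}\big(c_{\beta}^{d_{\beta}}G_{\beta}\big)\cdot\prod_{\beta}P_{\lambda_{h,\beta}}$ with $G_{\beta},G_{\beta,\beta'}$ independent of $h$ and the rank-$1$ factors $P_{\lambda_{h,\beta}}$ as in~\eqref{hlp} (with $\tfrac{w_{\beta,i}-v_{\beta}^{-2}w_{\beta,j}}{w_{\beta,i}-w_{\beta,j}}$ replaced by its rational degeneration), together with $\phi_{\unl{d}'}(\Psi_{\hbar}(\sX_{h}))=0$ for $\unl{d}'<\unl{d}$ (the rational Propositions~\ref{vanishG},~\ref{vanishB}).

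The two desired conclusions then follow exactly as in type $A_{n}$. For $\BQ[\hbar]$-linear independence: if $\sum_{h}a_{h}\sX_{h}=0$ with $a_{h}\in\BQ[\hbar]$ not all zero, pick $\unl{d}$ minimal among $\{\deg(h):a_{h}\neq 0\}$ and apply $\phi_{\unl{d}}\circ\Psi_{\hbar}$; by the vanishing above only the terms with $\deg(h)=\unl{d}$ survive, and cancelling the common nonzero factor $\prod_{\beta<\beta'}G_{\beta,\beta'}\cdot\prod_{\beta}(c_{\beta}^{d_{\beta}}G_{\beta})$ from the resulting identity in $\BQ[\hbar][\{w_{\beta,s}\}]$ leaves $\sum_{h\in\sH_{\unl{k},\unl{d}}}a_{h}\prod_{\beta}P_{\lambda_{h,\beta}}=0$, whence $a_{h}=0$ for all such $h$ because the $\prod_{\beta}P_{\lambda_{h,\beta}}$ ($h\in\sH_{\unl{k},\unl{d}}$) are $\BQ[\hbar]$-linearly independent --- a contradiction. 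For spanning: I would prove the rational analogue of Lemma~\ref{span} (Propositions~\ref{spanG},~\ref{spanB}), and then, using the rank-$1$ base case (where $\Yangian$ is the polynomial ring $\BQ[\hbar][\sx_{1,0},\sx_{1,1},\dots]$) and the fact that $\phi_{\unl{d}}(F)=0$ for all $\unl{d}$ forces $F=0$ (rational form of~\cite{Tsy22}), conclude that $\mathrm{Im}(\Psi_{\hbar})$ is spanned over $\BQ[\hbar]$ by $\{\Psi_{\hbar}(\sX_{h})\}_{h\in\sH}$; by injectivity of $\Psi_{\hbar}$ this shows $\{\sX_{h}\}$ spans $\Yangian$. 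Spanning together with $\BQ[\hbar]$-linear independence is precisely the assertion that $\{\sX_{h}\}_{h\in\sH}$ is a basis of the free $\BQ[\hbar]$-module $\Yangian$ (and, as a byproduct, that $\Psi_{\hbar}$ realizes the rational shuffle isomorphism).

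I expect the main obstacle to be the rational analogue of Lemma~\ref{span}: transferring the divisibility arguments behind Corollary~\ref{formulagbeta} and the intricate case analysis of Proposition~\ref{spanB} from multiplicative to additive $\zeta$-factors, while verifying that the rational wheel conditions are exactly the right degenerations and that no power of $\hbar$ is lost under specialization --- the rational counterpart of the danger of losing powers of $\langle m\rangle_{v}$. Equivalently, the delicate point is to confirm that each leading constant $c_{\beta}$ degenerates to a \emph{nonzero} element of $\BQ[\hbar]$, since the non-degeneracy of the resulting triangular system over $\BQ(\hbar)$ is what ultimately powers both the independence and the spanning arguments.
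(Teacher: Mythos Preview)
Your overall strategy—transporting the specialization-map machinery to the rational shuffle algebra—is essentially what the paper does to handle the \emph{general} root vectors $\sX_{\beta,s}$ (Lemmas~\ref{shuffleeleyang}--\ref{vanishyg}, Propositions~\ref{subsetyang}--\ref{spanyang}, Theorem~\ref{yangshuffle}). The rational analogues of Lemmas~\ref{shuffleelement}--\ref{span} go through without the difficulties you anticipate; in particular your worry about $c_\beta$ degenerating to zero is unfounded: Lemma~\ref{phiyangianrs} gives $\phi_\beta(\Psi(\sX_{\beta,s}))\circeq \hbar^{\kappa_\beta}\cdot p_{\beta,s}(w_{\beta,1})$ with $p_{\beta,s}$ monic of degree~$s$, so the constants are nonzero powers of~$\hbar$. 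Your linear-independence argument is fine and does not even require injectivity of $\Psi_\hbar$.

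The genuine difference lies in the spanning half, where you \emph{do} need injectivity. You propose either a rational version of~\cite{NT21} or a degeneration from Proposition~\ref{inj}, but neither is developed: the former would be a separate substantial result, and the latter is delicate (the trigonometric injectivity is over $\BQ(v)$, and naive specialization to $\BQ[\hbar]$ can lose information). The paper takes a different, two-step route: it \emph{first} proves Theorem~\ref{yangianbasis} for the \emph{specific} choices $\tilde{\sX}_{\beta,s}$ by invoking~\cite[Theorems~B.2--B.3]{FT19} directly (after checking that the modified $[1,2,1,2,2]$ root vector in type~$G_2$ still satisfies the properties (B.1)--(B.2) needed there, cf.~Remark~\ref{rk:FT-generaling-1}). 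With a basis $\{\tilde{\sX}_h\}$ already in hand, the linear independence of $\{\Psi(\tilde{\sX}_h)\}$ from the specialization machinery yields injectivity of $\Psi$ (Proposition~\ref{injyang}), and this bootstraps via Theorem~\ref{yangshuffle} to the general $\sX_{\beta,s}$. Your route, with the injectivity gap filled, would be more self-contained; the paper's is shorter because it outsources the initial spanning step to~\cite{FT19}.
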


\begin{proof}
Comparing $\tilde{\sX}_{\beta,s}$ to the root vectors $e_{\beta}^{(s)}$ used in \cite[(A.11)]{FT19}, we see that
the only difference is in the root vectors $\tilde{\sX}_{[1,2,1,2,2],s}$ in $G_{2}$-type. However, the two key properties
(B.1) and (B.2) of \cite[Appendix B]{FT19} still hold for the new root vectors, cf.~Remark~\ref{rk:FT-generaling-1}.
Hence, the proof of \cite[Theorem B.2]{FT19} and thus of \cite[Theorem B.3]{FT19} still goes through. This proves that
$\{\tilde{\sX}_{h}\}_{h\in \sH}$ form a basis of the free $\BQ[\hbar]$-module $\Yangian$. The proof of the fact that
the ordered monomials $\{\sX_{h}\}_{h\in \sH}$ in more general root vectors of \eqref{ygrv1}--\eqref{ygrv2} also provide
a basis will be derived from the shuffle algebra realization of $\Yangian$, see Theorem \ref{yangshuffle}.
\end{proof}

We define the shuffle algebra $(\bar\BW,\star)$ analogously to the shuffle algebra $(S,\star)$ of Section \ref{pre}
with the following modifications:
\begin{enumerate}

\item
All rational functions $F\in \bar\BW$ are defined over $\BQ[\hbar]$.

\item
The matrix $(\hzeta_{i,j}(z))_{i,j\in I}$ is defined via
\begin{equation}\label{eq:zeta-rational}
  \hzeta_{i,j}(z)=1+\frac{(\alpha_{i},\alpha_{j})\cdot \hbar}{2z}.
\end{equation}

\item
(\emph{pole conditions}) $F\in \bar\BW_{\unl{k}}$ has the form
\begin{equation}
  F=\frac{f(\{x_{i,r}\}_{i\in I}^{1\leq r\leq k_{i}})}
         {\prod_{i<j}^{a_{ij}\neq 0}\prod_{1\leq r\leq k_{i}}^{1\leq s\leq k_{j}}(x_{i,r}-x_{j,s})},
\label{poleyg}
\end{equation}
where $f\in \BQ[\hbar][\{x_{i,r}\}_{i\in I}^{1\leq r\leq k_{i}}]^{\mathfrak{S}_{\underline{k}}}$ and $<$ is an arbitrary order on $I$.

\item
(\emph{wheel conditions}) Let $f$ be the numerator of $F\in \bar\BW_{\unl{k}}$ from \eqref{poleyg}, then
\begin{equation}
   f(\{x_{i,r}\}_{i\in I}^{1\leq r\leq k_{i}})=0\  \text{once}\
   x_{i,s_{1}}=x_{i,s_{2}}+d_{i}\hbar=\cdots=x_{i,s_{1-a_{ij}}}-d_{i}a_{ij}\hbar=x_{j,r}-\frac{d_{i}a_{ij}}{2}\hbar
\end{equation}
for any $ i\neq j$ such that $a_{ij}\neq 0$, pairwise distinct $1\leq s_{1},\dots,s_{1-a_{ij}}\leq k_{i}$, and $1\leq r\leq k_{j}$.

\item
The shuffle product is defined like \eqref{shuffleproduct}, but $\zeta_{i,j}(\frac{x_{i,r}}{x_{j,s}})$
are replaced by $\hzeta_{i,j}(x_{i,r}-x_{j,s})$.
\end{enumerate}

Similarly to Proposition \ref{morphism}, we have:

\begin{Prop}
The assignment $\sx_{i,r}\mapsto x_{i,1}^{r}\in \bar\BW_{\mathbf{1}_i}\ (i\in I, r\in\BN)$ gives rise to a $\BQ[\hbar]$-algebra homomorphism
\begin{equation}\label{eq:Psi-homom-rat}
  \Psi\colon \Yangian \longrightarrow \bar\BW.
\end{equation}
\end{Prop}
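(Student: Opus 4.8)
The plan is to mimic exactly the proof of Proposition~\ref{morphism}, replacing the trigonometric data by its rational analogue. Recall that Proposition~\ref{morphism} asserts that $e_{i,r}\mapsto x_{i,1}^r$ extends to an algebra homomorphism $\Psi\colon U_v^>(L\fg)\to\SF$; the content there is that the two families of defining relations of $U_v^>(L\fg)$ are respected. The present statement is the literal ``rational degeneration'' of that fact: the Yangian $\Yangian$ is presented by the two families of relations displayed just before~\eqref{ygrv1}, and the shuffle algebra $\bar\BW$ is built from the rational kernel $\hzeta_{i,j}(z)=1+\tfrac{(\alpha_i,\alpha_j)\hbar}{2z}$ of~\eqref{eq:zeta-rational} in place of $\zeta_{i,j}(z)$ of~\eqref{eq:zeta}. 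So the proof is: \textbf{(i)} verify that the assignment $\sx_{i,r}\mapsto x_{i,1}^r$ is well defined, i.e.\ compatible with the two families of Yangian relations; \textbf{(ii)} note it is automatically a $\BQ[\hbar]$-algebra map once well-definedness holds, since $\bar\BW$ is generated (as an algebra over $\BQ[\hbar]$) together with the grading-compatibility being built into the definition of $\star$.

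First I would spell out the computation for the quadratic relation. In $\bar\BW_{2\mathbf 1_i}$ (if $i=j$) or $\bar\BW_{\mathbf 1_i+\mathbf 1_j}$ (if $i\neq j$), one computes $\Psi(\sx_{i,r})\star\Psi(\sx_{j,s})=x_{i,1}^r x_{j,2}^s\,\hzeta_{i,j}(x_{i,1}-x_{j,2})+x_{j,1}^s x_{i,2}^r\,\hzeta_{j,i}(x_{j,1}-x_{i,2})$ (up to the appropriate normalization constant in~\eqref{shuffleproduct}), and likewise for the products appearing in $[\sx_{i,r+1},\sx_{j,s}]-[\sx_{i,r},\sx_{j,s+1}]$ and in $\sx_{i,r}\sx_{j,s}+\sx_{j,s}\sx_{i,r}$. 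Clearing the common denominator $x_{i,1}-x_{j,2}$ (resp.\ $x_{j,1}-x_{i,2}$) and using $\hzeta_{i,j}(z)-\hzeta_{j,i}(z)=0$ together with $z\cdot\hzeta_{i,j}(z)=z+\tfrac{d_ia_{ij}}{2}\hbar$, the identity reduces to a polynomial identity in $x_{i,1},x_{i,2}$ (resp.\ in $x_{i,1},x_{j,2}$) that one checks directly — exactly as the $(z-v_i^{a_{ij}}w)e_i(z)e_j(w)=(v_i^{a_{ij}}z-w)e_j(w)e_i(z)$ relation is checked for $\Psi$ in the trigonometric case, with $\tfrac{z}{w}-v_i^{a_{ij}}$ degenerating to $z-w-\tfrac{d_ia_{ij}}{2}\hbar$ after the substitution $z\mapsto e^{\hbar z}$, $v\mapsto e^{\hbar/2}$ and taking the leading term in $\hbar$. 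Then I would treat the Serre relations: the symmetrization $\mathop{Sym}_{s_1,\dots,s_{1-a_{ij}}}[\sx_{i,s_1},[\sx_{i,s_2},\cdots,[\sx_{i,s_{1-a_{ij}}},\sx_{j,r}]\cdots]]$ maps under $\Psi$ into $\bar\BW_{(1-a_{ij})\mathbf 1_i+\mathbf 1_j}$, and one shows the image is the zero shuffle element because its numerator is forced to vanish at the rational wheel locus $x_{i,s_1}=x_{i,s_2}+d_i\hbar=\cdots=x_{j,r}-\tfrac{d_ia_{ij}}{2}\hbar$ while simultaneously being of low enough degree; this is the degeneration of the argument for~\eqref{serreloop} in Proposition~\ref{morphism}. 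Since our $\fg$ is of type $B_n$ or $G_2$, only the cases $1-a_{ij}\in\{2,3,4\}$ occur, so finitely many explicit checks suffice.

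The main obstacle — such as it is — is not conceptual but bookkeeping: one must track the normalization constants $\tfrac{1}{\unl k!\,\unl\ell!}$ and the order $<$ on $I$ entering~\eqref{shuffleproduct}, and confirm that the pole condition~\eqref{poleyg} and wheel condition from item (4) of the definition of $\bar\BW$ are genuinely met by the images of $\sx_{i,r}$ and their shuffle products (the pole condition is immediate since $x_{i,1}^r$ has no pole, and poles of the correct shape appear only through the $\hzeta$-factors). A clean way to organize everything, which I would adopt, is to invoke the general ``rational vs.\ trigonometric'' dictionary already announced at the start of Section~\ref{yangian}: replace $\tfrac{z}{w}-v^k$ by $z-w-\tfrac{k}{2}\hbar$ throughout the proof of Proposition~\ref{morphism}, observe that every step there is a polynomial manipulation that degenerates term-by-term, and conclude. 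In particular the homomorphism property and the well-definedness descend verbatim, so the proof can legitimately be condensed to ``Repeat the proof of Proposition~\ref{morphism} with $\zeta_{i,j}$ replaced by $\hzeta_{i,j}$ of~\eqref{eq:zeta-rational}, using that the Yangian relations are the rational degenerations of~\eqref{serreloop} and the relation preceding it.''
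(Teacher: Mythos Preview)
Your approach is exactly the paper's: it states the proposition with the single line ``Similarly to Proposition~\ref{morphism}, we have:'' and gives no further argument, so your plan to run the proof of Proposition~\ref{morphism} through the trigonometric-to-rational dictionary is precisely what is intended. One minor correction to your sketch: the Serre relations do not map to zero by ``vanishing at the wheel locus plus a degree bound'' --- the symmetrized iterated commutator is shown to be identically zero in $\bar\BW$ by direct expansion of the shuffle products (this is the content of Enriquez's argument), while the wheel conditions in Proposition~\ref{morphism} are a \emph{consequence} describing $\mathrm{Im}(\Psi)$, not the mechanism that kills the Serre relations.
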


Let us adapt our key tool of {\em specialization maps} to the Yangian setup. In type $G_{2}$, the specialization maps
$\phi_{\unl{d}}$ are defined via the following specialization of the $x^{(*,*)}_{*,*}$-variables,~cf.~\eqref{speG}:
\begin{equation}
\begin{aligned}
  & x^{(\beta,s)}_{1,t}\mapsto w_{\beta,s}+t\hbar,\quad 1\leq t\leq 2,\\
  & x^{(\beta,s)}_{2,t}\mapsto w_{\beta,s}-\tfrac{3}{2}\hbar+t\hbar, \quad 1\leq t\leq 3.
\end{aligned}
\end{equation}
In type $B_{n}$, the specialization maps $\phi_{\unl{d}}$ are defined via the following specialization
of the $x^{(*,*)}_{*,*}$-variables, cf.~\eqref{spe}:
\begin{equation}
   x^{(\beta,s)}_{i,1}\mapsto w_{\beta,s}-i \hbar, \quad x^{(\beta,s)}_{i,2}\mapsto w_{\beta,s}-(2n-i-1)\hbar \qquad
   \forall\ \beta\in\Delta^{+},\ 1\leq s\leq d_\beta,\ i\in\beta.
\end{equation}

In what follows, we will use the notation $\circeq$ to denote an equality up to $\BQ^{\times}$ (cf.~\eqref{eq:trig-const}):
\begin{equation}\label{eq:rat-const}
  A\circeq B \quad  \text{if} \quad  A=c\cdot B \quad \text{for some}\  c\in \BQ^{\times}.
\end{equation}

\begin{Rk}
We emphasize that $c$ in~\eqref{eq:rat-const} is a nonzero element of $\BQ$ rather than $\BQ[\hbar]$.
Most importantly, the appearance of such constants occurs from the ``rank 1'' computations. Explicitly,
the formula~\eqref{eq:rank1-power-trig} in the shuffle algebra $S$ is now replaced by the equality
\begin{equation}\label{eq:rank1-power-rat}
  \underbrace{x_{i,1}^{r}\star\cdots\star x_{i,1}^{r}}_{\ell\rm\ times}= \ell!\cdot (x_{i,1}\cdots x_{i,\ell})^{r}
\end{equation}
in the shuffle algebra $\bar\BW$ for any $i\in I, r\geq 0, \ell\geq 1$, see~\cite[Lemma 6.22]{Tsy18}.
Thus, the product of quantum integers in the trigonometric case is now replaced by the product of integers.
\end{Rk}

We have the following straightforward analogues of Lemmas \ref{imageEtildeg} and \ref{phirvns}:

\begin{Lem}\label{rootphiG}
For type $G_{2}$, we have:
\begin{align}
  & \Psi(\tilde{\sX}_{[i],s})\circeq x_{i,1}^{s},\quad 1\leq i\leq 2,\\
  &\Psi(\tilde{\sX}_{[1,2],s})\circeq \frac{\hbar x_{1,1}^{s}}{x_{1,1}-x_{2,1}},\\
  & \Psi(\tilde{\sX}_{[1,2,2],s})\circeq \frac{\hbar^{2} x_{1,1}^{s}}{(x_{1,1}-x_{2,1})(x_{1,1}-x_{2,2})},\\
  & \Psi(\tilde{\sX}_{[1,2,2,2],s})\circeq \frac{\hbar^{3} x_{1,1}^{s}}{(x_{1,1}-x_{2,1})(x_{1,1}-x_{2,2})(x_{1,1}-x_{2,3})},\\
  & \Psi(\tilde{\sX}_{[1,2,1,2,2],s})\circeq
    \frac{\hbar^{4} g(\{x_{1,r},x_{2,t}\}_{1\leq r\leq 2}^{1\leq t\leq 3})}
         {\prod_{1\leq r\leq 2}^{1\leq t\leq 3}(x_{1,r}-x_{2,t})},
\end{align}
where $g\in \BQ[\hbar][\{x_{1,r},x_{2,t}\}_{1\leq r\leq 2}^{1\leq t\leq 3}]^{\fS_{2}\times\fS_{3}}$
(we do not really need an explicit formula for this $g$).
\end{Lem}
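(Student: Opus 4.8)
The plan is a direct computation with the shuffle homomorphism $\Psi$ of~\eqref{eq:Psi-homom-rat}, running exactly parallel to the trigonometric computations behind Lemmas~\ref{imageEtildeg} and~\ref{phirvns}; I shall only indicate its structure. The first formula $\Psi(\tilde{\sX}_{[i],s})\circeq x_{i,1}^{s}$ is the definition of $\Psi$, and $\sx_{2,0}$ is always sent to $x_{2,1}^{0}=1\in\bar\BW_{\mathbf{1}_{2}}$. For $\beta\in\{[1,2],[1,2,2],[1,2,2,2]\}$ I would induct along the chain of brackets $\tilde{\sX}_{[1,2],s}=[\sx_{1,s},\sx_{2,0}]$, $\tilde{\sX}_{[1,2,2],s}=[\tilde{\sX}_{[1,2],s},\sx_{2,0}]$, $\tilde{\sX}_{[1,2,2,2],s}=[\tilde{\sX}_{[1,2,2],s},\sx_{2,0}]$: once $\Psi(\tilde{\sX}_{\beta',s})$ is known, $\Psi(\tilde{\sX}_{\beta,s})=\Psi(\tilde{\sX}_{\beta',s})\star x_{2,1}^{0}-x_{2,1}^{0}\star\Psi(\tilde{\sX}_{\beta',s})$, and each of the two shuffle products is evaluated from~\eqref{shuffleproduct} (with $\zeta_{i,j}$ replaced by $\hzeta_{i,j}$ of~\eqref{eq:zeta-rational}) as a two-term symmetrization in which only the newly adjoined colour-$2$ variable slides past $x_{1,1}$ and the older colour-$2$ variables, contributing the factors $\hzeta_{1,2}$ and $\hzeta_{2,2}$. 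Since $\Psi(\tilde{\sX}_{\beta,s})$ lands in $\bar\BW$, whose pole condition~\eqref{poleyg} permits only the factors $x_{1,r}-x_{2,t}$ in the denominator, the apparent poles at coincidences of two colour-$2$ variables must cancel after symmetrization; a short computation then shows that the difference of the two orderings contributes exactly one further factor of $\hbar$ and one further simple pole between $x_{1,1}$ and the adjoined variable. Running one such step (for $\beta=[1,2,2]$) already yields $\Psi(\tilde{\sX}_{[1,2,2],s})\circeq\hbar^{2}x_{1,1}^{s}/\bigl((x_{1,1}-x_{2,1})(x_{1,1}-x_{2,2})\bigr)$, and iterating the recursion produces the stated formulas for $[1,2]$, $[1,2,2]$ and $[1,2,2,2]$.

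For the longest root one uses $\tilde{\sX}_{[1,2,1,2,2],s}=\bigl[[\sx_{1,s},\sx_{2,0}],[[\sx_{1,0},\sx_{2,0}],\sx_{2,0}]\bigr]=[\tilde{\sX}_{[1,2],s},\tilde{\sX}_{[1,2,2],0}]$, so $\Psi(\tilde{\sX}_{[1,2,1,2,2],s})$ is the shuffle commutator of $\Psi(\tilde{\sX}_{[1,2],s})\in\bar\BW_{(1,1)}$ and $\Psi(\tilde{\sX}_{[1,2,2],0})\in\bar\BW_{(1,2)}$, hence an element of $\bar\BW_{(2,3)}$. By~\eqref{poleyg} it equals $g/\prod_{r=1}^{2}\prod_{t=1}^{3}(x_{1,r}-x_{2,t})$ with $g\in\BQ[\hbar][x_{1,1},x_{1,2},x_{2,1},x_{2,2},x_{2,3}]^{\fS_{2}\times\fS_{3}}$, and it remains only to see that $\hbar^{4}$ divides $g$. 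Writing $\Psi(\tilde{\sX}_{[1,2],s})\circeq\hbar F_{0}$ and $\Psi(\tilde{\sX}_{[1,2,2],0})\circeq\hbar^{2}G_{0}$ with $F_{0},G_{0}$ independent of $\hbar$, bilinearity of $\star$ over $\BQ[\hbar]$ gives $\Psi(\tilde{\sX}_{[1,2,1,2,2],s})\circeq\hbar^{3}\,(F_{0}\star G_{0}-G_{0}\star F_{0})$; at $\hbar=0$ every $\hzeta_{i,j}$ reduces to $1$, so $F_{0}\star G_{0}$ and $G_{0}\star F_{0}$ both reduce to the same (suitably normalized) $\fS_{2}\times\fS_{3}$-symmetrization of the commutative product $F_{0}G_{0}$, whence $\hbar\mid(F_{0}\star G_{0}-G_{0}\star F_{0})$ and the asserted form follows. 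No explicit $g$ is required.

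The only real obstacle is the bookkeeping of these symmetrizations — most notably, for $[1,2,1,2,2]$, verifying that all spurious poles $x_{1,r}-x_{1,t}$ and $x_{2,r}-x_{2,t}$ cancel and that the two shuffle orderings agree to order $\hbar^{3}$ — which is mechanical but tedious, exactly as in the trigonometric case where Lemmas~\ref{imageEtildeg} and~\ref{phirvns} are disposed of by ``straightforward computation''. A shortcut is to deduce Lemma~\ref{rootphiG} from Lemma~\ref{imageEtildeg} via the standard rational degeneration of the shuffle algebra $S$ into $\bar\BW$ (under which $\zeta_{i,j}\mapsto\hzeta_{i,j}$, the $v$-commutators of~\eqref{rvg2}--\eqref{rvg5} degenerate to ordinary commutators, and the quantum factorials $[\ell]_{v_{i}}!$ degenerate to $\ell!$, cf.~\eqref{eq:rank1-power-trig} versus~\eqref{eq:rank1-power-rat}); under this degeneration the trigonometric prefactor $\langle 3\rangle_{v}^{3}\langle 2\rangle_{v}[2]_{v}$ of Lemma~\ref{imageEtildeg} becomes a nonzero rational multiple of $\hbar^{4}$, consistent with the stated formula. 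Since the trigonometric computation is itself elementary, however, carrying out the rational one directly is arguably the cleaner route.
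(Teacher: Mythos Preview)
Your proposal is correct and follows the same approach as the paper, which provides no separate proof and simply labels Lemma~\ref{rootphiG} a ``straightforward analogue'' of Lemma~\ref{imageEtildeg}. Your recursive computation for $[1,2]$, $[1,2,2]$, $[1,2,2,2]$ and your $\hbar$-specialization argument for the extra factor of $\hbar$ in the $[1,2,1,2,2]$ commutator are precisely the content of this straightforward computation (the latter observation is in fact what the paper isolates as Lemma~\ref{conrv}).
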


\begin{Lem}\label{rootphiB}
For type $B_{n}$, we have:
\begin{align}
  & \Psi(\tilde{\sX}_{[i,j],s})\circeq \frac{\hbar^{j-i}x_{i,1}^{s}}{(x_{i,1}-x_{i+1,1})\cdots (x_{j-1,1}-x_{j,1})},\\
  & \Psi(\tilde{\sX}_{[i,n,j],s})\circeq
    \frac{\hbar^{2n-i-j+1}x_{i,1}^{s}\prod_{\ell=j}^{n-1}
           (2\hbar+x_{\ell,1}-x_{\ell,2})(2\hbar-x_{\ell,1}+x_{\ell,2})}
         {(x_{i,1}-x_{i+1,1})\cdots (x_{j-1,1}-x_{j,1})(x_{j-1,1}-x_{j,2})
          \prod_{\ell=j}^{n-1}\prod_{1\leq r,t\leq 2}(x_{\ell,r}-x_{\ell+1,t})}.
\end{align}
\end{Lem}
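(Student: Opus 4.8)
\textbf{Proof of Lemmas~\ref{rootphiG} and~\ref{rootphiB}.}

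The plan is to carry out the same direct shuffle-algebra computation that establishes Lemmas~\ref{imageEtildeg} and~\ref{phirvns} in the trigonometric setting, but now inside the rational shuffle algebra $\bar\BW$ with $\hzeta_{i,j}(z)=1+\tfrac{(\alpha_i,\alpha_j)\hbar}{2z}$ in place of $\zeta_{i,j}(z)$. First I would unravel the definition of $\Psi$ on the generators: under~\eqref{eq:Psi-homom-rat} we have $\Psi(\sx_{i,s})=x_{i,1}^s$, and for any two homogeneous elements the shuffle product symmetrizes their product weighted by the appropriate $\hzeta$-factors. Since the root vectors $\tilde{\sX}_{\beta,s}$ of~\eqref{ygrv1}--\eqref{ygrv2} are iterated Lie brackets, each application of $\Psi$ to a bracket $[\,\cdot\,,\sx_{j,0}]$ produces a symmetrization of two terms; as in the trigonometric case, the crucial point is that adjoining the generator $\sx_{j,0}$ (with spectral parameter $0$) contributes a $\hzeta$-factor whose numerator $x_{\ell,r}-v_\ell^{-a_{\ell j}}x_{j,\bullet}$ is replaced by $x_{\ell,r}-x_{j,\bullet}+\tfrac{(\alpha_\ell,\alpha_j)}{2}\hbar$, and many of the two resulting summands combine so that the pole $(x_{\ell,r}-x_{j,\bullet})^{-1}$ survives exactly once while an extra linear factor of $\hbar$ appears. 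Iterating this along the word $\beta=[i_1,\dots,i_\ell]$ (respectively along the special word $[1,2,1,2,2]$ in $G_2$) produces the stated formulas.

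The concrete execution differs only cosmetically from the trigonometric proofs: for $G_2$, one starts with $\Psi(\tilde{\sX}_{[1],s})=x_{1,1}^s$ and $\Psi(\tilde{\sX}_{[2],s})=x_{2,1}^s$, then computes $\Psi([\sx_{1,s},\sx_{2,0}])$ as a $\Sym_{\fS_1\times\fS_1}$ of $x_{1,1}^s\cdot 1$ times the single $\hzeta_{1,2}(x_{1,1}-x_{2,1})=1+\tfrac{-3\hbar}{2(x_{1,1}-x_{2,1})}$ (note $(\alpha_1,\alpha_2)=-3$ in $G_2$) minus the opposite-order term; the difference has numerator proportional to $\hbar$ over $x_{1,1}-x_{2,1}$, giving $\Psi(\tilde{\sX}_{[1,2],s})\circeq \hbar x_{1,1}^s/(x_{1,1}-x_{2,1})$. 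Bracketing once more with $\sx_{2,0}$ introduces the factor $\hzeta_{2,2}(x_{2,1}-x_{2,2})=1+\tfrac{2\hbar}{2(x_{2,1}-x_{2,2})}$ together with $\hzeta_{1,2}(x_{1,1}-x_{2,2})$, and after symmetrization over $\{x_{2,1},x_{2,2}\}$ one obtains $\Psi(\tilde{\sX}_{[1,2,2],s})\circeq \hbar^2 x_{1,1}^s/\big((x_{1,1}-x_{2,1})(x_{1,1}-x_{2,2})\big)$ — with the numerical rank-$1$ constant $2$ from $\hzeta_{2,2}$ now absorbed into $\BQ^\times$ under $\circeq$, cf.~\eqref{eq:rank1-power-rat}. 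Continuing, $[1,2,2,2]$ picks up a third $\hbar$, and $[1,2,1,2,2]$ is obtained by shuffling $\Psi([\sx_{1,s},\sx_{2,0}])$ against $\Psi([[\sx_{1,0},\sx_{2,0}],\sx_{2,0}])$; the cross $\hzeta$-factors clear all poles not already present, leaving an overall $\hbar^4$ and a remaining numerator $g\in\BQ[\hbar][\{x_{1,r},x_{2,t}\}]^{\fS_2\times\fS_3}$, whose precise shape is immaterial for the later arguments. For $B_n$ the pattern is identical: each step along $[i,i+1,\dots,j]$ multiplies by $\hzeta_{\ell,\ell+1}(x_{\ell,1}-x_{\ell+1,1})$ with $(\alpha_\ell,\alpha_{\ell+1})=-4$ for short-simple pairs (contributing $\hbar$ per step after the symmetrization collapses), while the ``turnaround'' at node $n$ in $[i,n,j]$ produces the extra factors $(2\hbar\pm(x_{\ell,1}-x_{\ell,2}))$ coming from $\hzeta_{n,n}$ and the doubled occurrence of node $n$, exactly mirroring the $(v^4 x_{\ell,r}-x_{\ell,r'})$ factors of Lemma~\ref{phirvns}.

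There is no serious obstacle here — both lemmas assert only that a straightforward finite symmetrization evaluates to the displayed closed form, and the author's own proofs of Lemmas~\ref{imageEtildeg} and~\ref{phirvns} are simply "straightforward computation". The one point that deserves a sentence of care is bookkeeping the $\BQ^\times$ versus $\BQ[\hbar]^\times$ distinction: under $\circeq$ we discard only genuine nonzero rational numbers, and these arise precisely from the rank-$1$ collisions governed by~\eqref{eq:rank1-power-rat} (factorials $\ell!$ replacing the quantum factorials $[\ell]_{v_i}!$ of the trigonometric case), so one must check that every scalar being absorbed is $\hbar$-independent while every genuine power of $\hbar$ is retained in the numerator. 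Granting that, the computations go through verbatim, and I would simply record the result as "Straightforward computation, parallel to Lemmas~\ref{imageEtildeg} and~\ref{phirvns}."
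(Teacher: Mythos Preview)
Your approach is correct and matches the paper's: these two lemmas are presented there simply as ``straightforward analogues of Lemmas~\ref{imageEtildeg} and~\ref{phirvns}'' with no further argument, which is precisely the direct shuffle computation you describe. One small numerical slip to fix when you actually write it out: in $B_n$ one has $(\alpha_\ell,\alpha_{\ell+1})=-2$ for all adjacent pairs (not $-4$), though this does not affect the structure of the argument.
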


As follows from the above two lemmas, the images $\Psi(\tilde{\sX}_{\beta,s})$ are divisible by $\hbar^{|\beta|-1}$.
This is actually true for any root vectors $\sX_{\beta,s}$ defined in~\eqref{ygrv1}--\eqref{ygrv2}:

\begin{Lem}\label{conrv}
For any $\beta\in\Delta^{+}$ and $s\in\BN$, $\Psi(\sX_{\beta,s})$ is divisible by $\hbar^{|\beta|-1}$.
\end{Lem}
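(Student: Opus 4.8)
\textbf{Proof proposal for Lemma \ref{conrv}.}

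The plan is to prove the divisibility by induction on the height $|\beta|$, exactly mirroring the structure used for the trigonometric root vectors in Lemmas \ref{Gs1} and \ref{phirv}, but tracking powers of $\hbar$ instead of the explicit factorized forms. The base case $|\beta|=1$ is trivial, since $\Psi(\sX_{[i],s})\circeq x_{i,1}^{s}$ is divisible by $\hbar^{0}=1$. For the inductive step, observe that every root vector $\sX_{\beta,s}$ defined in \eqref{ygrv1}--\eqref{ygrv2} is, by construction, a Lie-bracket expression that can be written as a single commutator $\sX_{\beta,s}=[\sX_{\alpha_{1},s_{1}},\sX_{\alpha_{2},s_{2}}]$ where $\alpha_{1},\alpha_{2}\in\Delta^{+}$, $\alpha_{1}+\alpha_{2}=\beta$, and $|\alpha_{1}|+|\alpha_{2}|=|\beta|$ (for $\beta\ne[1,2,1,2,2]$ take $\alpha_{1}=[i_{1},\dots,i_{\ell-1}]$ and $\alpha_{2}=[i_{\ell}]$; for $\beta=[1,2,1,2,2]$ take $\alpha_{1}=[1,2]$ and $\alpha_{2}=[1,2,2]$ as in \eqref{ygrv2}). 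Applying $\Psi$, we get $\Psi(\sX_{\beta,s})=\Psi(\sX_{\alpha_{1},s_{1}})\star\Psi(\sX_{\alpha_{2},s_{2}})-\Psi(\sX_{\alpha_{2},s_{2}})\star\Psi(\sX_{\alpha_{1},s_{1}})$. By the inductive hypothesis, $\Psi(\sX_{\alpha_{1},s_{1}})$ is divisible by $\hbar^{|\alpha_{1}|-1}$ and $\Psi(\sX_{\alpha_{2},s_{2}})$ by $\hbar^{|\alpha_{2}|-1}$, so the product (in either order) is divisible by $\hbar^{|\alpha_{1}|+|\alpha_{2}|-2}=\hbar^{|\beta|-2}$.

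It therefore remains to gain one extra power of $\hbar$ from the commutator itself. The key point is that the shuffle product $\star$ in $\bar\BW$ involves the kernel $\hzeta_{i,j}(x_{i,r}-x_{j,s})=1+\frac{(\alpha_{i},\alpha_{j})\hbar}{2(x_{i,r}-x_{j,s})}$, and when one forms the difference $A\star B - B\star A$ of two shuffle elements supported in complementary gradings, the ``constant'' ($\hbar$-free) parts of the two symmetrizations cancel: indeed, setting $\hbar=0$ turns $\hzeta_{i,j}$ into $1$, so the shuffle product degenerates to the ordinary commutative product of rational functions, which is symmetric, whence $(A\star B)|_{\hbar=0}=(B\star A)|_{\hbar=0}$ and the difference vanishes at $\hbar=0$. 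Since $\Psi(\sX_{\beta,s})\in\bar\BW_{\unl{k}}$ is (after clearing the pole denominators of \eqref{poleyg}) a polynomial in $\hbar$ with coefficients polynomial in the $x$-variables, vanishing at $\hbar=0$ means divisibility by $\hbar$. Combining, $\Psi(\sX_{\beta,s})$ is divisible by $\hbar\cdot\hbar^{|\beta|-2}=\hbar^{|\beta|-1}$.

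The one subtlety — and the step I expect to require the most care — is making the ``gain one $\hbar$ from the commutator'' argument rigorous at the level of the \emph{numerator} $f$ rather than the rational function $F$ itself, i.e.\ confirming that dividing $F$ by $\hbar^{|\beta|-1}$ leaves an element still lying in $\bar\BW_{\unl{k}}$ over $\BQ[\hbar]$ (no spurious denominators in $\hbar$ are introduced). This follows because the pole denominator $\prod_{i<j}^{a_{ij}\ne 0}\prod_{r,s}(x_{i,r}-x_{j,s})$ is $\hbar$-free, so $F$ divisible by $\hbar^{m}$ in $\BQ[\hbar][\{x_{i,r}^{\pm1}\}]$-localized sense is equivalent to its numerator $f$ being divisible by $\hbar^{m}$ in $\BQ[\hbar][\{x_{i,r}\}]^{\fS_{\unl{k}}}$; and the latter divisibility is detected by setting $\hbar=0$, which is the computation above. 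I would also remark that this lemma is the rational shadow of the fact that $c_{\beta}$ in Lemmas \ref{Gs1}, \ref{phirv} carries the factor $\langle 1\rangle_{v}^{\,\kappa_{\beta}}\doteq(v-v^{-1})^{\kappa_{\beta}}$, which rationally degenerates to $\hbar^{\kappa_{\beta}}$ with $\kappa_{\beta}=|\beta|-1$ (for $\beta\ne[1,2,1,2,2]$); the case $\beta=[1,2,1,2,2]$, where the trigonometric $\kappa_{\beta}=|\beta|+1$, is still covered since $|\beta|-1\le|\beta|+1$ and we only claim divisibility by the smaller power $\hbar^{|\beta|-1}$.
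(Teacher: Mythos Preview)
Your proposal is correct and follows essentially the same approach as the paper: induction via the commutator decomposition built into the definitions \eqref{ygrv1}--\eqref{ygrv2}, with the extra power of $\hbar$ coming from the commutator. The paper compresses your ``set $\hbar=0$ and observe commutativity'' step into the single identity $\hzeta_{i,j}(z)-\hzeta_{j,i}(-z)=\frac{(\alpha_i,\alpha_j)}{z}\hbar$, which is exactly the same observation phrased pointwise rather than as an evaluation at $\hbar=0$.
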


\begin{proof}
The proof is similar to that of Lemma \ref{phirv}, and follows immediately from the equality
  \[\hzeta_{i,j}(z)-\hzeta_{j,i}(-z)=\frac{(\alpha_{i},\alpha_{j})}{z}\hbar .\qedhere\]
\end{proof}

Furthermore, invoking the constants $\kappa_{\beta}$ given by \eqref{kappaG} in type $G_{2}$ and by \eqref{kappaB}
in type $B_{n}$, we have the following straightforward analogue of Lemmas \ref{Gs1} and \ref{phirv}:

\begin{Lem}\label{phiyangianrs}
In both types $G_{2}$ and $B_{n}$, we have:
\begin{equation}
  \phi_{\beta}(\Psi(\sX_{\beta,s}))\circeq
  \hbar^{\kappa_{\beta}}\cdot p_{\beta,s}(w_{\beta,1}) \qquad \forall\ (\beta,s)\in\Delta^{+}\times \BN,
\end{equation}
where $p_{\beta,s}(w)\in \BQ[\hbar][w]$ is a monic degree $s$ polynomial in $w$ over $\BQ[\hbar]$.
\end{Lem}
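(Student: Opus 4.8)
\textbf{Proof proposal for Lemma \ref{phiyangianrs}.}
The plan is to mimic the proofs of Lemmas \ref{Gs1} and \ref{phirv}, keeping in mind that in the rational setting the only difference is that powers of $\langle m\rangle_v$ get replaced by powers of $\hbar$ (by Lemma \ref{conrv}, coming from $\hzeta_{i,j}(z)-\hzeta_{j,i}(-z)=\frac{(\alpha_i,\alpha_j)}{z}\hbar$) and quantum-integer factors get replaced by ordinary integers (via the rank~$1$ formula \eqref{eq:rank1-power-rat}, which is responsible for the ambiguity in $\circeq$). First I would observe that the argument is local in $(\beta,s)$ and proceed by induction on the height $|\beta|$, with the base case $|\beta|=1$ being $\phi_{[i]}(\Psi(\sx_{i,s}))\circeq w_{[i],1}^s=p_{[i],s}(w_{[i],1})$, a monic degree $s$ polynomial. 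For the inductive step, write $\beta=[i_1,\dots,i_\ell]$ (or, in the $G_2$ case of $\beta=[1,2,1,2,2]$, use the bracketing \eqref{ygrv2}), peel off the last bracket $\sX_{\beta,s}=[\sX_{\alpha,r},\sx_{i_\ell,s_\ell}]$ (respectively the last factor in \eqref{ygrv2}), and use — exactly as in \eqref{eq:simplification-1} and \eqref{eq:simplification-2} — that under $\phi_\beta$ the term $\Psi(\sX_{\alpha,r})\star\Psi(\sx_{i_\ell,s_\ell})$ vanishes, so only $\Psi(\sx_{i_\ell,s_\ell})\star\Psi(\sX_{\alpha,r})$ survives. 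This vanishing is the rational analogue of $\phi_{\alpha_1+\alpha_2}(\Psi(E_{\alpha_1,s_1})\star\Psi(E_{\alpha_2,s_2}))=0$, i.e.\ a special case of the rational counterpart of Propositions \ref{vanishG}/\ref{vanishB} (which I would cite as the rational version of Lemma \ref{vanish}, to be established alongside).

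Concretely, after applying the shuffle product \eqref{shuffleproduct} with $\hzeta$ in place of $\zeta$ and specializing, the surviving symmetrization collapses to a single term, and $\phi_\beta(\Psi(\sX_{\beta,s}))$ becomes $\phi_\beta(\Psi(\sX_{\alpha,r}))$ times $(x^{(\beta,1)}_{i_\ell,\ast})^{s_\ell}$ times a product of $\hzeta$-factors whose $\phi_\beta$-specializations each contribute a scalar multiple of $\hbar$ (never zero, by the height/root-growth bookkeeping that is identical to type $B_n$ in the trigonometric case, since the \emph{relative} positions of the specialized $x$-variables in \eqref{speG}, \eqref{spe} have exactly the same combinatorics as their rational analogues). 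By the induction hypothesis $\phi_\alpha(\Psi(\sX_{\alpha,r}))\circeq \hbar^{\kappa_\alpha}p_{\alpha,r}(w_{\alpha,1})$ with $p_{\alpha,r}$ monic of degree $r$; since all specialized variables become $w_{\beta,1}$ plus an integer multiple of $\hbar$, multiplying by $(w_{\beta,1}+c\hbar)^{s_\ell}$ keeps the result a monic polynomial in $w_{\beta,1}$ of degree $r+s_\ell=s$, and the new $\hbar$-factors bring the total power of $\hbar$ up from $\kappa_\alpha$ to $\kappa_\beta$ — this last equality being a direct reading of \eqref{kappaG} (for $G_2$, noting the jump for $\beta=[1,2,1,2,2]$) and \eqref{kappaB} (for $B_n$, where the extra $2(n-j)$ counts precisely the doubled $\hzeta$-factors arising from the $x_{\ell,2}$-variables, exactly as in \eqref{eq:simplification-2}). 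The exceptional $G_2$ root $[1,2,1,2,2]$ is handled just as in Lemma \ref{Gs1}: one uses the bracketing \eqref{ygrv2}, peels the outer bracket, and checks that the relevant cross $\hzeta$-factors between the two inner pieces contribute the extra powers of $\hbar$ recorded in \eqref{cbetag5}, now degenerated to $\hbar^{\kappa_\beta}$ with $\kappa_\beta=|\beta|+1$.

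The main obstacle I expect is not any single computation but rather the careful bookkeeping of which $\hzeta$-factors survive the specialization and with what (possibly vanishing, possibly unit-times-$\hbar$) value — i.e.\ establishing the rational analogues of the vanishing statements in Lemmas \ref{Gs2}, \ref{bstep3} and Propositions \ref{vanishG}, \ref{vanishB} that are invoked implicitly. Fortunately, these are purely combinatorial statements about the relative order of the specialized variables, and the substitutions \eqref{speG}, \eqref{spe} have rational analogues with the \emph{same} relative-order data (only the scaling by $v^{\BZ}$ versus the shift by $\hbar\BZ$ differs), so the existing proofs transcribe verbatim after the replacement $\zeta\rightsquigarrow\hzeta$, $v^k\rightsquigarrow$ (shift by $\tfrac{k}{2}\hbar$). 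The only genuinely new input is checking that each such surviving factor is \emph{nonzero} as an element of $\BQ[\hbar][\{w\}]$, i.e.\ that $\hzeta_{i,j}$ evaluated at a nonzero shift is a nonzero scalar multiple of $\hbar/(\text{shift})$; this is immediate from \eqref{eq:zeta-rational}. Hence the lemma follows by the same induction as its trigonometric counterparts.
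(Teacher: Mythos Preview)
Your proposal is correct and follows essentially the same approach as the paper, which in fact does not spell out a separate proof at all but merely declares Lemma~\ref{phiyangianrs} to be ``the straightforward analogue of Lemmas~\ref{Gs1} and~\ref{phirv}''. Your detailed write-up---peeling off the last bracket, invoking the rational vanishing $\phi_{\alpha_1+\alpha_2}(\Psi(\sX_{\alpha_1,s_1})\star\Psi(\sX_{\alpha_2,s_2}))=0$ for $\alpha_1<\alpha_2$, and tracking that specialized $\hzeta$-numerators contribute nonzero $\BQ$-multiples of $\hbar$ while the specialized $x$-variables $(w_{\beta,1}+c\hbar)^{s_\ell}$ contribute monic factors in $w_{\beta,1}$---is exactly the intended transcription of \eqref{eq:simplification-1}--\eqref{eq:simplification-2} to the rational setting.
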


Recall that $\sH$ denotes the set of all functions $h\colon \Delta^+\times \BN\to \BN$ with finite support.
For any $\unl{k}\in \BN^I$ and $\unl{d}\in \mathrm{KP}(\unl{k})$, we define the subsets $\sH_{\unl{k}}$,
$\sH_{\unl{k},\unl{d}}$ of $\sH$ similarly to \eqref{hunlkunld}, but with $h\in H$ been replaced by $h\in \sH$.
Using Lemma \ref{phiyangianrs} and arguing exactly as in Sections \ref{tG}--\ref{type B}, we obtain the following
analogues of Lemmas \ref{shuffleelement} and \ref{vanish} for the Yangians of types $G_{2}$ and $B_{n}$:

\begin{Lem}\label{shuffleeleyang}
For  any $h\in\sH_{\unl{k},\unl{d}}$, we have
\begin{equation}
  \phi_{\unl{d}}(\Psi(\sX_{h}))\circeq
  \hbar^{\sum_{\beta\in\Delta^{+}}d_{\beta}\kappa_{\beta}}\cdot
  \prod_{\beta,\beta'\in \Delta^+}^{\beta<\beta'}\hat{G}_{\beta,\beta'}\cdot \prod_{\beta\in\Delta^{+}}\hat{G}_{\beta}\cdot
  \prod_{\beta\in\Delta^{+}}\hat{P}_{\lambda_{h,\beta}},
\end{equation}
where $\hat{G}_{\beta,\beta'},\hat{G}_{\beta}$ are \underline{independent of $h\in \sH_{\unl{k},\unl{d}}$} and are
rational counterparts of $G_{\beta,\beta'},G_{\beta}$ from Lemma {\rm \ref{shuffleelement}}
(obtained by replacing factors $(x-v^{t}y)$ with $(x-y-\frac{t}{2}\hbar)$), while
\begin{equation}\label{hlp-rat}
  \hat{P}_{\lambda_{h,\beta}}={\mathop{Sym}}_{\mathfrak{S}_{d_{\beta}}}
  \left(\prod_{s=1}^{d_{\beta}}p_{\beta,r_{\beta}(h,s)}(w_{\beta,s})
  \prod_{1\leq s<r\leq d_{\beta}}\Big(1+\frac{(\beta,\beta)\cdot \hbar}{2(w_{\beta,s}-w_{\beta,r})}\Big)\right).
\end{equation}
\end{Lem}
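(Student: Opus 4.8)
The plan is to mimic the proof of Lemma~\ref{shuffleelement} in type $A_n$ (and its type $G_2$ and $B_n$ analogues, Propositions~\ref{spekpG} and~\ref{spekpB}), transporting every step to the rational setting via the dictionary $\zeta_{i,j}(z/w)\rightsquigarrow\hzeta_{i,j}(z-w)$, i.e.\ $(x-v^t y)\rightsquigarrow(x-y-\tfrac t2\hbar)$, and $v$-integers $\rightsquigarrow$ ordinary integers (as recorded in~\eqref{eq:rank1-power-rat}). First I would recall from~\eqref{reppbwd} that $\sX_h=\prod^{\rightarrow}_{\beta\in\Delta^+}\big(\sX_{\beta,r_\beta(h,1)}\star\cdots\star\sX_{\beta,r_\beta(h,d_\beta)}\big)$, choose the special splitting of the variables $\{x_{i,\ell}\}$ into groups $\{x^{(\beta,s)}_{i,t}\}$ as in Sections~\ref{tG}--\ref{type B}, so that $\Psi(\sX_h)\circeq\sum_{\sigma\in\mathrm{Sh}_{\unl d}}\sigma(\hat F_h)$ where $\hat F_h$ is the rational analogue of~\eqref{eq:part-symm-2} (the product of $\Psi(\sX_{\beta,r_\beta(h,s)})$ with the cross $\hzeta$-factors between distinct $(\beta,s)$-blocks). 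Then I would invoke the rational counterpart of Lemma~\ref{Gs2}/Lemma~\ref{bstep3} — whose proof uses only the vanishing of $\hzeta_{i,j}(0)$-type specializations and is therefore formally identical — to conclude $\phi_{\unl d}(\sigma(\hat F_h))=0$ unless $\sigma\in\mathfrak S_{\unl d}$.

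Next, for $\sigma\in\mathfrak S_{\unl d}$ I would split the $\phi_{\unl d}$-specialization of $\sigma(\hat F_h)$ into three pieces exactly as in the proofs of Propositions~\ref{spekpG} and~\ref{spekpB}: (i) the cross $\hzeta$-factors between distinct positive roots $\beta<\beta'$, which are $\mathfrak S_{\unl d}$-invariant and specialize to $\prod_{\beta<\beta'}\hat G_{\beta,\beta'}$ with $\hat G_{\beta,\beta'}$ obtained from $G_{\beta,\beta'}$ by the rational substitution; (ii) for each fixed $\beta$, the $\hzeta$-factors internal to the $\beta$-blocks among themselves (i.e.\ between $x^{(\beta,p)}_{*,*}$ and $x^{(\beta,q)}_{*,*}$ with $p\ne q$), which — after summing over $\mathfrak S_{d_\beta}$ — produce the product of $\hat G_\beta$ with the ``rank~$1$'' symmetrized expression $\hat P_{\lambda_{h,\beta}}$ of~\eqref{hlp-rat}, using Lemma~\ref{phiyangianrs} to evaluate each $\phi_\beta(\Psi(\sX_{\beta,r_\beta(h,s)}))\circeq\hbar^{\kappa_\beta}p_{\beta,r_\beta(h,s)}(w_{\beta,s})$; and (iii) collecting the powers of $\hbar$: each of the $d_\beta$ root vectors $\sX_{\beta,\bullet}$ contributes $\hbar^{\kappa_\beta}$ by Lemma~\ref{phiyangianrs}, so the total $\hbar$-power is $\hbar^{\sum_\beta d_\beta\kappa_\beta}$, which I factor out front. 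Keeping track of which $\hbar$-powers are absorbed into $\hat G_\beta$ versus left explicit is a bookkeeping point but follows verbatim from the trigonometric argument. The monicity of $p_{\beta,s}$ in Lemma~\ref{phiyangianrs} is what makes the $\hat P_{\lambda_{h,\beta}}$ in~\eqref{hlp-rat} well-defined with the stated leading behaviour.

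The main obstacle — really the only nontrivial point — is establishing Lemma~\ref{phiyangianrs} itself, i.e.\ that $\phi_\beta(\Psi(\sX_{\beta,s}))\circeq\hbar^{\kappa_\beta}p_{\beta,s}(w_{\beta,1})$ with $p_{\beta,s}$ \emph{monic of degree $s$}. For the particular root vectors $\tilde{\sX}_{\beta,s}$ this is immediate from Lemmas~\ref{rootphiG}--\ref{rootphiB} by direct specialization. For a general choice of decomposition $s=s_1+\cdots$ in~\eqref{ygrv1}--\eqref{ygrv2} one argues by induction on the length of $\beta$ exactly as in~\eqref{eq:simplification-1}--\eqref{eq:simplification-2}: peeling off the last generator $\sx_{i_\ell,s_\ell}$, the $\lambda$-free commutator in the Yangian becomes (after specialization) a single term since the ``wrong-order'' term $\Psi(\sX_{\alpha,r})\star\Psi(\sx_{i_\ell,s_\ell})$ is killed by $\phi_\beta$ via a $\hzeta_{i,i}(0)$ or $\hzeta_{j,i}(0)$ vanishing, and the remaining symmetrization over the last pair of equal-index variables contributes one factor of $\hbar$ (the rational analogue of $\langle 2\rangle_v$) together with the degree count; the exponent $\kappa_\beta$ is then read off as $|\beta|-1$ (or $|\beta|+1$ for $[1,2,1,2,2]$ in $G_2$, resp.\ $|\beta|+2(n-j)-1$ for $[i,n,j]$ in $B_n$), matching~\eqref{kappaG} and~\eqref{kappaB}. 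The monicity and degree claim for $p_{\beta,s}$ is preserved at each inductive step because $\Psi(\sx_{i_\ell,s_\ell})=x_{i_\ell,1}^{s_\ell}$ contributes a monic factor and the specialization of the $\hzeta$-factors is $\hbar$-linear with $w$-leading coefficient $1$. I would note the vanishing statement $\phi_{\alpha_1+\alpha_2}(\Psi(\sX_{\alpha_1,s_1})\star\Psi(\sX_{\alpha_2,s_2}))=0$ needed here follows from the rational counterpart of Propositions~\ref{vanishG}/\ref{vanishB}, whose proofs, again, use only $\hzeta$-vanishings and go through unchanged; alternatively one can take it as part of the same inductive package. Once Lemma~\ref{phiyangianrs} is in hand, the assembly of Lemma~\ref{shuffleeleyang} is purely formal.
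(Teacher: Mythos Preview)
Your proposal is correct and follows the same approach as the paper, which simply states that the lemma is obtained ``using Lemma~\ref{phiyangianrs} and arguing exactly as in Sections~\ref{tG}--\ref{type B}'' without further detail. Your outline faithfully unpacks that directive: the rational analogue of Lemma~\ref{Gs2}/\ref{bstep3} to kill summands with $\sigma\notin\mathfrak S_{\unl d}$, the $\mathfrak S_{\unl d}$-invariant cross-factor $\prod\hat G_{\beta,\beta'}$, and the rank~$1$ reduction to $\hat P_{\lambda_{h,\beta}}$ via Lemma~\ref{phiyangianrs}. Note that your extended discussion of how to prove Lemma~\ref{phiyangianrs} itself is extra---that lemma is stated separately in the paper and may be taken as given for the purposes of Lemma~\ref{shuffleeleyang}.
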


This again features a ``rank $1$ reduction'': each $\hat{P}_{\lambda_{h,\beta}}$ from~\eqref{hlp-rat} can be viewed as
the shuffle product $p_{\beta,r_{\beta}(h,1)}(x)\star\cdots \star p_{\beta,r_{\beta}(h,d_\beta)}(x)$ in the $A_1$-type
shuffle algebra $\bar\BW$, evaluated at $\{w_{\beta,s}\}_{s=1}^{d_\beta}$.
The following result is a Yangian counterpart of Lemma~\ref{vanish} for types $G_2$ and $B_n$:

\begin{Lem}\label{vanishyg}
For  any $h\in\sH_{\unl{k},\unl{d}}$ and $\unl{d}'<\unl{d}$, we have $\phi_{\unl{d}'}(\Psi(\sX_{h}))=0$.
\end{Lem}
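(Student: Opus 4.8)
The plan is to transport, essentially verbatim, the arguments of Propositions~\ref{vanishG} (type $G_{2}$) and~\ref{vanishB} (type $B_{n}$) from the trigonometric shuffle algebra $S$ to its rational counterpart $\bar\BW$. First I would record the rational analogue of the symmetrization formula~\eqref{eq:part-symm}: fixing a special splitting of the variables $\{x_{i,l}\}$ into groups $\{x^{(\beta,s)}_{i,t}\}$ exactly as in Sections~\ref{tG}--\ref{type B}, the definition of the shuffle product on $\bar\BW$ gives $\Psi(\sX_{h}) \circeq \sum_{\sigma\in\text{Sh}_{\unl{d}}}\sigma(\hat{F}_{h})$, where $\hat{F}_{h}$ is obtained from $F_{h}$ of~\eqref{eq:part-symm-2} (resp.\ its $B_{n}$ analogue) by replacing every factor $\zeta_{i,j}(x^{(\alpha,p)}_{i,\ell}/x^{(\beta,q)}_{j,r})$ with $\hzeta_{i,j}(x^{(\alpha,p)}_{i,\ell}-x^{(\beta,q)}_{j,r})$. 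Thus it suffices to prove $\phi_{\unl{d}'}(\sigma(\hat{F}_{h}))=0$ for every $\sigma\in\text{Sh}_{\unl{d}}$ whenever $\unl{d}'<\unl{d}$.

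Next I would spell out the single ``dictionary'' entry that makes the transport work: the numerator of $\hzeta_{i,j}(z)=\tfrac{z+\frac{(\alpha_{i},\alpha_{j})}{2}\hbar}{z}$ vanishes at $z=-\tfrac{(\alpha_{i},\alpha_{j})}{2}\hbar$, the additive analogue of the zero $x/y=v^{-(\alpha_{i},\alpha_{j})}$ of the numerator of $\zeta_{i,j}$, and $\hzeta_{i,j}\equiv 1$ precisely when $a_{ij}=0$, exactly as for $\zeta_{i,j}$. Comparing this with the rational specialization maps $\phi_{\unl{d}}$ of the Yangian setup ($x^{(\beta,s)}_{1,t}\mapsto w_{\beta,s}+t\hbar$, $x^{(\beta,s)}_{2,t}\mapsto w_{\beta,s}-\tfrac32\hbar+t\hbar$ in type $G_{2}$; $x^{(\beta,s)}_{i,1}\mapsto w_{\beta,s}-i\hbar$, $x^{(\beta,s)}_{i,2}\mapsto w_{\beta,s}-(2n-i-1)\hbar$ in type $B_{n}$), one checks by inspection that for a pair of variables attached to the \emph{same} positive root $\gamma$ and occupying the relevant ``adjacent'' spots, the corresponding $\hzeta$-factor specializes to $0$ under $\phi_{\unl{d}}$ in exactly the same combinatorial situations as the corresponding $\zeta$-factor does in the trigonometric case (e.g.\ $\phi_{\unl{d}}(\hzeta_{1,2}(x^{(\gamma,r)}_{1,1}-x^{(\gamma,r)}_{2,1}))=0$ for $G_{2}$, and $\phi_{\unl{d}}(\hzeta_{n,n}(x^{(\gamma,r)}_{n,1}-x^{(\gamma,r)}_{n,2}))=0$ for $B_{n}$, and so on). Since the $B_{n}$ argument runs by induction on $n$, I would also observe that the $B_{2}$ base case, as well as the reduction to the first interval $\Delta^{+}_{1}$ via the inductive hypothesis for $B_{n-1}$ applied to roots $>[1,n,2]$, go through unchanged.

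With these ingredients the case-by-case analysis of Propositions~\ref{vanishG} and~\ref{vanishB} applies word for word: given $\unl{d}'<\unl{d}$, pick the smallest $\beta$ with $d'_{\beta}<d_{\beta}$; in type $B_{n}$ reduce to $\beta\in\Delta^{+}_{1}$; assume without loss of generality $d'_{\alpha}=0$ for all $\alpha\leq\beta$; then, tracking the images $\sigma(x^{(\beta,1)}_{1,1})$ (and, for the longest roots $[1,2,1,2,2]$ in $G_{2}$ and $[1,n,j]$ in $B_{n}$, also $\sigma(x^{(\beta,1)}_{1,2})$), force some variable to move to a strictly larger root $\gamma>\beta$ and exhibit a $\hzeta$-factor (or a product of two) in $\sigma(\hat{F}_{h})$ joining two variables at $\gamma$, which therefore specializes to $0$ under $\phi_{\unl{d}'}$. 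The impossibility cases ($\beta=[1,2,2,2]$ for $G_{2}$, $\beta=[1,n,2]$ for $B_{n}$) are excluded exactly as there, by comparing the two expressions $\sum_{\alpha}d_{\alpha}\alpha=\sum_{\alpha}d'_{\alpha}\alpha$ in the root lattice under the hypothesis $d'_{\alpha}=0$ for $\alpha\leq\beta$. I do not expect a genuine obstacle: the only point deserving care is to confirm that passing to the rational setting introduces no new $\hbar$-degeneracies, i.e.\ that a $\hzeta$-factor invoked for a zero contribution is never simultaneously a pole, which is immediate from $\hzeta_{i,j}(z)=\frac{z+\frac{(\alpha_{i},\alpha_{j})}{2}\hbar}{z}$; so the work reduces to the bookkeeping of carrying the two existing proofs across the correspondence $\tfrac{z}{w}-v^{k}\leftrightarrow z-w-\tfrac{k}{2}\hbar$.
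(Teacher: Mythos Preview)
Your proposal is correct and matches the paper's approach: the paper does not give a separate proof of Lemma~\ref{vanishyg} but simply states that it follows by ``arguing exactly as in Sections~\ref{tG}--\ref{type B}'' (see the sentence preceding Lemmas~\ref{shuffleeleyang}--\ref{vanishyg}), which is precisely the transport you outline via the dictionary $\tfrac{z}{w}-v^{k}\leftrightarrow z-w-\tfrac{k}{2}\hbar$.
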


Combining Theorem \ref{yangianbasis} and Lemmas \ref{shuffleeleyang}--\ref{vanishyg}, we obtain
(cf.~\cite[Proposition 6.16]{Tsy18}):

\begin{Prop}\label{injyang}
The homomorphism $\Psi$ of~\eqref{eq:Psi-homom-rat} is injective.
\end{Prop}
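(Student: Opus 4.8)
The plan is to deduce the injectivity of $\Psi$ from the linear independence of the images $\{\Psi(\sX_{h})\}_{h\in\sH}$ of the $\BQ[\hbar]$-basis of $\Yangian$ furnished by Theorem~\ref{yangianbasis}, following the strategy of~\cite[Proposition 6.16]{Tsy18}. Since $\Psi$ is a morphism of $\BN^{I}$-graded algebras (each $\sx_{i,r}$ has degree $\mathbf{1}_{i}$, and $\Psi(\sx_{i,r})\in\bar\BW_{\mathbf{1}_i}$), any relation splits into homogeneous components, so it suffices to fix $\unl{k}\in\BN^{I}$ and show that $\{\Psi(\sX_{h})\}_{h\in\sH_{\unl{k}}}$ are linearly independent over $\BQ[\hbar]$.

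First I would suppose for contradiction that $\sum_{h\in\sH_{\unl{k}}} c_{h}\,\Psi(\sX_{h})=0$ is a nontrivial $\BQ[\hbar]$-linear relation, and let $\unl{d}_{0}\in\mathrm{KP}(\unl{k})$ be minimal, with respect to the order induced by the convex order on $\Delta^{+}$, among those Kostant partitions $\unl{d}$ for which $c_{h}\neq 0$ for some $h\in\sH_{\unl{k},\unl{d}}$. Applying $\phi_{\unl{d}_{0}}$ and using Lemma~\ref{vanishyg} (which kills $\Psi(\sX_{h})$ when $\deg(h)>\unl{d}_{0}$, while no $h$ with $\deg(h)<\unl{d}_{0}$ occurs by minimality), I obtain
\[
  \sum_{h\in\sH_{\unl{k},\unl{d}_{0}}} c_{h}\,\phi_{\unl{d}_{0}}(\Psi(\sX_{h}))=0 .
\]
By Lemma~\ref{shuffleeleyang}, each summand equals, up to a nonzero rational scalar, the product of the $h$-independent factor $\hbar^{\sum_{\beta}d_{\beta}\kappa_{\beta}}\cdot\prod_{\beta<\beta'}\hat{G}_{\beta,\beta'}\cdot\prod_{\beta}\hat{G}_{\beta}$ (a nonzero polynomial in the $w$-variables and $\hbar$) with $\prod_{\beta\in\Delta^{+}}\hat{P}_{\lambda_{h,\beta}}$. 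Cancelling this non-zero-divisor in the domain of $\fS_{\unl{d}_{0}}$-symmetric polynomials over $\BQ[\hbar]$, the relation reduces to $\sum_{h\in\sH_{\unl{k},\unl{d}_{0}}} c_{h}\,\prod_{\beta}\hat{P}_{\lambda_{h,\beta}}=0$ after absorbing the scalars into the $c_{h}$'s.

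Next I would invoke the ``rank $1$'' linear independence of the $\hat{P}$'s: for a fixed $\beta$, since $p_{\beta,s}(w)$ is monic of degree $s$ (Lemma~\ref{phiyangianrs}) and the correction factor $\prod_{s<r}(1+\tfrac{(\beta,\beta)\hbar}{2(w_{\beta,s}-w_{\beta,r})})$ in~\eqref{hlp-rat} contributes $1+(\text{lower order})$, the polynomial $\hat{P}_{\lambda_{h,\beta}}$ is the monomial symmetric function $m_{\lambda_{h,\beta}}$ plus a $\BQ[\hbar]$-combination of strictly smaller ones; hence the $\hat{P}_{\lambda_{h,\beta}}$ are $\BQ[\hbar]$-linearly independent (the rational counterpart of the computation behind~\eqref{eq:rank1-power-rat}, cf.~\cite{Tsy18}). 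As the variable sets for distinct $\beta$ are disjoint and $h\mapsto(\lambda_{h,\beta})_{\beta\in\Delta^{+}}$ is a bijection from $\sH_{\unl{k},\unl{d}_{0}}$ onto tuples of multisets of the prescribed sizes $d_{\beta}$, the products $\prod_{\beta}\hat{P}_{\lambda_{h,\beta}}$ are themselves linearly independent over $\BQ[\hbar]$. Thus $c_{h}=0$ for all $h\in\sH_{\unl{k},\unl{d}_{0}}$, contradicting the choice of $\unl{d}_{0}$; this proves the linear independence of $\{\Psi(\sX_{h})\}$ and hence the injectivity of $\Psi$.

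The real work here is already done in the inputs: the specialization identities of Lemmas~\ref{shuffleeleyang}--\ref{vanishyg} and the basis Theorem~\ref{yangianbasis}. Within the argument itself, the only point demanding care is that $\BQ[\hbar]$ is not a field, so one must argue with genuine $\BQ[\hbar]$-linear independence rather than over its fraction field; this is harmless since the relevant ring of symmetric polynomials over $\BQ[\hbar]$ is a domain, the $h$-independent prefactor is a non-zero-divisor there, and the triangularity of $\hat{P}_{\lambda}$ against a monomial-symmetric-function basis holds over $\BQ[\hbar]$ verbatim. As promised in Theorem~\ref{yangianbasis}, this also upgrades the basis statement from the special root vectors $\tilde{\sX}_{h}$ to the general ones $\sX_{h}$.
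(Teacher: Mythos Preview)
Your argument is correct and is exactly the approach the paper takes (it merely cites Theorem~\ref{yangianbasis} and Lemmas~\ref{shuffleeleyang}--\ref{vanishyg} together with~\cite[Proposition 6.16]{Tsy18}); your write-up just unpacks that one line. One small caution on the logical order: the proof of Theorem~\ref{yangianbasis} only establishes the basis property for the specific root vectors $\{\tilde{\sX}_{h}\}$, deferring the general $\{\sX_{h}\}$ to Theorem~\ref{yangshuffle}, which in turn invokes Proposition~\ref{injyang}; so to avoid circularity you should take $\{\tilde{\sX}_{h}\}$ as your basis input (your linear-independence argument via Lemmas~\ref{shuffleeleyang}--\ref{vanishyg} applies verbatim to this special case), and your closing remark that injectivity alone ``upgrades the basis statement'' overreaches---you get linear independence of the general $\{\sX_{h}\}$, but spanning still requires Proposition~\ref{spanyang} as in Theorem~\ref{yangshuffle}.
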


Following \cite[Definition 3.27]{Tsy19}, we introduce:

\begin{Def}
$F\in \bar\BW_{\unl{k}}$ is \textbf{good} if $\phi_{\unl{d}}(F)$ is divisible by
$\hbar^{\sum_{\beta\in \Delta^{+}}d_{\beta}\kappa_{\beta}}$ for any $\unl{d}\in\text{\rm KP}(\unl{k})$.
\end{Def}

Let $\BW_{\unl{k}}$ be the $\BQ[\hbar]$-submodule of all good elements in $\bar\BW_{\unl{k}}$, and set
$\BW:=\bigoplus_{\unl{k}\in\BN^{I}}\BW_{\unl{k}}$.

\begin{Prop}\label{subsetyang}
$\Psi(\Yangian)\subset \BW$.
\end{Prop}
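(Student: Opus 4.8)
The plan is to show that $\Psi$ sends the algebra generators $\sx_{i,r}$ of $\Yangian$ to good elements and that the set $\BW$ of good elements is a subalgebra of $\bar\BW$; since $\Psi$ is an algebra homomorphism by~\eqref{eq:Psi-homom-rat}, this will immediately give $\Psi(\Yangian)\subset\BW$. The image $\Psi(\sx_{i,r})=x_{i,1}^{r}\in\bar\BW_{\mathbf{1}_i}$ lies in a graded piece with $\unl{k}=\mathbf 1_i$, for which the only Kostant partition is $\unl{d}=\{d_{[i]}=1\}$ (in type $G_2$, $[i]$ is a simple root; in type $B_n$ similarly $\alpha_i=[i,i]$), and then $\phi_{\unl d}(x_{i,1}^r)\circeq w_{[i],1}^r$, which is divisible by $\hbar^{\kappa_{[i]}}=\hbar^{0}$ trivially. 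So the generators are good.

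The heart of the matter is therefore the stability of $\BW$ under the shuffle product $\star$. Take good elements $F\in\bar\BW_{\unl{k}}$, $G\in\bar\BW_{\unl{\ell}}$, and fix $\unl{d}\in\mathrm{KP}(\unl{k}+\unl{\ell})$. I would compute $\phi_{\unl{d}}(F\star G)$ by the same ``split the variables into groups indexed by $(\beta,s)$'' bookkeeping used throughout Sections~\ref{tG}--\ref{type B}: each summand in the symmetrization defining $F\star G$ distributes the $x^{(\beta,s)}_{i,t}$-variables between the $F$-slot and the $G$-slot, and under $\phi_{\unl d}$ a summand survives only if the partition of each group's variables into the two slots is compatible with the monotonicity forced by the $\hzeta$-factors (exactly as in Lemmas~\ref{Gs2} and~\ref{bstep3}, whose proofs relied only on vanishing of $\hzeta_{i,j}$ at the offending specialization points — and the rational $\hzeta_{i,j}(z)=1+\frac{(\alpha_i,\alpha_j)\hbar}{2z}$ has the same zero/pole pattern as $\zeta_{i,j}$). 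For a surviving summand the group of variables $\{x^{(\beta,s)}_{i,t}\}$ gets split, say with $a$ of them in the $F$-slot forming a sub-root $\beta'$ and the rest in the $G$-slot forming $\beta''$ with $\beta'+\beta''=\beta$; the divisibility of $\phi$ applied to the $F$-part contributes $\hbar$ to a power bounded below by the corresponding sum of $\kappa$'s coming from $F$'s goodness, similarly for $G$, and the ``cross'' $\hzeta$-factors linking the two slots each contribute an extra factor of $\hbar$ at the relevant specialization (by Lemma~\ref{conrv}'s mechanism, $\hzeta_{i,j}(z)-\hzeta_{j,i}(-z)=\frac{(\alpha_i,\alpha_j)}{z}\hbar$). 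Summing the minimal $\hbar$-powers over all $(\beta,s)$ and checking that the count of cross-factors plus the inherited powers is at least $\sum_\beta d_\beta\kappa_\beta$ — using that $\kappa_\beta$ as defined in~\eqref{kappaG},~\eqref{kappaB} behaves subadditively (indeed $\kappa_\beta\le\kappa_{\beta'}+\kappa_{\beta''}+(\text{number of cross }\hzeta\text{'s})$ whenever $\beta=\beta'+\beta''$ with $\beta'<\beta''$) — yields the required divisibility of $\phi_{\unl d}(F\star G)$ by $\hbar^{\sum_\beta d_\beta\kappa_\beta}$.

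The main obstacle I anticipate is verifying that inequality $\kappa_\beta\le\kappa_{\beta'}+\kappa_{\beta''}+\#\{\text{cross }\hzeta\text{-factors}\}$ uniformly, in particular for the ``long'' roots $\beta=[i,n,j]$ in type $B_n$ and $\beta=[1,2,1,2,2]$ in type $G_2$ where $\kappa_\beta$ is larger than $|\beta|-1$; here one must track exactly how many of the pairs $(i,t),(j,t')$ with nonzero $(\alpha_i,\alpha_j)$ straddle the $F$/$G$ cut and confirm that each ``extra'' unit in $\kappa_\beta$ beyond naive height counting is accounted for by a genuine cross-factor. This is essentially the rational shadow of the computations producing the explicit $c_\beta$'s in Lemmas~\ref{Gs1} and~\ref{phirv}, so I expect it to go through by the same case analysis (treating $\beta=[i,j]$, $\beta=[i,n,j]$ for $B_n$, and the six roots for $G_2$ separately), but it is the place where one cannot avoid a careful bookkeeping argument rather than a one-line invocation. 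Once goodness of $\BW$ under $\star$ is established, the proposition follows at once.
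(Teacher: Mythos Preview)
Your approach differs from the paper's and, as sketched, carries a genuine gap. The paper does not attempt to prove that $\BW$ is closed under $\star$; it argues directly that for any word $\sx_{i_1,r_1}\cdots\sx_{i_m,r_m}$ the image $F=\Psi(\sx_{i_1,r_1})\star\cdots\star\Psi(\sx_{i_m,r_m})$ is good. Because each factor is a single variable, \emph{every} $\hzeta$-factor in the symmetrization is already a ``cross'' factor, and one simply tracks, for each group $\{x^{(\beta,s)}_{i,t}\}$, the slot-index function $o(\cdot)$: the summand vanishes unless the $o$-values along the group obey a strict monotone chain, and in that surviving case the $\hzeta$-numerators among the group's variables contribute exactly $\hbar^{\kappa_\beta}$. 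This is a short local count (one paragraph each for $\beta=[i,n,j]$ in type $B_n$ and $\beta=[1,2,1,2,2]$ in type $G_2$), with no inductive appeal to goodness of sub-pieces.

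Your route asks for strictly more: closure of $\BW$ under $\star$ is true a posteriori (since Theorem~\ref{yangshuffle} gives $\BW=\Psi(\Yangian)$), but is not available here. The gap is the step ``the divisibility of $\phi$ applied to the $F$-part contributes $\hbar$ to a power bounded below by the corresponding sum of $\kappa$'s from $F$'s goodness''. Goodness of $F$ constrains $\phi_{\unl{d}'}(F)$ only at the very specific evaluation loci coming from $\unl{d}'\in\mathrm{KP}(\unl{k})$; when you restrict $\phi_{\unl{d}}$ to the $F$-slot, the resulting specialization of the $F$-variables need not match any such locus beyond the trivial simple-root partition, which yields only $\hbar^0$. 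You would therefore first have to prove that in every surviving summand the $F$-slot variables within each $(\beta,s)$-group assemble into a genuine sub-root $\beta'$ \emph{with the correct $\phi_{\beta'}$-pattern}, and then check the subadditivity $\kappa_\beta\le\kappa_{\beta'}+\kappa_{\beta''}+\#(\text{cross }\hzeta)$. Both steps amount to exactly the local case analysis the paper performs directly, so the subalgebra detour buys no economy. (A side remark: the identity from Lemma~\ref{conrv}, $\hzeta_{i,j}(z)-\hzeta_{j,i}(-z)=\frac{(\alpha_i,\alpha_j)}{z}\hbar$, governs commutators; what actually produces $\hbar$-factors in a single product $F\star G$ is that the individual numerators $z+\tfrac{(\alpha_i,\alpha_j)}{2}\hbar$ specialize to multiples of $\hbar$, which is precisely what the paper's $o$-value bookkeeping makes explicit.)
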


\begin{proof}
For any $m\in\BN$, $i_{1},\dots,i_{m}\in I$, $r_{1},\dots,r_{m}\in\BN$, set
$F\coloneqq \Psi(\sx_{i_{1},r_{1}}\cdots \sx_{i_{m},r_{m}})$, and let $f$ be the numerator of $F$ from \eqref{poleyg}.
Set $\unl{k}=\sum_{\ell=1}^{m}\alpha_{i_{\ell}}\in\BN^{I}$, and choose any $\unl{d}\in\text{KP}(\unl{k})$.
It suffices to show that the $\phi_{\unl{d}}$-specialization of each summand in the symmetrization from $f$
is divisible by $\hbar^{\sum_{\beta\in\Delta^{+}}d_{\beta}\kappa_{\beta}}$. Similarly to~\eqref{eq:o-spot},
if a variable $x^{(*,*)}_{*,*}$ is plugged into $\Psi(\sx_{i_{q},r_{q}})$ for some $1\leq q\leq m$, then
we shall use the notation $o(x^{(*,*)}_{*,*})=q$.

In type $B_{n}$, the case of $\beta=[i,j]$ is analogous to $A_{n}$-type. Thus, it remains to treat the case of
$\beta=[i,n,j]$ with $d_{\beta}\neq 0$. For any $1\leq s\leq d_{\beta}$, the $\phi_{\unl{d}}$-specialization of
the corresponding summand vanishes unless
  \[o(x^{(\beta,s)}_{i,1})> o(x^{(\beta,s)}_{i+1,1})> \cdots > o(x^{(\beta,s)}_{n,1})> o(x^{(\beta,s)}_{n,2})>
    \cdots>o(x^{(\beta,s)}_{j+1,2})> o(x^{(\beta,s)}_{j,2}).\]
In the latter case, the $\phi_{\unl{d}}$-specialization of the $\hzeta$-factors between pairs of the
$x^{(\beta,s)}_{*,*}$-variables contributes precisely the required factor $\hbar^{\kappa_{\beta}}$.

In type $G_{2}$, the only nontrivial check is for $\beta=[1,2,1,2,2]$ case. Suppose $d_{\beta}\neq 0$.
For any $1\leq s\leq d_{\beta}$, the  $\phi_{\unl{d}}$-specialization of the corresponding summand vanishes unless
  \[o(x^{(\beta,s)}_{1,1})> o(x^{(\beta,s)}_{2,1})>o(x^{(\beta,s)}_{2,2})> o(x^{(\beta,s)}_{2,3})
    \quad \text{and} \quad o(x^{(\beta,s)}_{1,2})>o(x^{(\beta,s)}_{2,2}).\]
In the latter case, the $\phi_{\unl{d}}$-specialization of the $\hzeta$-factors between pairs of the
$x^{(\beta,s)}_{*,*}$-variables contributes precisely the required factor $\hbar^6=\hbar^{\kappa_{\beta}}$ as well.
\end{proof}

Let $\BW'_{\unl{k}}$ be the $\BQ[\hbar]$-submodule of $\BW_{\unl{k}}$ spanned by
$\{\Psi(\sX_{h})\}_{h\in \sH_{\unl{k}}}$. Then, the following Yangian counterpart of Lemma \ref{span}
holds true in types $G_{2}$ and $B_{n}$ (cf. Propositions~\ref{spanG} and~\ref{spanB}):

\begin{Prop}\label{spanyang}
For any $F\in \BW_{\unl{k}}$ and $\unl{d}\in \text{\rm KP}(\unl{k})$, if $\phi_{\unl{d}'}(F)=0$
for all $\unl{d}'\in \text{\rm KP}(\unl{k})$ such that $\unl{d}'<\unl{d}$, then there exists
$F_{\unl{d}}\in \BW'_{\unl{k}}$ such that $\phi_{\unl{d}}(F)=\phi_{\unl{d}}(F_{\unl{d}})$
and $\phi_{\unl{d}'}(F_{\unl{d}})=0$ for all $\unl{d}'<\unl{d}$.
\end{Prop}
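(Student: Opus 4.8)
The strategy mirrors the proof of Lemma~\ref{span} in type $A_n$ (see~\cite[\S3.2.3]{Tsy18}) and its $G_2$- and $B_n$-versions (Propositions~\ref{spanG} and~\ref{spanB}), adapted to the rational setup via the dictionary of Section~\ref{yangian}: $\zeta_{i,j}(z)\rightsquigarrow \hzeta_{i,j}(z)$, factors $(x-v^t y)\rightsquigarrow (x-y-\tfrac{t}{2}\hbar)$, and the ``rank $1$'' formula~\eqref{eq:rank1-power-trig} replaced by~\eqref{eq:rank1-power-rat}. By the reduction explained in the proofs of Propositions~\ref{spanG}--\ref{spanB}, it suffices to treat the two ``elementary'' Kostant partitions: $\unl{d}_1=\{d_\beta=2,\ d_\alpha=0\ (\alpha\ne\beta)\}$ and $\unl{d}_2=\{d_\beta=d_{\beta'}=1,\ d_\alpha=0\ (\alpha\ne\beta,\beta')\}$. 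Concretely, given $F\in\BW_{\unl{k}}$ with $\phi_{\unl{d}'}(F)=0$ for all $\unl{d}'<\unl{d}$, we must show that $\phi_{\unl{d}}(F)$ is divisible (over $\BQ[\hbar]$) by the product $\prod_{\beta<\beta'}\hat{G}_{\beta,\beta'}\cdot\prod_\beta \hat{G}_\beta$ of Lemma~\ref{shuffleeleyang}, together with the power $\hbar^{\sum_\beta d_\beta\kappa_\beta}$ coming from goodness of $F$, after which we subtract a suitable combination of $\{\Psi(\sX_h)\}_{h\in\sH_{\unl{k}}}$ to match $\phi_{\unl{d}}(F)$ exactly and kill all lower specializations.

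\textbf{Key steps.} First I would record the rational analogues of the two divisibility inputs. The wheel conditions~(4) of $\bar\BW$ produce, upon $\phi_{\unl{d}}$-specialization, exactly the linear factors $(w_{\beta,s}-w_{\beta',s'}-\tfrac{t}{2}\hbar)$ assembled into $\hat{G}_\beta$ and $\hat{G}_{\beta,\beta'}$ — this is the verbatim rational translation of the case-by-case wheel-condition bookkeeping in Propositions~\ref{spanG} and~\ref{spanB}, using that $\unl{d}'<\unl{d}_1$ (resp.\ $\unl{d}'<\unl{d}_2$) makes available the vanishing $\phi_{\unl{d}'}(F)=0$ needed to force the ``non-wheel'' factors. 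The goodness hypothesis $F\in\BW_{\unl{k}}$ supplies the extra $\hbar$-power $\hbar^{\sum_\beta d_\beta\kappa_\beta}$ directly by definition. Second, combining these with the explicit shuffle-element formula of Lemma~\ref{shuffleeleyang}, I would argue that the quotient $\phi_{\unl{d}}(F)\big/\big(\hbar^{\sum d_\beta\kappa_\beta}\prod_{\beta<\beta'}\hat{G}_{\beta,\beta'}\prod_\beta \hat{G}_\beta\big)$ is a genuine element of $\BQ[\hbar][\{w_{\beta,s}^{\pm1}\}]^{\fS_{\unl{d}}}$, and that the factors $\{\hat{P}_{\lambda_{h,\beta}}\}_{h\in\sH_{\unl{k},\unl{d}}}$ span this space over $\BQ[\hbar]$ — this last point is the rational ``rank $1$'' fact that $\{\hat P_{\lambda}\}$, being (by~\eqref{hlp-rat}) the shuffle products $p_{\beta,r_\beta(h,1)}(x)\star\cdots\star p_{\beta,r_\beta(h,d_\beta)}(x)$ in the $A_1$-type algebra $\bar\BW$ evaluated at $\{w_{\beta,s}\}$, constitute a spanning set of the symmetric Laurent polynomials, cf.~the $A_1$-Yangian shuffle algebra computation in~\cite{Tsy18}. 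Third, writing the quotient as a $\BQ[\hbar]$-combination of such products over the various $\beta$, I lift it to $F_{\unl{d}}:=\sum_h c_h\,\Psi(\sX_h)\in\BW'_{\unl{k}}$; by Lemma~\ref{shuffleeleyang} this matches $\phi_{\unl{d}}(F)$, and by Lemma~\ref{vanishyg} it satisfies $\phi_{\unl{d}'}(F_{\unl{d}})=0$ for all $\unl{d}'<\unl{d}$, completing the proof.

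\textbf{Main obstacle.} The routine part is the wheel-condition/$\hzeta$-factor bookkeeping, which is a mechanical transcription of Propositions~\ref{spanG} and~\ref{spanB}; the one genuinely delicate point is verifying that after extracting all the linear factors $\hat{G}_\beta$, $\hat{G}_{\beta,\beta'}$ and the $\hbar$-power there is \emph{no residual $\hbar$-denominator} — i.e.\ the quotient lies in $\BQ[\hbar][\{w^{\pm1}\}]$ and not merely in $\BQ(\hbar)[\{w^{\pm1}\}]$ — and then that the rational rank-$1$ polynomials $\{\hat P_\lambda\}$ still span integrally. Over $\BQ[\hbar]$ the power $\hbar^{\kappa_\beta}$ for the ``long'' roots $\beta$ (notably $\beta=[1,2,1,2,2]$ in $G_2$, where $\kappa_\beta=|\beta|+1$ rather than $|\beta|-1$, and $\beta=[i,n,j]$ in $B_n$ with the shifted $\kappa_\beta$ of~\eqref{kappaB}) is exactly what accounts for the extra $\hbar$'s visible in Lemmas~\ref{rootphiG}--\ref{rootphiB}; I would check that the definition of \textbf{good} already absorbs precisely these, so no further $\hbar$-divisibility beyond goodness and the wheel factors is ever needed — this is the ``matching of constants'' remark preceding Theorem~\ref{rttthmb}, transported to the rational side, and it is where I expect the argument to require the most care.
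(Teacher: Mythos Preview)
Your proposal is correct and follows essentially the same approach as the paper's proof: goodness supplies the power $\hbar^{\sum_\beta d_\beta\kappa_\beta}$, the wheel-condition bookkeeping of Propositions~\ref{spanG}--\ref{spanB} (transported rationally) supplies the factor $\prod_{\beta<\beta'}\hat G_{\beta,\beta'}\cdot\prod_\beta \hat G_\beta$, the quotient is then a symmetric polynomial matched by a $\BQ[\hbar]$-combination of the rank-$1$ products $\hat P_{\lambda_{h,\beta}}$ via Lemma~\ref{shuffleeleyang}, and Lemma~\ref{vanishyg} gives the vanishing for $\unl{d}'<\unl{d}$. Two small remarks: in the Yangian setting the target is $\BQ[\hbar][\{w_{\beta,s}\}]^{\fS_{\unl d}}$ (polynomials, not Laurent polynomials), and the ``no residual $\hbar$-denominator'' concern you flag is resolved immediately by observing that each linear factor $(w_{\beta,s}-w_{\beta',s'}-\tfrac{t}{2}\hbar)$ in $\hat G_\beta,\hat G_{\beta,\beta'}$ is coprime to $\hbar$ in $\BQ[\hbar][\{w\}]$, so divisibility by the $\hat G$-product and by $\hbar^{\sum d_\beta\kappa_\beta}$ combine to divisibility by their product.
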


\begin{proof}
Since $F$ is good, the specialization $\phi_{\unl{d}}(F)$ is divisible by
$\hbar^{\sum_{\beta\in\Delta^{+}}d_{\beta}\kappa_{\beta}}$. On the other hand, arguing as in the proofs
of Propositions~\ref{spanG} and~\ref{spanB} (which utilized only wheel conditions), we conclude that $\phi_{\unl{d}}(F)$
is also divisible by $\prod_{\beta<\beta'}\hat{G}_{\beta,\beta'}\cdot \prod_{\beta\in\Delta^{+}}\hat{G}_{\beta}$.
Therefore, we have:
\begin{equation}\label{wheelspeyg}
  \phi_{\unl{d}}(F)=\hbar^{\sum_{\beta\in\Delta^{+}}d_{\beta}\kappa_{\beta}}\cdot
  \prod_{\beta,\beta'\in \Delta^+}^{\beta<\beta'}\hat{G}_{\beta,\beta'}\cdot \prod_{\beta\in\Delta^{+}}\hat{G}_{\beta}\cdot G
\end{equation}
for some symmetric polynomial
  $G\in \BQ[\hbar][\{w_{\beta,s}\}_{\beta\in\Delta^{+}}^{1\leq s\leq d_{\beta}}]^{\fS_{\unl{d}}}$.
Combining~\eqref{wheelspeyg} with Lemma~\ref{shuffleeleyang}
(and the ``rank $1$'' counterpart from~\cite[end of proof of Lemma 3.18]{Tsy19}),
we see that there is a linear combination $F_{\unl{d}}=\sum_{h\in \sH_{\unl{k},\unl{d}}}c_{h}\sX_{h}$ such that
$\phi_{\unl{d}}(F)=\phi_{\unl{d}}(F_{\unl{d}})$. On the other hand, the equality $\phi_{\unl{d}'}(F_{\unl{d}})=0$
for all $\unl{d}'<\unl{d}$ follows from Lemma \ref{vanishyg}.
\end{proof}

Combining Propositions \ref{subsetyang}--\ref{spanyang}, we immediately obtain the shuffle algebra realization
and the PBW theorem for $\Yangian$ in types $G_{2}$ and $B_{n}$:

\begin{Thm}\label{yangshuffle}
(a) The $\BQ[\hbar]$-algebra embedding $\Psi\colon \Yangian \to \bar\BW$ of Proposition~\ref{injyang} gives rise
to a $\BQ[\hbar]$-algebra isomorphism $\Psi\colon \Yangian \,\iso\, \BW$.

\medskip
\noindent
(b) The ordered monomials $\{\sX_{h}\}_{h\in\sH}$ of~\eqref{eq:pbwd-yangian} form a basis of the free $\BQ[\hbar]$-module $\Yangian$,
cf.~Theorem~{\rm\ref{yangianbasis}}.
\end{Thm}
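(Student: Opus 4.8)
The plan is to combine the three propositions just proved (Propositions~\ref{subsetyang}--\ref{spanyang}) with the basis statement of Theorem~\ref{yangianbasis} in exactly the way Lemmas~\ref{shuffleelement}--\ref{span} were combined in type $A_n$ in Subsection~2.3. First I would treat part (b) for the \emph{special} root vectors: the injectivity of $\Psi$ (Proposition~\ref{injyang}) together with Theorem~\ref{yangianbasis} shows that $\{\Psi(\tilde{\sX}_h)\}_{h\in\sH}$ is a $\BQ[\hbar]$-basis of $\Psi(\Yangian)$, and Proposition~\ref{subsetyang} places all of these inside $\BW$. For the general root vectors of~\eqref{ygrv1}--\eqref{ygrv2}, linear independence of $\{\sX_h\}_{h\in\sH}$ follows from Lemmas~\ref{shuffleeleyang} and~\ref{vanishyg}: if a finite $\BQ[\hbar]$-linear combination $\sum_h c_h \sX_h$ vanishes, pick the maximal Kostant partition $\unl{d}$ (with respect to~\eqref{eq:KP-order}) occurring among the $\mathrm{deg}(h)$; applying $\phi_{\unl{d}}$ kills all terms of strictly smaller degree by Lemma~\ref{vanishyg}, and by Lemma~\ref{shuffleeleyang} the surviving terms become, after dividing off the common $h$-independent factor $\hbar^{\sum d_\beta\kappa_\beta}\prod \hat G_{\beta,\beta'}\prod \hat G_\beta$, a $\BQ[\hbar]$-combination of the rank-one symmetrizations $\prod_\beta \hat P_{\lambda_{h,\beta}}$; the ``rank~1'' statement that these $\hat P_{\lambda_{h,\beta}}$ are $\BQ[\hbar]$-linearly independent (the Yangian analogue of~\cite[Proposition 1.4]{Tsy22}, cf.\ the end-of-proof remark in~\cite[Lemma 3.18]{Tsy19}) forces $c_h=0$ for all $h$ with $\mathrm{deg}(h)=\unl{d}$, and induction finishes linear independence.

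Next I would prove part (a). Proposition~\ref{injyang} already gives the injective algebra map $\Psi\colon\Yangian\hookrightarrow\bar\BW$, and Proposition~\ref{subsetyang} refines its image into $\BW$, so it remains to show surjectivity onto $\BW$, i.e.\ $\BW_{\unl{k}}\subseteq\Psi(\Yangian)$ for every $\unl{k}\in\BN^I$. I would argue by descending induction on Kostant partitions. Let $F\in\BW_{\unl{k}}$. There is a maximal $\unl{d}_{\max}\in\mathrm{KP}(\unl{k})$ (the partition of $\unl{k}$ into simple roots), and vacuously $\phi_{\unl{d}'}(F)=0$ for all $\unl{d}'<\unl{d}$ with $\unl{d}$ the minimal partition; running Proposition~\ref{spanyang} upward through $\mathrm{KP}(\unl{k})$ in increasing order, at each stage $\unl{d}$ for which $\phi_{\unl{d}'}(F)=0$ for all $\unl{d}'<\unl{d}$ we produce $F_{\unl{d}}\in\BW'_{\unl{k}}\subseteq\Psi(\Yangian)$ with $\phi_{\unl{d}}(F-F_{\unl{d}})=0$ and still $\phi_{\unl{d}'}(F-F_{\unl{d}})=0$ for all $\unl{d}'<\unl{d}$; replacing $F$ by $F-F_{\unl{d}}$ we advance to the next partition. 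After finitely many steps we reach $F'\in\BW_{\unl{k}}$ with $\phi_{\unl{d}}(F')=0$ for all $\unl{d}\in\mathrm{KP}(\unl{k})$, including $\unl{d}_{\max}$; as in item (2) of Subsection~2.3 (the Yangian analogue of~\cite[Proposition 1.6]{Tsy22}), vanishing of the $\unl{d}_{\max}$-specialization of a rational function in $\bar\BW_{\unl{k}}$ satisfying the pole conditions forces $F'=0$. Hence $F\in\Psi(\Yangian)$, proving surjectivity; combined with injectivity this gives the algebra isomorphism $\Psi\colon\Yangian\iso\BW$.

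Finally, part (b) for the general root vectors follows from part (a): since $\{\Psi(\sX_h)\}_{h\in\sH}$ spans $\BW'=\bigoplus_{\unl{k}}\BW'_{\unl{k}}$, and the inductive surjectivity argument just given in fact shows $\BW'_{\unl{k}}=\BW_{\unl{k}}$ for all $\unl{k}$ (every $F\in\BW_{\unl{k}}$ was written as a sum of the $F_{\unl{d}}\in\BW'_{\unl{k}}$), we get that $\{\Psi(\sX_h)\}_{h\in\sH}$ spans $\BW\cong\Yangian$; together with the linear independence from the previous paragraph, $\{\sX_h\}_{h\in\sH}$ is a $\BQ[\hbar]$-basis of the free module $\Yangian$, completing Theorem~\ref{yangshuffle} and, via the last assertion of Theorem~\ref{yangianbasis}, confirming that theorem as well.

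The main obstacle I anticipate is not the structural induction — which is purely formal once the three propositions are in hand — but the rank-one input hidden in Proposition~\ref{spanyang} and in the linear independence step: one needs the precise Yangian $A_1$-type fact that the symmetrized functions $\hat P_{\lambda_{h,\beta}}$ of~\eqref{hlp-rat} form a $\BQ[\hbar]$-basis of the relevant ring of symmetric polynomials, with the monic-polynomial freedom in $p_{\beta,s}$ matching the freedom in the decompositions $s=s_1+\cdots+s_\ell$. This is the rational analogue of~\cite[Proposition 1.4, Lemma 1.14]{Tsy22} and was already used in~\cite[Lemma 3.18]{Tsy19}; invoking it correctly (and checking that the $h$-independent factors $\hat G_\beta,\hat G_{\beta,\beta'}$ genuinely do not interfere, being nonzero elements of the fraction field and symmetric) is where the real content lies. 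Everything else reduces to the bookkeeping already carried out in Sections~\ref{tG}--\ref{type B} for the trigonometric case, with $v^k$ replaced by $\tfrac{k}{2}\hbar$-shifts and quantum integers replaced by integers via~\eqref{eq:rank1-power-rat}.
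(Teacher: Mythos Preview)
Your proposal is correct and follows essentially the same approach as the paper: linear independence of $\{\Psi(\sX_h)\}$ via Lemmas~\ref{shuffleeleyang}--\ref{vanishyg} (equivalently, via Proposition~\ref{injyang}), spanning of $\BW$ by iterated application of Proposition~\ref{spanyang} together with the observation that vanishing of all $\phi_{\unl d}$ forces $F=0$, and then both parts of the theorem fall out at once. The paper's proof is just a terser version of what you wrote, and your identification of the rank-one input (the $\BQ[\hbar]$-basis property of the $\hat P_{\lambda_{h,\beta}}$) as the only nontrivial external ingredient is exactly right.
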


\begin{proof}
According to Lemma~\ref{vanishyg} and Proposition~\ref{injyang}, $\{\Psi(\sX_{h})\}_{h\in\sH}\subset \BW$ are
linearly independent. On the other hand, by iterated application of Proposition \ref{spanyang}, we also get that
$\{\Psi(\sX_{h})\}_{h\in\sH}$ span $\BW$ over $\BQ[\hbar]$, cf.~\cite[Proposition 1.6]{Tsy22}. Thus,
$\{\Psi(\sX_{h})\}_{h\in\sH}$ form a basis of $\BW$. Combining this with the injectivity of the homomorphism
$\Psi\colon \Yangian \to \BW$ from Proposition~\ref{injyang} (which uses the validity of Theorem~\ref{yangshuffle}(b)
only for the particular choices $\tilde{\sX}_{\beta,s}$, see Theorem~\ref{yangianbasis}), we immediately obtain
both parts of Theorem~\ref{yangshuffle}.
\end{proof}

\begin{Rk}\label{rk:FT-generaling-1}
We note that Theorem~\ref{yangshuffle}(b) can be proved directly as Theorem B.3 in~\cite{FT19}. The proof of the latter
relied only on (B.1) and (B.2) of \emph{loc.cit}. The former of these holds true in our setup without any changes:
  $\Delta(X_{\beta,s})=X_{\beta,s}\otimes 1+1\otimes X_{\beta,s}+\mathrm{lower\ order\ terms}$, where $\Delta$ is
the coproduct on the Yangian and we use the standard  filtration on the Yangian. On the other hand, (B.2) fails on the nose,
but what we really need is the property stated after (B.2) which does still hold:
``for any PBW monomial $y$, the expression $\tau_a(y)$ is polynomial in the variable $a$, has a maximal degree of $a$
equal to the filtered degree of $y$, and the coefficient of this leading power of $a$ equals $\bar{y}$ which is obtained
from $y$ by replacing all $\sx_{i,r}$ with $\sx_{i,0}$''.
\end{Rk}


\subsection{The Drinfeld-Gavarini dual $\dgdual$ and its shuffle algebra realization}
\label{DGdual}

For any $(\beta,s)\in\Delta^{+}\times\BN$, define $\bar{\sX}_{\beta,s}\in Y_{\hbar}^{>}(\mathfrak{g})$ via
\begin{equation}
  \bar{\sX}_{\beta,s}\coloneqq \hbar\cdot \sX_{\beta,s}.
\end{equation}
We define $\dgdual$, the \emph{``positive subalgebra'' of the Drinfeld-Gavarini dual}, as the  $\BQ[\hbar]$-subalgebra
of $\Yangian$ generated by $\{ \bar{\sX}_{\beta,s}\}_{\beta\in\Delta^{+}}^{s\in\BN}$. For any $h\in \sH$, define
the ordered monomial (cf.~\eqref{eq:pbwd-yangian}):
\begin{equation}
  \bar{\sX}_{h}\coloneqq \prod_{(\beta,s)\in\Delta^{+}\times\mathbb{N}}\limits^{\rightarrow}\bar{\sX}_{\beta,s}^{h(\beta,s)}.
\end{equation}
Similarly to \cite[Theorem A.7]{FT19}, we obtain:

\begin{Thm}\label{thm:Gavarini}
(a) $\dgdual$ is independent of the choice of root vectors $\sX_{\beta,s}$
in \eqref{ygrv1}--\eqref{ygrv2}.

\medskip
\noindent
(b) For any choices of $s_k$ in \eqref{ygrv1}--\eqref{ygrv2}, the ordered monomials $\{\bar{\sX}_{h}\}_{h\in \sH}$
form a basis of the free $\BQ[\hbar]$-module $\dgdual$.
\end{Thm}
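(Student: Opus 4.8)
The plan is to mirror the proof of Theorem~\ref{thm:Gavarini}'s trigonometric analogues, namely Theorems~\ref{PBWDintegralb} and~\ref{srig}, but now in the rational (Yangian) setup where the factors of quantum integers collapse to ordinary integers by~\eqref{eq:rank1-power-rat}. The key tool will again be the specialization maps $\phi_{\unl{d}}$ on $\bar\BW$ together with a ``good''-type refinement encoding the divisibility by $\hbar$-powers. Concretely, for $\beta\in\Delta^{+}$ define $\bar{c}_{\beta}:=\hbar^{\kappa_{\beta}}$ (so that $\phi_{\beta}(\Psi(\bar{\sX}_{\beta,s}))=\phi_{\beta}(\hbar\cdot \Psi(\sX_{\beta,s}))\circeq \hbar^{\kappa_{\beta}+1}\cdot p_{\beta,s}(w_{\beta,1})$ by Lemma~\ref{phiyangianrs}), and introduce the $\BQ[\hbar]$-submodule $\bar\BW^{\mathrm{DG}}_{\unl{k}}\subset \BW_{\unl{k}}$ of good elements $F$ with the extra property that $\phi_{\unl{d}}(F)$ is divisible by $\hbar^{\sum_{\beta}d_{\beta}(\kappa_{\beta}+1)}=\hbar^{|\unl{d}|}\cdot \hbar^{\sum_{\beta}d_{\beta}\kappa_{\beta}}$, where $|\unl{d}|:=\sum_{\beta}d_{\beta}$. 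Then set $\bar\BW^{\mathrm{DG}}:=\bigoplus_{\unl{k}}\bar\BW^{\mathrm{DG}}_{\unl{k}}$.

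First I would establish $\Psi(\dgdual)\subset \bar\BW^{\mathrm{DG}}$. Since $\dgdual$ is generated by $\{\bar{\sX}_{\beta,s}=\hbar\sX_{\beta,s}\}$, an arbitrary element is a $\BQ[\hbar]$-linear combination of products $\bar{\sX}_{\beta_{1},r_{1}}\cdots\bar{\sX}_{\beta_{m},r_{m}}=\hbar^{m}\sX_{\beta_{1},r_{1}}\cdots\sX_{\beta_{m},r_{m}}$; by Proposition~\ref{subsetyang} the image $\Psi(\sX_{\beta_{1},r_{1}}\cdots\sX_{\beta_{m},r_{m}})$ already lies in $\BW$, so the claim reduces to showing that each $\phi_{\unl{d}}$ of such a product carries an additional factor $\hbar^{|\unl{d}|-m}$ relative to the bound $\hbar^{\sum_{\beta}d_{\beta}\kappa_{\beta}}$ forced by the proof of Proposition~\ref{subsetyang}. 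This is a purely combinatorial count: in the relevant nonvanishing term of the symmetrization, the $\hbar^{m}$ prefactor supplies one $\hbar$ per generator, and one checks that the $\phi_{\unl{d}}$-specialization of the $\hzeta$-factors produces $\hbar^{\sum_{\beta}d_{\beta}\kappa_{\beta}}$ exactly as in Proposition~\ref{subsetyang} — so altogether $\hbar^{m+\sum_{\beta}d_{\beta}\kappa_{\beta}}$, and $m\geq |\unl{d}|$ always (each group $(\beta,s)$ absorbs at least one generator). Actually since the convention can be arranged so that $m$ equals precisely $|\unl{d}|$ on the surviving terms when $F$ is a PBWD-ordered monomial $\bar{\sX}_h$, we get the sharp divisibility. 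This gives one inclusion.

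Next I would prove the reverse spanning statement, i.e.\ the analogue of Proposition~\ref{spanyang}: for $F\in\bar\BW^{\mathrm{DG}}_{\unl{k}}$ with $\phi_{\unl{d}'}(F)=0$ for all $\unl{d}'<\unl{d}$, there is $F_{\unl{d}}\in \bar\BW^{\mathrm{DG},\prime}_{\unl{k}}:=\mathrm{span}_{\BQ[\hbar]}\{\Psi(\bar{\sX}_h)\}_{h\in\sH_{\unl{k}}}$ with $\phi_{\unl{d}}(F)=\phi_{\unl{d}}(F_{\unl{d}})$ and $\phi_{\unl{d}'}(F_{\unl{d}})=0$ for $\unl{d}'<\unl{d}$. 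The argument runs exactly as for Proposition~\ref{spanyang}: wheel conditions plus goodness plus the new $\hbar^{|\unl{d}|}$-divisibility force $\phi_{\unl{d}}(F)=\hbar^{\sum_{\beta}d_{\beta}(\kappa_{\beta}+1)}\cdot\prod_{\beta<\beta'}\hat{G}_{\beta,\beta'}\cdot\prod_{\beta}\hat{G}_{\beta}\cdot G$ for a symmetric $G\in\BQ[\hbar][\{w_{\beta,s}\}]^{\fS_{\unl{d}}}$, and the ``rank $1$'' computation~\eqref{eq:rank1-power-rat} (together with the fact that the $\hat{P}_{\lambda_{h,\beta}}$ of~\eqref{hlp-rat}, evaluated with the extra $\hbar$'s from $\bar{\sX}$, span over $\BQ[\hbar]$ the appropriate submodule of symmetric polynomials — this is where I must check that tensoring the $A_1$ basis by $\hbar^{|\unl{d}|}$ still yields a $\BQ[\hbar]$-basis of the target module, which holds because the $A_1$ Gavarini dual is exactly $\bigoplus_\ell \hbar^\ell\cdot(\text{sym.\ polys in }\ell\text{ vars})$, cf.\ the $A_n$ treatment in~\cite{Tsy19}) matches $G$ to a combination $\sum_{h\in\sH_{\unl{k},\unl{d}}}c_h\,\Psi(\bar{\sX}_h)$. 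Then $\phi_{\unl{d}'}(F_{\unl{d}})=0$ for $\unl{d}'<\unl{d}$ by Lemma~\ref{vanishyg} (which applies verbatim since $\bar{\sX}_{\beta,s}$ differs from $\sX_{\beta,s}$ only by a scalar).

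Finally, combining the two inclusions with iterated application of the spanning step (descending through $\mathrm{KP}(\unl{k})$ with respect to~\eqref{eq:KP-order}, and using that $\phi_{\unl{d}}(F)=0$ for all $\unl{d}$ implies $F=0$, cf.~\cite[Proposition 1.6]{Tsy22}), one concludes that $\{\Psi(\bar{\sX}_h)\}_{h\in\sH}$ is a $\BQ[\hbar]$-basis of $\bar\BW^{\mathrm{DG}}$; since $\Psi$ is injective by Proposition~\ref{injyang}, the $\{\bar{\sX}_h\}_{h\in\sH}$ are linearly independent in $\dgdual$, proving part (b) and hence, since the basis is the same for every admissible choice of $s_k$, also part (a) (any two such generating sets produce the same $\BQ[\hbar]$-span). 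I expect the main obstacle to be the bookkeeping in the first inclusion: verifying that the $\hbar$-adic order of $\phi_{\unl{d}}$ of a product $\hbar^m\sX_{\beta_1,r_1}\cdots\sX_{\beta_m,r_m}$ is \emph{exactly} $m+\sum_\beta d_\beta\kappa_\beta$ on the surviving terms (not merely $\geq$), so that $\bar\BW^{\mathrm{DG}}$ is neither too big nor too small — in particular handling the root $\beta=[1,2,1,2,2]$ in type $G_2$ where $\kappa_\beta=|\beta|+1$, and the roots $\beta=[i,n,j]$ in type $B_n$ where $\kappa_\beta=|\beta|+2(n-j)-1$, both of which require care since the relevant $\hzeta$-factor contributions are more intricate, exactly as in Lemmas~\ref{conrv} and~\ref{phiyangianrs}.
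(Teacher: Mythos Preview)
Your overall approach---proving the result via the shuffle realization $\Psi\colon\dgdual\,\iso\,\mathbf{W}$---is essentially how the paper proceeds in Theorem~\ref{dgdualshuffle}, from which the general case of Theorem~\ref{thm:Gavarini} is then deduced. (The paper's own proof of Theorem~\ref{thm:Gavarini} is shorter: it cites \cite[Appendix~A]{FT19} directly for the specific root vectors $\tilde{\sX}_{\beta,s}$, and only invokes the shuffle realization for the general case; but your route is legitimate.) Your module $\bar\BW^{\mathrm{DG}}$ coincides with the paper's $\mathbf{W}$, since taking $\unl{d}$ to be the partition into simple roots recovers the global $\hbar^{|\unl{k}|}$-divisibility.

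However, there is a genuine gap in your ``first inclusion'' $\Psi(\dgdual)\subset\bar\BW^{\mathrm{DG}}$. Your claim that $m\geq|\unl{d}|$ is false: take $m=1$ with $\bar{\sX}_{[1,2],0}=\hbar[\sx_{1,0},\sx_{2,0}]$ and $\unl{d}$ the partition into simple roots, so $|\unl{d}|=2>1=m$. Your bound then gives only $\hbar^{1}$-divisibility of $\phi_{\unl{d}}$, whereas $\hbar^{2}$ is required. The missing ingredient is Lemma~\ref{conrv}: each $\Psi(\sX_{\beta_i,r_i})$ is itself divisible by $\hbar^{|\beta_i|-1}$ (from cancellations in the iterated commutator), so $\Psi(\bar{\sX}_{\beta_1,r_1}\cdots\bar{\sX}_{\beta_m,r_m})$ is divisible by $\hbar^{|\unl{k}|}$, not merely $\hbar^{m}$. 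This global $\hbar^{|\unl{k}|}$-factor, together with the $\hzeta$-contributions counted in Proposition~\ref{subsetyang}, is what actually yields divisibility by $\hbar^{\sum_\gamma d_\gamma(\kappa_\gamma+1)}$; the paper signals exactly this by writing ``following Lemmas~\ref{conrv}--\ref{phiyangianrs} and the proof of Proposition~\ref{subsetyang}''. Once you replace your $\hbar^m$ count with the $\hbar^{|\unl{k}|}$ count coming from Lemma~\ref{conrv}, the rest of your argument goes through as written.
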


\begin{proof}
The arguments of \cite[Appendix A]{FT19} apply directly to the particular choice
$\{\tilde{\sX}_{\beta,s}\}_{\beta\in\Delta^{+}}^{s\in\BN}$. The general case can be derived
from the shuffle realization of $\dgdual$, see Theorem~\ref{dgdualshuffle}.
\end{proof}

\begin{Rk}
We note that Theorem~\ref{thm:Gavarini} can be proved directly as Theorem A.7 in~\cite{FT19}. The proof of
the latter relied only on the validity of properties (As1, As2, As3) from \emph{loc.cit}. In the present setup:
(As1) is obvious, (As2) is established in Theorem~\ref{yangshuffle}(b), (As3) is verified precisely as
in~\cite[Lemma A.6]{FT19} since all our $\sX_{\beta,s}$'s are still iterated commutators of~$\sx_{i,r}$'s.
\end{Rk}

Following~\cite[Definition 3.8]{Tsy19}, we introduce:

\begin{Def}
$F\in \bar\BW_{\unl{k}}$ is \textbf{integral} if $F$ is divisible by $\hbar^{|\unl{k}|}$ and $\phi_{\unl{d}}(F)$
is divisible by $\hbar^{\sum_{\beta\in \Delta^{+}}d_{\beta}(\kappa_{\beta}+1)}$ for any $\unl{d}\in \mathrm{KP}(\unl{k})$.
\end{Def}

Let $\mathbf{W}_{\unl{k}}\subset \bar\BW_{\unl{k}}$ be the $\BQ[\hbar]$-submodule of all integral elements, and set
$\mathbf{W}:=\bigoplus_{\unl{k}\in\BN^{I}}\mathbf{W}_{\unl{k}}$. Then,  following Lemmas \ref{conrv}--\ref{phiyangianrs}
and the proof of Proposition \ref{subsetyang}, we obtain:

\begin{Prop}
$\Psi(\dgdual)\subset \mathbf{W}$.
\end{Prop}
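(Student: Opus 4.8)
The plan is to mimic the proof of Proposition~\ref{subsetyang}, now tracking one extra power of $\hbar$ per simple root. Concretely, for any $m\in\BN$, $i_1,\dots,i_m\in I$, $r_1,\dots,r_m\in\BN$, we set
\[
  F\coloneqq \Psi\big(\bar{\sX}_{\beta_1,r_1}\cdots \bar{\sX}_{\beta_m,r_m}\big)
  = \hbar^m\cdot \Psi\big(\sX_{\beta_1,r_1}\cdots \sX_{\beta_m,r_m}\big),
\]
and we must show $F\in\mathbf{W}_{\unl{k}}$, where $\unl{k}=\sum_{q=1}^m \beta_q$. First I would observe that, by Lemma~\ref{conrv}, each $\Psi(\sX_{\beta_q,r_q})$ is divisible by $\hbar^{|\beta_q|-1}$, so $\Psi(\sX_{\beta_1,r_1}\cdots\sX_{\beta_m,r_m})$ is divisible by $\hbar^{\sum_q(|\beta_q|-1)}=\hbar^{(\sum_q|\beta_q|)-m}=\hbar^{|\unl{k}|-m}$; multiplying by the prefactor $\hbar^m$ yields divisibility of $F$ by $\hbar^{|\unl{k}|}$, which is the first condition in the definition of an integral element.

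For the second condition, fix $\unl{d}\in\mathrm{KP}(\unl{k})$ and consider a single summand $\sigma(F_h)$ in the symmetrization defining $\phi_{\unl{d}}(F)$, using the $o(x^{(*,*)}_{*,*})$-notation from~\eqref{eq:o-spot}. Since $F\circeq \hbar^m\cdot \Psi(\sX_{\beta_1,r_1}\cdots\sX_{\beta_m,r_m})$, the proof of Proposition~\ref{subsetyang} already shows that the $\phi_{\unl{d}}$-specialization of each nonvanishing summand of $\Psi(\sX_{\beta_1,r_1}\cdots\sX_{\beta_m,r_m})$ is divisible by $\hbar^{\sum_{\beta\in\Delta^+}d_\beta\kappa_\beta}$ — this is exactly the case-by-case argument over $\beta=[i,j]$ and $\beta=[i,n,j]$ in type $B_n$ (reducing $\beta=[i,j]$ to type $A_n$) and over $\beta=[1,2,1,2,2]$ in type $G_2$, where in each case the chain of inequalities forced on the $o$-values makes the $\hzeta$-factors between the $x^{(\beta,s)}_{*,*}$-variables contribute precisely $\hbar^{\kappa_\beta}$. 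Multiplying by $\hbar^m=\hbar^{|\unl{k}|}=\hbar^{\sum_{\beta\in\Delta^+}d_\beta|\beta|}$ (using $\sum_{\beta}d_\beta|\beta|=|\unl{k}|$ from~\eqref{eq:KP-partition} and~\eqref{eq:root-height}), and noting $\kappa_\beta+|\beta|=2|\beta|-1$ when $\beta\neq[1,2,1,2,2]$ and $\kappa_\beta+|\beta|=2|\beta|+1$ otherwise — in all cases at least $\kappa_\beta+1$ — we conclude that $\phi_{\unl{d}}(F)$ is divisible by $\hbar^{\sum_{\beta\in\Delta^+}d_\beta(\kappa_\beta+1)}$. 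Hence $F\in\mathbf{W}_{\unl{k}}$, and since $\Psi(\dgdual)$ is generated as a $\BQ[\hbar]$-algebra by such products of $\Psi(\bar{\sX}_{\beta,s})$'s and $\mathbf{W}$ is closed under the shuffle product (being defined by divisibility conditions that behave well under $\star$, exactly as in~\cite[Lemma 3.9]{Tsy19}), we get $\Psi(\dgdual)\subset\mathbf{W}$.

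The only genuinely delicate point is the bookkeeping that the extra factor $\hbar^m$ from the $\bar{\sX}$'s, combined with the $\hbar^{\sum d_\beta\kappa_\beta}$ already extracted from the $\hzeta$-factors, really suffices to reach the target exponent $\sum d_\beta(\kappa_\beta+1)$ for \emph{every} admissible $\unl{d}$; this is precisely the identity $\sum_\beta d_\beta|\beta|=|\unl{k}|=m$ together with $|\beta|\geq 1$, so no case is lost. I expect the main obstacle — really the only thing requiring care — to be confirming that the case analysis of Proposition~\ref{subsetyang} genuinely extracts the \emph{full} power $\hbar^{\kappa_\beta}$ (not merely $\hbar^{\kappa_\beta}$ up to a lower power) from the $x^{(\beta,s)}_{*,*}$-block for each $\beta$, since the argument here reuses that extraction verbatim; once that is granted, as it was in Proposition~\ref{subsetyang}, the present statement follows by the elementary degree count above.
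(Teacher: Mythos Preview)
Your degree count has a genuine gap. You write $\hbar^m=\hbar^{|\unl{k}|}$, but $m$ is the number of factors $\bar{\sX}_{\beta_q,r_q}$ while $|\unl{k}|=\sum_q|\beta_q|$; these coincide only when every $\beta_q$ is simple. Since the simple-root generators $\hbar\,\sx_{i,r}$ do \emph{not} generate $\dgdual$ over $\BQ[\hbar]$ (for instance $\bar{\sX}_{[1,2],s}=\hbar^{-1}[\hbar\sx_{1,s_1},\hbar\sx_{2,s_2}]$ is not in that subalgebra), you must allow non-simple $\beta_q$, and then $m<|\unl{k}|$ in general. Nor can you simply multiply the overall divisibility $\hbar^{|\unl{k}|}$ by the specialization bound $\hbar^{\sum_\gamma d_\gamma\kappa_\gamma}$ from Proposition~\ref{subsetyang}: both ultimately come from $\hzeta$-type factors and are not independent. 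A concrete failure: in type $B_4$ take $m=1$, $\beta_1=[1,4,2]$, so $|\unl{k}|=7$, and $\unl{d}=\{d_{[1,4,3]}=1,\ d_{[2]}=1\}$. One needs $\phi_{\unl{d}}(F)$ divisible by $\hbar^{(\kappa_{[1,4,3]}+1)+(\kappa_{[2]}+1)}=\hbar^{8+1}=\hbar^{9}$, but your bound gives only $\hbar^{m+\sum_\gamma d_\gamma\kappa_\gamma}=\hbar^{1+7}=\hbar^{8}$, and the overall divisibility gives only $\hbar^{7}$. (The required $\hbar^{9}$ \emph{does} hold, because the internal numerator factors $(2\hbar\pm(x_{3,1}-x_{3,2}))$ of $\Psi(\tilde{\sX}_{[1,4,2],s})$ from Lemma~\ref{rootphiB} specialize to multiples of $\hbar$ under $\phi_{\unl{d}}$.)

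The fix---implicit in the paper's pointer to Lemmas~\ref{conrv}--\ref{phiyangianrs} together with the proof of Proposition~\ref{subsetyang}---is to rerun the summand-by-summand $o$-value analysis with building blocks $\Psi(\bar{\sX}_{\beta_q,r_q})$ rather than $\Psi(\sx_{i,r})$. When several variables of a fixed $(\gamma,s)$-group land in the \emph{same} block $\Psi(\bar{\sX}_{\beta_q})$ there is no cross-$\hzeta$-factor between them, and the missing $\hbar$-powers must instead be read off the $\phi_{\unl{d}}$-specialization of the numerator of $\Psi(\bar{\sX}_{\beta_q})$ itself (this is exactly what Lemmas~\ref{rootphiG}--\ref{rootphiB} and~\ref{phiyangianrs} supply). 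Your appeal to closure of $\mathbf{W}$ under $\star$ does not rescue the argument either, since already the single-generator case $m=1$ above is not covered by your count.
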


Finally, we have the following upgrade of Theorem \ref{yangshuffle}:

\begin{Thm}\label{dgdualshuffle}
The $\BQ[\hbar]$-algebra isomorphism $\Psi\colon \Yangian \,\iso\, \BW$ of Theorem {\rm \ref{yangshuffle}(a)} gives rise to
a $\BQ[\hbar]$-algebra isomorphism $\Psi\colon \dgdual \,\iso\, \mathbf{W}$.
\end{Thm}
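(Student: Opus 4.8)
The plan is to mimic the proof strategy already used for $\integralgl$ (Theorem~\ref{srig}) and for the RTT form $\integralb$ (Theorem~\ref{rttthmb}), which all proceed in two halves: first establish the inclusion $\Psi(\dgdual)\subset \mathbf{W}$ (already done in the Proposition preceding the statement), and then show the reverse, namely that $\mathbf{W}$ is spanned over $\BQ[\hbar]$ by the images $\Psi(\bar{\sX}_h)$ for $h\in\sH$. Combined with injectivity of $\Psi$ on $\Yangian$ (Proposition~\ref{injyang}) — hence on its subalgebra $\dgdual$ — and with the linear independence of $\{\bar{\sX}_h\}_{h\in\sH}$ inside $\dgdual$ (Theorem~\ref{thm:Gavarini}(b)), this will force $\Psi\colon\dgdual\iso\mathbf{W}$.

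First I would compute $\phi_{\unl{d}}(\Psi(\bar{\sX}_h))$ for $h\in\sH_{\unl{k},\unl{d}}$. Since $\bar{\sX}_{\beta,s}=\hbar\cdot\sX_{\beta,s}$, applying $\Psi$ and $\phi_{\unl{d}}$ and invoking Lemma~\ref{shuffleeleyang} gives
\begin{equation*}
  \phi_{\unl{d}}(\Psi(\bar{\sX}_h))\circeq
  \hbar^{\sum_{\beta}d_{\beta}(\kappa_{\beta}+1)}\cdot
  \prod_{\beta<\beta'}\hat{G}_{\beta,\beta'}\cdot\prod_{\beta}\hat{G}_{\beta}\cdot
  \prod_{\beta}\hat{P}_{\lambda_{h,\beta}},
\end{equation*}
where the extra $\hbar^{\sum_\beta d_\beta}$ comes from the $d_\beta$ factors of $\hbar$ (one per copy of $\bar{\sX}_{\beta,\cdot}$), matching exactly the two divisibility conditions in the definition of an integral element. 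Then I would run the usual descending induction on $\unl{d}\in\mathrm{KP}(\unl{k})$ with respect to the order~\eqref{eq:KP-order}: given $F\in\mathbf{W}_{\unl{k}}$ with $\phi_{\unl{d}'}(F)=0$ for all $\unl{d}'<\unl{d}$, the wheel-condition arguments of Propositions~\ref{spanG} and~\ref{spanB} (which are purely about vanishing and carry over verbatim to the rational $\hzeta$-setting, as noted in Proposition~\ref{spanyang}) show $\phi_{\unl{d}}(F)$ is divisible by $\prod_{\beta<\beta'}\hat{G}_{\beta,\beta'}\cdot\prod_\beta\hat{G}_\beta$; integrality of $F$ gives divisibility by $\hbar^{\sum_\beta d_\beta(\kappa_\beta+1)}$; so $\phi_{\unl{d}}(F)$ equals that product times a symmetric polynomial $G\in\BQ[\hbar][\{w_{\beta,s}\}]^{\fS_{\unl{d}}}$. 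The crucial point is that this $G$ lies in the $\BQ[\hbar]$-span of the ``rank~$1$'' symmetrized products $\hat{P}_{\lambda_{h,\beta}}$, which I would deduce from the rational rank-$1$ computation~\eqref{eq:rank1-power-rat} (the factor of $\ell!$ there, a genuine integer, is precisely what the Drinfeld–Gavarini rescaling is built to absorb). This yields $F_{\unl{d}}=\sum_{h\in\sH_{\unl{k},\unl{d}}}c_h\bar{\sX}_h$ with $c_h\in\BQ[\hbar]$, $\phi_{\unl{d}}(F)=\phi_{\unl{d}}(F_{\unl{d}})$, and $\phi_{\unl{d}'}(F_{\unl{d}})=0$ for $\unl{d}'<\unl{d}$ by Lemma~\ref{vanishyg}. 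Iterating over all $\unl{d}$ and using that $\phi_{\unl{d}}(F)=0$ for every $\unl{d}$ forces $F=0$ (take the maximal Kostant partition into simple roots, cf.~\cite[Proposition~1.6]{Tsy22}), shows $\{\Psi(\bar{\sX}_h)\}_{h\in\sH}$ spans $\mathbf{W}$.

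The main obstacle I anticipate is the verification that the symmetric polynomial $G$ above actually lies in the $\BQ[\hbar]$-span of $\{\hat{P}_{\lambda_{h,\beta}}\}$, i.e. establishing the correct rank-$1$ integral basis statement in the rational/Yangian setting: one needs that, after dividing $\phi_{\unl{d}}(F)$ by the $h$-independent factors and the power of $\hbar$, the result is an arbitrary $\BQ[\hbar]$-linear combination of the $\prod_\beta\hat{P}_{\lambda_{h,\beta}}$ with \emph{polynomial} (not merely rational) coefficients — this is the rational analogue of the ``rank $1$'' input used in the proof of Proposition~\ref{spanig}, available from the end of the proof of~\cite[Lemma 3.18]{Tsy19} together with the fact that the $p_{\beta,s}(w)$ of Lemma~\ref{phiyangianrs} form a monic (hence $\BQ[\hbar]$-basis) family. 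Once that rank-$1$ fact is in hand the global argument is a formal induction identical in shape to Theorems~\ref{srig} and~\ref{lusthmb}, and I would then conclude: $\Psi$ is injective on $\dgdual$, surjective onto $\mathbf{W}$, hence an isomorphism, and simultaneously the spanning plus linear independence gives Theorem~\ref{thm:Gavarini}(b) for arbitrary root-vector choices as a byproduct.
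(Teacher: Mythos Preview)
Your proposal is correct and follows essentially the same approach as the paper: the paper's proof reduces to the analogue of Proposition~\ref{spanyang} with the factor $\hbar^{\sum_\beta d_\beta\kappa_\beta}$ replaced by $\hbar^{\sum_\beta d_\beta(\kappa_\beta+1)}$, which is exactly the descending induction on Kostant partitions you describe. Your treatment is in fact more explicit than the paper's (which dispatches the argument in two sentences), and you correctly identify the rank-$1$ input from~\cite[Lemma~3.18]{Tsy19} together with the monicity of the $p_{\beta,s}$ as the key ingredient.
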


\begin{proof}
It remains to prove the opposite inclusion $\mathbf{W}\subseteq \Psi(\dgdual)$. To this end, it suffices to show
that for any $F\in \mathbf{W}_{\unl{k}}$ and $\unl{d}\in \text{\rm KP}(\unl{k})$, if $\phi_{\unl{d}'}(F)=0$ for all
$\unl{d}'\in \text{\rm KP}(\unl{k})$ such that $\unl{d}'<\unl{d}$, then there exists
$F_{\unl{d}}\in \mathbf{W}_{\unl{k}}\cap \Psi(\dgdual)$ such that $\phi_{\unl{d}}(F)=\phi_{\unl{d}}(F_{\unl{d}})$
and $\phi_{\unl{d}'}(F_{\unl{d}})=0$ for $\unl{d}'<\unl{d}$, cf.~(2) after Lemma~\ref{span}.
The proof of this result is analogous to the proof of Proposition \ref{spanyang}, except that the factor
$\hbar^{\sum_{\beta\in\Delta^{+}}d_{\beta}\kappa_{\beta}}$ in \eqref{wheelspeyg} is getting replaced by
$\hbar^{\sum_{\beta\in \Delta^{+}}d_{\beta}(\kappa_{\beta}+1)}$.
\end{proof}


\appendix
\section{The RTT realization in type $B_n$}\label{sec:app}

In this section, we recall the RTT realization of $U_v(L{\mathfrak{o}}_{2n+1})$, established in~\cite{JLM20},
and use it to explain the natural origin and the name of the integral form $\integralb$ from Subsection~\ref{rttb}.


\subsection{RTT realization of $U_{\sfq}(L\sso_{2n+1})$}

Set $N=2n+1$. For $1\leq i\leq N$, we define $i'$ and $\bar{i}$ via:
\begin{equation}\label{eq:prime}
  i':=N+1-i,
\end{equation}
\begin{equation}\label{eq:bar}
  (\bar{1},\ldots,\bar{N}):=\left(n-\tfrac{1}{2},\ldots,\tfrac{1}{2}, 0, -\tfrac{1}{2}, \ldots, -n+\tfrac{1}{2}\right).
\end{equation}
To follow the notations of~\cite{JLM20}, we also define:
\begin{equation}\label{eq:q-vs-v}
  \sfq:=v^2 \ \ (\mathrm{so\ that}\ v=\sfq^{1/2}), \qquad \xi:=\sfq^{2-N}.
\end{equation}

Consider the trigonometric $R$-matrix with a spectral parameter $\bar{R}_\trig(u)$ given by
\begin{equation}\label{trigR}
  \bar{R}_\trig(u):=
  \frac{u-1}{u\sfq-\sfq^{-1}} \, R + \frac{\sfq-\sfq^{-1}}{u\sfq-\sfq^{-1}} \, P - \frac{(\sfq-\sfq^{-1})(u-1)\xi}{(u\sfq-\sfq^{-1})(u-\xi)} \, Q ,
\end{equation}
see~\cite[(3.1)]{JLM20}, where $P,Q,R\in (\End\, \BQ^N)^{\otimes 2}$ are defined via (with $\sfq=v^2$ as in~\eqref{eq:q-vs-v}):
\begin{equation}
\begin{split}
  & P\ =\sum_{1\leq i,j\leq N} e_{ij}\otimes e_{ji},\\
  & Q\ =\sum_{1\leq i,j\leq N} \sfq^{\bar{i}-\bar{j}} e_{i'j'}\otimes e_{ij},\\
  & R\ =\sum_{1\leq i\leq N}^{i\ne n+1} \sfq^{1-\delta_{i,n+1}} e_{ii}\otimes e_{ii} \, +
    \sum_{1\leq i,j\leq N}^{i\ne j,j'} e_{ii}\otimes e_{jj} + \sfq^{-1} \sum_{i\ne n+1} e_{ii}\otimes e_{i'i'} \ + \\
  & \qquad \quad (\sfq-\sfq^{-1})\sum_{i<j} e_{ij}\otimes e_{ji} - (\sfq-\sfq^{-1})\sum_{i>j} \sfq^{\bar{i}-\bar{j}} e_{i'j'}\otimes e_{ij}.
\end{split}
\end{equation}
This $\bar{R}_\trig(u)$ satisfies the famous \emph{Yang-Baxter equation} (with a spectral parameter):
\begin{equation}\label{qYB}
  \bar{R}_{\trig;12}(u/v) \bar{R}_{\trig;13}(u/w) \bar{R}_{\trig;23}(v/w)=
  \bar{R}_{\trig;23}(v/w) \bar{R}_{\trig;13}(u/w) \bar{R}_{\trig;12}(u/v).
\end{equation}

Following~\cite{JLM20} (with the conceptual ideology going back to~\cite{frt}), we define the
\textbf{RTT integral form of the quantum loop algebra of $\sso_{N}$}, denoted by $\rtU^\rtt_v(L\sso_{N})$, to be
the associative $\BZ[v,v^{-1}]$-algebra generated by $\{\ell^\pm_{ij}[\mp r]\}_{1\leq i,j\leq N}^{r\in \BN}$ with
the following defining relations:
\begin{equation}\label{affRTT}
\begin{split}
  & \ell^+_{ij}[0]=\ell^-_{ji}[0]=0\ \ \mathrm{for}\ 1\leq i<j\leq N,\\
  & \ell^\pm_{ii}[0]\ell^\mp_{ii}[0]=1\ \ \mathrm{for}\ 1\leq i\leq N,\\
  & \bar{R}_{\trig}(z/w)\Lf^\pm_1(z)\Lf^\pm_2(w)=\Lf^\pm_2(w)\Lf^\pm_1(z)\bar{R}_\trig(z/w),\\
  & \bar{R}_{\trig}(z/w)\Lf^+_1(z)\Lf^-_2(w)=\Lf^-_2(w)\Lf^+_1(z)\bar{R}_\trig(z/w),
\end{split}
\end{equation}
as well as
\begin{equation}\label{affRTT-extra}
  \Lf^\pm(u) D \Lf^\pm(u\xi)^{\mathrm{t}} D^{-1} = D \Lf^\pm(u\xi)^{\mathrm{t}} D^{-1} \Lf^\pm(u) = 1,
\end{equation}
where ${\mathrm{t}}$ denotes the matrix transposition with $E^{\mathrm{t}}_{ij}=E_{j'i'}$ and $D$ is the diagonal matrix
\begin{equation*}
  D=\mathrm{diag}\big(\sfq^{\bar{1}},\sfq^{\bar{2}},\ldots,\sfq^{\bar{N}}\big).
\end{equation*}
Here, $\Lf^\pm(u)\in \rtU^\rtt_v(L\sso_N)[[u,u^{-1}]]\otimes \End\, \BQ^N$ is defined by
\begin{equation}\label{eq:L-matrix}
  \Lf^\pm(u)\ =\sum_{1\leq i,j\leq N} \ell^\pm_{ij}(u)\otimes E_{ij} \quad \mathrm{with} \quad
  \ell^\pm_{ij}(u):=\sum_{r\geq 0} \ell^\pm_{ij}[\mp r] u^{\pm r}.
\end{equation}
We also define the $\BQ(v)$-counterpart $U^\rtt_v(L\sso_N):=\rtU^\rtt_v(L\sso_N)\otimes_{\BZ[v,v^{-1}]} \BQ(v)$.

\begin{Rk}
The last two relations in~\eqref{affRTT} are commonly referred to as the \emph{RTT relations}. However, without imposing
\eqref{affRTT-extra}, one actually gets an extended version of that algebra featuring an extra Heisenberg algebra factor.
\end{Rk}

Let $U_v(L\sso_N)$ be the quantum loop algebra of type $B_n$ in the new Drinfeld realization. It is a $\BQ(v)$-algebra
generated by $\{x^\pm_{i,r},\varphi_{i,-k},\psi_{i,k},k_i^{\pm 1}\}_{1\leq i\leq n}^{r\in \BZ,k\in \BN}$ with
the relations as in~\cite[\S1]{JLM20}. Identifying $x^+_{i,r}$ with our $e_{i,r}$, the subalgebra of $U_v(L\sso_N)$
generated by $\{x^+_{i,r}\}_{1\leq i\leq n}^{r\in \BZ}$ recovers $U^>_v(L\sso_N)$ from Subsection~\ref{ssec:qlas}.
In what follows, we will consider the following generating series:
\begin{equation*}
  x^\pm_i(u)=\sum_{r\in \BZ} x^+_{i,r}u^{-r},\quad
  \varphi_i(u)=\sum_{k\geq 0} \varphi_{i,-k}u^k,\quad
  \psi_i(u)=\sum_{k\geq 0} \psi_{i,k}u^{-k}.
\end{equation*}

The relation between the algebras $U_v(L\sso_N)$ and $\rtU^\rtt_v(L\sso_{N})$ was recently established in~\cite{JLM20}.
To state the main result, we consider the Gauss decomposition of the matrices $\Lf^\pm(u)$ from~\eqref{eq:L-matrix}:
\begin{equation*}
  \Lf^\pm(u)=F^\pm(u)\cdot H^\pm(u)\cdot E^\pm(u).
\end{equation*}
Here, $F^\pm(u), H^\pm(u), E^\pm(u)\in \rtU^\rtt_v(L\sso_N)[[u,u^{-1}]]\otimes \End\, \BQ^N$ are of the form
\begin{equation*}
  F^\pm(u)=\sum_{i} E_{ii}+\sum_{i>j} f^\pm_{ij}(u)\otimes E_{ij},\
  H^\pm(u)=\sum_{i} h^\pm_i(u)\otimes E_{ii},\
  E^\pm(u)=\sum_{i} E_{ii}+\sum_{i<j} e^\pm_{ij}(u)\otimes E_{ij}.
\end{equation*}

\begin{Thm}[\cite{JLM20}]\label{thm:JLM-iso}
There is a unique $\BQ(v)$-algebra isomorphism
\begin{equation*}
  \varrho \colon U_v(L\sso_N) \, \iso \, \rtU^\rtt_v(L\sso_{N})
\end{equation*}
defined by
\begin{equation}\label{JLM formulas}
\begin{split}
  & x^+_i(u)\mapsto \frac{e^+_{i,i+1}(u\sfq^i)-e^-_{i,i+1}(u\sfq^i)}{\sfq-\sfq^{-1}},\quad
    x^-_i(u) \mapsto \frac{f^+_{i+1,i}(u\sfq^i)-f^-_{i+1,i}(u\sfq^i)}{\sfq^{1-\delta_{in}/2}-\sfq^{-1+\delta_{in}/2}}, \\
  & \psi_i(u)\mapsto h^-_{i+1}(u\sfq^i)h^-_{i}(u\sfq^i)^{-1},\quad
    \varphi_i(u)\mapsto h^+_{i+1}(u\sfq^i)h^+_{i}(u\sfq^i)^{-1},
\end{split}
\end{equation}
where $\sfq=v^2$ as in~\eqref{eq:q-vs-v}.
\end{Thm}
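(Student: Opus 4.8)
The statement to prove is Theorem~\ref{thm:JLM-iso}, which asserts the existence and uniqueness of the $\BQ(v)$-algebra isomorphism $\varrho\colon U_v(L\sso_N)\,\iso\,\rtU^\rtt_v(L\sso_N)$ given by the explicit Gauss-coordinate formulas~\eqref{JLM formulas}. Since this is quoted verbatim from~\cite{JLM20}, the plan is to follow that reference rather than reprove it from scratch; what follows is the structure of that argument.

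\textbf{Step 1: Extract Drinfeld-type relations from the RTT presentation.} First I would substitute the Gauss decomposition $\Lf^\pm(u)=F^\pm(u)H^\pm(u)E^\pm(u)$ into the $RLL$-relations~\eqref{affRTT} and the symmetry relation~\eqref{affRTT-extra}, and extract relations among the Gauss coordinates $e^\pm_{ij}(u)$, $f^\pm_{ij}(u)$, $h^\pm_i(u)$. The standard mechanism (going back to Ding--Frenkel in type $A$) is to multiply the matrix relation on appropriate sides by elementary projections and compare matrix entries $(i,j;k,l)$. One isolates the ``first-level'' generators $e^\pm_{i,i+1}(u)$, $f^\pm_{i+1,i}(u)$, $h^\pm_i(u)$, shows that all higher $e^\pm_{ij}$, $f^\pm_{ij}$ are obtained from these by $v$-bracketing (so the algebra is generated by the first-level coordinates together with the Cartan ones), and verifies that the images under~\eqref{JLM formulas} of the Drinfeld generators $x^\pm_i(u)$, $\varphi_i(u)$, $\psi_i(u)$ satisfy all the defining relations of $U_v(L\sso_N)$ listed in~\cite[\S1]{JLM20}. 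The role of the extra shift $u\mapsto u\sfq^i$ and the $\delta_{in}$-corrections in~\eqref{JLM formulas} is precisely to absorb the asymmetry of the $B_n$ Cartan matrix (the short simple root $\alpha_n$), and checking the Serre-type relations at the branch node $i=n$ is the most delicate compatibility check here.

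\textbf{Step 2: Construct the inverse / prove surjectivity and injectivity.} Having a well-defined homomorphism $\varrho$, I would then build a homomorphism in the opposite direction by sending the RTT generators $\ell^\pm_{ij}[\mp r]$ to explicit elements of $U_v(L\sso_N)$: the Cartan coordinates $h^\pm_i(u)$ go to products of the $\varphi_i,\psi_i$ (inverting the triangular system in~\eqref{JLM formulas}), the first-level off-diagonal coordinates go to $x^\pm_i(u)$ up to the scalars in~\eqref{JLM formulas}, and the higher coordinates to the corresponding iterated $v$-commutators; then one reassembles $\Lf^\pm(u)=F^\pm H^\pm E^\pm$ and checks that the resulting $L$-matrices satisfy~\eqref{affRTT}--\eqref{affRTT-extra}. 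Verifying the $RLL$-relations for this candidate inverse is the computational heart of the proof: it amounts to a finite (but large) $R$-matrix calculation, and in~\cite{JLM20} it is organized by reducing to rank-$2$ subalgebras ($A_1\times A_1$, $A_2$, $B_2$) embedded along the Dynkin diagram. Once both maps are shown to be algebra homomorphisms, composing them and checking agreement on generators yields that they are mutually inverse, giving the isomorphism; uniqueness is immediate since~\eqref{JLM formulas} determines $\varrho$ on a generating set.

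\textbf{Main obstacle.} The genuine difficulty — and the reason this is a cited theorem rather than a routine check — is the $RLL\Rightarrow$ Drinfeld-relations computation at and around the short root $\alpha_n$: the $R$-matrix~\eqref{trigR} has the extra rank-one term $Q$ coming from the orthogonality form, the index $n+1$ behaves specially in $R$, and the half-integer shift $\delta_{in}/2$ in the normalization of $x^-_n$ must be pinned down exactly so that the quantum Serre relations and the commutation relations between $x^\pm_n$ and $\varphi_n,\psi_n$ come out right. I would therefore expect to spend most of the effort isolating the $B_2$ subalgebra (indices near $\{n-1,n\}$ and their primes) and matching it against the known Drinfeld presentation of $U_v(L\sso_5)$, after which the higher-rank case follows by the embedding/induction pattern of~\cite{JLM20}. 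For the purposes of the present paper we only need the \emph{statement} of Theorem~\ref{thm:JLM-iso} together with the explicit formulas~\eqref{JLM formulas}, which we invoke in Appendix~\ref{sec:app} to identify the RTT integral form $\integralb$ of Subsection~\ref{rttb}; accordingly I would cite~\cite{JLM20} for the full proof and reproduce only the formulas needed downstream.
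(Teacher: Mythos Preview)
Your assessment is correct: the paper does not prove this theorem at all, stating it purely as a citation from~\cite{JLM20} (followed only by a remark on minor normalization differences), so there is no ``paper's own proof'' to compare against. Your outline of the Ding--Frenkel/Gauss-decomposition strategy from~\cite{JLM20} is accurate in spirit and your final paragraph correctly identifies that only the statement and formulas~\eqref{JLM formulas} are needed downstream; for the purposes of this paper you should simply cite~\cite{JLM20} and omit the sketch.
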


\begin{Rk}
We note right away that~\cite{JLM20} established a similar isomorphism for centrally extended algebras,
and also that they used slightly rescaled formulas for the images of $x^\pm_n(u)$.
\end{Rk}


\subsection{The RTT realization of $\integralb$}

Let $\rtU^{\rtt,>}_v(L\sso_{N})$ be the $\BZ[v,v^{-1}]$-subalgebra of $\rtU^\rtt_v(L\sso_{N})$ generated by the coefficients
of $\{e^\pm_{ij}(u)\}_{1\leq i<j\leq N}$, the matrix coefficients of $E^\pm(u)$. The key goal of this appendix is to highlight
the natural origin of the integral form $\integralb$ introduced in Subsection~\ref{rttb} and its specific quantum root
vectors (a special case of~\eqref{eq:rtt-vectors})
\begin{equation}\label{eq:rtt-root-vectors}
\begin{split}
  & \tilde{\mathcal{E}}^{\rtt}_{[i,j],s}:= \langle 2\rangle_{v}\cdot
    [\cdots[[e_{i,s},e_{i+1,0}]_{v^{2}},e_{i+2,0}]_{v^{2}},\cdots,e_{j,0}]_{v^{2}},\\
  & \tilde{\mathcal{E}}^{\rtt}_{[i,n,j],s}:= \langle 2\rangle_{v}\cdot
    [\cdots[[[\cdots[e_{i,s},e_{i+1,0}]_{v^{2}},\cdots, e_{n,0}]_{v^{2}}, e_{n,0}],e_{n-1,0}]_{v^{2}},\cdots,e_{j,0}]_{v^{2}}.
\end{split}
\end{equation}

Let us express the matrix coefficients of $E^\pm(u)$ as series in $u^{\pm 1}$ with coefficients in $\rtU^\rtt_v(L\sso_{N})$:
\begin{equation}\label{eq:rtt-matrix-coef}
  e^+_{ij}(u)=\sum_{r>0} e^{(-r)}_{ij} u^r,\quad  e^-_{ij}(u)=\sum_{r\geq 0} e^{(r)}_{ij} u^{-r}
  \qquad \forall\ 1\leq i<j\leq N.
\end{equation}
We also define $e_{ij}(u):=e^+_{ij}(u)-e^-_{ij}(u)$. The key technical result of this subsection is:

\begin{Prop}\label{prop:E-explicitly}
For any $1\leq i<j\leq n$, we have:
\begin{equation}\label{eq:iterated-1}
  e_{i,j+1}(u)=(1-\sfq^2)^{i-j}\cdot
  [\cdots[[e_{i,i+1}(u),e^{(0)}_{i+1,i+2}]_{\sfq},e^{(0)}_{i+2,i+3}]_{\sfq},\cdots,e^{(0)}_{j,j+1}]_{\sfq}
\end{equation}
and
\begin{multline}\label{eq:iterated-2}
  e_{i,j'}(u)=\sfq(1-\sfq^2)^{i+j-2n-1}(-1)^{j-n-1}\times \\
  [\cdots[[[\cdots[e_{i,i+1}(u),e^{(0)}_{i+1,i+2}]_{\sfq},\cdots, e^{(0)}_{n,n+1}]_{\sfq},e^{(0)}_{n,n+1}],
   e^{(0)}_{n-1,n}]_{\sfq},\cdots,e^{(0)}_{j,j+1}]_{\sfq}.
\end{multline}
\end{Prop}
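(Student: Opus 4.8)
The plan is to compute the Gauss-decomposition matrix entries $e^\pm_{ij}(u)$ by an iterated ``descent'' on the second index $j$ (and then on the conjugated index $j'$), extracting the needed $\sfq$-commutator relations directly from the RTT relations~\eqref{affRTT}. First I would recall from~\cite{JLM20} the explicit low-rank formulas: by Theorem~\ref{thm:JLM-iso}, $e_{i,i+1}(u)=e^+_{i,i+1}(u)-e^-_{i,i+1}(u)=(\sfq-\sfq^{-1})x^+_i(u\sfq^{-i})$, so the base case $j=i$ of~\eqref{eq:iterated-1} is just a rescaling of $x^+_i$. The engine of the induction is a commutation relation of the shape
\begin{equation}\label{eq:descent}
  e_{i,j+1}(u)\doteq [e_{i,j}(u),e^{(0)}_{j,j+1}]_{\sfq}
\end{equation}
together with its ``turning point'' analogue at $j=n$ producing the doubled generator $e^{(0)}_{n,n+1}$ and then the descent $e^{(0)}_{n-1,n}, e^{(0)}_{n-2,n-1},\dots$ for the entries $e_{i,j'}(u)$. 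Such relations are obtained by evaluating the $RLL=LLR$ relations on suitable matrix unit components $E_{a b}\otimes E_{c d}$ and using the Gauss decomposition $\Lf^\pm=F^\pm H^\pm E^\pm$ to isolate the $E^\pm$ part; this is the standard technique (see~\cite[\S3--4]{JLM20}), and I would cite the relevant structural lemmas of \emph{loc.cit.}\ rather than re-deriving the full Gauss-decomposition machinery.

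The key steps, in order, are: (1) record the base case and the precise normalization constant relating $e_{i,i+1}(u)$ to $x^+_i(u\sfq^{-i})$ and hence to $e_i(z)$ of Subsection~\ref{ssec:qlas}; (2) establish the one-step descent~\eqref{eq:descent} with the exact scalar $(1-\sfq^2)$ per step, for $i+1\le j\le n-1$, by picking the appropriate entries of the $RLL$ relation and solving for $e_{i,j+1}$ modulo lower Gauss factors; (3) treat the turning point: show that passing from $e_{i,n+1}(u)$ (equivalently $e_{i,n}(u)$ composed once more with the short simple root) to $e_{i,n'}=e_{i,n+2}$ inserts the factor $[\,\cdot\,,e^{(0)}_{n,n+1}]$ \emph{without} a $\sfq$-twist, reflecting $d_n=1$ and the special middle node, and accounts for the $\sfq(-1)^{j-n-1}$ prefactor and the change of exponent $i-j\rightsquigarrow i+j-2n-1$; (4) iterate the untwisted-then-$\sfq$-twisted descent on the second half $n\ge \ell\ge j$ to reach $e_{i,j'}(u)$, collecting the scalar $(1-\sfq^2)^{i+j-2n-1}$; (5) assemble~\eqref{eq:iterated-1}--\eqref{eq:iterated-2}, taking the difference $e^+ - e^-$ throughout so that all formulas are stated for the combined series $e_{ij}(u)$ and translate, via $\sfq=v^2$ and Theorem~\ref{thm:JLM-iso}, into the iterated $v^2$-commutators of the $e_{i,r}$'s appearing in~\eqref{eq:rtt-root-vectors}.

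\textbf{Main obstacle.}
The hard part will be step~(3), the turning point at the node $n$: the RTT presentation of type $B_n$ has the extra symmetry relation~\eqref{affRTT-extra} and the $R$-matrix~\eqref{trigR} carries the $Q$-term coming from $\xi=\sfq^{2-N}$, so the commutation relations involving $e^{(0)}_{n,n+1}$ and its reflection $e^{(0)}_{n+1,n'}=e^{(0)}_{n+1,n+2}$ are genuinely different from the type-$A$ descent and are where the doubled occurrence of $e^{(0)}_{n,n+1}$ (matching the Drinfeld root vector for $[i,n,j]$, cf.~\eqref{rvb2}) and the untwisted commutator first appear. I would handle this by carefully tracking the diagonal Gauss factors $h^\pm_n, h^\pm_{n+1}, h^\pm_{n+2}$ (whose ratios are the Drinfeld currents $\varphi_n,\psi_n$ by~\eqref{JLM formulas}) and verifying that they cancel in the final commutator expression, so that only the stated scalar and sign survive; the bookkeeping of powers of $\sfq$ (keeping $\bar i$ of~\eqref{eq:bar} straight) is the most error-prone part but is otherwise routine. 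Everything else — steps (1), (2), (4), (5) — parallels the rank-one and type-$A$ computations already in~\cite{JLM20, Tsy18} and should go through with only notational changes.
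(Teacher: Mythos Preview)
Your proposal is correct and matches the paper's approach: both extract the one-step descent $e_{i,j+1}(u)=(1-\sfq^2)^{-1}[e_{i,j}(u),e^{(0)}_{j,j+1}]_\sfq$ from suitable matrix entries of the $RLL=LLR$ relation, treat the middle node $k=n+1$ separately (where the $Q$-term in $\bar R_\trig$ intervenes and the $\sfq$-twist disappears, yielding the plain commutator with $e^{(0)}_{n,n+1}$), and then iterate on the second half. Two shortcuts the paper uses that you may want to adopt: (i) the rank-reduction embeddings of \cite[\S3.2, Proposition~4.2]{JLM20} reduce everything to $i=1$, so you never carry a general $i$; (ii) for the second-half descent ($n+1<k<2n$) the paper does not re-derive new relations but invokes the symmetry $e^{(0)}_{k,k+1}=-e^{(0)}_{k'-1,k'}$ from \cite[Proposition~5.4]{JLM20}, which immediately converts the type-$A$ recursion into the desired bracket with $e^{(0)}_{k'-1,k'}$ and produces the sign $(-1)^{j-n-1}$ for free --- this bypasses the need to track the $h^\pm_{n+1},h^\pm_{n+2}$ factors you anticipate in step~(3).
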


\begin{proof}
Due to the ``rank reduction'' embedding homomorphisms of\cite[\S3.2, Proposition 4.2]{JLM20}, it suffices to
prove both formulas~\eqref{eq:iterated-1} and~\eqref{eq:iterated-2} for $i=1$ and $1<j\leq n$.

We prove \eqref{eq:iterated-1} for $i=1$ by induction on $j\geq 1$, the base case $j=1$ being vacuous.
Comparing the matrix coefficients $\langle v_1\otimes v_j|\cdots|v_j\otimes v_{j+1}\rangle$ of both sides of the
RTT relation $\bar{R}_{\trig}(z/w)\Lf^-_1(z)\Lf^-_2(w)=\Lf^-_2(w)\Lf^-_1(z)\bar{R}_\trig(z/w)$, we get:
\begin{equation}\label{eq:rtt-1}
\begin{split}
   & \frac{z-w}{\sfq z-\sfq^{-1}w} \ell^-_{1j}(z)\ell^-_{j,j+1}(w)+\frac{(\sfq-\sfq^{-1})z}{\sfq z-\sfq^{-1}w}\ell^-_{jj}(z)\ell^-_{1,j+1}(w)=\\
   & \frac{z-w}{\sfq z-\sfq^{-1}w} \ell^-_{j,j+1}(w)\ell^-_{1j}(z)+\frac{(\sfq-\sfq^{-1})w}{\sfq z-\sfq^{-1}w}\ell^-_{jj}(w)\ell^-_{1,j+1}(z).
\end{split}
\end{equation}
Expanding all rational factors as series in $z/w$ and evaluating the $[w^0]$-coefficients, we obtain:
\begin{equation}\label{eq:rtt-2}
  \sfq \ell^-_{1j}(z)\ell^-_{j,j+1}[0]=\sfq\ell^-_{j,j+1}[0]\ell^-_{1j}(z)+(1-\sfq^2)\ell^-_{jj}[0]\ell^-_{1,j+1}(z).
\end{equation}
Likewise, comparing the matrix coefficients $\langle v_1\otimes v_j|\cdots|v_j\otimes v_{j}\rangle$ of both sides
of the same RTT relation, we also get:
\begin{equation}\label{eq:rtt-3}
   \frac{z-w}{\sfq z-\sfq^{-1}w} \ell^-_{1j}(z)\ell^-_{jj}(w)+\frac{(\sfq-\sfq^{-1})z}{\sfq z-\sfq^{-1}w}\ell^-_{jj}(z)\ell^-_{1j}(w)=
   \ell^-_{jj}(w)\ell^-_{1j}(z).
\end{equation}
Expanding both rational factors as series in $z/w$ and evaluating the $[w^0]$-coefficients, we obtain:
\begin{equation}\label{eq:rtt-4'}
  \sfq \ell^-_{1j}(z)\ell^-_{jj}[0]=\ell^-_{jj}[0]\ell^-_{1j}(z),
\end{equation}
or equivalently:
\begin{equation}\label{eq:rtt-4}
   {\ell^-_{jj}[0]}^{-1}\ell^-_{1j}(z)=\sfq^{-1} \ell^-_{1j}(z){\ell^-_{jj}[0]}^{-1}.
\end{equation}

Let $h^-_j[0]$ be the $u^0$-coefficient of $h^-_j(u)$. Then, $\ell^-_{jj}[0]=h^-_j[0]$ and
$\ell^-_{j,j+1}[0]=h_j^-[0]e^{(0)}_{j,j+1}=\ell_{jj}^-[0]e^{(0)}_{j,j+1}$. Thus, multiplying
\eqref{eq:rtt-2} by ${\ell^-_{jj}[0]}^{-1}$ on the left and applying \eqref{eq:rtt-4}, we obtain:
\begin{equation}\label{eq:rtt-5}
  (1-\sfq^2)\ell^-_{1,j+1}(z)=[\ell^-_{1j}(z),e^{(0)}_{j,j+1}]_\sfq.
\end{equation}
Here, $\ell^-_{1,j+1}(z)=h^-_1(z)e^-_{1,j+1}(z)$ and $\ell^-_{1j}(z)=h^-_1(z)e^-_{1j}(z)$.
As $h^-_1(z)$ commutes with $e^-_{j,j+1}(w)$ for $1<j<2'$, we derive:
\begin{equation}\label{eq:rtt-6}
  e^-_{1,j+1}(z)=(1-\sfq^2)^{-1} \cdot [e^-_{1j}(z),e^{(0)}_{j,j+1}]_\sfq.
\end{equation}
Arguing in the same way, but using the other RTT relation
$\bar{R}_{\trig}(z/w)\Lf^+_1(z)\Lf^-_2(w)=\Lf^-_2(w)\Lf^+_1(z)\bar{R}_\trig(z/w)$, we also obtain:
\begin{equation}\label{eq:rtt-6'}
  e^+_{1,j+1}(z)=(1-\sfq^2)^{-1} \cdot [e^+_{1j}(z),e^{(0)}_{j,j+1}]_\sfq.
\end{equation}
Subtracting \eqref{eq:rtt-6} from \eqref{eq:rtt-6'}, we finally get:
\begin{equation}\label{eq:rtt-6-fin}
  e_{1,j+1}(z)=(1-\sfq^2)^{-1} \cdot [e_{1j}(z),e^{(0)}_{j,j+1}]_\sfq.
\end{equation}
Applying the induction assumption for $e_{1j}(z)$ completes our proof of \eqref{eq:iterated-1} for $i=1$.

To prove \eqref{eq:iterated-2} for $i=1$, we note first that
$e^\pm_{1,k+1}(z)=\frac{[e^\pm_{1k}(z),e^{(0)}_{k,k+1}]_\sfq}{1-\sfq^2}$ for $n+1<k<2n$, similarly to
\eqref{eq:rtt-6}--\eqref{eq:rtt-6'}. But according to \cite[Proposition 5.4]{JLM20}
(corrected by replacing $-e^\pm_{n-1}(u\xi \sfq^{2n-2})$ with $-e^\pm_{n}(u\xi \sfq^{2n})$ in \emph{loc.cit.}),
we have $e^{(0)}_{k,k+1}=-e^{(0)}_{k'-1,k'}$. Therefore, we get:
\begin{equation}\label{eq:rtt-7}
  e^\pm_{1,k+1}(z)=-(1-\sfq^2)^{-1} \cdot [e^\pm_{1k}(z),e^{(0)}_{k'-1,k'}]_\sfq.
\end{equation}
Finally, a special care should be taken of the $k=n+1$ case as in that case the matrix coefficient
$\langle v_1\otimes v_{n+1}|\Lf^-_2(w)\Lf^\pm_1(z)\bar{R}_\trig(z/w)|v_{n+1}\otimes v_{n+1}\rangle$
is given by a different formula:
\begin{multline}\label{rtt-8}
  \big\langle v_1\otimes v_{n+1} \,\big|\, \Lf^-_2(w)\Lf^\pm_1(z)\bar{R}_\trig(z/w) \,\big|\, v_{n+1}\otimes v_{n+1}\big\rangle=\\
  \sum_{i=1}^{N} \frac{a_{i,n+1}(z/w)}{(z/w-\sfq^{-2})(z/w-\xi)}\ell^-_{n+1,i}(w)\ell^\pm_{1,i'}(z),
\end{multline}
where
\begin{equation}\label{rtt-9}
  a_{i,n+1}(z/w)=
  \begin{cases}
     \sfq^{-1}(z/w-\xi)(z/w-1)+(\xi-1)(\sfq^{-2}-1)z/w &\ \text{if}\ i=n+1\\
     (\sfq^{-2}-1) \sfq^{\bar{i}-\overline{n+1}}\xi \cdot (z/w-1) &\ \text{if}\ i<n+1\\
     (\sfq^{-2}-1) \sfq^{\bar{i}-\overline{n+1}}\cdot (z/w-1)z/w &\ \text{if}\ i>n+1
  \end{cases}.
\end{equation}
Expanding all rational factors in $z/w$ and evaluating the $w^0$-coefficient, only the $i=n+1$ term will have
a nontrivial contribution. Explicitly, we obtain the following analogue of \eqref{eq:rtt-4'}:
\begin{equation}\label{eq:rtt-10}
  \sfq \ell^\pm_{1,n+1}(z)\ell^-_{n+1,n+1}[0]=\sfq \ell^-_{n+1,n+1}[0]\ell^\pm_{1,n+1}(z),
\end{equation}
so that \eqref{eq:rtt-4} will get replaced by
\begin{equation}\label{eq:rtt-11}
   {\ell^-_{n+1,n+1}[0]}^{-1}\ell^\pm_{1,n+1}(z)=\ell^\pm_{1,n+1}(z){\ell^-_{n+1,n+1}[0]}^{-1}.
\end{equation}
This establishes \eqref{eq:iterated-2} for $i=1$ and $j=n$, while \eqref{eq:rtt-7} establishes it then for $1<j<n$.
\end{proof}

Combining Proposition~\ref{prop:E-explicitly} with $\varrho(x^+_i(u))=\frac{-\sfq}{1-\sfq^2}e_{i,i+1}(u\sfq^i)$
of~\eqref{JLM formulas}, the definition~\eqref{eq:rtt-root-vectors}, and identification of the present currents
$x^+_i(u)$ with the currents $e_i(u)$ of~\eqref{eq:basic-def}, we obtain:

\begin{Cor}\label{cor:Dr-via-rtt-roots}
For any $1\leq i < j\leq n$ and $s\in \BZ$, we have:
\begin{equation}\label{eq:Dr-via-rtt-roots}
  \varrho(\tilde{\mathcal{E}}^{\rtt}_{[i,j],s}) \doteq e^{(s)}_{i,j+1}
  \qquad \mathrm{and} \qquad
  \varrho(\tilde{\mathcal{E}}^{\rtt}_{[i,n,j],s}) \doteq e^{(s)}_{i,j'}.
\end{equation}
\end{Cor}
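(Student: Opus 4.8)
The plan is to deduce Corollary~\ref{cor:Dr-via-rtt-roots} directly from Proposition~\ref{prop:E-explicitly} together with the explicit formula $\varrho(x^+_i(u))=\frac{-\sfq}{1-\sfq^2}\,e_{i,i+1}(u\sfq^i)$ extracted from~\eqref{JLM formulas}. Concretely, recall that $e_{i,i+1}(u)=e^+_{i,i+1}(u)-e^-_{i,i+1}(u)$, so that the stated first formula of~\eqref{JLM formulas}, namely $x^+_i(u)\mapsto \frac{e^+_{i,i+1}(u\sfq^i)-e^-_{i,i+1}(u\sfq^i)}{\sfq-\sfq^{-1}}$, may be rewritten as $\varrho(x^+_i(u)) = (\sfq-\sfq^{-1})^{-1} e_{i,i+1}(u\sfq^i) \doteq e_{i,i+1}(u\sfq^i)$, since $\sfq-\sfq^{-1}\in \BQ^\times\cdot v^\BZ$. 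Extracting the coefficient of $u^{-s}$ and using our identification $x^+_{i,r}=e_{i,r}$, this gives $\varrho(e_{i,s}) \doteq \sfq^{-is}\, e^{(s)}_{i,i+1} \doteq e^{(s)}_{i,i+1}$ for all $s\in \BZ$ (here $e^{(s)}_{i,i+1}$ means the coefficient of $u^{-s}$ in $e_{i,i+1}(u)$, which by~\eqref{eq:rtt-matrix-coef} is $e^{(s)}_{i,i+1}$ for $s\geq 0$ and $e^{(s)}_{i,i+1}$ for $s<0$ — both indexed consistently by the series $e_{i,i+1}(u)=e^+_{i,i+1}(u)-e^-_{i,i+1}(u)$).

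First I would handle the $[i,j]$ case. Applying $\varrho$ to the definition~\eqref{eq:rtt-root-vectors} of $\tilde{\mathcal E}^\rtt_{[i,j],s}$ and using that $\varrho$ is an algebra homomorphism commuting with the $v$-commutator $[\,\cdot\,,\cdot\,]_{v^2}=[\,\cdot\,,\cdot\,]_\sfq$, we get
\begin{equation*}
  \varrho(\tilde{\mathcal E}^\rtt_{[i,j],s}) \doteq
  [\cdots[[\varrho(e_{i,s}),\varrho(e_{i+1,0})]_\sfq,\varrho(e_{i+2,0})]_\sfq,\cdots,\varrho(e_{j,0})]_\sfq.
\end{equation*}
Now $\varrho(e_{i,s})\doteq e^{(s)}_{i,i+1}$ is the coefficient of $u^{-s}$ in $e_{i,i+1}(u)$, and $\varrho(e_{\ell,0})\doteq e^{(0)}_{\ell,\ell+1}$ for $i<\ell\leq j$. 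Extracting the $[u^{-s}]$-coefficient of the identity~\eqref{eq:iterated-1} (rewritten as an identity of power series in $u$), the right-hand side of~\eqref{eq:iterated-1} becomes precisely the iterated $\sfq$-commutator above, up to the overall scalar $(1-\sfq^2)^{i-j}\in \BQ^\times\cdot v^\BZ$; the left-hand side coefficient is $e^{(s)}_{i,j+1}$. Hence $\varrho(\tilde{\mathcal E}^\rtt_{[i,j],s})\doteq e^{(s)}_{i,j+1}$, as claimed.

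Next I would treat the $[i,n,j]$ case by the same mechanism, now invoking~\eqref{eq:iterated-2} in place of~\eqref{eq:iterated-1}: the bracketing pattern in~\eqref{eq:iterated-2}, which first runs up $[\cdots[e_{i,i+1}(u),e^{(0)}_{i+1,i+2}]_\sfq,\cdots,e^{(0)}_{n,n+1}]_\sfq$, then appends $e^{(0)}_{n,n+1}$ via an \emph{uncentered} commutator $[\,\cdot\,,e^{(0)}_{n,n+1}]$, and then runs back down with $\sfq$-commutators $[\,\cdot\,,e^{(0)}_{n-1,n}]_\sfq,\cdots,[\,\cdot\,,e^{(0)}_{j,j+1}]_\sfq$, matches exactly the bracketing pattern in the definition~\eqref{eq:rtt-root-vectors} of $\tilde{\mathcal E}^\rtt_{[i,n,j],s}$ once we substitute $\varrho(e_{\ell,0})\doteq e^{(0)}_{\ell,\ell+1}$ for $i<\ell\leq n$ and $\varrho(e_{\ell,0})\doteq e^{(0)}_{\ell,\ell+1}$ for $j\leq \ell\leq n-1$ on the descending branch (the appearance of the \emph{same} generators $e^{(0)}_{\ell,\ell+1}$ on the way down in~\eqref{eq:iterated-2}, rather than new ones, is exactly what makes this work, and it is guaranteed by the relation $e^{(0)}_{k,k+1}=-e^{(0)}_{k'-1,k'}$ used in the proof of Proposition~\ref{prop:E-explicitly}). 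Extracting the $[u^{-s}]$-coefficient of~\eqref{eq:iterated-2} and discarding the overall scalar $\sfq(1-\sfq^2)^{i+j-2n-1}(-1)^{j-n-1}\in \BQ^\times\cdot v^\BZ$ gives $\varrho(\tilde{\mathcal E}^\rtt_{[i,n,j],s})\doteq e^{(s)}_{i,j'}$.

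The only genuine subtlety — and the step I would be most careful about — is bookkeeping the scalar $\sfq^i$ rescaling of the spectral parameter in $\varrho(x^+_i(u))\doteq e_{i,i+1}(u\sfq^i)$ versus $\varrho(x^+_\ell(u))\doteq e_{\ell,\ell+1}(u\sfq^\ell)$ for the auxiliary generators with zero mode: since we only ever take the $u^0$-coefficient of the auxiliary currents $e_{\ell,\ell+1}(u\sfq^\ell)$, and that coefficient is $\sfq^{0\cdot \ell}e^{(0)}_{\ell,\ell+1}=e^{(0)}_{\ell,\ell+1}$, the rescaling is harmless for all but the first factor; for the first factor it contributes only $\sfq^{-is}$, which is absorbed into $\doteq$. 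I would also note in passing that the formula $\varrho(x^+_i(u))=\frac{-\sfq}{1-\sfq^2}e_{i,i+1}(u\sfq^i)$ is exactly the rewriting of~\eqref{JLM formulas} used in the sentence preceding the Corollary, so no independent verification is needed. This completes the proof.
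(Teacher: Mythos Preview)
Your approach is exactly the paper's: combine Proposition~\ref{prop:E-explicitly} with the formula $\varrho(x^+_i(u))=\frac{-\sfq}{1-\sfq^2}e_{i,i+1}(u\sfq^i)$ and the definition~\eqref{eq:rtt-root-vectors}, then extract the $u^{-s}$-coefficient. The structure and conclusion are correct.

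However, there is a bookkeeping error in your justification. You write that $\sfq-\sfq^{-1}\in \BQ^\times\cdot v^{\BZ}$ and that $(1-\sfq^2)^{i-j}\in \BQ^\times\cdot v^{\BZ}$; neither is true, since $\sfq-\sfq^{-1}=v^2-v^{-2}$ is not a monomial in $v$. Consequently the individual assertions $\varrho(e_{\ell,r})\doteq e^{(r)}_{\ell,\ell+1}$ are false as stated. What actually happens is a cancellation: in the $[i,j]$ case, applying $\varrho$ to the $j-i+1$ generators $e_{i,s},e_{i+1,0},\ldots,e_{j,0}$ produces a factor $(\sfq-\sfq^{-1})^{-(j-i+1)}$, the prefactor $\langle 2\rangle_v=\sfq-\sfq^{-1}$ contributes one positive power, and inverting~\eqref{eq:iterated-1} contributes $(1-\sfq^2)^{j-i}=(-\sfq)^{j-i}(\sfq-\sfq^{-1})^{j-i}$. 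The net power of $(\sfq-\sfq^{-1})$ is $1-(j-i+1)+(j-i)=0$, leaving only $(-\sfq)^{j-i}\sfq^{-is}\in \BQ^\times\cdot v^{\BZ}$. The $[i,n,j]$ case works identically with $2n-i-j+2$ generators and the exponent $2n+1-i-j$ from~\eqref{eq:iterated-2}. Once you track this cancellation explicitly, your argument goes through and matches the paper's.
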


Since the elements~\eqref{eq:rtt-root-vectors} are specific case of quantum root vectors~\eqref{eq:rtt-vectors},
we finally obtain:

\begin{Prop}\label{prop:RTT-vs-Drinfeld integral forms}
  $\varrho(\integralb)=\rtU^{\rtt,>}_v(L\sso_{2n+1})$.
\end{Prop}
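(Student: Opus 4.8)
The plan is to prove the equality of the two $\BZ[v,v^{-1}]$-subalgebras of $U^\rtt_v(L\sso_{2n+1})$ by a two-sided inclusion argument, using Corollary~\ref{cor:Dr-via-rtt-roots} as the bridge. First I would record what Corollary~\ref{cor:Dr-via-rtt-roots} gives us: under the isomorphism $\varrho$ of Theorem~\ref{thm:JLM-iso}, the quantum root vectors $\tilde{\mathcal{E}}^{\rtt}_{\beta,s}$ of~\eqref{eq:rtt-root-vectors} are sent (up to $\BQ^\times\cdot v^\BZ$) to the matrix coefficients $e^{(s)}_{i,j+1}$ (for $\beta=[i,j]$) and $e^{(s)}_{i,j'}$ (for $\beta=[i,n,j]$). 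As $\beta=[i,j]$ ranges over $\{1\le i\le j\le n\}$ the pair $(i,j+1)$ ranges over $\{1\le i<i''\le n+1\}$, and as $\beta=[i,n,j]$ ranges over $\{1\le i<j\le n\}$ the pair $(i,j')$ ranges over $\{1\le i<j'\le N,\ j'>n+1\}$; together these cover precisely all pairs $1\le i<j\le N$ with $i\le n$ and $j\ne n+1$, $j\ne i'$. Combined with the "rank reduction" embeddings of~\cite[\S3.2]{JLM20} and the symmetry relations $e^{(0)}_{k,k+1}=-e^{(0)}_{k'-1,k'}$ from~\cite[Proposition 5.4]{JLM20}, the coefficients of all the remaining $e^\pm_{ij}(u)$ (those with $i>n$, or $j=i'$, together with the $u^0$-parts $e^{(0)}_{ij}$ for all $i<j$) are expressible as $v$-commutators of the ones already accounted for; this is exactly what Proposition~\ref{prop:E-explicitly} and its consequence were set up to supply.

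For the inclusion $\varrho(\integralb)\subseteq \rtU^{\rtt,>}_v(L\sso_{2n+1})$: since $\integralb$ is generated over $\BZ[v,v^{-1}]$ by the $\tilde{\mathcal{E}}^{\epsilon}_{\beta,s}$ (with $\epsilon$ fixed, say $\epsilon=+$), and by Theorem~\ref{rttthmb}(b) this integral form does not depend on the choice of quantum root vectors, I may use the specific generators $\tilde{\mathcal{E}}^{\rtt}_{\beta,s}=\tilde{\mathcal{E}}^{+}_{\beta,s}$ of~\eqref{eq:rtt-root-vectors}. By Corollary~\ref{cor:Dr-via-rtt-roots} their images $\doteq e^{(s)}_{i,j+1},\ e^{(s)}_{i,j'}$ lie in $\rtU^{\rtt,>}_v(L\sso_{N})$ (the factor in $\BQ^\times\cdot v^\BZ$ is a unit of $\BZ[v,v^{-1}]$), so $\varrho(\integralb)\subseteq \rtU^{\rtt,>}_v(L\sso_{N})$.

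For the reverse inclusion $\rtU^{\rtt,>}_v(L\sso_{2n+1})\subseteq \varrho(\integralb)$: by definition $\rtU^{\rtt,>}_v(L\sso_{N})$ is generated over $\BZ[v,v^{-1}]$ by the coefficients of all $e^\pm_{ij}(u)$ with $1\le i<j\le N$. I would argue that each such coefficient lies in $\varrho(\integralb)$ by a double induction. The outer induction is on the "height" $j-i$ (using the rank-reduction embeddings of~\cite{JLM20}, which are compatible with $\varrho$, one reduces to $i=1$). For $i=1$ and $1<j\le n$, Proposition~\ref{prop:E-explicitly}, formula~\eqref{eq:iterated-1}, writes $e_{1,j+1}(u)=e^+_{1,j+1}(u)-e^-_{1,j+1}(u)$ as an iterated $v$-commutator of $e_{1,2}(u)$ with the $e^{(0)}_{k,k+1}$; by Corollary~\ref{cor:Dr-via-rtt-roots} this difference equals, up to a unit, the generating series $\sum_s \varrho(\tilde{\mathcal{E}}^{\rtt}_{[1,j],s})u^{\mp s}$, hence all coefficients of $e^+_{1,j+1}(u)-e^-_{1,j+1}(u)$ lie in $\varrho(\integralb)$; but $e^+_{1,j+1}(u)$ involves only positive powers of $u$ and $e^-_{1,j+1}(u)$ only nonnegative powers, so the coefficients separate and each $e^{(-r)}_{1,j+1}$ ($r>0$) and $e^{(r)}_{1,j+1}$ ($r>0$) individually lies in $\varrho(\integralb)$. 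The one genuinely subtle point is the $u^0$-coefficients $e^{(0)}_{ij}$ (the "constant" Gauss-coordinates), which are not directly visible from $\tilde{\mathcal{E}}^{\rtt}_{\beta,s}$ with $s\ge 0$; but $s\in\BZ$ is allowed in~\eqref{eq:rtt-root-vectors}, and the chain $e^{(0)}_{1,j+1}=(1-\sfq^2)^{-1}[e^{(0)}_{1,j},e^{(0)}_{j,j+1}]_\sfq$ (the $u^0$-part of~\eqref{eq:rtt-6-fin}) lets me build them inductively from $e^{(0)}_{1,2}\doteq \varrho(\tilde{\mathcal{E}}^{\rtt}_{[1],0})=\varrho(e_{1,0})$; note the scalars $(1-\sfq^2)^{-1}=(1-v^4)^{-1}$ are not units in $\BZ[v,v^{-1}]$, so here one must carry the normalization factor $\langle 2\rangle_v$ built into~\eqref{eq:rtt-root-vectors} — this is precisely the reason for that extra factor, as flagged in the Remark after~\eqref{eq:rtt-vectors}. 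Continuing, for $i=1$ and $n<j<2n$ one uses~\eqref{eq:rtt-7} and the sign relation $e^{(0)}_{k,k+1}=-e^{(0)}_{k'-1,k'}$, with the $j=n$ step handled by~\eqref{eq:iterated-2}; and the remaining coordinates $e^\pm_{1,i'}$ and those with $i>1$ follow from the rank-reduction embeddings. Assembling the two inclusions gives $\varrho(\integralb)=\rtU^{\rtt,>}_v(L\sso_{2n+1})$.

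I expect the main obstacle to be exactly the bookkeeping around the $u^0$-coefficients $e^{(0)}_{ij}$ and the non-unit scalars $(1-v^4)^{\pm 1}$ and $\langle 2\rangle_v$: one must verify carefully that the integral form $\integralb$, which is generated by the $\langle 2\rangle_v$-rescaled root vectors, is large enough to contain every $e^{(0)}_{ij}$ on the nose over $\BZ[v,v^{-1}]$ (not merely over $\BQ(v)$), and simultaneously not larger than $\rtU^{\rtt,>}_v(L\sso_N)$. This is where Proposition~\ref{prop:RTT-vs-Drinfeld integral forms}'s content really lies; everything else is a routine propagation of the relations of Proposition~\ref{prop:E-explicitly} through the rank-reduction embeddings.
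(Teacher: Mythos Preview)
Your approach is the paper's: the proposition is stated as an immediate consequence of Corollary~\ref{cor:Dr-via-rtt-roots} together with Theorem~\ref{PBWDintegralb}(a), with no further argument, and your two-sided inclusion is exactly the natural way to unpack that one-line proof.

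Three points on the execution. (i)~The $\doteq$ in Corollary~\ref{cor:Dr-via-rtt-roots} only guarantees a constant in $\BQ^\times\cdot v^{\BZ}$, which is \emph{not} automatically a unit of $\BZ[v,v^{-1}]$; you must check that the actual constant lands in $\pm v^{\BZ}$, which follows by tracking the powers of $(1-\sfq^2)$ in~\eqref{eq:iterated-1}--\eqref{eq:iterated-2} against the $\langle 2\rangle_v$ in~\eqref{eq:rtt-root-vectors} and the $(\sfq-\sfq^{-1})^{-1}$ in~\eqref{JLM formulas} (they cancel exactly, since $1-\sfq^2=-\sfq\langle 2\rangle_v$). (ii)~Your worry about the constant terms $e^{(0)}_{ij}$ is misplaced: the $s=0$ case of Corollary~\ref{cor:Dr-via-rtt-roots} covers them directly, so no separate inductive chain is needed. (iii)~The claim that $e^\pm_{1,i'}=e^\pm_{1,N}$ ``follows from the rank-reduction embeddings'' is wrong: rank reduction shifts $i\mapsto i-1$ inside a smaller $\sso_{N-2}$ and handles only the case $i>1$. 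For the generators $e^{(r)}_{ij}$ with $i+j>N$ one needs the unitarity constraint~\eqref{affRTT-extra} and the resulting symmetry relations of~\cite[Proposition~5.4]{JLM20} (which you correctly invoke in your first paragraph but then drop), together with a check that those relations express the redundant coordinates as $\BZ[v,v^{-1}]$-polynomials in the ones with $i+j\le N$. This last verification is the genuine content behind the reverse inclusion; the paper leaves it implicit in the citation of~\cite{JLM20}.
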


This result explains why we called $\integralb$ the RTT integral form of $\qfb$. Moreover,
Theorem~\ref{PBWDintegralb}(b) implies the PBWD theorem for $\rtU^{\rtt,>}_v(L\sso_{2n+1})$, cf.~\cite[Theorem~3.25]{FT19}:

\begin{Cor}
The ordered monomials in $\big\{e^{(r)}_{ij} \,|\ i<j \ \mathrm{such\ that}\ i+j\leq N, r\in \BZ\big\}$ form a
basis of the free $\BZ[v,v^{-1}]$-module $\rtU^{\rtt,>}_v(L\sso_{2n+1})$.
\end{Cor}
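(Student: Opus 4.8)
The plan is to transport the PBWD theorem for the Drinfeld integral form $\integralb$, i.e.\ Theorem~\ref{PBWDintegralb}(b), onto $\rtU^{\rtt,>}_v(L\sso_{N})$ (recall $N=2n+1$) along the isomorphism $\varrho$ of Theorem~\ref{thm:JLM-iso}, using the explicit identifications from Corollary~\ref{cor:Dr-via-rtt-roots} and Proposition~\ref{prop:RTT-vs-Drinfeld integral forms}. For $\beta\in\Delta^{+}$ and $s\in\BZ$, abbreviate $e^{(s)}_{\beta}:=e^{(s)}_{i,j+1}$ when $\beta=[i,j]$ (allowing $j=i$) and $e^{(s)}_{\beta}:=e^{(s)}_{i,j'}$ when $\beta=[i,n,j]$. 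First I would record the combinatorics: $\beta\mapsto e^{(0)}_{\beta}$ is a bijection from $\Delta^{+}$ onto $\{e^{(0)}_{ij}\mid 1\le i<j\le N,\ i+j\le N\}$, since the roots $[i,j]$ exhaust the pairs with second index $\le n+1$ and the roots $[i,n,j]$, via $j'=2n+2-j$ together with the equivalence $i<j\Leftrightarrow i+j'\le N$, exhaust those with second index $\ge n+2$. Tensoring with $\mathrm{id}_{\BZ}$, this identifies $\Delta^{+}\times\BZ$ with the index set of the statement; in particular the convex order~\eqref{orderbetas} on $\Delta^{+}\times\BZ$ is carried to a total order on the generators $\{e^{(r)}_{ij}\}$, and the ordered monomials in the conclusion are to be taken with respect to that order.

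Next, Theorem~\ref{PBWDintegralb}(b) applies to the particular quantum root vectors $\tilde{\mathcal{E}}^{\rtt}_{\beta,s}$ of~\eqref{eq:rtt-root-vectors} --- a legitimate instance of~\eqref{eq:rtt-vectors} with $\epsilon=+$ and the decomposition $s=s+0+\cdots+0$ --- so the ordered monomials $\tilde{\mathcal{E}}^{\rtt}_{h}=\prod_{(\beta,s)}\limits^{\rightarrow}(\tilde{\mathcal{E}}^{\rtt}_{\beta,s})^{h(\beta,s)}$, $h\in H$, form a $\BZ[v,v^{-1}]$-basis of the free module $\integralb$. By Proposition~\ref{prop:RTT-vs-Drinfeld integral forms}, $\varrho$ restricts to a $\BZ[v,v^{-1}]$-algebra isomorphism $\integralb\iso\rtU^{\rtt,>}_v(L\sso_{N})$, hence $\{\varrho(\tilde{\mathcal{E}}^{\rtt}_{h})\}_{h\in H}$ is a $\BZ[v,v^{-1}]$-basis of $\rtU^{\rtt,>}_v(L\sso_{N})$. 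The remaining point is the refinement of Corollary~\ref{cor:Dr-via-rtt-roots} that the proportionality constant occurring there is in fact a \emph{unit} of $\BZ[v,v^{-1}]$:
\[
  \varrho(\tilde{\mathcal{E}}^{\rtt}_{\beta,s})=u_{\beta,s}\cdot e^{(s)}_{\beta},\qquad u_{\beta,s}\in\{\pm1\}\cdot v^{\BZ}\subset\BZ[v,v^{-1}]^{\times}.
\]
Granting this, $\varrho(\tilde{\mathcal{E}}^{\rtt}_{h})=U_{h}\,\mathcal{M}_{h}$ with $U_{h}=\prod_{(\beta,s)}u_{\beta,s}^{h(\beta,s)}$ a unit and $\mathcal{M}_{h}=\prod_{(\beta,s)}\limits^{\rightarrow}(e^{(s)}_{\beta})^{h(\beta,s)}$ exactly the ordered monomial in the $e^{(r)}_{ij}$'s attached to $h$; therefore $\{\mathcal{M}_{h}\}_{h\in H}$ is again a basis of the free $\BZ[v,v^{-1}]$-module $\rtU^{\rtt,>}_v(L\sso_{N})$, which is the assertion.

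The hard part is thus the displayed unit claim, i.e.\ bookkeeping of the factors $\langle 2\rangle_{v}=v^{2}-v^{-2}$; this is precisely the purpose of the normalization $\langle 2\rangle_{v}$ built into~\eqref{eq:rtt-vectors} (cf.\ the Remark following it). Using $\sfq-\sfq^{-1}=\langle 2\rangle_{v}$ and $1-\sfq^{2}=-v^{2}\langle 2\rangle_{v}$, formula~\eqref{JLM formulas} gives $\langle 2\rangle_{v}\,\varrho(e_{i,s})\doteq e^{(s)}_{i,i+1}$ with the ambiguity only a sign and a power of $v$ (from the shift $u\mapsto u\sfq^{i}$), which settles $\beta=[i,i]$. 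For $\beta=[i,j]$ with $i<j$ and for $\beta=[i,n,j]$ I would substitute $\varrho(e_{k,0})\doteq\langle 2\rangle_{v}^{-1}e^{(0)}_{k,k+1}$ into the iterated $v^{2}$-commutator defining $\tilde{\mathcal{E}}^{\rtt}_{\beta,s}$: this produces $|\beta|$ factors $\langle 2\rangle_{v}^{-1}$, which combined with the leading $\langle 2\rangle_{v}$ leave $\langle 2\rangle_{v}^{-(|\beta|-1)}$, while \eqref{eq:iterated-1}--\eqref{eq:iterated-2} express $e^{(s)}_{\beta}$ as $(1-\sfq^{2})^{-(|\beta|-1)}=\langle 2\rangle_{v}^{-(|\beta|-1)}\cdot(\text{unit})$ times the \emph{same} iterated $\sfq$-commutator of $e^{(\cdot)}$-generators (note $\sfq=v^{2}$, so the brackets match on the nose), equivalently that commutator equals $\langle 2\rangle_{v}^{|\beta|-1}\cdot(\text{unit})\cdot e^{(s)}_{\beta}$; hence all powers of $\langle 2\rangle_{v}$ cancel and $u_{\beta,s}\in\{\pm1\}\cdot v^{\BZ}$. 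The only points demanding care are the $k=n+1$ slot in~\eqref{eq:iterated-2}, for which Proposition~\ref{prop:E-explicitly} already incorporates the necessary correction, and the routine tracking of the $v$-power shifts $u\mapsto u\sfq^{i}$ in~\eqref{JLM formulas}; neither introduces a nontrivial rational constant. Freeness of the resulting basis is inherited from Theorem~\ref{PBWDintegralb}(b).
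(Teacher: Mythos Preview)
Your proof is correct and follows exactly the route the paper sketches: transport Theorem~\ref{PBWDintegralb}(b) through the isomorphism $\varrho$ of Proposition~\ref{prop:RTT-vs-Drinfeld integral forms}, using the identifications of Corollary~\ref{cor:Dr-via-rtt-roots}. Your explicit verification that the proportionality constants in Corollary~\ref{cor:Dr-via-rtt-roots} lie in $\{\pm1\}\cdot v^{\BZ}$ (rather than merely in $\BQ^{\times}\cdot v^{\BZ}$, which is all that the symbol $\doteq$ guarantees) is a genuine and necessary ingredient that the paper leaves implicit; it is indeed visible from the explicit formulas~\eqref{eq:iterated-1}--\eqref{eq:iterated-2} together with~\eqref{JLM formulas}, exactly as you trace it.
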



\end{document}